\colorlet{corrcol}{red}   
\newcommand{\HL}[1]{#1}   
\definecolor{rouge}{rgb}{0.7,0.00,0.00}
\definecolor{vert}{rgb}{0.00,0.5,0.00}
\definecolor{bleu}{rgb}{0.00,0.00,0.8}
\newtheorem{theorem}{Theorem}[section]
\newtheorem{lemma}[theorem]{Lemma}
\newtheorem{corollary}[theorem]{Corollary}
\newtheorem{remark}[theorem]{Remark}
\newtheorem{proposition}[theorem]{Proposition}
\numberwithin{equation}{section}
\begin{document}
\title[Products of random matrices]{Conditioned limit theorems for products \\of random matrices}
\author{ Ion~Grama}
\curraddr[Grama, I.]{ Universit\'{e} de Bretagne-Sud, LMBA UMR CNRS 6205, Vannes, France.}
\email{ion.grama@univ-ubs.fr}
\author{\'Emile Le Page}
\curraddr[Le Page, E.]{Universit\'{e} de Bretagne-Sud, LMBA UMR CNRS 6205, Vannes, France.}
\email{emile.lepage@univ-ubs.fr}
\author{Marc~Peign\'{e}}
\curraddr[Peign\'{e}, M.]{Universit\'{e} F. Rabelais, LMPT UMR CNRS 7350, Tours, France.}
\email{peigne@lmpt.univ-tours.fr}
\date{\today}
\subjclass[2000]{ Primary 60B20, 60J05, 60J45. Secondary 37A50 }
\keywords{Exit time, Markov chains, mixing, spectral gap. }
\footnotetext[1]{ This is the full version of the paper }

\begin{abstract}
\HL{Let $g_{1},g_{2},\dots$ be i.i.d.~random matrices in $GL\left( d,\mathbb{R}\right).$ 
For any $n\geq 1$ consider the product $G_{n}=g_{n} \dots g_{1}$} and
the random process $G_{n}v=g_{n}\dots g_{1}v$ 
in $\mathbb{R}^{d}$ starting at point $v\in \mathbb{R}^{d}\smallsetminus \left\{ 0\right\} .$ 
It is well known that under appropriate assumptions, the sequence $\left( \log \left\Vert G_{n}v\right\Vert \right) _{n\geq 1}$ 
behaves like a sum of i.i.d.~r.v.'s
and satisfies standard classical properties such as the law of large
numbers, the law of iterated logarithm and the central limit theorem. 
\HL{For any vector $v$ 
with $\left\Vert v \right\Vert >1$ 
denote by $\tau_v$ the first time when the
random process $G_{n}v$ enters 
the closed unit ball in $\mathbb{R}^{d}.$}
We establish the asymptotic as $n\rightarrow +\infty $ of the probability of the event $\left\{ \tau
_{v}>n\right\} $ and find the limit law for the quantity $\frac{1}{\sqrt{n}} \log \left\Vert G_{n}v\right\Vert $ 
conditioned that $\tau _{v}>n.$ 
\end{abstract}

\maketitle

\section{Introduction  \label{Intro} }

Let $\mathbb{G}=GL\left( d,\mathbb{R}\right) $ be the general linear group
of $d\times d$ invertible matrices with respect to ordinary matrix multiplication.
The Euclidean norm in $\mathbb{V}=\mathbb{R}^{d}$ is denoted by $\left\Vert
v\right\Vert =\sqrt{\sum_{i=1}^{d}v_{i}^{2}},$ for $v\in \mathbb{V}.$ Denote
by $\left\Vert g\right\Vert =\sup_{v\in \mathbb{V}\smallsetminus \left\{
0\right\} }\frac{\left\Vert gv\right\Vert }{\left\Vert v\right\Vert }$ the
operator norm of an element $g$ of $\mathbb{G}$ and endow the group $\mathbb{%
G}$ with the usual Borel $\sigma $-algebra with respect to $\left\Vert \cdot
\right\Vert .$ Suppose that on the probability space $\left( \Omega ,%
\mathcal{F},\mathbf{Pr}\right) $ we are given an i.i.d.~sequence $\left(
g_{n}\right) _{n\geq 1}$ of $\mathbb{G}$-valued random elements of the same
law $\mathbf{Pr}\left( g_{1}\in dg\right) =\boldsymbol{\mu }\left( dg\right)
,$ where $\boldsymbol{\mu }$ is a probability measure on $\mathbb{G}.$
Consider the product $G_{n}=g_{n}\dots g_{1}$ of the random matrices $%
g_{1},\dots,g_{n}$ and the random process $G_{n}v=g_{n}\dots g_{1}v$ in $\mathbb{V%
}$ starting at point $v\in \mathbb{V}\smallsetminus \left\{ 0\right\} .$ The
object of interest is the size of the vector $G_{n}v$ which is controlled by
the quantity $\log \left\Vert G_{n}v\right\Vert .$ It follows from the
results of Le Page \cite{LePage82} that, under appropriate assumptions, the
sequence $\left( \log \left\Vert G_{n}v\right\Vert \right) _{n\geq 1}$
behaves like a sum of i.i.d.~r.v.'s and satisfies standard classical
properties such as the law of large numbers, law of iterated logarithm and
the central limit theorem. Further results and a discussion of the
assumptions under which these results hold true can be found in Furstenberg
and Kesten \cite{FurstKest}, Bougerol and Lacroix \cite{Boug-Lacr85},
Guivarc'h and Raugi \cite{GuivRag}, Benoist and Quint \cite{BenoisQuint2016}, 
Hennion \cite{Hennion}, Jan \cite{Jan}, Goldsheid and Margulis \cite{Gold}.

Denote by $\mathbb{B}$ the closed unit ball in $\mathbb{V}$ and by $\mathbb{B%
}^{c}$ its complement. For any $v\in \mathbb{B}^{c}$ define the exit time of
the random process $G_{n}v$ from $\mathbb{B}^{c}$ by%
\begin{equation*}
\tau _{v}=\min \left\{ n\geq 1:G_{n}v\in \mathbb{B}\right\} .
\end{equation*}%
The goal of this paper is to establish the asymptotic as $n\rightarrow +\infty $ 
of the probability of the event $\left\{ \tau _{v}>n\right\}
=\left\{ G_{1}v\in \mathbb{B}^{c},\dots,G_{n}v\in \mathbb{B}^{c}\right\} $ and
find the limit law for the quantity $\frac{1}{\sqrt{n}}\log \left\Vert
G_{n}v\right\Vert $ conditioned that $\tau _{v}>n.$

The study of related problems for random walks in $\mathbb{R}$ has attracted
much attention. We refer the reader to Spitzer \cite{Spitzer}, Iglehart \cite%
{Igle74}, Bolthausen \cite{Bolth}, Bertoin and Doney \cite{BertDoney94},
Doney \cite{Doney85}, Borovkov \cite{Borovkov04a}, \cite{Borovkov04b},
Vatutin and Wachtel \cite{VatWacht09}, Caravenna \cite{Carav05} and to
references therein. Random walks in $\mathbb{R}^{d}$ conditioned to stay in
a cone have been considered in 
Shimura \cite{Shim91}, 
Garbit 
\cite{Garb10}, 
Echelsbacher and K\"{o}nig \cite{EichKonig} and
Denisov and Wachtel \cite{Den Wacht 2008}, \cite{Den Wachtel 2011}. 
The case of Markov chains with bounded jumps were considered by Varapoulos \cite{Var1999} who obtained
upper and lower bounds for the exit probability. 
However, to the best of our knowledge, the exact asymptotic of these probabilities 
for the products of random matrices 
has not yet been studied in the literature.

\HL{To state our results we need more notations}.  Let 
$N\left( g\right) =\max \left\{ \left\Vert g\right\Vert, \left\Vert g^{-1}\right\Vert \right\}.$ 
Denote by $\mathbb{P}\left(\mathbb{V}\right) $ 
the projective space of $\mathbb{V}$ endowed with the angular distance. 
In the sequel for any $v\in \mathbb{V}\smallsetminus \left\{ 0\right\} $ we
denote by $\overline{v}=\mathbb{R}v\in \mathbb{P}\left( \mathbb{V}\right) $
its direction and for any direction $\overline{v}\in \mathbb{P}\left( 
\mathbb{V}\right) $ we denote by $v$ a vector in $\mathbb{V}\smallsetminus
\left\{ 0\right\} $ of direction $\overline{v}.$ For any $g\in \mathbb{G}$
and $\overline{v}\in \mathbb{P}\left( \mathbb{V}\right) $ denote by $g\cdot 
\overline{v}$ the element of the projective space 
$\mathbb{P}\left( \mathbb{V}\right) $ 
associated to the product $gv,$ i.e.\ $g\cdot \overline{v}=\overline{gv}.$ 

Now we formulate our conditions.
The first condition requires exponential moments of the quantity $\log N\left( g\right) \geq 0.$

\vskip0.2cm \noindent \textbf{P1.} \textit{There exists }$\delta _{0}>0$%
\textit{\ such that}%
$\displaystyle  \quad  
\int_{\mathbb{G}} \exp \left( \delta _{0} \log N\left( g\right)    \right)  \boldsymbol{\mu } \left(dg\right) = 
\int_{\mathbb{G}}                 N\left( g\right) ^{\delta _{0}}              \boldsymbol{\mu }\left(dg\right) < +\infty.$
\vskip0.2cm

Denote by $\Gamma_{\boldsymbol \mu }$ the smallest closed semigroup which contains 
the support of ${\boldsymbol \mu }.$
The second condition requires, roughly speaking, that the dimension of
$\Gamma_{\boldsymbol \mu }$ cannot be reduced.

\vskip0.2cm \noindent \textbf{P2 (Strong irreducibility).}\textit{\ The
support  of $\boldsymbol{\mu }$
acts strongly irreducibly on $\mathbb{V}$,  
i.e.\ no proper \HL{finite union of} subspaces of  $\mathbb{V}$  is
invariant with respect to all elements of $\Gamma_{\boldsymbol \mu }.$ }
\vskip0.2cm

We say that the sequence $\left( h_{n}\right) _{n\geq 1}$ in $\mathbb G$
is contracting \HL{iff
$\left\Vert h_n\right\Vert ^{-1} h_n$ converges to a rank $1$ matrix $M$. 
The word "contracting" refers to the action of 
$h_n$ for the projective space $\mathbb{P}\left(\mathbb{V}\right) :$
for any $v \in \mathbb V \smallsetminus \textrm{Ker} (M) $ it holds 
$h_n \cdot \overline v \to \overline v_0 
$
 as $n \to +\infty,$ where the limit $\overline v_0$ 
 is the unique direction (not depending on $v$) corresponding to any vector $v_0\not = 0,$ $v_0 \in \textrm{Im}(M)$.} 

\vskip0.2cm \noindent \textbf{P3 (Contracting property). }\textit{The semigroup} 
$\Gamma_{\boldsymbol \mu }$ 
\textit{contains a contracting sequence.}

\vskip0.2cm
Some comments on the conditions \textbf{P2} and \textbf{P3} are appropriate. Note that any $h_n \in \mathbb G$ admits a polar decomposition: $h_n=w_n^1 a_n w_n^2,$ where $w_n^1, w_n^2$ are orthogonal matrices and $a_n$ is a diagonal matrix with entries $a_n(1)\geq \dots \geq a_n(d) > 0 $  on the diagonal.
A necessary and sufficient condition for the sequence $\left( h_{n}\right) _{n\geq 1}$
to be contracting is that  $\lim_{n\rightarrow +\infty }a_n(1) a_n(2)^{-1}= +\infty $ (see \cite{Boug-Lacr85}).
Then condition \textbf{P3} is verified if $\Gamma_{\boldsymbol \mu }$
contains a matrix $b$ with a unique eigenvalue (counting multiplicities) of maximal modulus (take $h_n=b^n$).  
For more details on the conditions \textbf{P2} and \textbf{P3} we refer the reader to \cite{Boug-Lacr85}.
It follows from \cite{Gold} that \textbf{P2} and \textbf{P3} are satisfied if and only if
the action of
the Zariski closure of $\Gamma_{\boldsymbol \mu }$ in $\mathbb G$
is strongly irreducible on $\mathbb V$ and contracting on $\mathbb{P(V)}.$ 

On the product space $\mathbb{G}\times \mathbb{P}\left( \mathbb{V}\right)$
define the function $\rho$ called norm cocycle by setting%
\begin{equation}
\rho \left( g,\overline{v}\right) :=\log \frac{\left\Vert gv\right\Vert }{%
\left\Vert v\right\Vert },\ \text{for}\ \left( g,\overline{v}\right) \in \mathbb{G}\times \mathbb{P}\left( \mathbb{V}\right).  \label{cocycle001}
\end{equation}%
It is well known (\cite{LePage82}, \cite{Boug-Lacr85}) 
that under conditions \textbf{P1-P3} 
there exists an unique $\boldsymbol{\mu }$-invariant measure $\boldsymbol{\nu }$ on $%
\mathbb{P}\left( \mathbb{V}\right) $ such that, for any continuous function $%
\varphi $ on $\mathbb{P}\left( \mathbb{V}\right) ,$%
\begin{equation}
\left( \boldsymbol{\mu }\ast \boldsymbol{\nu }\right) \left( \varphi \right)
=\int_{\mathbb{G}}\int_{\mathbb{P}\left( \mathbb{V}\right) }\varphi \left(
g\cdot \overline{v}\right) \boldsymbol{\nu }\left( d\overline{v}\right) 
\boldsymbol{\mu }\left( dg\right) =\int_{\mathbb{P}\left( \mathbb{V}\right)
}\varphi \left( \overline{v}\right) \boldsymbol{\nu }\left( d\overline{v}%
\right) =\boldsymbol{\nu }\left( \varphi \right) .  \label{st mes proj sp}
\end{equation}%
Moreover the upper Lyapunov exponent 
\begin{equation}
\gamma_{\boldsymbol{ \mu}} =\int_{\mathbb{G\times P}\left( \mathbb{V}%
\right) }\rho \left( g,\overline{v}\right) \boldsymbol{\mu }\left( dg\right) 
\boldsymbol{\nu }\left( d\overline{v}\right)  \label{Lyap expon}
\end{equation}%
is finite and there exists a constant $\sigma >0$ such that for any $v\in 
\mathbb{V\smallsetminus }\left\{ 0\right\} $ and any $t\in \mathbb{R},$%
\begin{equation*}
\lim_{n\rightarrow +\infty }\mathbf{Pr}\left( \frac{\log \left\Vert
G_{n}v\right\Vert -n\gamma_{\boldsymbol{ \mu}} }{\sigma \sqrt{n}}\leq t\right) =\Phi \left(
t\right) ,
\end{equation*}%
where $\Phi \left( \cdot \right) $ is the standard normal distribution.

\vskip0.2cm \noindent \textbf{P4. }\textit{The upper Lyapunov exponent $\gamma_{\boldsymbol{ \mu}}$ is equal to $0$}.

Hypothesis $\textbf{P4}$ does not imply that the event 
\HL{$\left\{ \tau _{v}>n\right\} 
$
occurs} with positive probability for any $v\in \mathbb{B}^{c}$ and $n$ large enough;  to ensure
this we need the following additional condition:

\vskip0.2cm \noindent \textbf{P5. }\textit{There exists }$\delta >0$\textit{%
\ such that}%
$\displaystyle \quad 
\inf_{s\in \mathbb{S}^{d-1}}\boldsymbol{\mu }\left( g:\log \left\Vert
gs\right\Vert >\delta \right) >0.
$

 It is shown in the Appendix  that a measure $\boldsymbol{\mu }$
on $\mathbb{G}$ satisfying \textbf{P1-P5} and such that $\gamma_{\boldsymbol{ \mu}} =0$ exists.

\HL{From the main results of the paper we deduce the following statements.} 
Assume \textbf{P1-P5}. \HL{Then, see Theorems \ref{Theorem harmonic func} and \ref{Th asympt tau},} 
for any $v\in \mathbb{B}^{c},$
\begin{equation}
\mathbf{Pr}\left( \tau _{v}>n\right) =\frac{2V\left( v\right) }{\sigma \sqrt{%
2\pi n}}\left( 1+o\left( 1\right) \right) \;\text{as\ }n\rightarrow +\infty ,
\label{intro001}
\end{equation}%
where $V$ is a positive function on $\mathbb{B}^{c}$ with following
properties: for any $s\in \mathbb{S}^{d-1}$ the function $t\rightarrow
V\left( ts\right) $ is increasing on $\left( 1,\infty \right) ,$ $0\vee
\left( \log t-a\right) \leq V\left( ts\right) \leq c\left( 1+\log t\right) $
for $t>1$ and some constant $a>0,$ and $\lim_{t\rightarrow +\infty }\frac{%
V\left( ts\right) }{\log t}=1.$
Moreover, in Theorem \ref{Th weak conv cond positive}, 
we prove that the \HL{conditional law of  
$\frac{1}{\sigma \sqrt{n}}\log \left\Vert G_{n}v\right\Vert $ 
given the event $\left\{ \tau _{v}>n\right\} $
converges} to the Rayleigh distribution $\Phi ^{+}\left( t\right) =1-\exp
\left( -\frac{t^{2}}{2}\right) :$ for any $v\in \mathbb{B}^{c}$ 
and for any $t\geq 0,$
\begin{equation}
\lim_{n\rightarrow +\infty }\mathbf{Pr}\left( \left. \frac{\log \left\Vert
G_{n}v\right\Vert }{\sigma \sqrt{n}}\leq t\right\vert \tau _{v}>n\right)
=\Phi ^{+}\left( t\right) .
\label{intro002}
\end{equation}

The usual way for obtaining such type of results for a classical random walk on the real line is the
Wiener-Hopf factorization (see Feller \cite{Feller}). 
Unfortunately, the Wiener-Hopf factorization is not suited for studying the exit time probabilities for random
walks in $\mathbb R^d$ or walks based on dependent variables. 
Alternative approaches have been developed recently: we refer to
\cite{Var1999}, \cite{EichKonig}, \cite{Den Wacht 2008} and \cite{Den Wachtel 2011}. 
For the results (\ref{intro001}) and (\ref{intro002}) we rely, on the one hand, upon these developments, and, 
on the other hand,  upon a key strong approximation result for dependent random variables
established separately in \cite{GLP2014}.
As it is shown in Le Page \cite{LePage82}, the study of a product of random matrices is reduced to
the study of a specially designed Markov chain.
One of the difficulties is the construction of the harmonic function for the obtained chain. 
To crop with this, we show that, under assumptions \textbf{P1-P4}, its perturbed transition operator 
satisfies a spectral gap property on an appropriately chosen Banach space. 
This property allows to built a martingale approximation, from which we derive the existence of the harmonic function. 
In the second part of the paper we transfer the properties of the exit time 
from the Gaussian sequence to the associated Markov walk
based on the strong approximation \cite{GLP2014}. 

We end this section by recalling some standard notations. Throughout the
paper $c,c^{\prime },c^{\prime \prime },...$ with or without indices denote
absolute constants. By $c_{\varepsilon },c_{\varepsilon ,\delta }^{\prime
},...$ we denote constants depending only on theirs indices. All these
constants are not always the same when used in different formulas. Other
constants will be specifically indicated.
\HL{The integer part of a real number $a$ is denoted by $[a].$} 
By $\phi _{\sigma }\left( t\right) =\frac{1}{\sqrt{2\pi }}\exp \left( -\frac{%
t^{2}}{2\sigma ^{2}}\right) $ and $\Phi _{\sigma }\left( t\right)
=\int_{-\infty }^{t}\phi _{\sigma }\left( t\right) du$ we denote
respectively the normal density and distribution functions of mean $0$ and
variance $\sigma ^{2}$ on the real line $\mathbb{R}.$ The identity matrix in 
$\mathbb{G}$ is denoted by $I$ and $g^{\prime }$ is the transpose of $g\in 
\mathbb{G}.$ 
For two
sequences $\left( a_{n}\right) _{n\geq 1}$ and $\left( b_{n}\right) _{n\geq
1}$ in $\mathbb{R}^{\ast }_{+}$ the equivalence $a_{n}\sim b_{n}$ as $%
n\rightarrow +\infty $ means $\lim_{n\rightarrow +\infty } \frac{a_{n}}{b_{n}}=1.$

\section{Main results}

Consider the homogenous Markov chain $\left( X_{n}\right) _{n\geq 0}$ with
values in the product space $\mathbb{X}=\mathbb{G}\times \mathbb{P}\left( 
\mathbb{V}\right) $ and initial value $X_{0}=\left( g,\overline{v}\right)
\in \mathbb{X}$ by setting $X_{1}=\left( g_{1},g\cdot \overline{v}\right) $
and%
\begin{equation}
X_{n+1}=\left( g_{n+1},g_{n}...g_{1}g\cdot \overline{v}\right) ,\;n\geq 1.
\label{Markov chain001}
\end{equation}%
For any  $x=\left( g,\overline{v}\right) \in \mathbb X$ 
denote by 
$\mathbf P (x,  dg' ,  d\overline{v}') = \boldsymbol{\mu }\left( dg'\right)\delta_{ g\cdot \overline{v}} (d \overline{v}')$
the transition probability of $\left( X_{n}\right) _{n\geq 0}$
and by
\begin{equation}
\mathbf{P}f\left( x \right) 
= \int_{\mathbb{X}}f\left( x'  \right)   \mathbf P (x, d x') 
= \int_{\mathbb{G}}f\left(g',g\cdot \overline{v}\right) \boldsymbol{\mu }\left( dg'\right), 
\label{trans prob X}
\end{equation}%
the corresponding transition operator, where $f$ is any bounded
measurable function $f$ on $\mathbb{X}.$ On the space $\mathbb{X}$ define
the probability measure 
\begin{equation}
\boldsymbol{\lambda }\left( dg,d\overline{v}\right) =\boldsymbol{\mu }\left(
dg\right) \times \boldsymbol{\nu }\left( d\overline{v}\right) ,
\label{invar mes X}
\end{equation}%
where $\boldsymbol{\nu }$ is the $\boldsymbol{\mu }$-invariant measure
defined by (\ref{st mes proj sp}). 
 It is shown in Section \ref{Sec Exist Inv Prob} that under conditions \textbf{P1-P3} the measure $\boldsymbol{\lambda }
$ is stationary for the Markov chain $\left( X_{k}\right) _{k\geq 0},$ i.e.
that $\boldsymbol{\lambda }\left( \mathbf{P}f\right) =\boldsymbol{\lambda }%
\left( f\right) $ for any bounded measurable function $f$ on $\mathbb{X}.$
Denote by $\mathbb{P}_{x}$ the probability measure generated by the finite
dimensional distributions of $\left( X_{k}\right) _{k\geq 0}$ starting at $%
X_{0}=x\in \mathbb{X}$ and by $\mathbb{E}_{x}$ 
the corresponding expectation; 
for any probability measure ${\bf \nu}$ on $\mathbb X$, we set $\mathbb P_{\bf \nu}=\int_{\mathbb X}\mathbb P_x {\bf \nu}(dx)$. Then for any bounded measurable function $f$ on $\mathbb{X},$%
\begin{equation*}
\mathbb{E}_{x}f\left( X_{1}\right) =\mathbf{P}f\left( x\right) \ \ \text{and
\ }\mathbb{E}_{x}f\left( X_{n}\right) =\mathbf{P}^{n}f\left( x\right)
,\;n\geq 2.
\end{equation*}

Let $v\in \mathbb{V\smallsetminus }\left\{ 0\right\} $ be a starting vector
and $\overline{v}$ be its direction. From (\ref{cocycle001}) and (\ref%
{Markov chain001}), iterating the cocycle property
$\rho \left( g_2 g_1,\overline{v}\right) =\rho \left( g_2,g_1\cdot \overline{v}\right) +\rho \left( g_1,\overline{v}%
\right) $ one gets the basic representation%
\begin{equation}
\log \left\Vert G_{n}gv\right\Vert =y+\sum_{k=1}^{n}\rho \left( X_{k}\right)
,\ n\geq 1,  \label{relation001}
\end{equation}%
where $y=\log \left\Vert g v\right\Vert $ determines the ``size'' of the vector $%
gv. $ In the sequel we will deal with the   random walk $\left( y+S_{n}\right)
_{n\geq 0}$ associated to the Markov chain $\left( X_{n}\right) _{n\geq 0},$
where $X_{0}=x=\left( g,\overline{v}\right) $ is an arbitrary element of $%
\mathbb{X},$ $y$ is any real number and%
\begin{equation}
S_{0}=0,\;S_{n}=\sum_{k=1}^{n}\rho \left( X_{k}\right) ,\;n\geq 1.
\label{Markov walk001}
\end{equation}

In the proof of our main results we will make use of the following CLT which
can be deduced from Theorem 2 (p. 273) in Le Page \cite{LePage82} and where
we assume that $\gamma_{\boldsymbol{ \mu}} =0.$

\begin{theorem}
\label{CLT GLDR}Assume \textbf{P1-P4}. Then there exists a constant $\sigma
\in \left( 0,\infty \right) $ such that uniformly in $x\in \mathbb{X}$ and $%
t>0,$ 
\begin{equation*}
\lim_{n\rightarrow +\infty }\mathbb{P}_{x}\left( \frac{S_{n}}{\sigma \sqrt{n}}%
\leq t\right) =\Phi \left( t\right) .
\end{equation*}
\end{theorem}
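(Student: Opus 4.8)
The statement asks for a central limit theorem for the Markov walk $S_n = \sum_{k=1}^n \rho(X_k)$, uniform in the starting point $x \in \mathbb{X}$, under the normalization $\gamma_{\boldsymbol\mu} = 0$ forced by \textbf{P4}. The plan is to invoke the spectral-gap machinery already alluded to in the introduction. First I would verify that, under \textbf{P1--P3}, the transition operator $\mathbf{P}$ acting on a suitable Banach space $\mathcal{B}$ of H\"older-type functions on $\mathbb{X}$ (the standard Le Page / Bougerol--Lacroix space, with norm controlling H\"older regularity in the $\mathbb{P}(\mathbb{V})$-variable and involving a weight like $N(g)^{\varepsilon}$ to absorb the moment condition \textbf{P1}) has a spectral gap: the eigenvalue $1$ is simple, with eigenfunction $\mathbf{1}$ and eigenmeasure $\boldsymbol\lambda$, and the rest of the spectrum lies in a disc of radius $<1$. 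This is essentially Theorem 2 of Le Page \cite{LePage82} combined with the quasi-compactness statements in \cite{Boug-Lacr85}.

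Next I would introduce the Fourier perturbation of $\mathbf{P}$ by the cocycle $\rho$, namely the family of operators $\mathbf{P}_t f(x) = \mathbf{P}(e^{it\rho(\cdot)} f)(x) = \int_{\mathbb{G}} e^{it\rho(g', g\cdot\overline{v})} f(g', g\cdot\overline{v})\, \boldsymbol\mu(dg')$ for $t$ in a neighborhood of $0$. One checks that $t \mapsto \mathbf{P}_t$ is analytic (as a map into bounded operators on $\mathcal{B}$), using \textbf{P1} to control the exponential moments of $\rho$, which satisfies $|\rho(g,\overline v)| \le \log N(g)$. By analytic perturbation theory, for $|t|$ small $\mathbf{P}_t$ has a dominant simple eigenvalue $\lambda(t)$ with $\lambda(0) = 1$, an associated eigenprojector $\Pi_t$ depending analytically on $t$, and a spectral decomposition $\mathbf{P}_t^n = \lambda(t)^n \Pi_t + R_t^n$ where the spectral radius of $R_t$ stays bounded by some $\kappa < 1$ uniformly for $|t|$ small. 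Writing $\log\lambda(t) = i\gamma_{\boldsymbol\mu} t - \tfrac{\sigma^2}{2}t^2 + o(t^2)$ and using \textbf{P4} to kill the linear term, one obtains for fixed $u \in \mathbb{R}$, with $t = u/(\sigma\sqrt{n})$,
\begin{equation*}
\mathbb{E}_x\left[ e^{i\frac{u}{\sigma\sqrt n} S_n} \right] = \mathbf{P}_{t}^n \mathbf{1}(x) = \lambda(t)^n\, \Pi_t\mathbf{1}(x) + R_t^n\mathbf{1}(x) \longrightarrow e^{-u^2/2}
\end{equation*}
as $n \to \infty$, where the convergence is uniform in $x$ because $\Pi_t \mathbf{1} \to \Pi_0\mathbf{1} = \mathbf{1}$ in $\mathcal{B}$ (hence uniformly on $\mathbb{X}$) and $\|R_t^n\mathbf{1}\|_{\mathcal{B}} \le C\kappa^n$. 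Here one also uses that $\sigma > 0$, i.e.\ that $\rho$ is not a coboundary, which is where \textbf{P2--P3} (non-degeneracy of the asymptotic variance) enter. The characteristic-function convergence plus L\'evy's continuity theorem, together with a standard argument upgrading pointwise-in-$u$ convergence to uniform-in-$x$ convergence of distribution functions (Polya's theorem: convergence to a continuous limit c.d.f.\ is automatically uniform, and the uniformity in $x$ is inherited from the uniform-in-$x$ convergence of characteristic functions on compact $t$-sets), yields the claimed convergence of $\mathbb{P}_x(S_n/(\sigma\sqrt n) \le t)$ to $\Phi(t)$ uniformly in $x \in \mathbb{X}$ and $t$.

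The main obstacle I anticipate is not the perturbation-theoretic core, which is routine once the spectral gap is in hand, but rather the bookkeeping needed to get the uniformity in $x$ and in $t$ simultaneously: one must ensure that the $o(1)$ error terms in the local expansion of $\lambda(t)^n$ and in $\Pi_t\mathbf{1}(x) - \mathbf{1}$ are controlled in the $\mathcal{B}$-norm uniformly, so that evaluation at an arbitrary $x$ costs only a factor $\|\cdot\|_{\mathcal{B}}$, and then convert the resulting uniform convergence of characteristic functions into uniform convergence of c.d.f.'s via a smoothing (Berry--Esseen-type) inequality rather than a bare continuity-theorem argument. A secondary point requiring care is checking that the Banach space $\mathcal{B}$ on which $\mathbf{P}$ has its spectral gap is the same one on which $t\mapsto \mathbf{P}_t$ is analytic and the perturbed operators remain quasi-compact with a uniform essential spectral radius; this compatibility is standard in this literature but should be cited precisely from \cite{LePage82} and \cite{Boug-Lacr85}. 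Since the statement explicitly says it "can be deduced from Theorem 2 (p.~273) in Le Page \cite{LePage82}", the cleanest route is to quote that theorem for fixed $x$ and then supply only the short additional argument promoting it to uniformity in $x$, exploiting that Le Page's proof already runs through the $\mathcal{B}$-valued quantities $\Pi_t\mathbf{1}$ and $R_t^n\mathbf{1}$.
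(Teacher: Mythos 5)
Your proposal is correct and follows essentially the same route as the paper, which simply cites Theorem 2 of Le Page \cite{LePage82} for this CLT; the spectral-gap/Nagaev--Guivarc'h perturbation argument you sketch is precisely what underlies Le Page's theorem, and your observation that the uniformity in $x$ comes for free because $\Pi_t\mathbf{1}$ and $R_t^n\mathbf{1}$ are controlled in the $\mathcal{B}$-norm (which dominates the sup-norm by condition \textbf{M1} b)) is the right way to promote Le Page's statement to the uniform form the paper records.
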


Using (\ref{relation001}) and the results from \cite{LePage82} it is easy to
obtain the well known expression%
\begin{equation}
\sigma ^{2}=\text{Var}_{\mathbb{P}_{\boldsymbol{\lambda }}}\left( \rho
\left( X_{1}\right) \right) +2\sum_{k=2}^{+\infty }\text{Cov}_{\mathbb{P}_{%
\boldsymbol{\lambda }}}\left( \rho \left( X_{1}\right) ,\rho \left(
X_{k}\right) \right) < +\infty .  \label{sigma001}
\end{equation}

For any $y>0$ denote by $\tau _{y}$ the first time when the Markov walk $%
\left( y+S_{n}\right) _{n\geq 0}$ becomes negative:%
\begin{equation*}
\tau _{y}=\min \left\{ n\geq 1:y+S_{n}\leq 0\right\} .
\end{equation*}%
From Lemma \ref{Lemma finite tau} of Section \ref{sec invar func} it follows
that, for any $y>0$ and $x\in \mathbb{X},$ the stopping time $\tau _{y}$ is $\mathbb{P}_{x}$-a.s.~finite.

To state our first order asymptotic of the probability $\mathbb{P}_{x}\left(
\tau _{y}>n\right) $ we need an harmonic function which we proceed to
introduce. For any $\left( x,y\right) \in \mathbb{X\times R}$ denote by 
$\mathbf{Q}\left( x,y,dx^{\prime }\times dy^{\prime }\right) =\mathbb{P}_{x}\left( X_{1}\in dx^{\prime },y+S_{1}\in dy^{\prime }\right) $ the
transition probability of the two dimensional Markov chain $\left(
X_{n},y+S_{n}\right) _{n\geq 0}$ under the measure $\mathbb{P}_{x}.$
Consider the transition kernel 
$\left( x,y\right) \in \mathbb{X}\times \mathbb{R}^{\ast }_{+}\rightarrow \mathbf{Q}_{+}\left( x,y,\cdot \right) $
on $\mathbb{X}\times \mathbb{R}^{\ast }_{+}$ defined by%
\begin{equation*}
\mathbf{Q}_{+}\left( x,y,\cdot \right) =1_{\mathbb{X}\times \mathbb{R}^{\ast}_{+}}\left( \cdot \right) \mathbf{Q}\left( x,y,\cdot \right) .
\end{equation*}%
Note that $\mathbf{Q}_{+}$ is not a Markov kernel. 
A $\mathbf{Q}_{+}$-positive harmonic function $V$ is any 
function $V:\mathbb{X}\times \mathbb{R}^{\ast }_{+}\rightarrow \mathbb{R}^{\ast }_{+}$ satisfying
\begin{equation}
\mathbf{Q}_{+}V=V.  \label{harm}
\end{equation}%
The function $V$ can be extended by setting $V\left( x,y\right)= 0$ for $\left( x,y\right) \in \mathbb{X}\times \mathbb{R}_{-}.$
The kernel $\mathbf{Q}_{+}$ 
and the function $V$ are related to the exit time 
$\tau _{y}$ by the following identity: for any $x\in \mathbb{X},$ $y>0,$ 
$n\geq 1$ and bounded measurable function $\varphi $ on $\mathbb{X}\times \mathbb{R}^{\ast }_{+},$%
\begin{equation*}
\mathbf{Q}_{+}^{n}\left( V\varphi \right) \left( x,y\right) =\mathbb{E}%
_{x}\left( V\varphi \left( X_{n},y+S_{n}\right) ;\tau _{y}>n\right).
\end{equation*}%
With $\varphi =1$ we see that $V$ is $\mathbf{Q}_{+}$-harmonic iff, for any $x\in \mathbb{X},$ $y>0,$
\begin{equation*}
V\left( x,y\right) =\mathbb{E}_{x}\left( V\left( X_{1},y+S_{1}\right) ;\tau
_{y}>1\right) .
\end{equation*}

The following theorem proves the existence of a $\mathbf{Q}_{+}$-harmonic
function. We also establish some of its important properties such as linear
behavior as $y\rightarrow +\infty .$

\begin{theorem}
\label{Theorem harmonic func}Assume hypotheses \textbf{P1-P5}.

\noindent 1. For any $x\in \mathbb{X}$ and $y>0$ the limit 
\begin{equation*}
V\left( x,y\right) =\lim_{n\rightarrow +\infty }\mathbb{E}_{x}\left(
y+S_{n};\tau _{y}>n\right)
\end{equation*}%
exists and satisfies $V\left( x,y\right) >0.$ Moreover, for any $x\in 
\mathbb{X}$ the function $V\left( x,\cdot \right) $ is increasing on $%
\mathbb{R}^{\ast }_{+},$ satisfies $0\vee \left( y-a\right) \leq V\left(
x,y\right) \leq c\left( 1+y\right) $ for any $y>0$ and some $a>0,$ and $%
\lim_{y\rightarrow +\infty }\frac{V\left( x,y\right) }{y}=1.$

\noindent 2. The function $V$ is $\mathbf{Q}_{+}$-harmonic, i.e. for any $%
x\in \mathbb{X}$ and $y>0,$ 
\begin{equation*}
\mathbb{E}_{x}\left( V\left( X_{1},y+S_{1}\right) ;\tau _{y}>1\right)
=V\left( x,y\right) .
\end{equation*}
\end{theorem}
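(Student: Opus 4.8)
The plan is to construct $V$ as the limit of $\mathbb{E}_{x}(y+S_{n};\tau_{y}>n)$ by building a martingale approximation to the Markov walk $(y+S_{n})$ and then mimicking, in the Markov setting, the classical argument of Denisov and Wachtel for random walks. First I would set $Y_{n}=y+S_{n}$ and, using the spectral-gap property of the transition operator $\mathbf{P}$ on a suitable Banach space (which the excerpt indicates is available under \textbf{P1--P4}), decompose $\rho(X_{k})=M_{k}-M_{k-1}+\theta(X_{k-1})-\theta(X_{k})$ for some bounded function $\theta$ solving the Poisson equation $(\mathbf{I}-\mathbf{P})\theta=\mathbf{P}\rho$, where $(M_{n})$ is a square-integrable $\mathbb{P}_{x}$-martingale with stationary increments and variance parametrized by $\sigma^{2}$ from \eqref{sigma001}. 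This gives $Y_{n}=y+M_{n}+\theta(x)-\theta(X_{n})+O(1)$, so up to a bounded perturbation the Markov walk is a martingale, and one can transfer to it the fluctuation estimates (in particular a uniform bound $\mathbb{E}_{x}(Y_{n};\tau_{y}>n)\leq c(1+y)$ and control of overshoots) that are known for sums of independent increments via the strong approximation of \cite{GLP2014}.

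The core analytic step is to show that the sequence $v_{n}(x,y):=\mathbb{E}_{x}(y+S_{n};\tau_{y}>n)$ is Cauchy. Write $v_{n+1}(x,y)-v_{n}(x,y)=\mathbb{E}_{x}\big((y+S_{n+1})\mathbf{1}_{\{\tau_{y}>n+1\}}-(y+S_{n})\mathbf{1}_{\{\tau_{y}>n\}}\big)$ and, using the Markov property and the martingale approximation, identify the right side as $-\mathbb{E}_{x}\big((y+S_{n})\mathbf{1}_{\{\tau_{y}=n+1\}}\big)+(\text{boundary term from }\theta)$. The first quantity is, up to the bounded correction, the expected value of a near-martingale at the exit time restricted to exit exactly at step $n+1$; standard overshoot estimates plus the bound $\mathbb{P}_{x}(\tau_{y}>n)=O((1+y)/\sqrt{n})$ (itself obtained from the CLT of Theorem~\ref{CLT GLDR} combined with the strong approximation) show that $\sum_{n}|v_{n+1}-v_{n}|<\infty$, hence $v_{n}(x,y)\to V(x,y)$. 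Positivity $V(x,y)>0$ follows from \textbf{P5}: the probability that the walk makes an initial upward excursion of size $\geq\delta$ at each of the first few steps, never returning below $0$, is bounded below, and on that event $y+S_{n}$ stays bounded away from $0$, so $\liminf v_{n}(x,y)>0$. The linear bounds $0\vee(y-a)\leq V(x,y)\leq c(1+y)$ come from the two-sided estimate on $v_{n}$: the lower bound $y-a$ from optional stopping applied to the martingale part (the loss $a$ accounting for overshoot and the oscillation of $\theta$), the upper bound from the already-noted uniform estimate. Monotonicity in $y$ follows from a coupling: two walks started from $y_{1}<y_{2}$ with the same driving sequence $(g_{k})$ satisfy $y_{1}+S_{n}\leq y_{2}+S_{n}$ pathwise, so $\{\tau_{y_{1}}>n\}\subset\{\tau_{y_{2}}>n\}$ and the integrand is larger, giving $v_{n}(x,y_{1})\leq v_{n}(x,y_{2})$; passing to the limit gives monotonicity of $V(x,\cdot)$. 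For $\lim_{y\to\infty}V(x,y)/y=1$, divide the two-sided bound by $y$ to get $\limsup\leq$ and $\liminf$ pinched towards $1$, or more precisely use that for large $y$ the walk is very unlikely to ever go negative in the relevant time scale, so $\mathbb{E}_{x}(y+S_{n};\tau_{y}>n)=y+\mathbb{E}_{x}(S_{n})-(\text{small})$ and $\mathbb{E}_{x}(S_{n})=\theta(x)-\mathbb{E}_{x}\theta(X_{n})=O(1)$, whence $V(x,y)/y\to 1$.

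For part 2, harmonicity is essentially automatic once the limit exists: by the Markov property,
\begin{equation*}
\mathbb{E}_{x}\big(V(X_{1},y+S_{1});\tau_{y}>1\big)
=\mathbb{E}_{x}\Big(\mathbf{1}_{\{y+S_{1}>0\}}\lim_{n\to\infty}\mathbb{E}_{X_{1}}\big(y+S_{1}+S'_{n};\tau_{y+S_{1}}>n\big)\Big),
\end{equation*}
and the inner limit, together with the outer expectation, reassembles $\lim_{n\to\infty}\mathbb{E}_{x}(y+S_{n+1};\tau_{y}>n+1)=V(x,y)$, provided one can exchange the limit and the expectation $\mathbb{E}_{x}$. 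That exchange is justified by dominated convergence using the uniform bound $\mathbb{E}_{X_{1}}(y+S_{1}+S'_{n};\tau_{y+S_{1}}>n)\leq c(1+y+S_{1})$ and integrability of $S_{1}$ under \textbf{P1}. The main obstacle, as the introduction itself flags, is the construction of the harmonic function, i.e. establishing that $(v_{n})$ converges: this is where the martingale approximation via the spectral gap and the strong approximation of \cite{GLP2014} are indispensable, since without replacing the dependent increments $\rho(X_{k})$ by a martingale one cannot run the Denisov--Wachtel-type overshoot/summability argument. A secondary technical point is uniformity: all estimates must hold uniformly enough in $x\in\mathbb{X}$ for the later theorems, which forces one to carry the $\theta$-correction and the CLT rate estimates with explicit control over the dependence on the starting point.
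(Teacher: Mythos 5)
The broad scaffolding you describe (Poisson equation, the martingale $M_n$, optional stopping, pathwise monotonicity, dominated convergence for harmonicity) does match the paper's strategy, but the step at the heart of the proof is missing and the step you propose in its place is wrong. Your claimed summability $\sum_n |v_{n+1}-v_n|<\infty$ fails: writing $v_{n+1}-v_n=\mathbb{E}_x\left(\rho(X_{n+1})1_{\{\tau_y>n\}}\right)-\mathbb{E}_x\left((y+S_{n+1})1_{\{\tau_y=n+1\}}\right)$ and inserting $\rho(X_{n+1})=(M_{n+1}-M_n)+\mathbf{P}\theta(X_n)-\mathbf{P}\theta(X_{n+1})$, the martingale increment drops out, but the remaining boundary term is only $O\!\left(\mathbb{P}_x(\tau_y>n)\right)=O(n^{-1/2})$ in absolute value, which is not summable; convergence of $v_n$ holds only by cancellation, not absolutely. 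The paper gets that cancellation for free: since $\{\tau_y>n\}$ is $\mathcal{F}_n$-measurable and $(M_n)$ is a martingale, $\mathbb{E}_x\!\left(y+M_n;\tau_y>n\right)=y-\mathbb{E}_x\!\left(y+M_{\tau_y};\tau_y\le n\right)$, and the right side converges by dominated convergence once $\mathbb{E}_x|y+M_{\tau_y}|<\infty$. That integrability (equivalently the bound $\mathbb{E}_x(y+M_n;T_y>n)\le c(1+y)$) is \emph{the} hard estimate, and you treat it as given by ``standard overshoot estimates''; for a Markov walk with dependent increments it is not standard, and in the paper it is the object of a careful iterative argument using the level-crossing time $\nu_n$ (Lemma~\ref{Lemma 4} together with Lemmas~\ref{Lemma 1}--\ref{Lemma 3}), driven only by Burkholder/Doob moment bounds from \textbf{M4} and the CLT of Theorem~\ref{CLT GLDR}. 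In particular, the strong approximation of~\cite{GLP2014} is \emph{not} used to build $V$ — it enters only later, for Theorems~\ref{Th asympt tau} and~\ref{Th weak conv cond positive} — so presenting it as ``indispensable'' here is wrong; and invoking $\mathbb{P}_x(\tau_y>n)=O((1+y)/\sqrt{n})$ as an input is circular, since in the paper that estimate is a consequence of $V$ via Theorem~\ref{Th asympt tau}.

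Two of the auxiliary claims are also argued incorrectly. For positivity you say that on an event where $y+S_n$ stays bounded away from $0$ one gets $\liminf_n v_n>0$; but the probability of any such event decays like $n^{-1/2}$, so the contribution to $v_n$ vanishes. The paper instead first proves $V=\mathbf{Q}_+V$, iterates it to $V(x,y)=\mathbb{E}_x\!\left(V(X_n,y+S_n);\tau_y>n\right)$ for a \emph{fixed} $n$, and then uses the lower bound $V\ge 0\vee(y-a)$ together with~\textbf{P5} to force a strictly positive lower bound for that fixed $n$. Similarly, dividing the two-sided inequality $0\vee(y-a)\le V(x,y)\le c(1+y)$ by $y$ only pins $\liminf V/y\ge 1$ and $\limsup V/y\le c$, not $\limsup\le 1$; the paper's $\lim_{y\to\infty}V(x,y)/y=1$ comes from the separate iteration of Lemma~\ref{Lemma 6}, which makes the constants $A_m,B_m$ arbitrarily close to $1$ and $0$ respectively. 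Part~2 (harmonicity by dominated convergence, with domination coming from the uniform bound of Corollary~\ref{corr to lemma4}) is the one piece you describe essentially as in the paper.
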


The proof of this theorem is given in Section \ref{sec invar func} (see
Propositions \ref{PROP func V for S} and \ref{PROP harmonic func}) with $%
a=2\left\Vert \mathbf{P}\theta \right\Vert _{\infty },$ where the function $%
\theta $ is the solution of the Poisson equation $\theta -\mathbf{P}\theta
=\rho $ (see Section \ref{secMartDec} for the existence of the function $%
\theta $ and its properties).

Now we state our main result concerning the limit behavior of the exit time $\tau _{y}.$

\begin{theorem}
\label{Th asympt tau}Assume hypotheses \textbf{P1-P5}. Then, for any $x\in 
\mathbb{X}$ and $y>0,$%
\begin{equation*}
\mathbb{P}_{x}\left( \tau _{y}>n\right) \sim \frac{2V\left( x,y\right) }{%
\sigma \sqrt{2\pi n}}\text{\ as\ }n\rightarrow +\infty .
\end{equation*}
Moreover, there exists a constant $c$ such that for any $y>0$ and $x \in \mathbb X,$
\begin{equation*}
\sup_{n \geq 1}\sqrt{n}\mathbb{P}_{x}\left( \tau _{y}>n\right) \leq c\frac{1+y}{\sigma}.
\end{equation*}
\end{theorem}

The following theorem establishes a central limit theorem for the sum $%
y+S_{n}$ conditioned to stay positive.

\begin{theorem}
\label{Th weak conv cond positive}Assume hypotheses \textbf{P1-P5}. For any $%
x\in \mathbb{X},\ y>0$ and $t>0,$%
\begin{equation*}
\lim_{n\rightarrow +\infty } \mathbb{P}_{x}\left( \left. \frac{%
y+S_{n}}{\sigma \sqrt{n}}\leq t\right\vert \tau _{y}>n\right) = \Phi^{+}\left( t\right),
\end{equation*}%
where $\Phi ^{+}\left( t\right) =1-\exp \left( -\frac{t^{2}}{2}\right) .$
\end{theorem}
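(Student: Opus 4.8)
The plan is to combine the two previous results --- the harmonic function $V$ from Theorem \ref{Theorem harmonic func} and the exit-time asymptotic $\mathbb{P}_x(\tau_y>n)\sim 2V(x,y)/(\sigma\sqrt{2\pi n})$ from Theorem \ref{Th asympt tau} --- with a strong (Gaussian) approximation for the Markov walk $(y+S_n)_{n\ge0}$ of the type established in \cite{GLP2014}. The quantity to identify is
\begin{equation*}
\lim_{n\to\infty}\mathbb{P}_x\!\left(\frac{y+S_n}{\sigma\sqrt n}\le t \;\Big|\; \tau_y>n\right)
=\lim_{n\to\infty}\frac{\mathbb{P}_x\!\left(\frac{y+S_n}{\sigma\sqrt n}\le t,\ \tau_y>n\right)}{\mathbb{P}_x(\tau_y>n)}.
\end{equation*}
By Theorem \ref{Th asympt tau} the denominator is $\sim 2V(x,y)/(\sigma\sqrt{2\pi n})$, so it suffices to show that the numerator is asymptotically $\frac{2V(x,y)}{\sigma\sqrt{2\pi n}}\,\Phi^+(t)=\frac{2V(x,y)}{\sigma\sqrt{2\pi n}}\bigl(1-e^{-t^2/2}\bigr)$. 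Equivalently, writing $H(t)=\Phi^+(t)$ for $t\ge0$, it suffices to show that for every bounded Lipschitz $h$ on $\mathbb{R}_+$,
\begin{equation*}
\mathbb{E}_x\!\left(h\!\left(\tfrac{y+S_n}{\sigma\sqrt n}\right);\ \tau_y>n\right)
\sim \frac{2V(x,y)}{\sigma\sqrt{2\pi n}}\int_0^\infty h(u)\,u\,e^{-u^2/2}\,du,
\end{equation*}
since $u\,e^{-u^2/2}$ is the Rayleigh density whose CDF is $\Phi^+$.

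The core of the argument is the following decomposition, standard in this circle of ideas (Denisov--Wachtel \cite{Den Wacht 2008}). Fix a slowly growing integer $m=m_n\to\infty$ with $m_n=o(n)$ (e.g. $m_n=n^{1/4}$ or $m_n=\lfloor n^{\varepsilon}\rfloor$). Using the Markov property at time $m$,
\begin{equation*}
\mathbb{E}_x\!\left(h\!\left(\tfrac{y+S_n}{\sigma\sqrt n}\right);\tau_y>n\right)
=\mathbb{E}_x\!\left(\mathbf{1}\{\tau_y>m\}\,\psi_{n-m}\!\left(X_m,\,y+S_m\right)\right),
\end{equation*}
where $\psi_{k}(x',y')=\mathbb{E}_{x'}\bigl(h((y'+S_{k})/(\sigma\sqrt n));\tau_{y'}>k\bigr)$. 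Now one applies the strong approximation of \cite{GLP2014}: on a suitable probability space the walk $(S_j)_{j\le k}$ started from $X_0=x'$ is coupled with a Brownian motion $(\sigma B_j)_{j\le k}$ so that $\max_{j\le k}|S_j-\sigma B_j|=o(\sqrt k)$ with high probability (in fact with a logarithmic-type rate). This lets one replace, for the inner expectation with $k=n-m\sim n$ and initial height $y'=y+S_m=O(\sqrt m)=o(\sqrt n)$, the event $\{\tau_{y'}>n-m\}$ and the value $h((y'+S_{n-m})/(\sigma\sqrt n))$ by their Brownian counterparts. By the classical computation for Brownian motion conditioned to stay positive (reflection principle / Lévy), for any starting height $z\ge0$,
\begin{equation*}
\mathbb{E}\!\left(h\!\left(\tfrac{z+\sigma B_{n}}{\sigma\sqrt n}\right);\ z+\sigma B_s>0\ \forall s\le n\right)
=\frac{2z}{\sigma\sqrt{2\pi n}}\int_0^\infty h(u)\,u\,e^{-u^2/2}\,du\ \bigl(1+o(1)\bigr)
\end{equation*}
as $n\to\infty$ uniformly for $z=o(\sqrt n)$, $z$ bounded away from $0$ by a fixed small margin after a further conditioning step. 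Plugging $z=y+S_m$ and $k=n-m$, one obtains
\begin{equation*}
\psi_{n-m}(X_m,y+S_m)=\frac{2(y+S_m)}{\sigma\sqrt{2\pi n}}\Bigl(\int_0^\infty h(u)u\,e^{-u^2/2}du\Bigr)\bigl(1+o(1)\bigr)+\text{(error)},
\end{equation*}
and substituting back,
\begin{equation*}
\mathbb{E}_x\!\left(h\!\left(\tfrac{y+S_n}{\sigma\sqrt n}\right);\tau_y>n\right)
=\frac{2}{\sigma\sqrt{2\pi n}}\Bigl(\int_0^\infty h(u)u\,e^{-u^2/2}du\Bigr)\,\mathbb{E}_x\!\left((y+S_m);\tau_y>m\right)\bigl(1+o(1)\bigr).
\end{equation*}
Finally, by Theorem \ref{Theorem harmonic func}, $\mathbb{E}_x(y+S_m;\tau_y>m)\to V(x,y)$ as $m\to\infty$, which delivers exactly the claimed numerator asymptotic and hence the theorem after dividing by $\mathbb{P}_x(\tau_y>n)$.

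The main obstacle --- and where the bulk of the technical work lies --- is making the coupling/replacement step uniform and quantitatively controlled: one must show that the error terms (from the strong approximation rate of \cite{GLP2014}, from the discrepancy between the discrete exit time $\tau$ and the continuous one, and from the boundary effects near $0$) are $o\!\bigl(\mathbb{E}_x(y+S_m;\tau_y>m)/\sqrt n\bigr)$ uniformly in the relevant range of starting heights. This requires: (i) good upper bounds on $\mathbb{P}_{x'}(\tau_{y'}>k)$ of the order $C(1+y')/\sqrt k$, valid uniformly in $x'$ (available from Theorem \ref{Th asympt tau} together with the harmonic function bounds $V(x',y')\le c(1+y')$); (ii) control of the overshoot/undershoot of the walk near the barrier using condition \textbf{P5} and the exponential moment bound \textbf{P1}; and (iii) a truncation argument discarding the contribution of $\{S_m$ large$\}$ and of $\{y+S_m$ too close to $0\}$, using the linear growth $V(x,y)/y\to1$ to ensure no mass is lost. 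Once these uniform estimates are in place, the remaining steps are the routine Brownian computation and the passage to the limit $m\to\infty$ after $n\to\infty$.
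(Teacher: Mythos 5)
Your overall plan --- split the time interval at an intermediate epoch, apply the Gaussian strong approximation for the remaining steps, identify the limiting constant via the harmonic function $V$, and compute the Brownian exit law via L\'evy's formula --- coincides with the paper's, and you correctly list the kinds of technical estimates one needs. But the decomposition at a \emph{deterministic}, slowly growing time $m_n$ (you suggest $n^{1/4}$ or $\lfloor n^{\varepsilon}\rfloor$) does not work as stated. After conditioning on $\{\tau_y>m\}$ the height $y+S_m$ is typically of order $\sqrt{m_n}$, which is far smaller than the strong-approximation error $n^{1/2-2\varepsilon}$ of Proposition \ref{Proposition KMT} (cf.\ \eqref{KMTbound001}). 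For the Brownian formula $\frac{2z}{\sigma\sqrt{2\pi n}}\Phi^{+}(t)\,(1+o(1))$ to hold with relative error $o(1)$, the post-split height $z$ must exceed roughly $n^{1/2-\varepsilon}$; a ``fixed small margin'' away from zero, as you suggest, is far from sufficient, and for the bulk of the conditional law of $y+S_{m_n}$ the coupling gives no useful asymptotic. Similarly, the uniform bound $\mathbb{P}_{x'}(\tau_{y'}>k)\le C(1+y')/\sqrt{k}$ that you invoke in (i) is only established in the paper for $y'\ge k^{1/2-\varepsilon}$ (Lemma \ref{lemma tau ylarge}, claim 2), not down to $y'=O(1)$.

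The paper resolves exactly this difficulty by splitting at the \emph{stopping time} $\nu_n=\min\{k\ge1:\ |y+M_k|\ge 2n^{1/2-\varepsilon}\}$ (see \eqref{nu n}) rather than at a deterministic time: at $\nu_n$ the walk is automatically at height at least of order $n^{1/2-\varepsilon}$, so the coupling applies uniformly on $\{\nu_n\le n^{1-\varepsilon}\}$, whose complement has probability $O(e^{-cn^{\varepsilon}})$ by Lemma \ref{Lemma 2}. The role played in your sketch by $\mathbb{E}_x(y+S_m;\tau_y>m)\to V(x,y)$ is taken over by the nontrivial Lemma \ref{Lemma BB-1}, which shows $\mathbb{E}_x\bigl(y+S_{\nu_n};\,\tau_y>\nu_n,\,\nu_n\le n^{1-\varepsilon}\bigr)\to V(x,y)$, and the upper truncation by Lemma \ref{Lemma BB2}. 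One could in principle salvage a deterministic-time split by taking $m_n=n^{1-\delta}$ with $\delta$ small enough that $\sqrt{m_n}\gg n^{1/2-\varepsilon}$ and then truncating the small-height paths, but that is not what you propose, and the stopping-time device delivers the required uniformity directly.
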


The results for $\log \left\Vert G_{n}v\right\Vert $ stated in the previous
section are obtained by taking $X_{0}=x=\left( I,\overline{v}\right) $ as
the initial state of the Markov chain $\left( X_{n}\right) _{n\geq 0}$ and
setting \HL{$y=\ln \Vert v\Vert$ and} $V\left( v\right) =V\left( \HL{\left( I,\overline{v}\right)},\ln \Vert v\Vert\right).$
\section{Banach space and spectral gap conditions\label{Banach space}}

In this section we verify the spectral gap properties \textbf{M1-M3} of the
perturbed transition operator of the Markov chain $\left( X_{n}\right)_{n\geq 0}$ 
acting on a Banach space to be introduced below; for more details we refer to \cite{LePage82}.  
Under these properties and some additional moment conditions \textbf{M4-M5} stated
below, in the paper \cite{GLP2014} we have established a
Komlos-Major-Tusnady type strong approximation result for Markov chains (see
Proposition \ref{Proposition KMT}) which is one of the crucial points in the
proof of the main results of the paper. 
\HL{The conditions \textbf{M1-M5} also imply
the existence of the solution $\theta $ of the Poisson
equation $\rho =\theta -\mathbf{P}\theta $ which is used in the next section
to construct a martingale approximation of the Markov walk $\left(
S_{n}\right) _{n\geq 0}.$}

On the projective space $\mathbb{P}\left( 
\mathbb{V}\right) $ define the \HL{angular} distance 
$\displaystyle 
d\left( \overline{u},\overline{v}\right) =\frac{\left\Vert u\wedge
v\right\Vert }{\left\Vert u\right\Vert \left\Vert v\right\Vert },$ 
where $u\wedge v$ is the vector product of $u$ and $v.$
Let $\mathcal{C}_{b}$ be the vector space of complex valued continuous bounded functions 
\HL{$f:\mathbb{X}\rightarrow \mathbb{C}$} 
endowed with the supremum norm 
$\left\Vert f\right\Vert _{\infty}=\sup_{\left( g,\overline{v}\right) \in \mathbb{X}}\left\vert f\left( g,\overline{v}\right) \right\vert.$ 
Let $\delta _{0}>0$ and $0<\varepsilon <\delta _{0}.$ For any $f\in \mathcal{%
C}_{b}$ set%
\begin{eqnarray}\qquad \qquad 
k_{\varepsilon }\left( f\right) =\sup_{\overline{u}\neq \overline{v}%
,\;g\in \mathbb{G}}\frac{\left\vert f\left( g,\overline{u}\right) -f\left( g,%
\overline{v}\right) \right\vert }{d\left( \overline{u},\overline{v}\right)
^{\varepsilon }N\left( g\right) ^{4\varepsilon }} 
+\sup_{g\neq h,\;\overline{u}\in \mathbb{P}\left( \mathbb{V}\right) }\frac{%
\left\vert f\left( g,\overline{u}\right) -f\left( h,\overline{u}\right)
\right\vert }{\left\Vert g-h\right\Vert ^{\varepsilon }\left( N\left(
g\right) N\left( h\right) \right) ^{3\varepsilon }}.  \label{module cont}
\end{eqnarray}%
Define the vector space 
$
\mathcal{B}=\mathcal{B}_{\varepsilon }:=\left\{ f\in \mathcal{C}%
_{b}:k_{\varepsilon }\left( f\right) < +\infty \right\} .
$
Endowed with the norm%
\begin{equation}
\left\Vert f\right\Vert _{\mathcal{B}}=\left\Vert f\right\Vert _{\infty
}+k_{\varepsilon }\left( f\right)  \label{norm}
\end{equation}%
the space $\mathcal{B}$ becomes a Banach space. Note also that $f_1,f_2\in \mathcal{B}$ implies $f_1f_2\in \mathcal{B}$ with $\left\Vert f_1f_2\right\Vert
_{\mathcal{B}}\leq \left\Vert f_1\right\Vert _{\mathcal{B}%
}\left\Vert f_2\right\Vert _{\mathcal{B}},$ so that $\mathcal{%
B}$ is also a Banach algebra.

Denote by $\mathcal{B}^{\prime }=\mathcal{L}\left( \mathcal{B},\mathbb{C}%
\right) $ the topological dual of $\mathcal{B}$ equipped with the norm $%
\left\Vert \cdot \right\Vert _{\mathcal{B}^{\prime }}:$ $\left\Vert \psi
\right\Vert _{\mathcal{B}^{\prime }}=\sup_{\left\Vert f\right\Vert _{%
\mathcal{B}}\leq 1}\frac{\left\vert \psi \left( f\right) \right\vert }{%
\left\Vert f\right\Vert _{\mathcal{B}}},$ for any linear functional $\psi
\in \mathcal{B}^{\prime }.$  For any linear operator $A$ from $\mathcal{B}$
to $\mathcal{B}$, its operator norm on $\mathcal{B}$ is 
$\left\Vert A\right\Vert _{\mathcal{B\rightarrow B}}:= \sup_{\left\Vert f\right\Vert _{%
\mathcal{B}}\leq 1}\frac{\left\Vert Af\right\Vert _{\mathcal{B}}}{\left\Vert
f\right\Vert _{\mathcal{B}}}.$  The Dirac measure $\delta_x$ at $x\in \mathbb{X}$ is defined by $%
\delta _{x}\left( f\right) =f\left( x\right) $ for any $f\in 
\mathcal{B}.$ The unit function $e$ on $\mathbb{X}$ is defined by $e\left(
x\right) =1$ for $x\in \mathbb{X}.$

Using the techniques of the paper \cite{LePage82}, it can be checked that under \textbf{P1-P4} the condition \textbf{M1-M3} below are satisfied:
\vskip0.2cm \noindent \textbf{M1 (Banach space):}

\nobreak\noindent a)\textit{\ The unit function }$e$\textit{\ belongs to }$%
\mathcal{B}.$

\noindent b)\textit{\ For every }$x\in \mathbb{X}$\textit{\ the Dirac
measure }$\boldsymbol{\delta }_{x}$\textit{\ belongs to }$\mathcal{B}%
^{\prime },$ with\textit{\ }$\sup_{x\in \mathbb{X}}\left\Vert \boldsymbol{%
\delta }_{x}\right\Vert _{\mathcal{B}^{\prime }}\leq 1.$\textit{\ }

\noindent c)\textit{\ }$\mathcal{B}\subseteq L^{1}\left( \mathbf{P}\left(
x,\cdot \right) \right) $\textit{\ for every }$x\in \mathbb{X}.$

\noindent d)\textit{\ There exists a constant }$\eta _{0}\in \left(
0,1\right) $\textit{\ such that for any }$t\in \left[ -\eta _{0},\eta _{0}%
\right] $ and\textit{\ }$f\in \mathcal{B}$\textit{\ the function }$e^{it\rho
}f$ \textit{belongs to }$\mathcal{B}.$
\begin{proof}
Proof of assertion a). Obvious.

Proof of assertion b). Since $\boldsymbol{\delta }_{x}\left(
f\right) =f\left( x\right) $ for any $x\in \mathbb{X},$ using (\ref{norm})
we have 
\begin{equation*}
\left\Vert \boldsymbol{\delta }_{x}\left( f\right) \right\Vert _{\mathcal{B}%
^{\prime }}=\sup_{f}\frac{\left\vert \boldsymbol{\delta }_{x}\left( f\right)
\right\vert }{\left\Vert f\right\Vert _{\mathcal{B}}}=\sup_{f}\frac{%
\left\vert f\left( x\right) \right\vert }{\left\Vert f\right\Vert _{\mathcal{%
B}}}\leq \sup_{f}\frac{\left\Vert f\right\Vert _{\infty }}{\left\Vert
f\right\Vert _{\mathcal{B}}}\leq 1.
\end{equation*}%
Assertion c) follows from the fact that the functions in $\mathcal{B}$ are
bounded.

Proof of assertion d). From Corollaries \ref{CorollaryBBB2} and \ref%
{CorollaryBBB4} if follows that $e^{it\rho }$ belongs to $\mathcal{B}$ for
any $\left\vert t\right\vert \leq \eta _{0}.$ Since $\mathcal{B}$ is an
algebra the function $e^{it\rho }g$\ belongs to $\mathcal{B}$ for any for
any $t$\ satisfying $\left\vert t\right\vert \leq \eta _{0}.$ This finishes
the proof of d).
\end{proof}

Condition \textbf{M1} c) implies that  the operator $\mathbf{P}$ defined by 
\begin{equation*}
f\in \mathcal{B}\rightarrow \mathbf{P}f(\cdot )=\int_{\mathbb{X}}f(g,%
\overline{v})\mathbf{P}(\cdot ,dg,d\overline{v})
\end{equation*}%
is well defined. 
Moreover,  it follows from \textbf{M1} d) that the perturbed
operator $\mathbf{P}_{t}f=\mathbf{P}(e^{it\rho }f)$ is well defined for any $%
t\in \lbrack -\eta _{0},\eta _{0}]$ and $f\in \mathcal{B}$ (notice that $%
\mathbf{P}=\mathbf{P}_{0}).$
  
 
\noindent \textbf{M2 (Spectral gap): }

\nobreak\noindent a)\textit{\ The map }$f\mapsto \mathbf{P}f$\textit{\ is a
bounded operator on }$\mathcal{B}.$

\noindent b)\textit{\ There exist constants }$\HL{C}>0$\textit{\ and }$r
\in (0,1)$\textit{\ such that}%
\begin{equation*}
\mathbf{P}=\Pi +R,   \end{equation*}%
\textit{where }$\Pi $\textit{\ is a one dimensional projector, }$\Pi 
\mathcal{B}=\left\{ f\in \mathcal{B}:\mathbf{P}f=f\right\} $\textit{\ and }$%
R $\textit{\ is an operator on }$\mathcal{B}$\textit{\ satisfying }$\Pi
R=R\Pi =0$\textit{\ and }$\left\Vert R^{n}\right\Vert _{\mathcal{B}%
\rightarrow \mathcal{B}}\leq \HL{C} r ^{n},$ $n\geq 1.$

\begin{proof}
Assertion a) can be easily checked. It also follows from condition \textbf{M3} below.

We now prove assertion b). Let $\alpha $ be any complex eigenvalue of
modulus $1$ and $f_{\alpha }\not=0$ be a corresponding eigenfunction. Then
there exists $x_{0}\in \mathbb{X}$ such that $f\left( x_{0}\right) \not=0$
and $\mathbf{P}^{n}f_{\alpha }\left( x_{0}\right) =\alpha ^{n}f_{\alpha
}\left( x_{0}\right) .$ Since $\lim_{n\rightarrow +\infty }\mathbf{P}%
^{n}f_{\alpha }\left( x_{0}\right) $ exists (see Section \ref{Sec Exist Inv
Prob}) it follows that $\alpha =1$ is the unique eigenvalue of modulus $1.$

It is shown in Section \ref{Sec I T M} that under \textbf{P1-P4} 
the conditions of the theorem of Ionescu-Tulcea and Marinescu \cite{ITM50} are satisfied.
Taking into account that $\alpha =1$ is the unique eigenvalue of
modulus $1$ from this theorem we conclude that%
\begin{equation*}
\mathbf{P}=\Pi +R,
\end{equation*}%
where $\Pi $ is an operator on $\mathcal{B}$ satisfying $\Pi ^{2}=\Pi $
(i.e. $\Pi $ is a projector), $\Pi \mathcal{B}=\left\{ f\in \mathcal{B}:%
\mathbf{P}f=f\right\} $ and $R$ is an operator on $\mathcal{B}$ satisfying $%
R\Pi =\Pi R=0$ with spectral radius $r\left( R\right) <1.$ The required
assertion follows.
\end{proof}

Since $e$ is the eigenfunction corresponding to eigenvalue $1$ of $\mathbf{P}%
,$ the space $\Pi \mathcal{B}$ is generated by $e.$ Moreover for any $f\in 
\mathcal{B},$%
\begin{equation}
\Pi f=\boldsymbol{\lambda }\left( f\right) e,  \label{linear form}
\end{equation}%
where $\boldsymbol{\lambda }$ is the stationary measure defined by (\ref%
{invar mes X}). Indeed, for any $f\in \mathcal{B},$ there exists $c_{f}\in 
\mathbb{R}$ such that $\Pi f=c_{f}e.$ By condition \textbf{M2} it follows
that $\boldsymbol{\lambda }\left( f\right) =\boldsymbol{\lambda }\left( 
\mathbf{P}^{n}f\right) =\boldsymbol{\lambda }\left( \Pi f\right) +%
\boldsymbol{\lambda }\left( R^{n}f\right) ,$ for any $n\geq 1.$ Taking the
limit as $n\rightarrow +\infty $ and using again \textbf{M2} we obtain $%
\boldsymbol{\lambda }\left( f\right) =\boldsymbol{\lambda }\left( \Pi
f\right) =c_{f}.$

\vskip0.2cm \noindent \textbf{M3 (Perturbed transition operator): }\textit{%
There exists a constant }$C_{\mathbf{P}}>0$ \textit{such that, for all } $%
n\geq 1,$ 
\begin{equation}
\sup_{\left\vert t\right\vert \leq \eta _{0}}\left\Vert \mathbf{P}%
_{t}^{n}\right\Vert _{\mathcal{B}\rightarrow \mathcal{B}}\leq C_{\mathbf{P}}.
\label{boundness of Ptn}
\end{equation}

\begin{proof}
From Lemma \ref{PropDoeblFortet} it follows that there exist constants $\eta
_{0},c_{\varepsilon }>0$ and $\rho _{\varepsilon }\in (0,1)$ such that for
any $n\geq 1$ 
\begin{equation*}
\sup_{\left\vert t\right\vert <\eta _{0}}\left\Vert \mathbf{P}%
_{t}^{n}f\right\Vert _{\mathcal{B}}\leq \left( 1+c_{\varepsilon }\rho
_{\varepsilon }^{n}\right) \left\Vert f\right\Vert _{\infty }\leq \left(
1+c_{\varepsilon }\rho _{\varepsilon }^{n}\right) \left\Vert f\right\Vert _{%
\mathcal{B}}.
\end{equation*}%
This implies that 
\begin{equation*}
\sup_{\left\vert t\right\vert \leq \eta _{0}}\left\Vert \mathbf{P}%
_{t}^{n}\right\Vert _{\mathcal{B}\rightarrow \mathcal{B}}\leq \left(
1+c_{\varepsilon }\rho _{\varepsilon }^{n}\right) \leq 1+c_{\varepsilon }.
\end{equation*}%
This proves (\ref{boundness of Ptn}) with $C_{\mathbf{P}}=1+c_{\varepsilon
}. $
\end{proof}

 Using condition \textbf{P1}, we readily deduce the conditions \textbf{M4}-\textbf{M5} below:
\vskip0.2cm \noindent \textbf{M4 (Moment condition): }\textit{For any }$p>2,$
\textit{\ it holds} 
$$ 
\sup_{x\in \mathbb{X}}\sup_{n\geq 1}\mathbb{E}_{x}^{1/p}\left\vert
\rho \left( X_{n}\right) \right\vert ^{p}  < \infty .
$$
\HL{\begin{proof}
Using condition \textbf{P1}, for any $p>2$ we have%
\begin{eqnarray*}
\sup_{x\in \mathbb{X}}\sup_{k\geq 1}\mathbb{E}_{x}^{1/p}\left\vert \rho
\left( X_{k}\right) \right\vert ^{p} &=&\sup_{\left( g,\overline{v}\right)
\in \mathbb{X}}\sup_{k\geq 1}\left( \mathbf{E}_{\mathbf{Pr}}\left\vert \log
\left\Vert g_{k}\frac{g_{k-1}...g_{1}g\overline{v}}{\left\Vert
g_{k-1}...g_{1}g\overline{v}\right\Vert }\right\Vert \right\vert ^{p}\right)
^{1/p} \\
&\leq &\left( \int_{\mathbb{G}}\left( \log N\left( g\right) \right) ^{p}%
\boldsymbol{\mu }\left( dg\right) \right) ^{1/p}=c_{p}< +\infty .
\end{eqnarray*}
\end{proof}}

\HL{Now using the moment condition \textbf{M4} we obtain:}

\vskip0.2cm \noindent \textbf{M5:}\textit{\ The stationary probability
measure }$\boldsymbol{\lambda }$\textit{\ satisfies } 
$$
\int \sup_{n\geq 0}\mathbf{P}^{n} \rho  ^{2}(x) \boldsymbol{\lambda }\left( dx\right) < +\infty.
$$
\HL{\begin{proof}
The assertion follows since
$\sup_{n\geq 0}\mathbf{P}^{n} \rho  ^{2} \leq \rho^2 + \sup_{n\geq 1}\mathbf{P}^{n} \rho  ^{2},$ 
the function $\rho^2$ is integrable with respect to the stationary measure $\boldsymbol{\lambda }$ and
by \textbf{M4} it holds
 $ \sup_{k\geq 1}\mathbf{P}^{k} \rho^{2} \leq c_p.$ 
\end{proof}}

\section{Martingale approximation\label{secMartDec}}

The goal of this section is to construct a martingale approximation for the
Markov walk $\left( S_{n}\right) _{n\geq 0}.$ All over the section it is
assumed that hypotheses \textbf{M1-M3} hold true. Following Gordin \cite%
{Gordin}, we define the function $\theta $ as the solution of the Poisson
equation $\rho =\theta -\mathbf{P}\theta $ and the approximating martingale
by $M_{n}=\sum_{k=1}^{n}\left( \theta \left( X_{k}\right) -\mathbf{P}\theta
\left( X_{k-1}\right) \right) ,$ $n\geq 1.$ Note that, the norm cocycle (\ref%
{cocycle001}) does not belong to the Banach space $\mathcal{B},$ so that the
existence of the solution of the Poisson equation does not follow directly
from condition \textbf{M3}.

\begin{lemma}
\label{Lemma bound g}The sum $\theta =\rho +\sum_{n=1}^{+\infty }\mathbf{P}%
^{n}\rho $ exists and satisfies the Poisson equation $\rho =\theta -\mathbf{P%
}\theta .$ Moreover,%
\begin{equation}
\sup_{x\in \mathbb{X}}\left\vert \theta \left( x\right) -\rho \left(
x\right) \right\vert < +\infty .  \label{bound001}
\end{equation}
\end{lemma}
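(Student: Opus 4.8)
The plan is to establish that the series $\theta = \rho + \sum_{n=1}^{\infty} \mathbf{P}^n \rho$ converges uniformly on $\mathbb{X}$ and that its tail difference from $\rho$ is uniformly bounded, by exploiting the spectral gap structure \textbf{M2} applied not to $\rho$ itself (which lies outside $\mathcal{B}$) but to a suitable bounded truncation or difference. First I would record the key qualitative fact: although $\rho(\cdot) = \rho(g,\overline{v}) = \log(\|gv\|/\|v\|)$ does not belong to $\mathcal{B}$, the function $\mathbf{P}\rho(x) = \int_{\mathbb{G}} \rho(g', g\cdot \overline{v})\,\boldsymbol{\mu}(dg')$ does, because integrating against $\boldsymbol{\mu}$ smooths in the $\overline{v}$ variable and, by the moment assumption \textbf{P1} together with the Hölder-type estimates for $\rho$ in the projective variable (Corollaries \ref{CorollaryBBB2}, \ref{CorollaryBBB4} referenced in the excerpt), produces a function with finite $k_{\varepsilon}$-seminorm and finite supremum. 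Thus $\mathbf{P}\rho \in \mathcal{B}$, and moreover $\boldsymbol{\lambda}(\rho) = \gamma_{\boldsymbol{\mu}} = 0$ by \textbf{P4}, hence $\boldsymbol{\lambda}(\mathbf{P}\rho) = 0$, so $\mathbf{P}\rho$ lies in the complement of the one-dimensional eigenspace $\Pi\mathcal{B}$.

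Next I would write, for $n \geq 1$, $\mathbf{P}^n \rho = \mathbf{P}^{n-1}(\mathbf{P}\rho) = \Pi(\mathbf{P}\rho) + R^{n-1}(\mathbf{P}\rho) = R^{n-1}(\mathbf{P}\rho)$, using $\Pi(\mathbf{P}\rho) = \boldsymbol{\lambda}(\mathbf{P}\rho)\, e = 0$ from \eqref{linear form} and \textbf{P4}. By \textbf{M2} b), $\|R^{n-1}(\mathbf{P}\rho)\|_{\mathcal{B}} \leq C_R\, r^{\,n-1}\, \|\mathbf{P}\rho\|_{\mathcal{B}}$, so the series $\sum_{n=1}^{\infty} \mathbf{P}^n\rho = \sum_{n=1}^{\infty} R^{n-1}(\mathbf{P}\rho)$ converges absolutely in the Banach space $\mathcal{B}$ (geometric series), and a fortiori uniformly on $\mathbb{X}$ since $\|\cdot\|_{\infty} \leq \|\cdot\|_{\mathcal{B}}$. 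Call the sum $\psi := \sum_{n=1}^{\infty}\mathbf{P}^n\rho \in \mathcal{B}$, so $\theta = \rho + \psi$ is well defined pointwise. For the Poisson equation, I would apply $\mathbf{P}$ term by term (justified by uniform convergence and \textbf{M1} c)): $\mathbf{P}\theta = \mathbf{P}\rho + \sum_{n=1}^{\infty}\mathbf{P}^{n+1}\rho = \sum_{n=1}^{\infty}\mathbf{P}^n\rho = \psi = \theta - \rho$, which is exactly $\rho = \theta - \mathbf{P}\theta$. Finally, the bound \eqref{bound001} is immediate: $\sup_{x}|\theta(x) - \rho(x)| = \|\psi\|_{\infty} \leq \|\psi\|_{\mathcal{B}} \leq C_R\,\|\mathbf{P}\rho\|_{\mathcal{B}}\sum_{n=0}^{\infty} r^{\,n} = \frac{C_R}{1-r}\|\mathbf{P}\rho\|_{\mathcal{B}} < \infty$.

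The main obstacle is the very first step: verifying rigorously that $\mathbf{P}\rho \in \mathcal{B}$, i.e.\ that the $\boldsymbol{\mu}$-average of the cocycle has finite $k_{\varepsilon}$-seminorm. This requires controlling both pieces of the seminorm \eqref{module cont} — Hölder continuity of $\overline{v} \mapsto \mathbf{P}\rho(g,\overline{v})$ in the angular distance with the weight $N(g)^{4\varepsilon}$, and continuity of $g \mapsto \mathbf{P}\rho(g,\overline{v})$ with the weight $(N(g)N(h))^{3\varepsilon}$ — using that $|\rho(g',g\cdot\overline{v})| \leq \log N(g') + \log N(g)$ and that differences of $\rho$ in the projective variable are controlled by $d(\overline{u},\overline{v})$ times a power of $N(g')$, then integrating the resulting bounds against $\boldsymbol{\mu}$ and invoking the exponential moment \textbf{P1} with a suitably small $\varepsilon < \delta_0$. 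In the actual paper these estimates are presumably the content of the Corollaries cited in the proof of \textbf{M1} d); I would simply invoke them, noting that $\mathbf{P}\rho = -i\,\frac{d}{dt}\big|_{t=0}\mathbf{P}_t e$ up to the constant, or more directly that $\mathbf{P}\rho(g,\overline{v}) = \int \rho(g',g\cdot\overline{v})\boldsymbol{\mu}(dg')$ inherits membership in $\mathcal{B}$ from the regularity of $\rho$ established there. Everything after that is routine geometric-series bookkeeping.
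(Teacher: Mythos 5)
Your proposal is correct and follows essentially the same route as the paper: the key step is the observation that although $\rho \notin \mathcal{B}$, the averaged function $\overline{\rho}=\mathbf{P}\rho$ does lie in $\mathcal{B}$ (this is the paper's Lemma~\ref{lemma rho smoo}), after which $\mathbf{P}^{n}\rho=R^{n-1}(\overline{\rho})$ decays geometrically by \textbf{M2} and $\boldsymbol{\lambda}(\overline{\rho})=\gamma_{\boldsymbol{\mu}}=0$, giving both the convergence of the series and the uniform bound. You correctly identify membership of $\mathbf{P}\rho$ in $\mathcal{B}$ as the genuine technical content and defer it to the H\"older estimates of the Appendix, which is exactly how the paper structures the argument.
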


\begin{proof}
Let us emphasize that the function $\rho$  may be unbounded and so may not belong to $\mathcal{B}$. 
The key point in what follows is that  $\overline{\rho }=\mathbf{P}\rho \in \mathcal{B}$, which is established in Lemma \ref{lemma rho smoo} of the Appendix; 
by condition \textbf{M2},  this readily implies  %
\begin{equation*}
\mathbf{P}^{n}\rho =\mathbf{P}^{n-1}\overline{\rho }=\boldsymbol{\lambda }%
\left( \overline{\rho }\right) +R^{n-1}\left( \overline{\rho }\right)
=R^{n-1}\left( \overline{\rho }\right) ,
\end{equation*}%
where, by the stationarity of $\boldsymbol{\lambda }$ 
we have $\boldsymbol{\lambda }\left( \overline{\rho }\right) =\boldsymbol{%
\lambda }\left( \mathbf{P}\rho \right) =\boldsymbol{\lambda }\left( \rho
\right) =\gamma_{\boldsymbol{ \mu}}=0.$ Since the spectral radius of the operator $R$ is less than $1,$ it holds%
\begin{equation*}
\left\Vert \mathbf{P}^{n}\rho \right\Vert _{\infty }\leq \left\Vert \mathbf{P%
}^{n}\rho \right\Vert _{\mathcal{B\rightarrow B}} = 
\left\Vert R^{n-1}\left( \overline{\rho }\right) \right\Vert _{\mathcal{B\rightarrow B}}.
\end{equation*}%
Consequently,  by {\bf M2} b),  there exists a real number $c_1>0$ such that, for any $x\in \mathbb{X}$ and   $n\geq 1,$%
\begin{equation}
 \left\vert \mathbf{P}^{n}\rho \left( x\right)
\right\vert \leq c_{1} r ^{n}  \label{bound000}
\end{equation}%
 with $r\in (0, 1)$. It readily follows that 
 the sum $\theta =\rho
+\sum_{n=1}^{+\infty }\mathbf{P}^{n}\rho $ exists. The Poisson equation $\rho
=\theta -\mathbf{P}\theta $ is obvious. Finally, for any $x\in \mathbb{X},$
one gets $\left\vert \theta \left( x\right) -\rho \left( x\right)
\right\vert \leq \sum_{n=1}^{+\infty }\left\vert \mathbf{P}^{n}\rho \left(
x\right) \right\vert \leq c_{1}\sum_{n=1}^{+\infty }r ^{n}< +\infty ,$
which proves (\ref{bound001}).
\end{proof}

\begin{corollary}
\label{bound moment g}For any $p>2$, it holds 
$\sup_{n \geq 1} \sup_{x \in \mathbb X}  \mathbb{E}_{x}\left\vert \theta
\left( X_{n}\right) \right\vert ^{p}< \infty.$ 
Moreover $\displaystyle \left\Vert \mathbf{P}\theta \right\Vert _{\infty }=\sup_{x\in 
\mathbb{X}}\left\vert \mathbf{P}\theta \left( x\right) \right\vert < +\infty .$
\end{corollary}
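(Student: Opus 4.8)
The plan is to deduce both assertions directly from the bound \eqref{bound001} of Lemma \ref{Lemma bound g} together with the moment condition \textbf{M4}; no approximation or spectral argument is needed beyond what has already been established. Set $c_{0}:=\sup_{x\in\mathbb{X}}\left\vert\theta\left(x\right)-\rho\left(x\right)\right\vert$, which is finite by \eqref{bound001}.

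First I would treat the second assertion, which is essentially immediate. The Poisson equation $\rho=\theta-\mathbf{P}\theta$ established in Lemma \ref{Lemma bound g} rearranges to $\mathbf{P}\theta=\theta-\rho$, whence
\begin{equation*}
\left\Vert\mathbf{P}\theta\right\Vert_{\infty}=\sup_{x\in\mathbb{X}}\left\vert\mathbf{P}\theta\left(x\right)\right\vert=\sup_{x\in\mathbb{X}}\left\vert\theta\left(x\right)-\rho\left(x\right)\right\vert=c_{0}<\infty.
\end{equation*}

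For the first assertion, fix $p>2$, $x\in\mathbb{X}$ and $n\geq1$. Pointwise on $\Omega$ one has $\left\vert\theta\left(X_{n}\right)\right\vert\leq\left\vert\rho\left(X_{n}\right)\right\vert+c_{0}$, so by Minkowski's inequality in $L^{p}\left(\mathbb{P}_{x}\right)$,
\begin{equation*}
\mathbb{E}_{x}^{1/p}\left\vert\theta\left(X_{n}\right)\right\vert^{p}\leq\mathbb{E}_{x}^{1/p}\left\vert\rho\left(X_{n}\right)\right\vert^{p}+c_{0}.
\end{equation*}
Taking the supremum over $x\in\mathbb{X}$ and $n\geq1$ and using \textbf{M4}, which provides $C_{p}:=\sup_{x\in\mathbb{X}}\sup_{n\geq0}\mathbb{E}_{x}^{1/p}\left\vert\rho\left(X_{n}\right)\right\vert^{p}<\infty$, we obtain $\sup_{n\geq1}\sup_{x\in\mathbb{X}}\mathbb{E}_{x}\left\vert\theta\left(X_{n}\right)\right\vert^{p}\leq\left(C_{p}+c_{0}\right)^{p}<\infty$.

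Since the whole argument is a short deduction, there is no genuine obstacle; the only subtlety worth emphasising is that $\rho$ itself may be unbounded, so $\theta$ cannot be controlled in $L^{\infty}$. This is exactly why the right inputs are \eqref{bound001}, which bounds the \emph{difference} $\theta-\rho$ rather than $\theta$, and the $L^{p}$ moment estimate \textbf{M4} for $\rho\left(X_{n}\right)$ along the Markov chain, rather than an $L^{\infty}$ bound.
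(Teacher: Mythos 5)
Your proof of the first assertion is exactly the paper's: write $\theta(X_n)=\rho(X_n)+(\theta-\rho)(X_n)$, bound the second term uniformly by \eqref{bound001}, and invoke \textbf{M4} for the first, then combine via Minkowski. For the second assertion you take a slightly different, and arguably cleaner, route than the paper. The paper observes $\mathbf{P}\theta(x)=\mathbb{E}_{x}\theta(X_{1})$ and deduces the uniform bound on $\mathbf{P}\theta$ from the already-established uniform $L^{p}$ bound on $\theta(X_{1})$ via Jensen's inequality; in particular, the paper's second assertion depends on the first. You instead rearrange the Poisson equation to $\mathbf{P}\theta=\theta-\rho$ and read off $\Vert\mathbf{P}\theta\Vert_{\infty}=\sup_{x}\vert\theta(x)-\rho(x)\vert<\infty$ directly from \eqref{bound001}, making the second assertion independent of the first and of \textbf{M4} altogether. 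Both arguments are correct; yours has the small advantage of isolating exactly which input (the $L^{\infty}$ bound on $\theta-\rho$) controls $\Vert\mathbf{P}\theta\Vert_{\infty}$, whereas the paper's passes unnecessarily through the $L^{p}$ moment bound. Your closing remark correctly identifies the one subtlety, namely that $\rho$ (and hence $\theta$) may be unbounded, which is why one must work with $\theta-\rho$ in $L^{\infty}$ and with $\rho(X_{n})$ in $L^{p}$ rather than trying to bound $\theta$ in $L^{\infty}$.
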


\begin{proof}
The first assertion 
follows from the moment condition \textbf{M4} and the bound (\ref{bound001}).
Since $\mathbf{P}\theta \left( x\right) =\mathbb{E}_{x}\theta \left( X_{1}\right) $ the second
assertion follows from the first one.
\end{proof}

Let $\mathcal{F}_{0}$ be the trivial $\sigma $-algebra and $\mathcal{F}%
_{n}=\sigma \left\{ X_{k}:k\leq n\right\} $ for $n\geq 1.$ Let $\left(
M_{n}\right) _{n\geq 0}$ be defined by%
\begin{equation}
M_{0}=0\;\text{and}\;M_{n}=\sum_{k=1}^{n}\left\{ \theta \left( X_{k}\right) -%
\mathbf{P}\theta \left( X_{k-1}\right) \right\} ,\;n\geq 1.  \label{mart001}
\end{equation}%
By the Markov property we have $\mathbb{E}_{x}\left( \theta \left(
X_{k}\right) |\mathcal{F}_{k-1}\right) =\mathbf{P}\theta \left(
X_{k-1}\right) ,$ which implies that the sequence $\left( M_{n},\mathcal{F}%
_{n}\right) _{n\geq 0}$ is a $0$ mean $\mathbb{P}_{x}$-martingale.
The following lemma shows that the difference $\left( S_{n}-M_{n}\right)
_{n\geq 0}$ is bounded.

\begin{lemma}
\label{Martingale decomp}
Let $a=2\left\Vert \mathbf{P}\theta \right\Vert
_{\infty }.$  It holds  $ \displaystyle \sup_{n\geq 0}\left\vert S_{n}-M_{n}\right\vert \leq
a\ \mathbb P_x$-a.s. for any $x \in \mathbb X$.
\end{lemma}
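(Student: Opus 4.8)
The plan is to express the difference $S_n - M_n$ as a telescoping sum involving the solution $\theta$ of the Poisson equation, and then bound each term using the bound $\left\Vert \mathbf{P}\theta \right\Vert_\infty < \infty$ from Corollary \ref{bound moment g}. The starting point is the representation $S_n = \sum_{k=1}^n \rho(X_k)$ from (\ref{Markov walk001}) together with the Poisson equation $\rho = \theta - \mathbf{P}\theta$ established in Lemma \ref{Lemma bound g}. Substituting, we get $S_n = \sum_{k=1}^n \left( \theta(X_k) - \mathbf{P}\theta(X_{k-1}) \right) + \sum_{k=1}^n \left( \mathbf{P}\theta(X_{k-1}) - \mathbf{P}\theta(X_k) \right)$. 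Wait — more directly: $S_n = \sum_{k=1}^n \left( \theta(X_k) - \mathbf{P}\theta(X_k) \right)$, and we compare this with $M_n = \sum_{k=1}^n \left( \theta(X_k) - \mathbf{P}\theta(X_{k-1}) \right)$.

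Subtracting, the $\theta(X_k)$ terms cancel and one obtains
\begin{equation*}
S_n - M_n = \sum_{k=1}^n \left( \mathbf{P}\theta(X_{k-1}) - \mathbf{P}\theta(X_k) \right) = \mathbf{P}\theta(X_0) - \mathbf{P}\theta(X_n),
\end{equation*}
a telescoping sum. Hence $\left| S_n - M_n \right| \leq \left| \mathbf{P}\theta(X_0) \right| + \left| \mathbf{P}\theta(X_n) \right| \leq 2 \left\Vert \mathbf{P}\theta \right\Vert_\infty = a$, holding pointwise for every $n\geq 0$ and every trajectory, hence $\mathbb{P}_x$-a.s.\ for any $x \in \mathbb{X}$. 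The case $n=0$ is trivial since $S_0 = M_0 = 0$.

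There is essentially no obstacle here: the work has all been done in the preceding lemma and corollary, where the existence of $\theta$ and the finiteness of $\left\Vert \mathbf{P}\theta\right\Vert_\infty$ were established (the latter being the genuinely nontrivial input, relying on $\overline{\rho} = \mathbf{P}\rho \in \mathcal{B}$ and the spectral gap \textbf{M2}). The only point requiring a word of care is that the bound is uniform — it does not degrade with $n$ — which is exactly what the telescoping structure delivers, since only the two endpoint terms survive. I would write this out in three or four lines.
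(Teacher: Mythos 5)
Your proof is correct and follows exactly the same route as the paper: substitute the Poisson equation into $S_n$, subtract $M_n$, observe the telescoping sum $S_n - M_n = \mathbf{P}\theta(X_0) - \mathbf{P}\theta(X_n)$, and bound by $2\left\Vert \mathbf{P}\theta\right\Vert_\infty$ via Corollary \ref{bound moment g}. Nothing to add.
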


\begin{proof}
Using the Poisson equation $\rho =\theta -\mathbf{P}\theta $ we obtain, for $%
k\geq 1,$%
\begin{eqnarray*}
\rho \left( X_{k}\right) =\theta \left( X_{k}\right) -\mathbf{P}\theta
\left( X_{k}\right) =
\left\{ \theta \left( X_{k}\right) -\mathbf{P}\theta \left(
X_{k-1}\right) \right\} +\left\{ \mathbf{P}\theta \left( X_{k-1}\right) -%
\mathbf{P}\theta \left( X_{k}\right) \right\} .
\end{eqnarray*}%
Summing in $k$ from $1$ to $n$ we get%
\begin{equation*}
S_{n}=\sum_{k=1}^{n}\rho \left( X_{k}\right) =\sum_{k=1}^{n}\left\{ \theta
\left( X_{k}\right) -\mathbf{P}\theta \left( X_{k-1}\right) \right\} +%
\mathbf{P}\theta \left( X_{0}\right) -\mathbf{P}\theta \left( X_{n}\right) ,
\end{equation*}%
which implies 
$\displaystyle 
S_{n}-M_{n}=\mathbf{P}\theta \left( X_{0}\right) -\mathbf{P}\theta \left(
X_{n}\right) ,\;n\geq 0. $
The assertion of the theorem now follows from Corollary \ref{bound moment g}.
\end{proof}

The following simple consequence of Burkholder's inequality will be used
repeatedly in the paper.

\begin{lemma}
\label{Lp boud martingales} For any $p>2$, it holds 
$\displaystyle \quad 
\sup_{n \geq 1} {1\over { n^{p/2} }}\sup_{x \in \mathbb X}  \mathbb{E}_{x}\left\vert M_n \right\vert ^{p}<+\infty.
$
\end{lemma}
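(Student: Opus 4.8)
The plan is to apply Burkholder's inequality to the $\mathbb{P}_x$-martingale $(M_n,\mathcal{F}_n)_{n\geq 0}$ defined in (\ref{mart001}). Recall that the martingale increments are $D_k := M_k - M_{k-1} = \theta(X_k) - \mathbf{P}\theta(X_{k-1})$ for $k\geq 1$. Burkholder's inequality gives, for every $p>2$, a constant $c_p$ depending only on $p$ such that
\begin{equation*}
\mathbb{E}_x |M_n|^p \leq c_p\, \mathbb{E}_x \Bigl( \sum_{k=1}^n D_k^2 \Bigr)^{p/2}.
\end{equation*}
By the triangle inequality in $L^{p/2}$ (Minkowski's inequality), the right-hand side is bounded by $c_p \bigl( \sum_{k=1}^n \mathbb{E}_x^{2/p} |D_k|^p \bigr)^{p/2}$.

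The next step is to bound $\mathbb{E}_x|D_k|^p$ uniformly in $k\geq 1$ and $x\in\mathbb{X}$. Since $D_k = \theta(X_k) - \mathbf{P}\theta(X_{k-1})$, we have $|D_k| \leq |\theta(X_k)| + |\mathbf{P}\theta(X_{k-1})| \leq |\theta(X_k)| + \|\mathbf{P}\theta\|_\infty$ by the second assertion of Corollary \ref{bound moment g}. Hence $\mathbb{E}_x|D_k|^p \leq 2^{p-1}\bigl( \mathbb{E}_x|\theta(X_k)|^p + \|\mathbf{P}\theta\|_\infty^p \bigr)$, and the first assertion of Corollary \ref{bound moment g} yields $\sup_{k\geq 1}\sup_{x\in\mathbb{X}} \mathbb{E}_x|\theta(X_k)|^p < \infty$. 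Therefore there is a constant $C_p$ with $\sup_{k\geq 1}\sup_{x\in\mathbb{X}} \mathbb{E}_x|D_k|^p \leq C_p$.

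Combining the two steps, $\mathbb{E}_x|M_n|^p \leq c_p \bigl( \sum_{k=1}^n C_p^{2/p}\bigr)^{p/2} = c_p C_p\, n^{p/2}$, uniformly in $x\in\mathbb{X}$, which is exactly the claimed bound $\sup_{n\geq 1} n^{-p/2} \sup_{x\in\mathbb{X}} \mathbb{E}_x|M_n|^p < \infty$. There is no real obstacle here: the only point requiring care is that all the constants produced (from Burkholder, from Minkowski, and from Corollary \ref{bound moment g}) are independent of the starting point $x$, which is guaranteed because the bounds in Corollary \ref{bound moment g} are uniform in $x$ and Burkholder's constant depends only on $p$. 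One could alternatively invoke the Marcinkiewicz--Zygmund / Rosenthal form of the inequality directly for sums of martingale differences with uniformly bounded $p$-th moments; the bookkeeping is identical.
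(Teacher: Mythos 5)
Your proof is correct and follows essentially the same route as the paper: Burkholder's inequality, followed by a convexity inequality (you use Minkowski in $L^{p/2}$, the paper uses H\"older) to reduce to uniform $p$-th moments of the increments, which are then controlled via Corollary \ref{bound moment g}. The only cosmetic difference is that the paper bounds $\mathbb{E}_x|\mathbf{P}\theta(X_{k-1})|^p$ via Jensen/conditioning while you bound it by $\|\mathbf{P}\theta\|_\infty^p$ directly; both work.
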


\begin{proof}
Denoting $\xi _{k}=\theta \left( X_{k}\right) -\mathbf{P}\theta \left(
X_{k-1}\right) $ and applying Burkholder's inequality one gets%
\begin{equation}
 \mathbb{E}_{x}\left\vert M_{n}\right\vert ^{p} \leq
c_{p} \mathbb{E}_{x}\left\vert \sum_{k=1}^{n}\xi _{k}^{2}\right\vert^{p/2}.  \label{Lpb001}
\end{equation}%
By  H\"{o}lder's inequality, one gets 
$$ \mathbb{E}_{x}\left\vert \sum_{k=1}^{n}\xi _{k}^{2}\right\vert ^{p/2}\leq
n^{p/2-1}\mathbb{E}_{x}\sum_{k=1}^{n}\left\vert \xi _{k}\right\vert ^{p}\leq
n^{p/2}\sup_{1\leq k\leq n}\mathbb{E}_{x}\left\vert \xi _{k}\right\vert ^{p}$$
with $\displaystyle \sup_{1\leq k\leq n}\left( \mathbb{E}_{x}\left\vert \xi _{k}\right\vert
^{p}\right) ^{1/p}\leq 2\sup_{k\geq 1}\left( \mathbb{E}_{x}\left\vert \theta
\left( X_{k}\right) \right\vert ^{p}\right) ^{\frac{1}{p}}$ 
since 
\HL{$\mathbb{E}_{x}\left\vert \left( \mathbf{P}\theta\right) \left( X_{k-1}\right) \right\vert ^{p} 
\leq 
\mathbb{E}_{x}\left\vert \theta \left( X_{k}\right) \right\vert ^{p}$.}  
The desired inequality follows from Corollary \ref{bound moment g}.
\end{proof}


\section{Existence of the harmonic function\label{sec invar func}}

We start with a series of auxiliary assertions. Assume hypotheses \textbf{M1-\HL{M5}}. 
Recall that by Lemma \ref{Martingale decomp} 
the differences $S_{n}-M_{n}$ are bounded $\mathbb P_x$-a.s.\ for any $x \in \mathbb X$ and $n \geq 1.$ 
For any $y>0$ denote by $T_{y}$ the first time
when the martingale $\left( y+M_{n}\right) _{n\geq 1}$ exits $\mathbb{R}%
^{\ast }_{+}=\left( 0,\infty \right) ,$ 
\begin{equation*}
T_{y}=\min \left\{ n\geq 1:y+M_{n}\leq 0\right\} .
\end{equation*}%
The following assertion shows that the stopping times $\tau _{y}$ and $T_{y}$
are $\mathbb{P}_{x}$-a.s.\ finite.

\begin{lemma}
\label{Lemma finite tau}For any $x\in \mathbb{X}$ and $y>0$ it holds 
$$
\mathbb{P}_{x}\left( \tau _{y}< +\infty \right)= \mathbb{P}_{x}\left(
T_{y}< +\infty \right) =1.
$$
\end{lemma}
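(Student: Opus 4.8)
The plan is to establish first that $\mathbb{P}_x(\tau_y=\infty)=0$ for every $x\in\mathbb{X}$ and $y>0$, and then to deduce the statement for $T_y$ from the uniform bound $\sup_{n\geq0}|S_n-M_n|\leq a$ of Lemma~\ref{Martingale decomp}: since $S_n-M_n\geq-a$, one has $\{T_y=\infty\}=\{y+M_n>0\text{ for all }n\}\subseteq\{(y+a)+S_n>0\text{ for all }n\}=\{\tau_{y+a}=\infty\}$, whence $\mathbb{P}_x(T_y=\infty)\leq\mathbb{P}_x(\tau_{y+a}=\infty)=0$.

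For the first statement, write $h_n(x',y')=\mathbb{P}_{x'}(\tau_{y'}>n)$; since $\{\tau_{y'}>n\}=\{\min_{1\leq k\leq n}(y'+S_k)>0\}$, the map $y'\mapsto h_n(x',y')$ is nondecreasing. By the Markov property, for all $m,n\geq1$ and any $K>y$,
\begin{equation*}
\mathbb{P}_x(\tau_y>m+n)=\mathbb{E}_x\!\left[1_{\{\tau_y>m\}}\,h_n\big(X_m,\,y+S_m\big)\right]\leq\Big(\sup_{x'\in\mathbb{X}}h_n(x',K)\Big)\mathbb{P}_x(\tau_y>m)+\mathbb{P}_x(y+S_m>K),
\end{equation*}
where we used $h_n(X_m,y+S_m)\leq h_n(X_m,K)$ on the event $\{y+S_m\leq K\}$ and $h_n\leq1$ on its complement.

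Two ingredients feed into this. First, by the central limit theorem for $(S_n)$ (Theorem~\ref{CLT GLDR}, in its two-sided form), $\inf_{x'\in\mathbb{X}}\mathbb{P}_{x'}(S_n\leq-K)\to\tfrac12$ as $n\to\infty$ — the argument $-K/(\sigma\sqrt n)$ being a vanishing one — so that $\sup_{x'}h_n(x',K)\leq1-\inf_{x'}\mathbb{P}_{x'}(S_n\leq-K)\leq\tfrac34$ for all $n\geq n_K$, with $n_K<\infty$ depending only on $K$. Second, Lemmas~\ref{Martingale decomp} and~\ref{Lp boud martingales} give $\sup_{x'\in\mathbb{X}}\mathbb{E}_{x'}|S_m|^p\leq c_p\,m^{p/2}$ for any $p>2$, hence $\mathbb{P}_x(y+S_m>K)\leq c_p\,m^{p/2}(K-y)^{-p}$ for $K>y$. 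Choosing $K_j=m_j^{3/4}$ and $m_{j+1}=m_j+n_{K_j}$, and putting $u_j=\mathbb{P}_x(\tau_y>m_j)$, the displayed inequality gives $u_{j+1}\leq\tfrac34u_j+\delta_j$ for all large $j$, with $\delta_j\leq c'_p\,m_j^{-p/4}\to0$; as $m_j\uparrow\infty$, taking $\limsup$ yields $\limsup_ju_j\leq\tfrac34\limsup_ju_j$, so $u_j\to0$. Since $n\mapsto\mathbb{P}_x(\tau_y>n)$ is nonincreasing, $\mathbb{P}_x(\tau_y=\infty)=\lim_ju_j=0$.

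The main obstacle is the uniform lower-tail bound $\inf_{x'}\mathbb{P}_{x'}(S_n\leq-K)\to\tfrac12$: it genuinely requires the two-sided form of Le~Page's central limit theorem (the distribution function of $S_n/(\sigma\sqrt n)$ at arguments tending to $0$ from \emph{both} sides), not merely the one-sided version, and it must be coupled with the moment control of $y+S_m$ so that the truncation level $K$ does not have to grow like $\sqrt n$ — which is precisely why a single application of the Markov property does not suffice and the iterated block construction is needed. (Alternatively, the lemma follows at once from the law of the iterated logarithm for $(S_n)$, which gives $\liminf_nS_n=-\infty$ $\mathbb{P}_x$-a.s., or from the strong approximation of Proposition~\ref{Proposition KMT}.)
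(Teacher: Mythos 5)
Your proposal is correct, but it reaches the first conclusion by a genuinely different route from the paper. The paper's proof is a one-liner: it cites the law of the iterated logarithm for $(S_n)$ from Theorem 5 of Le Page \cite{LePage82}, which gives $\liminf_n S_n = -\infty$ $\mathbb{P}_x$-a.s.\ and hence $\tau_y < \infty$ a.s.\ immediately. You instead build a self-contained iterated-block argument from the Markov property, the CLT (Theorem \ref{CLT GLDR}) and the moment bounds of Lemmas \ref{Martingale decomp} and \ref{Lp boud martingales}. Both routes are sound; yours avoids invoking the LIL (a heavier black box) but pays for it with the block construction, the truncation at $K_j=m_j^{3/4}$, and a recursion $u_{j+1}\leq \tfrac34 u_j + \delta_j$ whose passage to the limit you handle correctly. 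The one caveat you yourself flag is real: Theorem \ref{CLT GLDR} as stated in the paper gives uniform convergence only for $t>0$, whereas your lower bound $\inf_{x'}\mathbb{P}_{x'}(S_n\leq -K)\geq\tfrac14$ (for $n\geq n_K$) needs the distribution function at arguments tending to $0$ from \emph{below}; the one-sided statement gives $\limsup_n\mathbb{P}_x(S_n\leq 0)\leq\tfrac12$ but no lower bound, so the two-sided version — available in Le Page's Theorem 2 though not literally reproduced in the paper — is genuinely required, exactly as you observe. Your deduction of $\mathbb{P}_x(T_y<\infty)=1$ via the inclusion $\{T_y=\infty\}\subseteq\{\tau_{y+a}=\infty\}$ using $S_n\geq M_n-a$ from Lemma \ref{Martingale decomp} is precisely the paper's (unspelled-out) second step, and it is correct as written.
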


\begin{proof}
\HL{The fact that the first probability equals $1$} is a  consequence of the law of iterated logarithm for 
$\left( S_{n}\right) _{n\geq 0}$ established in Theorem 5 of \cite{LePage82}; 
\HL{the fact that the second probability equals $1$} follows  from Lemma \ref{Martingale decomp}.
\end{proof}

\begin{lemma}
\label{Lemma 1} There exist $c>0$ and  $\varepsilon _{0}>0$ such that for any $%
\varepsilon \in (0,\varepsilon _{0}),\;n\geq 1, x \in \mathbb X$ and $ y\geq  n^{1/2-\varepsilon },$%
\begin{equation}
\mathbb{E}_{x}\left( \left\vert y+M_{T_{y}}\right\vert
;\ T_{y}\leq n\right) \leq c\frac{y}{n^{\varepsilon }}.  \label{L1-001}
\end{equation}
\end{lemma}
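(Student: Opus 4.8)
The plan is to estimate the overshoot of the martingale $\left(y+M_n\right)_{n\ge 1}$ at its exit time $T_y$ on the event $\{T_y\le n\}$, using the moment bounds already available. The key observation is that on $\{T_y\le n\}$ we have $y+M_{T_y}\le 0$ but $y+M_{T_y-1}>0$, so
\[
|y+M_{T_y}| = -(y+M_{T_y}) \le -(y+M_{T_y}) + (y+M_{T_y-1}) = M_{T_y-1}-M_{T_y} = -\xi_{T_y},
\]
where $\xi_k=\theta(X_k)-\mathbf P\theta(X_{k-1})$; hence $|y+M_{T_y}|\le |\xi_{T_y}|$. Therefore
\[
\mathbb E_x\left(|y+M_{T_y}|;\,T_y\le n\right)\le \mathbb E_x\left(|\xi_{T_y}|;\,T_y\le n\right)
\le \sum_{k=1}^n \mathbb E_x\left(|\xi_k|;\,T_y=k\right).
\]

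First I would split each term with a truncation level. The event $\{T_y=k\}$ is $\mathcal F_{k-1}$-measurable only for the part $\{T_y\ge k\}$, so I would write $\{T_y=k\}\subseteq\{T_y\ge k\}\cap\{|\xi_k|>\varepsilon_0\text{-threshold}\}\cup\ldots$; more efficiently, use that $\{T_y\ge k\}$ means $y+M_j>0$ for $j<k$, and bound $\mathbb E_x(|\xi_k|;T_y=k)\le \mathbb E_x(|\xi_k|\mathbf 1_{\{T_y\ge k\}}\mathbf 1_{\{y+M_k\le 0\}})$. On $\{y+M_k\le 0\}$ we have $|\xi_k|=|M_k-M_{k-1}|\ge |y+M_{k-1}|$ (since $y+M_{k-1}>0\ge y+M_k$ forces the jump to exceed $y+M_{k-1}$), but more simply, on this event $|\xi_k|\le |\xi_k|$ and we can insert a factor controlled by Markov's inequality: since $y+M_{k-1}>0$ and $y+M_k\le0$, we get $|\xi_k|\ge y+M_{k-1}>0$. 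I would use Chebyshev at order $p>2$: for the contribution where the martingale is still positive at time $k-1$ but the increment is large, Hölder gives
\[
\mathbb E_x\left(|\xi_k|;\,T_y\ge k,\ |\xi_k|\ge y+M_{k-1}\right)\le \mathbb E_x^{1/p}|\xi_k|^p\ \mathbb P_x\left(T_y\ge k\right)^{(p-1)/p}\ \text{(refined via the size of }\xi_k).
\]

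The cleanest route, which I expect to carry through, is: $|y+M_{T_y}|\le|\xi_{T_y}|$, so by Hölder with exponents $p$ and $p/(p-1)$,
\[
\mathbb E_x\left(|y+M_{T_y}|;\,T_y\le n\right)\le \left(\sum_{k=1}^n\mathbb E_x|\xi_k|^p\right)^{1/p}\left(\mathbb P_x(T_y\le n)\right)^{(p-1)/p}\le c\,n^{1/p}\,\mathbb P_x(T_y\le n)^{(p-1)/p},
\]
using $\sup_k\mathbb E_x|\xi_k|^p<\infty$ from Corollary \ref{bound moment g}. It then remains to bound $\mathbb P_x(T_y\le n)$ by something like $c\,n/y^2$ (a maximal inequality: $\mathbb P_x(T_y\le n)\le\mathbb P_x(\min_{k\le n}M_k\le -y)\le\mathbb P_x(\max_{k\le n}|M_k|\ge y)\le y^{-2}\mathbb E_x M_n^2\le c\,n/y^2$ by Doob and Lemma \ref{Lp boud martingales}). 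Combining, for $y\ge n^{1/2-\varepsilon}$,
\[
\mathbb E_x\left(|y+M_{T_y}|;\,T_y\le n\right)\le c\,n^{1/p}\left(\frac{n}{y^2}\right)^{(p-1)/p}=c\,\frac{n}{y}\cdot\frac{1}{y^{(p-2)/p}}\cdot\frac{y^{?}}{\ } ,
\]
and I would choose $p$ large and $\varepsilon_0$ small so that the surplus power of $n$ over $y$ is absorbed: writing $n\le y^{1/(1/2-\varepsilon)}=y^{2/(1-2\varepsilon)}$, the bound becomes $c\,y^{1-\delta}$ with $\delta>0$, which is $\le c\,y\,n^{-\varepsilon}$ after again trading $y$ against $n$. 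The main obstacle is this bookkeeping of exponents — getting the dependence genuinely of the form $c\,y/n^{\varepsilon}$ (not merely $o(y)$) requires choosing $p$ depending on $\varepsilon$ and verifying the arithmetic $\tfrac1p+\tfrac{p-1}{p}\cdot\tfrac{1-2\varepsilon'}{\ }$-type inequality; everything else (the overshoot bound $|y+M_{T_y}|\le|\xi_{T_y}|$, the $L^p$ bounds, Doob's maximal inequality) is routine given the lemmas already established.
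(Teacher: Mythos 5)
Your proposal is correct but takes a genuinely different route from the paper's proof, so let me confirm that the exponent bookkeeping you flagged as the main obstacle does close, and compare the two. Both arguments rest on the overshoot bound $|y+M_{T_y}|\le|\xi_{T_y}|$. The paper then decomposes on the event $A_n=\{\max_{1\le k\le n}|\xi_k|\le n^{1/2-2\varepsilon}\}$: on $A_n$ the overshoot is $\le n^{1/2-2\varepsilon}\le y/n^{\varepsilon}$ directly, while on $A_n^c$ it uses the cruder bound $|y+M_{T_y}|\le y+M_n^*$ together with Markov, Doob and the $L^p$ bounds to kill the small probability of $A_n^c$. Your route is a single Hölder application: $\mathbb{E}_x\bigl(|\xi_{T_y}|;T_y\le n\bigr)\le\bigl(\sum_{k=1}^n\mathbb{E}_x|\xi_k|^p\bigr)^{1/p}\mathbb{P}_x(T_y\le n)^{1-1/p}\le c\,n^{1/p}(n/y^2)^{(p-1)/p}$, the last step via Doob and $\mathbb{E}_x M_n^2\le cn$. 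To get $c\,y/n^{\varepsilon}$ one must check $n^{1+\varepsilon}\le c\,y^{3-2/p}$ for $y\ge n^{1/2-\varepsilon}$; substituting the worst case $y=n^{1/2-\varepsilon}$, the exponent of $n$ is $-1/2+1/p+\varepsilon(4-2/p)$, which is $\le 0$ for any fixed $p>2$ once $\varepsilon\le\varepsilon_0:=(p-2)/(8p-4)$, so the arithmetic is fine. Your approach is more compact — one Hölder application instead of a two-case decomposition — at the price of needing this explicit exponent calculation to extract the rate; the paper's truncation gives $y/n^{\varepsilon}$ on the good event immediately, at the price of separately handling the bad event. Your middle paragraph (the truncation-by-threshold sketch and the discussion of $\mathcal{F}_{k-1}$-measurability) is unnecessary and can be dropped; the final "cleanest route" paragraph is all that is needed.
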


\begin{proof}
Consider the event $A_{n}=\left\{ \max_{1\leq k\leq n}\left\vert \xi
_{k}\right\vert \leq  n^{1/2-2\varepsilon } \right\} ,$ where $\xi _{k}=\theta
\left( X_{k}\right) -\mathbf{P}\theta \left( X_{k-1}\right); $   one gets%
\begin{eqnarray}
\mathbb{E}_{x}\left( \left\vert y+M_{T_{y}}\right\vert ;\ T_{y}\leq n\right)
&=&\mathbb{E}_{x}\left( \left\vert y+M_{T_{y}}\right\vert ;\ T_{y}\leq
n, A_{n}\right)  \notag \\
&&+\ \mathbb{E}_{x}\left( \left\vert y+M_{T_{y}}\right\vert ;\ T_{y}\leq n, %
A_n^c\right)  \notag \\
&=&J_{1}\left( x,y\right) +J_{2}\left( x,y\right) .  \label{eq-lemma1-000}
\end{eqnarray}

We bound first $J_{1}\left( x,y\right) .$ Since $T_{y}$ is the first time
when $y+M_{T_{y}}$ becomes negative and the size of the jump $\xi _{T_{y}}$
of $y+M_{T_{y}}$ at time $T_{y}$ does not exceed $n^{1/2-2\varepsilon }$ on
the event $A_{n},$ it follows that
$
J_{1}\left( x,y\right) \leq n^{1/2-2\varepsilon }\mathbb{P}_{x}\left(
T_{y}\leq n;A_{n}\right) \leq n^{1/2-2\varepsilon }.
$
Therefore, for any $y\geq n^{1/2-\varepsilon }$ and $x\in \mathbb X$
\begin{equation}
J_{1}\left( x,y\right) \leq {y\over n^{\varepsilon }}.
\label{eq-lemma1-R1}
\end{equation}

Now we  bound $J_{2}\left( x,y\right).$ We set  $M_{n}^{\ast }=\max_{1\leq k\leq n}\left\vert
M_{k}\right\vert$;  since  $\left\vert y+M_{T_{y}}\right\vert \leq
y+M_{n}^{\ast }$ on the event $\left\{
T_{y}\leq n\right\} $,   it is clear that, for any $x\in \mathbb{X},$%
\begin{equation}
J_{2}\left( x,y\right) \leq y\mathbb{P}_{x}\left( A_n^c\right) +%
\mathbb{E}_{x}\left( M_{n}^{\ast };A_n^c\right) .
\label{eq-lemma1-001}
\end{equation}%
The probability $\mathbb{P}_{x}\left( A_n^c\right) $ in (\ref%
{eq-lemma1-001}) can be bounded as follows:%
\begin{eqnarray}
\mathbb{P}_{x}\left( A_n^c\right) &=&\mathbb{P}_{x}\left(
\max_{1\leq k\leq n}\left\vert \xi _{k}\right\vert >  n^{1/2-2\varepsilon 
}\right)  \notag \\
&\leq &\sum_{1\leq k\leq n}\mathbb{P}_{x}\left( \left\vert \xi
_{k}\right\vert >  n^{1/2-2\varepsilon  }\right)  \notag \\
&\leq & {1\over   n^{\left( 1/2-2\varepsilon \right) p}}\sum_{1\leq k\leq n}%
\mathbb{E}_{x}\left( \left\vert \xi_{k}\right\vert
^{p}\right)  \notag \\
&\leq &{2^p \over n^{\left( 1/2-2\varepsilon \right) p}}\sum_{1\leq k\leq n}%
\mathbb{E}_{x}\left( \left\vert \theta \left( X_{k}\right) \right\vert
^{p}\right)  \notag \\
&=&{c_p\over n^{p/2-1-2\varepsilon p}},  \label{PAbar bound}
\end{eqnarray}%
where the last inequality follows from Corollary \ref{bound moment g}.
Similarly, we have%
\begin{eqnarray}
\mathbb{E}_{x}\left( M_{n}^{\ast };A_n^c\right) &\leq &\mathbb{E}%
_{x}\left( M_{n}^{\ast };M_{n}^{\ast }>n^{1/2+2\varepsilon },
A_{n}^c\right) +n^{1/2+2\varepsilon }\mathbb{P}_{x}\left( A_{n}^c\right)  \notag \\
&\leq &\int_{n^{1/2+2\varepsilon }}^{+\infty }\mathbb{P}_{x}\left(
M_{n}^{\ast }>t\right) dt+2n^{1/2+2\varepsilon }\mathbb{P}_{x}\left( 
A_n^c\right),   \label{EZnbarAn}
\end{eqnarray}%
with
\begin{equation}
2 n^{1/2+2\varepsilon }\mathbb{P}_{x}\left( A_n^c\right)
<2yn^{3\varepsilon }\mathbb{P}_{x}\left( A_n^c\right) \leq
{4c_{p}y\over n^{p/2-1 -2\varepsilon p-3\varepsilon }}.
\label{eq-lemma1-002a}
\end{equation}%
Now we analyze the integral in (\ref{EZnbarAn}). Using Doob's maximal
inequality for martingales and Lemma \ref{Lp boud martingales} we get 
$\displaystyle \mathbb{P}_{x}\left( M_{n}^{\ast }>t\right) \leq \frac{1}{t^{p}}\mathbb{E}%
_{x}\left\vert M_{n}\right\vert ^{p}\leq  c_{p}\frac{n^{p/2}}{t^{p}}$, therefore%
\begin{eqnarray}
\int_{n^{1/2+2\varepsilon }}^{+\infty }\mathbb{P}_{x}\left( M_{n}^{\ast } >t\right) dt &\leq
& c_{p} n^{p/2}\int_{n^{1/2+2\varepsilon }}^{+\infty }\frac{1}{t^{p}}dt  \notag
\\ &\leq & c_{p} \frac{n^{p/2}}{n^{\left( 1/2+2\varepsilon \right) \left( p-1\right)
}}  \notag \\
 &\leq&
c_{p} \frac{y}{n^{2\varepsilon p-3\varepsilon}}.
\label{eq-lemma1-003}
\end{eqnarray}%
Taking (\ref{EZnbarAn}), (\ref{eq-lemma1-003}) and (\ref{eq-lemma1-002a})
altogether, one gets%
\begin{equation}
\mathbb{E}_{x}\left( M_{n}^{\ast };A_n^c\right) \leq 
c_{p} \Bigl(\frac{y}{n^{2\varepsilon p-3\varepsilon}}+ {y\over n^{p/2-1 -2\varepsilon p-3\varepsilon }}\Bigr).  \label{eq-lemma1-004}
\end{equation}%
Implementing the bounds (\ref{PAbar bound}) and (\ref{eq-lemma1-004}) in (%
\ref{eq-lemma1-001}), we obtain,%
\begin{equation}
J_{2}\left( x,y\right) \leq c_p\Bigl({y\over n^{p/2-1 -2\varepsilon p}}+
 \frac{y}{n^{2\varepsilon p-3\varepsilon}}+{ y\over n^{p/2-1 -2\varepsilon p-3\varepsilon }}\Bigr)
  \label{eq-lemma1-R2}
\end{equation}
Finally, from (\ref{eq-lemma1-000}), (\ref{eq-lemma1-R1}), (\ref%
{eq-lemma1-R2}), we get, for any $y\geq  n^{1/2-\varepsilon },$%
\begin{eqnarray*}
\mathbb{E}_{x}\left( \left\vert y+M_{T_{y}}\right\vert ;\ T_{y}\leq n\right)
&\leq &{y\over n^{\varepsilon }}+c_p\Bigl({y\over n^{p/2-1 -2\varepsilon p}}+
 \frac{y}{n^{2\varepsilon p-3\varepsilon}}+{ y\over n^{p/2-1 -2\varepsilon p-3\varepsilon }}\Bigr).
\end{eqnarray*}%
Choose $p>2.$ Then there exist $c>0$ and $\varepsilon _{0}>0$ such that for any $%
\varepsilon \in \left( 0,\varepsilon _{0}\right) $ and $y\geq n^{1/2-\varepsilon },$ 
\begin{equation*}
\mathbb{E}_{x}\left( \left\vert y+M_{T_{y}}\right\vert ;\ T_{y}\leq n\right)
\leq c{y\over n^{\varepsilon }}
\end{equation*}%
which proves  the lemma.
\end{proof}

Let $\varepsilon >0,$ $y>0.$ Consider the first time $\nu _{n}$
when $\left\vert y+M_{k}\right\vert $ exceeds $2 n^{1/2-\varepsilon }:$ 
\begin{equation}
\nu _{n}=\nu _{n,y,\varepsilon}=\min \left\{ k\geq 1:\left\vert
y+M_{k}\right\vert \geq 2 n^{1/2-\varepsilon }\right\} .  \label{nu n}
\end{equation}

\begin{lemma} \label{nu001}
There exists $\varepsilon _{1}>0$ such that for any 
$\varepsilon  \in (0,\varepsilon _{1}),$ $x\in \mathbb{X}$ and $y>0$ it holds 
$$\mathbb{P}_{x}\left( \lim_{n\rightarrow +\infty }\nu _{n,y,\varepsilon}= +\infty \right)=1.$$
\end{lemma}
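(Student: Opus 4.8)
The plan is to show that $\nu_{n,y,\varepsilon}$ tends to infinity $\mathbb{P}_x$-a.s.\ by establishing that for almost every trajectory, eventually $\nu_{n,y,\varepsilon}$ exceeds any fixed level. Fix $x\in\mathbb{X}$, $y>0$ and an integer $m\geq 1$. Observe that $\{\nu_{n,y,\varepsilon}\leq m\}=\{\max_{1\leq k\leq m}|y+M_k|\geq 2n^{1/2-\varepsilon}\}$, so
\begin{equation*}
\mathbb{P}_x\left(\nu_{n,y,\varepsilon}\leq m\right)\leq \mathbb{P}_x\left(M_m^{\ast}\geq 2n^{1/2-\varepsilon}-y\right),
\end{equation*}
where $M_m^{\ast}=\max_{1\leq k\leq m}|M_k|$. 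For $n$ large enough (so that $2n^{1/2-\varepsilon}-y\geq n^{1/2-\varepsilon}$, say), Doob's maximal inequality combined with Lemma \ref{Lp boud martingales} gives, for any $p>2$,
\begin{equation*}
\mathbb{P}_x\left(\nu_{n,y,\varepsilon}\leq m\right)\leq \frac{\mathbb{E}_x|M_m|^p}{n^{(1/2-\varepsilon)p}}\leq c_p\frac{m^{p/2}}{n^{(1/2-\varepsilon)p}}.
\end{equation*}

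Now I want to sum this over $n$ along a suitable subsequence and apply Borel--Cantelli. The issue is that for fixed $m$ the bound is not summable in $n$ unless $(1/2-\varepsilon)p>1$, which we can arrange: choose $p>2$ fixed and then take $\varepsilon_1>0$ small enough that $(1/2-\varepsilon)p>1$ for all $\varepsilon\in(0,\varepsilon_1)$ — for instance $\varepsilon_1=\tfrac14$ works with $p=4$, giving exponent $1$; a slightly larger $p$ gives exponent strictly above $1$. With such a choice, $\sum_{n\geq 1}\mathbb{P}_x(\nu_{n,y,\varepsilon}\leq m)<\infty$, hence by Borel--Cantelli, $\mathbb{P}_x$-a.s.\ we have $\nu_{n,y,\varepsilon}>m$ for all $n$ sufficiently large. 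Taking the intersection over all $m\in\mathbb{N}$ (a countable intersection of probability-one events) yields that $\mathbb{P}_x$-a.s., for every $m$ there is $n_0(m)$ with $\nu_{n,y,\varepsilon}>m$ for $n\geq n_0(m)$, which is exactly the statement $\lim_{n\to\infty}\nu_{n,y,\varepsilon}=\infty$ a.s.

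The main (and essentially only) obstacle is making sure the quantitative bound on $\mathbb{P}_x(\nu_{n,y,\varepsilon}\leq m)$ decays fast enough in $n$ to be summable while keeping $\varepsilon$ in a range that does not depend on the fixed level $m$; this is handled by first fixing $p$ large enough and then shrinking $\varepsilon_1$ accordingly, exactly as in the proof of Lemma \ref{Lemma 1}. Everything else — the identity $\{\nu_{n}\leq m\}=\{M_m^{\ast}\geq 2n^{1/2-\varepsilon}-y\}$, Doob's inequality, the $L^p$ martingale bound, and the countable intersection step — is routine. Note also that the bound obtained is uniform in $x\in\mathbb{X}$ (since $c_p$ in Lemma \ref{Lp boud martingales} is), so the conclusion in fact holds with the same null set structure for every starting point.
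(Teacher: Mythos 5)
Your proof is correct and follows essentially the same route as the paper: Doob's maximal inequality combined with Lemma \ref{Lp boud martingales} gives the bound $\mathbb{P}_x(\nu_{n,y,\varepsilon}\leq N)\leq c_p N^{p/2}/n^{(1/2-\varepsilon)p}$, and Borel--Cantelli (plus the countable intersection over levels, which the paper leaves implicit) finishes the argument. One small wobble: your parenthetical ``$\varepsilon_1=\tfrac14$ works with $p=4$, giving exponent $1$; a slightly larger $p$ gives exponent strictly above $1$'' misreads your own bound --- since $\varepsilon$ ranges over the open interval $(0,\varepsilon_1)$, the exponent $(1/2-\varepsilon)p$ is already strictly greater than $1$ for every admissible $\varepsilon$ when $p=4$ and $\varepsilon_1=1/4$, which is precisely the paper's choice $\varepsilon_1=\tfrac{p-2}{2p}$.
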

\HL{\begin{proof}
Let $\Omega_0 = \cap_{k\geq 1} \left\{ \left| y+M_k \right| < +\infty \right\}.$ Note that,
by Lemma \ref{Lp boud martingales}, 
$\mathbb P_x \left( \Omega_0\right) =1,$ for any $x\in \mathbb X.$
Introduce the decreasing sequence of random sets $A_n = \left\{ k \geq 1: \left| y+M_k \right| >  n^{1/2-\varepsilon} \right\}.$
Let $A=\cap_{n\geq 1} A_n. $
It is easy to verify that for any $\omega \in \Omega_0$ we have $A(\omega)=\varnothing.$
Indeed, if $k(\omega) \in A(\omega)$ it holds $\left| y+M_{k(\omega)}(\omega) \right| = +\infty$
and therefore $\omega \not\in \Omega_0.$
Thus, for any $x \in \mathbb X$ we get $\mathbb P_x\left( A = \varnothing \right) =1$
and taking into account that
$\nu_{n}= \inf A_{n}$ is increasing the assertion follows. 
\end{proof}}
\begin{lemma} 
\label{Lemma 2}For any $\varepsilon \in (0,\frac{1}{2})$, there exists $c_\epsilon >0$ 
such that for any $n\geq 1, x \in \mathbb X$ and $y>0$%
\begin{equation*}
\mathbb{P}_{x}\left( \nu _{n,y,\varepsilon}>n^{1-\varepsilon }\right) \leq \exp \left( -c_{\varepsilon }n^{\varepsilon
}\right) .
\end{equation*}
\end{lemma}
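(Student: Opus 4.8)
The plan is to exploit the fact that on the event $\{\nu_{n,y,\varepsilon}>n^{1-\varepsilon}\}$ the martingale $y+M_k$ is confined to the narrow tube $\{|z|<2n^{1/2-\varepsilon}\}$ for the first $m:=\lfloor n^{1-\varepsilon}\rfloor$ steps, and that this is exponentially unlikely because over $m$ steps $(M_k)$ has a genuine fluctuation of order $\sqrt m=n^{1/2-\varepsilon/2}$, which dwarfs the tube width $n^{1/2-\varepsilon}$. Concretely, fix a block length $L:=\lceil c_1 n^{1-2\varepsilon}\rceil$ with $c_1$ a large constant to be chosen, split $\{1,\dots,m\}$ into $r:=\lfloor m/L\rfloor$ consecutive blocks, and set $D_j:=M_{(j+1)L}-M_{jL}$ for $0\le j\le r-1$. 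Since $rL\le m$, on $\{\nu_{n,y,\varepsilon}>n^{1-\varepsilon}\}$ every $D_j=(y+M_{(j+1)L})-(y+M_{jL})$ satisfies $|D_j|<4n^{1/2-\varepsilon}$, hence
\begin{equation*}
\mathbb{P}_x\bigl(\nu_{n,y,\varepsilon}>n^{1-\varepsilon}\bigr)\le \mathbb{P}_x\Bigl(\bigcap_{j=0}^{r-1}\bigl\{|D_j|<4n^{1/2-\varepsilon}\bigr\}\Bigr).
\end{equation*}

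The core of the proof is a uniform small-ball bound: there are $q\in(0,1)$ and $n_0(\varepsilon)$ such that for $n\ge n_0(\varepsilon)$ and every $x'\in\mathbb{X}$ one has $\mathbb{P}_{x'}(|M_L|<4n^{1/2-\varepsilon})\le q$. To get it, first note by Lemma \ref{Lp boud martingales} with $p=4$ that $\sup_{x'}\mathbb{E}_{x'}|M_L|^4\le C_4 L^2$. For a lower bound on the variance, use the uniform right-tail estimate of Theorem \ref{CLT GLDR}: $\inf_{x'}\mathbb{P}_{x'}(S_L>\sigma\sqrt L)\to 1-\Phi(1)>0$, whence $\mathbb{E}_{x'}S_L^2\ge (1-\Phi(1))\sigma^2 L\,(1+o(1))$ uniformly in $x'$; since $|S_L-M_L|\le a$ by Lemma \ref{Martingale decomp}, this gives $\mathbb{E}_{x'}M_L^2\ge c_0\sigma^2 L$ for $L$ large, uniformly in $x'$. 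The Paley–Zygmund inequality applied to $M_L^2$ then yields
\begin{equation*}
\mathbb{P}_{x'}\Bigl(M_L^2\ge\tfrac14\mathbb{E}_{x'}M_L^2\Bigr)\ge\tfrac{9}{16}\,\frac{(\mathbb{E}_{x'}M_L^2)^2}{\mathbb{E}_{x'}M_L^4}\ge\tfrac{9}{16}\,\frac{(c_0\sigma^2 L)^2}{C_4 L^2}=:2\kappa>0,
\end{equation*}
i.e.\ $\mathbb{P}_{x'}\bigl(|M_L|\ge\tfrac12\sqrt{c_0}\,\sigma\sqrt L\bigr)\ge 2\kappa$; choosing $c_1:=64/(c_0\sigma^2)$ forces $4n^{1/2-\varepsilon}\le\tfrac12\sqrt{c_0}\,\sigma\sqrt L$, and the small-ball bound holds with $q:=1-2\kappa$.

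Now run the standard iterated-conditioning argument. By the Markov property the conditional law of $D_j$ given $\mathcal{F}_{jL}$ is the law of $M_L$ under $\mathbb{P}_{X_{jL}}$, so $\mathbb{P}_x(|D_j|<4n^{1/2-\varepsilon}\mid\mathcal{F}_{jL})\le q$; conditioning successively on $\mathcal{F}_{(r-1)L},\dots,\mathcal{F}_{L}$ gives $\mathbb{P}_x\bigl(\bigcap_{j=0}^{r-1}\{|D_j|<4n^{1/2-\varepsilon}\}\bigr)\le q^{\,r}$. Since $m\sim n^{1-\varepsilon}$ and $L\sim c_1 n^{1-2\varepsilon}$, we have $r=\lfloor m/L\rfloor\ge c' n^{\varepsilon}$ for $n$ large, hence $\mathbb{P}_x(\nu_{n,y,\varepsilon}>n^{1-\varepsilon})\le q^{\,r}\le\exp(-c_\varepsilon n^{\varepsilon})$ for all $n\ge n_0(\varepsilon)$, with $c_\varepsilon:=c'|\log q|$. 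The finitely many remaining values $1\le n<n_0(\varepsilon)$ are absorbed by taking $c_\varepsilon$ smaller (for each such $n$ one checks $\sup_{x\in\mathbb{X},\,y>0}\mathbb{P}_x(\nu_{n,y,\varepsilon}>n^{1-\varepsilon})<1$ using the non-degeneracy of the increments built into \textbf{P5} together with a compactness argument on $\mathbb{P}(\mathbb{V})$).

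The main obstacle is the uniform-in-$x'$ anticoncentration estimate for $M_L$: one must bound the fluctuation of the Markov martingale over $L$ steps from below by $\sim\sigma\sqrt L$ simultaneously for all starting states, which is why the uniform CLT of Theorem \ref{CLT GLDR} (rather than a pointwise CLT) is needed; the rest — the block decomposition, the Markov-property conditioning, and the bookkeeping for small $n$ — is routine. If one prefers to avoid the second-moment route, an alternative is to invoke the two-sided uniform integral CLT directly to get $\sup_{x'}\mathbb{P}_{x'}(|S_L|<\delta\sigma\sqrt L)\le q_\delta<1$ with $q_\delta\to0$ as $\delta\to0$, and transfer it to $M_L$ via $|S_L-M_L|\le a$.
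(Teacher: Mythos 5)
Your proposal is correct in spirit and uses the same block decomposition plus iterated Markov-conditioning strategy as the paper, but the two arguments differ in how they obtain the one-block small-ball estimate. The paper subsamples the walk at times $m, 2m, \dots, Km$ with $m=[b^2n^{1-2\varepsilon}]$, replaces $M$ by $S$ via $\lvert S_k-M_k\rvert\le a$, and then bounds $\sup_{y\in\mathbb{R},\,x\in\mathbb{X}}\mathbb{P}_x(\lvert y+S_m\rvert\le 3n^{1/2-\varepsilon})$ directly from the uniform CLT (Theorem \ref{CLT GLDR}) by approximating with the Gaussian density, using that a ball of fixed radius has largest Gaussian mass when centred at the origin. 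You instead stay with the martingale $M$, look at the block increments $D_j=M_{(j+1)L}-M_{jL}$, and obtain the small-ball bound via Paley--Zygmund after establishing $\mathbb{E}_{x'}M_L^2\ge c_0\sigma^2 L$ (from the uniform CLT plus $\lvert S_L-M_L\rvert\le a$) and $\mathbb{E}_{x'}M_L^4\le C_4 L^2$ (Lemma \ref{Lp boud martingales}). One payoff of your version is that differencing removes the supremum over $y$: each $D_j$ is automatically confined to a ball centred at $0$, so you only need anticoncentration of $M_L$ at the origin, whereas the paper must bound the ball probability uniformly over the center. Your closing remark that one could instead invoke the two-sided uniform CLT for $S_L$ essentially recovers the paper's route.

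One small slip: your reduction ``on $\{\nu_{n,y,\varepsilon}>n^{1-\varepsilon}\}$ every $D_j$ satisfies $\lvert D_j\rvert<4n^{1/2-\varepsilon}$'' fails for $j=0$. There $D_0=M_L-M_0=M_L$, and the confinement event only controls $\lvert y+M_L\rvert$, not $\lvert M_L\rvert$; if $y$ is large, $\lvert D_0\rvert$ can be large even on the event. The fix is trivial --- take the product over $j=1,\dots,r-1$, which still gives $r-1\gtrsim n^\varepsilon$ blocks --- but it should be stated. Also note that, like the paper's argument, your proof establishes the bound only for $n\ge n_0(\varepsilon)$; the claim that the finitely many small $n$ are handled ``using \textbf{P5} together with a compactness argument'' is plausible but would require more detail than a parenthetical remark (the paper is equally silent on this point).
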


\begin{proof}
Let $m=\left[ b^{2}n^{1-2\varepsilon }\right] $ and $K=\left[ n^{\varepsilon
}/b^{2}\right] ,$ where $b$ will be chosen later on.  By Lemma \ref{Martingale
decomp}, for any $x \in \mathbb X$  we have  $\left\vert S_{n}-M_{n}\right\vert \leq a\leq n^{1/2-\varepsilon }$
$\mathbb{P}_{x}$-a.s. with $a=2\left\Vert 
\mathbf{P}\theta \right\Vert _{\infty }.$ So, for $n$ sufficiently large and for any  $y>0,$ 
\begin{eqnarray}
\mathbb{P}_{x}\left( \nu _{n,y,\varepsilon }>n^{1-\varepsilon }\right)  &=&%
\mathbb{P}_{x}\left( \max_{1\leq k\leq n^{1-\varepsilon }}\left\vert
y+M_{k}\right\vert \leq 2 n^{1/2-\varepsilon }\right)   \notag \\
&\leq &\mathbb{P}_{x}\left( \max_{1\leq k\leq K}\left\vert
y+M_{km}\right\vert \leq 2 n^{1/2-\varepsilon }\right)   \notag \\
&\leq &\mathbb{P}_{x}\left( \max_{1\leq k\leq K}\left\vert
y+S_{km}\right\vert \leq 3 n^{1/2-\varepsilon }\right) .  \label{nu000}
\end{eqnarray}%
Using the Markov property, it follows that, for any $x\in \mathbb{X},$ 
\begin{eqnarray*}
&&\mathbb{P}_{x}\left( \max_{1\leq k\leq K}\left\vert y+S_{km}\right\vert
\leq 3 n^{1/2-\varepsilon }\right)  \\
&\leq &\mathbb{P}_{x}\left( \max_{1\leq k\leq K-1}\left\vert
y+S_{km}\right\vert \leq 3 n^{1/2-\varepsilon }\right) \sup_{z\in \mathbb{R}%
,\ x\in \mathbb{X}}\mathbb{P}_{x}\left( \left\vert z+S_{m}\right\vert \leq
3 n^{1/2-\varepsilon }\right) ,
\end{eqnarray*}%
from which iterating, we get%
\begin{equation}
\mathbb{P}_{x}\left( \max_{1\leq k\leq K}\left\vert y+S_{km}\right\vert \leq
3 n^{1/2-\varepsilon }\right) \leq \left( \sup_{y\in \mathbb{R},\ x\in 
\mathbb{X}}\mathbb{P}_{x}\left( \left\vert y+S_{m}\right\vert \leq
3 n^{1/2-\varepsilon }\right) \right) ^{K}.  \label{nu001}
\end{equation}%
Denote $\mathbb{B}_{y}\left( r\right) =\left\{ z:\left\vert y+z\right\vert
\leq r\right\} .$ Then, for any $x\in \mathbb{X}$ and $y\in \mathbb{R},$ 
\begin{equation*}
\mathbb{P}_{x}\left( \left\vert y+S_{m}\right\vert \leq 3 n^{1/2-\varepsilon
}\right) =\mathbb{P}_{x}\left( \frac{S_{m}}{\sqrt{m}}\in \mathbb{B}_{y/\sqrt{%
m}}\left( r_{n}\right) \right) ,
\end{equation*}%
where $r_{n}=3 n^{1/2-\varepsilon }/\sqrt{m}.$ Using the central limit
theorem for $S_{n}$ (Theorem \ref{CLT GLDR}) we have, as $n\rightarrow
\infty ,$%
\begin{equation}
\sup_{y\in \mathbb{R},\ x\in \mathbb{X}}\left\vert \mathbb{P}_{x}\left( 
\frac{S_{m}}{\sqrt{m}}\in \mathbb{B}_{y/\sqrt{m}}\left( r_{n}\right) \right)
-\int_{\mathbb{B}_{y/\sqrt{m}}\left( r_{n}\right) }\mathbb{\phi }_{\sigma
^{2}}\left( u\right) du\right\vert \rightarrow 0.  \label{nu002}
\end{equation}%
From (\ref{nu002}) we deduce that, as $n\rightarrow +\infty ,$%
\begin{equation*}
\sup_{y\in \mathbb{R},\ x\in \mathbb{X}}\mathbb{P}_{x}\left( \left\vert
y+S_{m}\right\vert \leq 3 n^{1/2-\varepsilon }\right) \leq \sup_{y\in 
\mathbb{R}}\int_{\mathbb{B}_{y/\sqrt{m}}\left( r_{n}\right) }\mathbb{\phi }%
_{\sigma ^{2}}\left( u\right) du+o\left( 1\right) .
\end{equation*}%
Since $r_{n}\leq c_{1}b^{-1},$ we get%
\begin{equation*}
\sup_{y\in \mathbb{R}}\int_{\mathbb{B}_{y/\sqrt{m}}\left( r_{n}\right) }%
\mathbb{\phi }_{\sigma ^{2}}\left( u\right) du\leq \int_{-r_{n}}^{r_{n}}%
\mathbb{\phi }_{\sigma ^{2}}\left( u\right) du\leq  c_{2}r_{n} %
\leq \frac{c_{3}}{b}.
\end{equation*}%
Choosing $b$ large, for some $q_{\varepsilon }<1$ and $n$ large enough, we
obtain%
\begin{equation*}
\sup_{y\in \mathbb{R},\ x\in \mathbb{X}}\mathbb{P}_{x}\left( \left\vert
y+S_{m}\right\vert \leq 3 n^{1/2-\varepsilon }\right) \leq q_{\varepsilon }.
\end{equation*}%
Implementing this bound in (\ref{nu001}) and using (\ref{nu000}) it follows
that%
\begin{equation*}
\sup_{y>0,\ x\in \mathbb{X}}\mathbb{P}_{x}\left( \nu _{n,y,\varepsilon}
>n^{1-\varepsilon }\right) \leq q_{\varepsilon }^{K}\leq \exp \left(
-c_{\varepsilon }n^{\varepsilon }\right) ,
\end{equation*}%
which proves the lemma.
\end{proof}

\begin{lemma}
\label{Lemma 3}There exists $c>0$ such that for any $\varepsilon \in \left( 0,\frac{1}{2}\right), 
n \geq 1,  x\in \mathbb{X}$ and $y>0$,
\begin{equation*}
\sup_{1\leq k \leq n} 
\mathbb{E}_{x}\left( \left\vert y+M_{k}\right\vert
;\nu _{n,y,\varepsilon}>n^{1-\varepsilon }\right) \leq c\left( 1+y\right)
\exp \left( -c_{\varepsilon }n^{\varepsilon }\right) 
\end{equation*}
for some $c_\varepsilon >0$.
\end{lemma}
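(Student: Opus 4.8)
The plan is to estimate $\mathbb{E}_{x}\left( \left\vert y+M_{k}\right\vert ;\nu _{n,y,\varepsilon}>n^{1-\varepsilon }\right)$ for a fixed $k\in\{1,\dots,n\}$ by splitting the expectation according to whether $\left\vert y+M_{k}\right\vert$ is moderate or large. On the event $\left\{\nu _{n,y,\varepsilon}>n^{1-\varepsilon }\right\}$ we know that $\left\vert y+M_{j}\right\vert<2n^{1/2-\varepsilon }$ for all $j\le n^{1-\varepsilon }$, but this controls $\left\vert y+M_{k}\right\vert$ directly only when $k\le n^{1-\varepsilon }$; for $k>n^{1-\varepsilon }$ the martingale may have grown, so the point is to pay the small probability $\exp(-c_{\varepsilon }n^{\varepsilon })$ from Lemma \ref{Lemma 2} and absorb the growth of $\left\vert y+M_{k}\right\vert$ using the $L^p$ bound of Lemma \ref{Lp boud martingales}.

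\medskip

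First I would treat the easy regime $1\le k\le n^{1-\varepsilon }$. Here, on $\left\{\nu _{n,y,\varepsilon}>n^{1-\varepsilon }\right\}$ one has $\left\vert y+M_{k}\right\vert<2n^{1/2-\varepsilon }$ pointwise, hence
\begin{equation*}
\mathbb{E}_{x}\left( \left\vert y+M_{k}\right\vert ;\nu _{n,y,\varepsilon}>n^{1-\varepsilon }\right)\le 2n^{1/2-\varepsilon }\,\mathbb{P}_{x}\left( \nu _{n,y,\varepsilon}>n^{1-\varepsilon }\right)\le 2n^{1/2-\varepsilon }\exp\left( -c_{\varepsilon }n^{\varepsilon }\right),
\end{equation*}
using Lemma \ref{Lemma 2}. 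Since $n^{1/2-\varepsilon }\le c(1+y)\exp(c_\varepsilon n^\varepsilon/2)$ is far from sharp, one simply absorbs the polynomial factor $n^{1/2-\varepsilon }$ into a slightly smaller exponential constant (replace $c_\varepsilon$ by $c_\varepsilon/2$), which yields the claim with the factor $c(1+y)$ being in fact unnecessary in this range. For the complementary regime $n^{1-\varepsilon }<k\le n$ I would apply the Cauchy--Schwarz inequality:
\begin{equation*}
\mathbb{E}_{x}\left( \left\vert y+M_{k}\right\vert ;\nu _{n,y,\varepsilon}>n^{1-\varepsilon }\right)\le \left(\mathbb{E}_{x}\left\vert y+M_{k}\right\vert ^{2}\right)^{1/2}\,\mathbb{P}_{x}\left( \nu _{n,y,\varepsilon}>n^{1-\varepsilon }\right)^{1/2}.
\end{equation*}
By Lemma \ref{Lp boud martingales} with $p=2$ (or any $p>2$ followed by Jensen), $\mathbb{E}_{x}\left\vert M_{k}\right\vert ^{2}\le c\,k\le c\,n$, so $\left(\mathbb{E}_{x}\left\vert y+M_{k}\right\vert ^{2}\right)^{1/2}\le c(y+\sqrt n)\le c(1+y)\sqrt n$; combined with $\mathbb{P}_{x}\left( \nu _{n,y,\varepsilon}>n^{1-\varepsilon }\right)^{1/2}\le \exp(-c_{\varepsilon }n^{\varepsilon }/2)$ from Lemma \ref{Lemma 2}, and again absorbing the $\sqrt n$ into the exponential, we get the bound $c(1+y)\exp(-c_{\varepsilon }n^{\varepsilon })$ (with a relabelled $c_\varepsilon$).

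\medskip

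Taking the supremum over $1\le k\le n$ of the two regime-wise bounds gives the statement, with a single constant $c_\varepsilon>0$ obtained as the minimum of the two exponential rates (halved to swallow the polynomial corrections). I do not expect any serious obstacle here: the only mild subtlety is making sure the $L^p$-moment bound for $M_k$ is applied with the uniformity in $x$ provided by Lemma \ref{Lp boud martingales}, and noting that $\left\vert y+M_{k}\right\vert\le y+\left\vert M_k\right\vert$ so that the dependence on $y$ enters only additively and hence through the advertised factor $1+y$. The appearance of $1+y$ rather than $y$ is convenient precisely so that the estimate remains valid for small $y$ as well.
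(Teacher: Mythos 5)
Your argument is correct, and the second regime is precisely the paper's proof: Cauchy--Schwarz, the $L^p$ martingale moment bound of Lemma \ref{Lp boud martingales} (applied with $p>2$ and Jensen, since that lemma is stated only for $p>2$, so your parenthetical ``$p=2$'' is not directly available), followed by Lemma \ref{Lemma 2}. The case split is unnecessary, however: the paper applies Cauchy--Schwarz uniformly for all $1\leq k\leq n$, which via Minkowski gives $\mathbb{E}_{x}^{1/2}\left( \left\vert y+M_{k}\right\vert ^{2}\right) \leq y + c n^{1/2}$ and hence the claimed bound after multiplying by $\mathbb{P}_{x}^{1/2}\left( \nu _{n,y,\varepsilon}>n^{1-\varepsilon }\right) \leq \exp\left(-c_{\varepsilon} n^{\varepsilon}/2\right)$, so there is no need to treat small $k$ separately.
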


\begin{proof}
By   Cauchy-Schwartz inequality, for any $n \geq 1, 1\leq k\leq n, x\in \mathbb{X}$ and $y>0,$%
\begin{equation*}
\mathbb{E}_{x}\left( \left\vert y+M_{k}\right\vert ;\nu _{n,y,\varepsilon}
>n^{1-\varepsilon }\right) \leq \mathbb{E}_{x}^{1/2}\left( \left\vert
y+M_{k}\right\vert ^{2}\right) \mathbb{P}_{x}^{1/2}\left( \nu_{n,y,\varepsilon}>n^{1-\varepsilon }\right) .
\end{equation*}%
By Minkowsky's inequality  and Lemma \ref{Lp boud martingales}
$$\mathbb{E}_{x}^{1/2}\left( \left\vert
y+M_{k}\right\vert ^{2}\right) \leq y+\sqrt{\mathbb{E}_{x}M_{k}^{2}}
\leq y+\left( \mathbb{E}_{x}M_{k}^{3}\right)
^{1/3}\leq y+c n^{1/2}.$$   The claim follows by Lemma \ref%
{Lemma 2}.
\end{proof}

\begin{lemma}
\label{Lemma 4}There exists $c>0$ such that for any $n \geq 1, x\in \mathbb{X}$ and $y>0,$
\begin{equation}
\mathbb{E}_{x}\left(
y+M_{n};\ T_{y}>n\right) \leq c\left( 1+y\right) .  \label{mmm000}
\end{equation}
\end{lemma}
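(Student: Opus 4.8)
The plan is to track how the conditioned walk climbs to height of order $\sqrt n$: once it has, Lemma~\ref{Lemma 1} controls the subsequent overshoot, while the event that it has not done so within $n^{1-\varepsilon}$ steps is negligible by Lemma~\ref{Lemma 3}. Fix $\varepsilon>0$ small (below the $\varepsilon_0$ of Lemma~\ref{Lemma 1}) and put $N=\lfloor n^{1-\varepsilon}\rfloor$. I would first record the consequence of stopping the martingale $(y+M_{k\wedge T_y})_{k\ge0}$ at time $n$: since $y+M_{T_y}\le 0$ on $\{T_y\le n\}$,
\[
\mathbb E_x\big(y+M_n;\,T_y>n\big)=y+\mathbb E_x\big(|y+M_{T_y}|;\,T_y\le n\big).
\]
In particular, when $y\ge n^{1/2-\varepsilon}$ Lemma~\ref{Lemma 1} gives at once $\mathbb E_x(y+M_n;T_y>n)\le y+c\,y/n^{\varepsilon}\le c(1+y)$, so from now on I assume $y<n^{1/2-\varepsilon}$.

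I would then split $\mathbb E_x(y+M_n;T_y>n)$ according to whether $\nu_n=\nu_{n,y,\varepsilon}$ exceeds $N$ or not. On $\{T_y>n\}$ one has $y+M_n>0$, so the first piece is at most $\mathbb E_x(|y+M_n|;\nu_n>N)\le c(1+y)e^{-c_\varepsilon n^{\varepsilon}}$ by Lemma~\ref{Lemma 3} applied with $k=n$. On $\{\nu_n\le N\}$ I would apply the strong Markov property at $\nu_n$: on $\{\nu_n=m\le N,\ T_y>n\}$ the walk stays positive up to time $m$, $y+M_m\ge 2n^{1/2-\varepsilon}$, and its continuation is, given $\mathcal F_m$, a fresh copy of the Markov walk issued from $X_m$ at level $y'=y+M_m$. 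The identity above applied to this fresh walk, together with Lemma~\ref{Lemma 1} (legitimate since $y'\ge 2n^{1/2-\varepsilon}\ge(n-m)^{1/2-\varepsilon}$), bounds its contribution by $\big(1+c(n-m)^{-\varepsilon}\big)y'\le c'(y+M_{\nu_n})$, so that summing over $m\le N$ gives
\[
\mathbb E_x\big(y+M_n;\,T_y>n\big)\ \le\ c(1+y)e^{-c_\varepsilon n^{\varepsilon}}+c'\,\mathbb E_x\big((y+M_{\nu_n});\,\nu_n\le N,\ T_y>\nu_n\big).
\]

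It remains to bound $\mathbb E_x\big((y+M_{\nu_n});\nu_n\le N,T_y>\nu_n\big)$ by $c(1+y)$. Stopping the martingale at $\sigma=\nu_n\wedge T_y\wedge N$ and using, on the three parts $\{\nu_n\le N,\nu_n<T_y\}$, $\{T_y\le\nu_n\wedge N\}$, $\{N<\nu_n\wedge T_y\}$ of a partition of $\Omega$, that $y+M_\sigma$ equals $y+M_{\nu_n}>0$, $y+M_{T_y}\le 0$, $y+M_N>0$ respectively, together with $\mathbb E_x(y+M_\sigma)=y$, yields
\[
\mathbb E_x\big((y+M_{\nu_n});\,\nu_n\le N,\ T_y>\nu_n\big)\ \le\ y+\mathbb E_x\big(|y+M_{T_y}|;\,T_y\le\nu_n\wedge N\big),
\]
so everything reduces to the overshoot bound $\mathbb E_x(|y+M_{T_y}|;T_y\le\nu_n)\le c(1+y)$, uniform in $n$. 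For this I would write $|y+M_{T_y}|=\xi_{T_y}^{-}-(y+M_{T_y-1})\le\xi_{T_y}^{-}$, with $\xi_k=\theta(X_k)-\mathbf P\theta(X_{k-1})$, split the overshoot at a suitable level, and combine: the inclusion $\{T_y=k\le\nu_n\}\subset\{T_y\wedge\nu_n>k-1\}\in\mathcal F_{k-1}$; the uniform conditional moment bounds $\mathbb E_x(|\xi_k|^p\mid\mathcal F_{k-1})\le C_p$ for every $p$ (which follow from Corollary~\ref{bound moment g} and condition~\textbf{M4}); the bound $\mathbb E_x(T_y\wedge\nu_n)\le\mathbb E_x\nu_n\le c\,n^{1-\varepsilon}$ obtained by iterating Lemma~\ref{Lemma 2} along the Markov property; and, for the part of the overshoot of bounded size, an occupation-time estimate saying that the positive walk $(y+M_k)_{k<T_y}$ visits any fixed neighbourhood of the origin only $O(1+y)$ times in mean, extracted from a confinement argument of the type of Lemma~\ref{Lemma 2} on boxes of bounded size and from the central limit theorem of Theorem~\ref{CLT GLDR}.

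The hard part is precisely this overshoot estimate, which by the identity of the first paragraph is equivalent to the assertion of the lemma itself restricted to the confinement window. A naive truncation of the killing increment, fed only with the moment bounds on $\xi_k$, controls $\mathbb E_x(|y+M_{T_y}|;T_y\le\nu_n)$ only up to a factor growing with $n$, since the killing instant may range over $\sim n^{1-2\varepsilon}$ steps; what makes the estimate uniform is that a large overshoot forces the walk to be within $O(1)$ of $0$ at the killing time, and a positive conditioned walk spends only $O(1+y)$ steps near the origin before $T_y$. Converting this heuristic into the occupation/Green-function bound used above—relying on the spectral-gap information behind Theorem~\ref{CLT GLDR} and on the confinement Lemma~\ref{Lemma 2} rather than on the raw moments—is where the real work of the proof lies.
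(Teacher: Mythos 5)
Your setup matches the paper's up to the Markov-property step: the identity $\mathbb E_x(y+M_n;T_y>n)=y+\mathbb E_x(|y+M_{T_y}|;T_y\le n)$, the immediate conclusion from Lemma~\ref{Lemma 1} when $y\ge n^{1/2-\varepsilon}$, the split over $\{\nu_n\le N\}$ versus $\{\nu_n>N\}$ with $N=[n^{1-\varepsilon}]$, and the use of Lemma~\ref{Lemma 3} on the second event are all the same. But the route you take after applying the Markov property at $\nu_n$ leads to a dead end, and you say so yourself: you funnel the whole argument into the estimate $\mathbb E_x(|y+M_{T_y}|;T_y\le\nu_n\wedge N)\le c(1+y)$ and then observe that it is ``equivalent to the assertion of the lemma itself restricted to the confinement window.'' That is exactly the problem — the occupation-time/Green-function sketch you offer to resolve it is not a proof, and there is no ready-made ladder-variable or renewal theory for this Markov walk that would give it to you. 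As written, the argument is circular at that point.

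The idea you are missing is the bootstrap over time scales, and it turns on one further observation: $((y+M_k)1_{\{T_y>k\}})_{k\ge 1}$ is a submartingale. Using this, one does \emph{not} try to estimate the contribution at $\nu_n$ directly; instead, on $\{\nu_n=k,T_y>k\}$ with $k\le[n^{1-\varepsilon}]$ one dominates
\begin{equation*}
\mathbb E_x\bigl(y+M_k;\,T_y>k,\,\nu_n=k\bigr)\ \le\ \mathbb E_x\bigl(y+M_{[n^{1-\varepsilon}]};\,T_y>[n^{1-\varepsilon}],\,\nu_n=k\bigr),
\end{equation*}
which, after summing over $k$ and inserting the factor $1+c/(n-k)^\varepsilon\le 1+c'_\varepsilon/n^\varepsilon$ from Lemma~\ref{Lemma 1} (your replacement of this factor by a generic $c'$ throws away precision that the iteration needs), gives
\begin{equation*}
\mathbb E_x\bigl(y+M_n;\,T_y>n\bigr)\ \le\ \Bigl(1+\tfrac{c'_\varepsilon}{n^\varepsilon}\Bigr)\,\mathbb E_x\bigl(y+M_{[n^{1-\varepsilon}]};\,T_y>[n^{1-\varepsilon}]\bigr)+c(1+y)\exp\bigl(-c_\varepsilon n^\varepsilon\bigr).
\end{equation*}
The right-hand side contains the same quantity at the shorter horizon $[n^{1-\varepsilon}]$, so one iterates along $k_j=[n^{(1-\varepsilon)^j}]$ down to a fixed constant $n_0$; the product of the prefactors $1+c'_\varepsilon/k_j^\varepsilon$ and the sum of the exponential remainders converge uniformly in $n$ because $k_j^{-\varepsilon}$ decays geometrically away from $n_0$, and the base case $\mathbb E_x(y+M_{n_0};T_y>n_0)\le y+c$ is trivial. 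This recursion is what makes the bound $c(1+y)$ uniform in $n$, and it is the piece that replaces the overshoot estimate you could not close.
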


\begin{proof}
First we prove that there exist $c>0$ and  $\varepsilon _{0}>0$ such that for any $x\in 
\mathbb{X},$ $\varepsilon \in \left( 0,\varepsilon _{0}\right) $ and $y\geq n^{1/2-\varepsilon },$ 
\begin{equation}
\mathbb{E}_{x}\left( y+M_{n};\ T_{y}>n\right) \leq \left( 1+\frac{c}{%
n^{\varepsilon }}\right) y.  \label{bound G}
\end{equation}
Since $\left(
M_{n},\mathcal{F}_{n}\right) _{n\geq 1}$ is a zero mean $\mathbb{P}_{x}$%
-martingale, we have $\mathbb{E}_{x}M_{n}=0$ and $\mathbb{E}_{x}\left(
y+M_{n};\ T_{y}\leq n\right) =\mathbb{E}_{x}\left( y+M_{T_{y}};\ T_{y}\leq
n\right) ;$ so%
\begin{eqnarray}
\mathbb{E}_{x}\left( y+M_{n};\ T_{y}>n\right) &=&\mathbb{E}_{x}\left(
y+M_{n}\right) -\mathbb{E}_{x}\left( y+M_{n};\ T_{y}\leq n\right)  \notag \\
&=&y-\mathbb{E}_{x}\left( y+M_{n};\ T_{y}\leq n\right)  \notag \\
&=&y-\mathbb{E}_{x}\left( y+M_{T_{y}};\ T_{y}\leq n\right)  \notag \\
&=&y+\mathbb{E}_{x}\left( \left\vert y+M_{T_{y}}\right\vert ;\ T_{y}\leq
n\right) .  \label{bound G 000}
\end{eqnarray}%
By Lemma \ref{Lemma 1}, there exist $c>0$ and $\varepsilon _{0}>0$
such that $\mathbb{E}_{x}\left( \left\vert y+M_{T_{y}}\right\vert ;\ T_{y}\leq
n\right) \leq \frac{c}{n^{\varepsilon }}y,$ 
for any $y\geq n^{1/2-\varepsilon }$ and
$\varepsilon \in \left(0,\varepsilon _{0}\right) .$
Implementing this inequality in (\ref{bound G
000}), we obtain (\ref{bound G}).

Now we show (\ref{mmm000}) for any $x\in \mathbb{X}$ and $y>0.$ We use a
recursive argument based on the Markov property coupled with the bound (\ref%
{bound G}). First we note that, for any $\varepsilon >0,$%
\begin{eqnarray}
\mathbb{E}_{x}\left( y+M_{n};\ T_{y}>n\right) &=&\mathbb{E}_{x}\left(
y+M_{n};\ T_{y}>n,\nu _{n,y,\varepsilon}\leq n^{1-\varepsilon }\right)  \notag \\
&&+\mathbb{E}_{x}\left( y+M_{n};\ T_{y}>n,\nu _{n,y,\varepsilon }
> n^{1-\varepsilon }\right)  \notag \\
&=&J_{1}\left( x,y\right) +J_{2}\left( x,y\right) .  \label{bound J1+J2}
\end{eqnarray}%
By Lemma \ref{Lemma 3}, for some $\varepsilon \in \left( 0,\varepsilon_{0}\right) ,$ 
\begin{equation*}
J_{2}\left( x,y\right) \leq c\left( 1+y\right) \exp \left( -c_{\varepsilon
}n^{\varepsilon }\right) .
\end{equation*}%
To control $J_{1}\left( x,y\right) ,$ write it in the form 
\begin{equation}
J_{1}\left( x,y\right) =\sum_{k=1}^{\left[ n^{1-\varepsilon }\right] }%
\mathbb{E}_{x}\left( y+M_{n};\ T_{y}>n,\ \nu _{n,y,\varepsilon}=k\right) .
\label{bound J2 000}
\end{equation}%
By the Markov property of the chain $\left( X_{n}\right)
_{n\geq 1},$%
\begin{eqnarray}
\mathbb{E}_{x}\left( y+M_{n};\ T_{y}>n,\ \nu _{n,y,\varepsilon }=k\right) 
&=&\int \mathbb{E}_{x^{\prime }}\left( y^{\prime }+M_{n-k};\ T_{y^{\prime
}}>n-k\right)  \notag \\
&&\times \mathbb{P}_{x}\left( X_{k}\in dx^{\prime },y+M_{k}\in dy^{\prime
};\ T_{y}>k,\ \nu _{n,y,\varepsilon }=k\right)  \notag \\
&=&\mathbb{E}_{x}\left( U_{n-k}\left( X_{k},y+M_{k}\right) ;\ T_{y}>k,\ \nu_{n,y,\varepsilon }=k\right) ,  \label{bound E}
\end{eqnarray}%
where $U_{m}\left( x,y\right) =\mathbb{E}_{x}\left( y+M_{m};\ T_{y}>m\right) $
for any $m\geq 1.$ Since 
\begin{equation*}
\left\{ \nu _{n,y,\varepsilon }=k\right\} \subset \left\{ \left\vert
y+M_{k}\right\vert \geq n^{1/2-\varepsilon }\right\} ,
\end{equation*}%
using (\ref{bound G}), on the event $\left\{ T_y>k, \ \nu _{n,y,\varepsilon }=k\right\} $ we have%
\begin{equation}
U_{n-k}\left( X_{k},y+M_{k}\right) \leq \left( 1+\frac{c}{\left( n-k\right)
^{\varepsilon }}\right) \left( y+M_{k}\right) .  \label{bound Sn J3}
\end{equation}%
Inserting (\ref{bound Sn J3}) into (\ref{bound E}),  we obtain%
\begin{eqnarray}
&&\mathbb{E}_{x}\left( y+M_{n};\ T_{y}>n,\ \nu _{n,y,\varepsilon }=k\right) 
\notag \\
&\leq &\left( 1+\frac{c}{\left( n-k\right) ^{\varepsilon }}\right) \mathbb{E}%
_{x}\left( y+M_{k};\ T_{y}>k,\ \nu _{n,y,\varepsilon }=k\right) .
\label{bound E001}
\end{eqnarray}%
Combining (\ref{bound E001}) and (\ref{bound J2 000}) it follows that, for $%
n $ sufficiently large,%
\begin{eqnarray*}
J_{1}\left( x,y\right) &\leq &\sum_{k=1}^{\left[ n^{1-\varepsilon }\right]
}\left( 1+\frac{c}{\left( n-k\right) ^{\varepsilon }}\right) \mathbb{E}_{x}\left( y+M_{k};\ T_{y}>k,\nu _{n,y,\varepsilon }=k\right) \\
&\leq &\left( 1+\frac{c_{\varepsilon }^{\prime}}{n^{\varepsilon }}\right) 
\sum_{k=1}^{\left[ n^{1-\varepsilon }\right] }\mathbb{E}_{x}\left( y+M_{k};\ T_{y}>k,\nu_{n,y,\varepsilon }=k\right) .
\end{eqnarray*}%
Since $\left( \left( y+M_{n}\right) 1_{\left\{ T_{y}>n\right\} }\right)
_{n\geq 1}$ is a submartingale, for any $x\in \mathbb{X}$ and $1\leq k\leq \left[ n^{1-\varepsilon }\right], $ 
\begin{equation*}
\mathbb{E}_{x}\left( y+M_{k};\ T_{y}>k,\nu _{n,y,\varepsilon }=k\right) \leq 
\mathbb{E}_{x}\left( y+M_{\left[ n^{1-\varepsilon }\right] };\ T_{y}>\left[
n^{1-\varepsilon }\right] ,\nu _{n,y,\varepsilon }=k\right) .
\end{equation*}%
This implies%
\begin{eqnarray*}
J_{1}\left( x,y\right) &\leq &\left( 1+\frac{c_{\varepsilon }^{\prime}}{%
n^{\varepsilon }}\right) \sum_{k=1}^{\left[ n^{1-\varepsilon }\right] }%
\mathbb{E}_{x}\left( y+M_{\left[ n^{1-\varepsilon }\right] };\ T_{y}>\left[
n^{1-\varepsilon }\right] ,\nu _{n,y,\varepsilon }=k\right) \\
&\leq &\left( 1+\frac{c_{\varepsilon }^{\prime}}{n^{\varepsilon }}\right) \mathbb{E}%
_{x}\left( y+M_{\left[ n^{1-\varepsilon }\right] };\ T_{y}>\left[
n^{1-\varepsilon }\right] \right) .
\end{eqnarray*}%
Implementing the bounds for $J_{1}\left( x,y\right) $ and $J_{2}\left(
x,y\right) $ in (\ref{bound J1+J2}) we obtain 
\begin{eqnarray}
\mathbb{E}_{x}\left( y+M_{n};\ T_{y}>n\right) &\leq &\left( 1+\frac{%
c_{\varepsilon }^{\prime}}{n^{\varepsilon }}\right) \mathbb{E}_{x}\left( y+M_{\left[
n^{1-\varepsilon }\right] };\ T_{y}>\left[ n^{1-\varepsilon }\right] \right) 
\notag \\
&&+c\left( 1+y\right) \exp \left( -c_{\varepsilon }n^{\varepsilon
}\right) .  \label{bound E002}
\end{eqnarray}%
Let $k_{j}=\left[ n^{\left( 1-\varepsilon \right) ^{j}}\right] $ for $j\geq
0.$ Note that $\left[ k_{j}^{1-\varepsilon }\right] =\left[ \left[ n^{\left(
1-\varepsilon \right) ^{j}}\right] ^{1-\varepsilon }\right] \leq \left[
n^{\left( 1-\varepsilon \right) ^{j+1}}\right] =k_{j+1}.$ Then, using the
bound (\ref{bound E002}) and the fact that $\left( \left( y+M_{n}\right)
1_{\left\{ T_{y}>n\right\} }\right) _{n\geq 1}$ is a submartingale, we get%
\begin{eqnarray*}
&&\mathbb{E}_{x}\left( y+M_{k_{1}};\ T_{y}>k_{1}\right) \\
&\leq &\left( 1+\frac{c_{\varepsilon }^{\prime}}{k_{1}^{\varepsilon }}\right) \mathbb{%
E}_{x}\left( y+M_{\left[ k_{1}^{1-\varepsilon }\right] };\ T_{y}>\left[
k_{1}^{1-\varepsilon }\right] \right) +c\left( 1+y\right) \exp \left(
-c_{\varepsilon }k_{1}^{\varepsilon }\right) \\
&\leq &\left( 1+\frac{c_{\varepsilon }^{\prime}}{k_{1}^{\varepsilon }}\right) \mathbb{%
E}_{x}\left( y+M_{k_{2}};\ T_{y}>k_{2}\right) +c\left( 1+y\right) \exp \left(
-c_{\varepsilon }k_{1}^{\varepsilon }\right) .
\end{eqnarray*}%
Substituting this bound in (\ref{bound E002}) and continuing in the same
way, after $m$ iterations, we obtain 
\begin{equation}
\mathbb{E}_{x}\left( y+M_{n};\ T_{y}>n\right) \leq A_{m}\left( \mathbb{E}%
_{x}\left( y+M_{k_{m}};\ T_{y}>k_{m}\right) +c\left( 1+y\right) B_{m}\right) ,
\label{ineq iter}
\end{equation}%
where%
\begin{equation}
A_{m}=\prod_{j=1}^{m}\left( 1+\frac{c^\prime_{\varepsilon }}{k_{j-1}^{\varepsilon }}%
\right)  \label{Am000}
\end{equation}%
and%
\begin{equation}
B_{m}=\sum_{j=1}^{m}\exp \left( -c_{\varepsilon }k_{j-1}^{\varepsilon }\right) .  \label{Bm000}
\end{equation}%
Letting $\alpha _{j}=n^{-\varepsilon \left( 1-\varepsilon \right) ^{j-1}},$
we obtain%
\begin{equation}
A_{m}\leq \exp \left( 2^{\varepsilon }c^\prime_{\varepsilon }\sum_{j=1}^{m}\alpha
_{j}\right) .  \label{Am001}
\end{equation}%
We choose $m=m\left( n\right) $ such that $k_{m}=\left[ n^{\left(
1-\varepsilon \right) ^{m}}\right] \leq n_{0} \leq k_{m-1},$ where $n_{0}
\geq 1$ is a constant. Let $q_{0}=n_{0}^{-\varepsilon ^{2}}.$ Note that, for
any $j$ satisfying $0\leq j\leq m,$ it holds%
\begin{equation*}
\frac{\alpha _{j}}{\alpha _{j+1}}=n^{-\varepsilon ^{2}\left( 1-\varepsilon
\right) ^{j-1}}\leq n^{-\varepsilon ^{2}\left( 1-\varepsilon \right)
^{m-1}}\leq n_{0}^{-\varepsilon ^{2}}=q_{0}<1.
\end{equation*}%
Therefore 
\begin{equation*}
\alpha _{j}\leq q_{0}^{m-j}\alpha _{m}=q_{0}^{m-j}n^{-\varepsilon \left(
1-\varepsilon \right) ^{m-1}}\leq q_{0}^{m-j}n_{0}^{-\varepsilon }.
\end{equation*}%
This implies%
\begin{equation}
\sum_{j=1}^{m}\alpha _{j}\leq n_{0}^{-\varepsilon
}\sum_{j=1}^{m}q_{0}^{m-j}\leq \frac{n_{0}^{-\varepsilon }}{%
1-n_{0}^{-\varepsilon ^{2}}}.  \label{Am002}
\end{equation}

In the same way we can check that 
\begin{equation}
B_{m}\leq c_{1}\sum_{j=1}^{m}n^{-\varepsilon \left( 1-\varepsilon \right)
^{j-1}}\leq c_{2}\frac{n_{0}^{-\varepsilon }}{1-n_{0}^{-\varepsilon ^{2}}}.
\label{Bm002}
\end{equation}%
The assertion of the lemma follows from (\ref{ineq iter}), (\ref{Am001}), (\ref{Am002}) and (\ref{Bm002}) taking into account that 
\begin{eqnarray*}
\mathbb{E}_{x}\left( y+M_{k_{m}};\ T_{y}>k_{m}\right) &=&\mathbb{E}_{x}\left(
y+M_{n_{0}};\ T_{y}>n_{0}\right) \\
&\leq &\mathbb{E}_{x}\left( y+\left\vert M_{n_{0}}\right\vert \right) \\
&\leq &y+c.
\end{eqnarray*}
\end{proof}

\begin{corollary}
\label{corr to lemma4}
There exists $c>0$ such that for any $n\geq 1, x \in \mathbb X$ and $y>0$,
\begin{equation*}
\mathbb{E}_{x}\left( y+S_{n};\tau
_{y}>n\right) \leq c\left( 1+y\right)
\end{equation*}%
and%
\begin{equation*}
\mathbb{E}_{x}\left( y+M_{n};\tau
_{y}>n\right) \leq c\left( 1+y\right) .
\end{equation*}
\end{corollary}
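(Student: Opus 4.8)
The plan is to deduce both inequalities from Lemma \ref{Lemma 4} together with the uniform bound on $S_n-M_n$ furnished by Lemma \ref{Martingale decomp}. Write $a=2\left\Vert \mathbf{P}\theta \right\Vert _{\infty}\geq 0$, so that $\left\vert S_k-M_k\right\vert \leq a$ $\mathbb{P}_x$-a.s.\ for every $k\geq 0$ and every $x\in \mathbb{X}$. All the expectations below are finite: $M_n\in L^1$ by Lemma \ref{Lp boud martingales}, and $S_n=M_n+(S_n-M_n)$ with the last term bounded.

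The first step is to compare the exit time $\tau_y$ of the Markov walk with the exit time $T_{y+a}$ of the martingale shifted upward by $a$: I claim that $\left\{ \tau_y>n\right\} \subseteq \left\{ T_{y+a}>n\right\}$. Indeed, on $\left\{ \tau_y>n\right\}$ one has $y+S_k>0$ for every $1\leq k\leq n$, whence $(y+a)+M_k=(y+S_k)+a+(M_k-S_k)\geq y+S_k>0$ for every $1\leq k\leq n$, which is precisely the event $\left\{ T_{y+a}>n\right\}$.

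For the bound on $y+M_n$, note that on $\left\{ T_{y+a}>n\right\}$ the variable $(y+a)+M_n$ is positive (by the definition of $T_{y+a}$), so enlarging the event from $\left\{ \tau_y>n\right\}$ to $\left\{ T_{y+a}>n\right\}$ can only increase the expectation; combined with $a\geq 0$ this gives
\[
\mathbb{E}_x\left( y+M_n;\tau_y>n\right) \leq \mathbb{E}_x\left( (y+a)+M_n;\tau_y>n\right) \leq \mathbb{E}_x\left( (y+a)+M_n;T_{y+a}>n\right).
\]
Applying Lemma \ref{Lemma 4} with the starting point $y+a>0$ in place of $y$ bounds the right-hand side by $c(1+y+a)\leq c(1+a)(1+y)$.

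For the bound on $y+S_n$, observe that on $\left\{ \tau_y>n\right\}$ one has $0<y+S_n\leq (y+M_n)+a$, hence
\[
\mathbb{E}_x\left( y+S_n;\tau_y>n\right) \leq \mathbb{E}_x\left( y+M_n;\tau_y>n\right) +a\,\mathbb{P}_x\left( \tau_y>n\right) \leq c(1+a)(1+y)+a,
\]
and since $1+y\geq 1$ the last quantity is at most $c'(1+y)$ with $c'=c(1+a)+a$. Taking the larger of the two constants finishes the argument. There is no genuine obstacle here; the only point requiring care is the direction of the event inclusion — one must pass from $\left\{ \tau_y>n\right\}$ to the \emph{larger} event $\left\{ T_{y+a}>n\right\}$, obtained by shifting the martingale \emph{up} by $a$ (not down), and check that the integrand stays nonnegative on that larger event, which is automatic from the definition of $T_{y+a}$.
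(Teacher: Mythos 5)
Your proof is correct and follows essentially the same route as the paper: both deduce the bounds from Lemma \ref{Lemma 4} via the almost-sure inclusion $\{\tau_y>n\}\subseteq\{T_{y+a}>n\}$ coming from $\sup_n|S_n-M_n|\le a$. The paper bounds $\mathbb{E}_x(y+a+M_n;\tau_y>n)$ once and reads off both estimates from it, whereas you bound $\mathbb{E}_x(y+M_n;\tau_y>n)$ first and then pass to $S_n$ via $y+S_n\le(y+M_n)+a$; this is only a cosmetic difference in bookkeeping, and if anything your justification that the integrand is nonnegative on the enlarged event $\{T_{y+a}>n\}$ (rather than on $\{\tau_y>n\}$) is stated more precisely than in the paper.
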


\begin{proof}
Let $a=\left\Vert \mathbf{P}\theta \right\Vert _{\infty }$ and $x\in \mathbb{X}$. 
By Lemma \ref{Martingale decomp}, $\sup_{n\geq
0}\left\vert S_{n}-M_{n}\right\vert \leq a,$ $\mathbb{P}_{x}$-a.s., which
implies $\mathbb{P}_{x}\left( \tau _{y}\leq T_{y+a}\right) =1.$ Since $%
y+a+M_{n}\geq y+S_{n}>0$ on $\left\{ \tau _{y}>n\right\} ,$ $\mathbb{P}_{x}$
-a.s., for any $x\in \mathbb{X},$ using Lemma \ref{Lemma 4}, we get,%
\begin{eqnarray}
\mathbb{E}_{x}\left( y+a+M_{n};\tau _{y}>n\right) &\leq &\mathbb{E}%
_{x}\left( y+a+M_{n};\ T_{y+a}>n\right)  \notag \\
&\leq &c_{1}\left( 1+y+a\right)  \notag \\
&\leq &c_{2}\left( 1+y\right) .  \label{M plus a}
\end{eqnarray}%
Using the bound (\ref{M plus a}) it follows that%
\begin{equation*}
\mathbb{E}_{x}\left( y+S_{n};\tau _{y}>n\right) \leq \mathbb{E}_{x}\left(
y+a+M_{n};\tau _{y}>n\right) \leq c_{2}\left( 1+y\right)
\end{equation*}%
and%
\begin{equation*}
\mathbb{E}_{x}\left( y+M_{n};\tau _{y}>n\right) \leq \mathbb{E}_{x}\left(
y+a+M_{n};\tau _{y}>n\right) \leq c_{2}\left( 1+y\right) .
\end{equation*}
\end{proof}

\begin{lemma}
\label{Lemma 5}There exists $c>0$ such that for any $ x \in \mathbb X$ and $y>0$,
\begin{equation*}
\mathbb{E}_{x}\left\vert y+M_{T_{y}}\right\vert \leq
c\left( 1+y\right) < +\infty .
\end{equation*}
\end{lemma}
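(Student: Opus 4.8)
The plan is to combine the optional stopping theorem for the zero mean martingale $(M_n)_{n\geq 0}$ with the uniform bound of Lemma \ref{Lemma 4}, and then to pass to the limit $n\rightarrow\infty$ using the a.s.\ finiteness of $T_y$ established in Lemma \ref{Lemma finite tau}.

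First I would fix $x\in\mathbb{X}$ and $y>0$. Since $\left(M_{n},\mathcal{F}_{n}\right)_{n\geq 0}$ is a zero mean $\mathbb{P}_x$-martingale with $M_0=0$ and $T_{y}\wedge n$ is a bounded stopping time, the optional stopping theorem gives $\mathbb{E}_{x}M_{T_{y}\wedge n}=0$, hence $\mathbb{E}_{x}\left(y+M_{T_{y}\wedge n}\right)=y$. Splitting the expectation according to whether $T_{y}\leq n$ or $T_{y}>n$, and using that $y+M_{T_{y}}\leq 0$ on $\left\{T_{y}\leq n\right\}$ by the very definition of $T_{y}$, one obtains
\[
y=-\mathbb{E}_{x}\left(\left\vert y+M_{T_{y}}\right\vert;\ T_{y}\leq n\right)+\mathbb{E}_{x}\left(y+M_{n};\ T_{y}>n\right),
\]
that is, $\mathbb{E}_{x}\left(\left\vert y+M_{T_{y}}\right\vert;\ T_{y}\leq n\right)=\mathbb{E}_{x}\left(y+M_{n};\ T_{y}>n\right)-y.$

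Next, by Lemma \ref{Lemma 4} we have $\mathbb{E}_{x}\left(y+M_{n};\ T_{y}>n\right)\leq c\left(1+y\right)$ uniformly in $n\geq 1$ and $x\in\mathbb{X}$, whence $\mathbb{E}_{x}\left(\left\vert y+M_{T_{y}}\right\vert;\ T_{y}\leq n\right)\leq c\left(1+y\right)-y\leq c\left(1+y\right)$ for every $n$. Finally, since $T_{y}<\infty$ $\mathbb{P}_{x}$-a.s.\ by Lemma \ref{Lemma finite tau}, the nonnegative random variables $\left\vert y+M_{T_{y}}\right\vert 1_{\left\{T_{y}\leq n\right\}}$ increase to $\left\vert y+M_{T_{y}}\right\vert$ as $n\rightarrow\infty$, so the monotone convergence theorem yields $\mathbb{E}_{x}\left\vert y+M_{T_{y}}\right\vert=\lim_{n\rightarrow\infty}\mathbb{E}_{x}\left(\left\vert y+M_{T_{y}}\right\vert;\ T_{y}\leq n\right)\leq c\left(1+y\right)<\infty$, which is the claim.

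The argument is essentially routine and there is no serious obstacle; the only point requiring a little care is the justification of the monotone passage to the limit, which is precisely where the a.s.\ finiteness of $T_{y}$ (Lemma \ref{Lemma finite tau}) is used, together with the fact that the pre-limit quantities are already controlled by $c(1+y)$ via Lemma \ref{Lemma 4}.
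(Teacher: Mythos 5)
Your proof is correct and follows essentially the same route as the paper: both proofs rest on the optional stopping identity $\mathbb{E}_x M_{T_y\wedge n}=0$, the sign of $y+M_{T_y}$ at the stopping time, the bound of Lemma \ref{Lemma 4} for $\mathbb{E}_x\left(y+M_n;\ T_y>n\right)$, and a monotone convergence passage to the limit justified by $T_y<\infty$ $\mathbb{P}_x$-a.s. The only cosmetic difference is that the paper first writes $\mathbb{E}_x\left\vert y+M_{T_y\wedge n}\right\vert=2\mathbb{E}_x\left(y+M_n;\ T_y>n\right)-y$ and bounds this, while you isolate $\mathbb{E}_x\left(\left\vert y+M_{T_y}\right\vert;\ T_y\leq n\right)$ directly; the two computations are equivalent.
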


\begin{proof}
Let $x\in \mathbb{X}$. As $\left( M_{n},\mathcal{F}%
_{n}\right) _{n\geq 1}$ is a zero mean $\mathbb{P}_{x}$-martingale, we have $%
\mathbb{E}_{x}M_{n}=0$ and $\mathbb{E}_{x}\left( y+M_{n};\ T_{y}\leq n\right) =%
\mathbb{E}_{x}\left( y+M_{T_{y}};\ T_{y}\leq n\right) ;$ so, since $y>0,$ 
\begin{eqnarray*}
\mathbb{E}_{x}\left\vert y+M_{T_{y}\wedge n}\right\vert &=&\mathbb{E}%
_{x}\left( y+M_{n};\ T_{y}>n\right) -\mathbb{E}_{x}\left(
y+M_{T_{y}};\ T_{y}\leq n\right) \\
&=&\mathbb{E}_{x}\left( y+M_{n};\ T_{y}>n\right) -\mathbb{E}_{x}\left(
y+M_{n};\ T_{y}\leq n\right) \\
&=&2\mathbb{E}_{x}\left( y+M_{n};\ T_{y}>n\right) -y \\
&\leq &2\mathbb{E}_{x}\left( y+M_{n};\ T_{y}>n\right) .
\end{eqnarray*}%
Taking into account that by Lemma \ref{Lemma 4}, $\mathbb{E}_{x}\left(
y+M_{n};\ T_{y}>n\right) \leq c\left( 1+y\right) ,$ we get%
\begin{equation*}
\mathbb{E}_{x}\left\vert y+M_{T_{y}\wedge n}\right\vert \leq 2c\left(
1+y\right) .
\end{equation*}%
Since%
\begin{equation*}
\mathbb{E}_{x}\left( \left\vert y+M_{T_{y}}\right\vert ;\ T_{y}\leq n\right)
\leq \mathbb{E}_{x}\left\vert y+M_{T_{y}\wedge n}\right\vert \leq 2c\left(
1+y\right) ,
\end{equation*}%
by Lebesgue's monotone convergence theorem it follows that 
\begin{equation*}
\mathbb{E}_{x}\left\vert y+M_{T_{y}}\right\vert =\lim_{n\rightarrow +\infty }%
\mathbb{E}_{x}\left( \left\vert y+M_{T_{y}}\right\vert ;\ T_{y}\leq n\right)
\leq 2c \left( 1+y\right) < +\infty .
\end{equation*}
\end{proof}

\begin{corollary}
\label{Intergrab of Mtau}There exists $c>0$ such that for any $x \in \mathbb X$ and $y>0$,
\begin{equation*}
\mathbb{E}_{x}\left( \left\vert y+S_{\tau
_{y}}\right\vert \right) \leq c\left( 1+y\right) < +\infty
\end{equation*}%
and 
\begin{equation*}
\mathbb{E}_{x}\left( \left\vert y+M_{\tau
_{y}}\right\vert \right) \leq c\left( 1+y\right) < +\infty .
\end{equation*}
\end{corollary}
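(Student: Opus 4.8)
The plan is to derive both bounds from the corresponding estimate on the martingale overshoot $\mathbb{E}_{x}|y+M_{T_{y}}|$ in Lemma \ref{Lemma 5}, by exploiting the almost-sure proximity of $S_{n}$ and $M_{n}$ provided by Lemma \ref{Martingale decomp}. Fix $x\in\mathbb{X}$ and $y>0$, and set $a=2\|\mathbf{P}\theta\|_{\infty}$, so that $\sup_{n\geq 0}|S_{n}-M_{n}|\leq a$ holds $\mathbb{P}_{x}$-a.s. The first step is to relate $\tau_{y}$ and $T_{y}$: since $y+S_{n}\leq y+M_{n}+a$ and $y+M_{n}\leq y+S_{n}+a$ for all $n$, the event $\{\tau_{y}>n\}=\{y+S_{k}>0,\ k\leq n\}$ is sandwiched between $\{T_{y-a}>n\}$ and $\{T_{y+a}>n\}$ up to the $\mathbb{P}_{x}$-null set; in particular $T_{y-a}\leq \tau_{y}\leq T_{y+a}$ $\mathbb{P}_{x}$-a.s.\ (with the convention that $T_{z}=1$ when $z\leq 0$), which shows $\tau_{y}$ is $\mathbb{P}_{x}$-a.s.\ finite via Lemma \ref{Lemma finite tau}.

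For the second bound, I would write $y+M_{\tau_{y}} = (y+a)+M_{\tau_{y}} - a$ and use $\tau_{y}\leq T_{y+a}$ to control the overshoot. The cleanest route is to apply the optional-stopping/submartingale machinery already used in Lemma \ref{Lemma 5}: since $\tau_{y}\leq T_{y+a}$, one has $y+a+M_{k}>0$ on $\{\tau_{y}>k\}$, hence $(|y+a+M_{\tau_{y}\wedge n}|)_{n\geq 1}$ can be bounded exactly as in the proof of Lemma \ref{Lemma 5} but with stopping time $\tau_{y}$ in place of $T_{y}$, giving $\mathbb{E}_{x}|y+a+M_{\tau_{y}\wedge n}|\leq 2\mathbb{E}_{x}(y+a+M_{n};\tau_{y}>n)+{}$(a correction term). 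By Corollary \ref{corr to lemma4}, $\mathbb{E}_{x}(y+a+M_{n};\tau_{y}>n)\leq c(1+y)$ uniformly in $n$, so Fatou / Lebesgue's monotone convergence yields $\mathbb{E}_{x}|y+a+M_{\tau_{y}}|\leq c(1+y)$. Then $\mathbb{E}_{x}|y+M_{\tau_{y}}|\leq \mathbb{E}_{x}|y+a+M_{\tau_{y}}|+a\leq c'(1+y)<\infty$, which is the second claimed inequality.

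For the first bound, I would simply transfer from the martingale to the walk using $|y+S_{\tau_{y}}|\leq |y+M_{\tau_{y}}|+a$ (Lemma \ref{Martingale decomp}), so that $\mathbb{E}_{x}|y+S_{\tau_{y}}|\leq \mathbb{E}_{x}|y+M_{\tau_{y}}|+a\leq c''(1+y)<\infty$, completing the proof.

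The main obstacle I anticipate is the bookkeeping around the stopping-time comparison: one must be careful that the inequalities $T_{y-a}\leq \tau_{y}\leq T_{y+a}$ and the positivity $y+a+M_{k}>0$ on $\{\tau_{y}>k\}$ hold genuinely $\mathbb{P}_{x}$-a.s.\ and simultaneously for all $k$, which requires invoking the a.s.\ (rather than merely in-law) bound $\sup_{n}|S_{n}-M_{n}|\leq a$ of Lemma \ref{Martingale decomp}. Once that is set up, the estimate reduces to reusing the identity and submartingale argument of Lemma \ref{Lemma 5} together with the uniform-in-$n$ bound of Corollary \ref{corr to lemma4}, and the passage to the limit $n\to\infty$ is routine monotone convergence since $\tau_{y}<\infty$ $\mathbb{P}_{x}$-a.s.
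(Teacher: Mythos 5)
Your proposal is correct in substance but uses a slightly different decomposition from the paper's, and the phrase ``exactly as in the proof of Lemma~\ref{Lemma 5}'' papers over a small but real obstruction that you then (rightly) flag as a ``correction term.'' The paper splits $\left\vert y+S_{\tau_y\wedge n}\right\vert$ directly: since $y+S_{\tau_y}\leq 0$ on $\{\tau_y\leq n\}$ by the very definition of $\tau_y$, one has the exact identity
\begin{equation*}
\mathbb{E}_{x}\left\vert y+S_{\tau_y\wedge n}\right\vert
=\mathbb{E}_{x}\left(y+S_n;\tau_y>n\right)-\mathbb{E}_{x}\left(y+S_{\tau_y};\tau_y\leq n\right),
\end{equation*}
and then passes to $M$ on the second term via $|S_n-M_n|\leq a$, optional stopping and $\mathbb{E}_x M_n=0$, and back, landing at $2\mathbb{E}_x(y+S_n;\tau_y>n)+2a$, which Corollary~\ref{corr to lemma4} bounds; the $M$-bound then follows in one line. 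You instead work with the shifted martingale $\left\vert y+a+M_{\tau_y\wedge n}\right\vert$. The positive-part split is clean on $\{\tau_y>n\}$ (since $y+a+M_n\geq y+S_n>0$), but on $\{\tau_y\leq n\}$ one only knows $y+a+M_{\tau_y}\leq 2a$, not $\leq 0$, so the identity of Lemma~\ref{Lemma 5} genuinely fails and must be replaced by something like $|z|=-z+2z^{+}$ with $z=y+a+M_{\tau_y}$ and $(y+a+M_{\tau_y})^{+}\leq 2a$; this produces the additive correction (bounded by $4a$) you anticipate. After that, optional stopping, Corollary~\ref{corr to lemma4} and monotone convergence give the $M$-bound, and the $S$-bound follows by $|y+S_{\tau_y}|\leq |y+M_{\tau_y}|+a$. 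So your route is sound once the correction term is actually tracked, but the paper's choice of splitting $S$ rather than $a$-shifted $M$ exploits the sign alignment $y+S_{\tau_y}\leq 0$ and avoids the correction altogether, which is why it reads more cleanly.
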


\begin{proof}
For any $x\in \mathbb{X}$ and $y>0,$%
\begin{equation}
\mathbb{E}_{x}\left\vert y+S_{\tau _{y}\wedge n}\right\vert =\mathbb{E}%
_{x}\left( y+S_{n};\tau _{y}>n\right) -\mathbb{E}_{x}\left( y+S_{\tau
_{y}};\tau _{y}\leq n\right) .  \label{integ001}
\end{equation}%
By Lemma \ref{Martingale decomp} we have $\sup_{n\geq 0}\left\vert
S_{n}-M_{n}\right\vert \leq 2\left\Vert \mathbf{P}\theta \right\Vert
_{\infty }=a< +\infty .$ Note also that $\mathbb{E}_{x}M_{n}=0$ and $\mathbb{E}%
_{x}\left( y+M_{n};\tau _{y}\leq n\right) =\mathbb{E}_{x}\left( y+M_{\tau
_{y}};\tau _{y}\leq n\right) $; therefore the second term is bounded as follows:%
\begin{eqnarray*}
-\mathbb{E}_{x}\left( y+S_{\tau _{y}};\tau _{y}\leq n\right) &\leq &-\mathbb{%
E}_{x}\left( y+M_{\tau _{y}};\tau _{y}\leq n\right) +a \\
&=&-\mathbb{E}_{x}\left( y+M_{n};\tau _{y}\leq n\right) +a \\
&=&\mathbb{E}_{x}\left( y+M_{n};\tau _{y}>n\right) -\mathbb{E}_{x}\left(
y+M_{n}\right) +a \\
&\leq &\mathbb{E}_{x}\left( y+M_{n};\tau _{y}>n\right) +a \\
&\leq &\mathbb{E}_{x}\left( y+S_{n};\tau _{y}>n\right) +2a.
\end{eqnarray*}%
Substituting this bound in (\ref{integ001}) we get%
\begin{equation*}
\mathbb{E}_{x}\left\vert y+S_{\tau _{y}\wedge n}\right\vert \leq 2\mathbb{E}%
_{x}\left( y+S_{n};\tau _{y}>n\right) +2a,
\end{equation*}%
from which, by Corollary \ref{corr to lemma4}, we obtain%
\begin{equation*}
\mathbb{E}_{x}\left\vert y+S_{\tau _{y}\wedge n}\right\vert \leq c\left(
1+y\right) +2a.
\end{equation*}%
Since%
\begin{equation*}
\mathbb{E}_{x}\left( \left\vert y+S_{\tau _{y}}\right\vert ;\tau
_{y}>n\right) \leq \mathbb{E}_{x}\left\vert y+S_{\tau _{y}\wedge
n}\right\vert \leq c\left( 1+y\right) +2a,
\end{equation*}
by Lebesgue's monotone convergence theorem it follows that 
\begin{equation*}
\mathbb{E}_{x}\left\vert y+S_{\tau _{y}}\right\vert =\lim_{n\rightarrow
\infty }\mathbb{E}_{x}\left( \left\vert y+S_{\tau _{y}}\right\vert ;\tau
_{y}>n\right) \leq c\left( 1+y\right) +2a< +\infty .
\end{equation*}%
The second assertion follows from the first one since $\mathbb{P}_{x}\left(
\sup_{n\geq 0}\left\vert S_{n}-M_{n}\right\vert \leq a\right) =1.$
\end{proof}

\begin{lemma}
\label{Lemma 6}Uniformly in $x\in \mathbb{X},$%
\begin{equation*}
\lim_{y\rightarrow +\infty }\frac{1}{y}\lim_{n\rightarrow +\infty }\mathbb{E}%
_{x}\left( y+M_{n};\ T_{y}>n\right) =1.
\end{equation*}
\end{lemma}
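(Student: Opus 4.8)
The plan is to compute the inner limit in closed form and then squeeze it between $y$ and $(1+o(1))y$, uniformly in $x$.

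\emph{First, the inner limit.} I would start from the martingale identity already isolated inside the proof of Lemma~\ref{Lemma 4}: by (\ref{bound G 000}), for every $x\in\mathbb{X}$, $y>0$ and $n\geq 1$,
\[
\mathbb{E}_{x}\left( y+M_{n};\ T_{y}>n\right)=y+\mathbb{E}_{x}\left( \left\vert y+M_{T_{y}}\right\vert ;\ T_{y}\leq n\right).
\]
The right-hand side is nonnegative and nondecreasing in $n$, and by Lemma~\ref{Lemma 5} it is bounded above by $y+\mathbb{E}_{x}\left\vert y+M_{T_{y}}\right\vert\leq c(1+y)<\infty$; since $\mathbb{P}_{x}(T_{y}<\infty)=1$, monotone convergence shows the inner limit exists and equals $h(x,y):=y+\mathbb{E}_{x}\left\vert y+M_{T_{y}}\right\vert$. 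In particular $h(x,y)\geq y$, so $\liminf_{y\to\infty}\inf_{x\in\mathbb{X}}\tfrac1y h(x,y)\geq 1$; this is the easy half.

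\emph{The main step is the matching upper bound} $\limsup_{y\to\infty}\sup_{x\in\mathbb{X}}\tfrac1y h(x,y)\leq 1$, which I would get by re-running the recursive scheme of the proof of Lemma~\ref{Lemma 4} with one change. Fix $\varepsilon\in(0,\varepsilon_{0})$ small. The recursion (\ref{bound E002}) and its iterate (\ref{ineq iter}) hold verbatim (they come from Lemmas~\ref{Lemma 1} and \ref{Lemma 3}, which are uniform in $x,y$): for $n$ large,
\[
\mathbb{E}_{x}\left( y+M_{n};\ T_{y}>n\right)\leq A_{m}\big(\mathbb{E}_{x}\left( y+M_{k_{m}};\ T_{y}>k_{m}\right)+c(1+y)B_{m}\big),
\]
with $k_{m}\leq n_{0}$ and, by (\ref{Am001})--(\ref{Bm002}), $A_{m}\leq\exp\!\big(2^{\varepsilon}c'_{\varepsilon}n_{0}^{-\varepsilon}/(1-n_{0}^{-\varepsilon^{2}})\big)$ and $B_{m}\leq c\,n_{0}^{-\varepsilon}/(1-n_{0}^{-\varepsilon^{2}})$. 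In Lemma~\ref{Lemma 4} the scale $n_{0}$ was an arbitrary fixed constant; here I would instead let it grow with a parameter $\eta>0$: choose $n_{0}=n_{0}(\eta)$ so large that $A_{m}\leq 1+\eta$ and $B_{m}\leq\eta$, uniformly in $n$. Bounding the endpoint term by $\mathbb{E}_{x}\left( y+M_{k_{m}};\ T_{y}>k_{m}\right)\leq y+\mathbb{E}_{x}\left\vert M_{k_{m}}\right\vert\leq y+c\sqrt{n_{0}}$ via Lemma~\ref{Lp boud martingales}, and then letting $n\to\infty$, I obtain, uniformly in $x$,
\[
h(x,y)\leq(1+\eta)\big(y+c\sqrt{n_{0}}+c\,\eta(1+y)\big).
\]
Dividing by $y$ and letting $y\to\infty$ gives $\limsup_{y\to\infty}\sup_{x\in\mathbb{X}}\tfrac1y h(x,y)\leq(1+\eta)(1+c\eta)$, which tends to $1$ as $\eta\to 0$.

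Putting the two halves together yields $\sup_{x\in\mathbb{X}}\big\vert\tfrac1y h(x,y)-1\big\vert\to 0$ as $y\to\infty$, which is the assertion. I expect the only delicate point to be the uniformity bookkeeping in the second step: one must check that every constant entering (\ref{bound E002})--(\ref{ineq iter}) is independent of $x,y,n$ (it is, since Lemmas~\ref{Lemma 1}, \ref{Lemma 3} and the moment bound of Lemma~\ref{Lp boud martingales} are stated uniformly), and that pushing $n_{0}\to\infty$ affects the scheme only through the now–small quantities $A_{m},B_{m}$ and the factor $\sqrt{n_{0}}$ from the endpoint term, not through the structure of the iteration. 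No probabilistic input beyond Lemmas~\ref{Lemma 4} and \ref{Lemma 5} is needed.
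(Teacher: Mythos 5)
Your proof is correct and follows essentially the same route as the paper's: the lower bound $h(x,y)\geq y$ comes from the martingale identity in (\ref{bound G 000}), and the matching upper bound comes from re-running the iteration (\ref{ineq iter}) with the terminal scale $n_0$ taken large so that $A_m\leq 1+\eta$, $B_m\leq\eta$, then letting $y\to\infty$ and $\eta\to 0$. The only cosmetic difference is that you identify the inner limit explicitly as $y+\mathbb{E}_x\lvert y+M_{T_y}\rvert$ via monotone convergence, whereas the paper invokes submartingale convergence together with the bound from Lemma \ref{Lemma 4} to establish its existence; both give the same thing.
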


\begin{proof}
Let $x\in \mathbb{X}$ and $y>0.$ Let $n_{0}$ be a constant and
$m=m\left( n\right) $ is such that $\left[ n^{\left( 1-\varepsilon \right)^{m}}\right] =n_{0}.$ 
Consider the sequence $k_{j}=\left[ n^{\left( 1-\varepsilon \right) ^{j}}\right] ,$ $j=0,\dots,m.$  
From (\ref{ineq iter}) it
follows%
\begin{equation*}
\mathbb{E}_{x}\left( y+M_{n};\ T_{y}>n\right) \leq A_{m}\left( \mathbb{E}%
_{x}\left( y+M_{n_{0}};\ T_{y}>n_{0}\right) +c\left( 1+y\right) B_{m}\right) ,
\end{equation*}%
where $A_{m}$ and $B_{m}$ are defined by (\ref{Am000}) and (\ref{Bm000}).
Let $\delta >0.$ From (\ref{Am001}), (\ref{Am002}) and (\ref{Bm002}),
choosing $n_{0}$ sufficiently large, one gets $A_{m}\leq 1+\delta $ and $%
B_{m}\leq \delta $ uniformly in $m$ (and thus in $n$ sufficiently large).
This gives%
\begin{equation}
\mathbb{E}_{x}\left( y+M_{n};\ T_{y}>n\right) \leq \left( 1+\delta \right)
\left( \mathbb{E}_{x}\left( y+M_{n_{0}};\ T_{y}>n_{0}\right) +c\left(
1+y\right) \delta \right) .  \label{MMM002}
\end{equation}%
Since $\left( \left( y+M_{n}\right) 1_{\left\{ T_{y}>n\right\} }\right)
_{n\geq 1}$ is a submartingale, the sequence $\mathbb{E}_{x}\left(
y+M_{n};\ T_{y}>n\right) $ is increasing (and bounded by Lemma \ref{Lemma 4}): 
it thus converges as $n\rightarrow +\infty $ and one gets%
\begin{equation*}
\lim_{n\rightarrow +\infty }\mathbb{E}_{x}\left( y+M_{n};\ T_{y}>n\right) \leq
\left( 1+\delta \right) \left( \mathbb{E}_{x}\left(
y+M_{n_{0}};\ T_{y}>n_{0}\right) +c\left( 1+y\right) \delta \right) .
\end{equation*}%
From (\ref{bound G 000}) we have the lower bound $\mathbb{E}_{x}\left(
y+M_{n};\ T_{y}>n\right) \geq y;$  
we obtain%
\begin{equation*}
y\leq \lim_{n\rightarrow +\infty }\mathbb{E}_{x}\left( y+M_{n};\ T_{y}>n\right)
\leq \left( 1+\delta \right) \left( y+\mathbb{E}%
_{x}\left\vert M_{n_{0}}\right\vert+c\left( 1+y\right) \delta \right)
\end{equation*}%
and the claim follows since $\delta >0$ is arbitrary.
\end{proof}

For any $x\in \mathbb{X}$ denote%
\begin{equation*}
V\left( x,y\right) =\left\{ 
\begin{array}{cc}
-\mathbb{E}_{x}M_{\tau _{y}} & \text{if\ }y>0, \\ 
0 & \text{if\ }y\leq 0.%
\end{array}%
\right.
\end{equation*}%
The following proposition presents some properties of the function $V.$

\begin{proposition}
\label{PROP func V for S}The function $V$ satisfies:

\noindent 1. For any $y>0$ and $x\in \mathbb{X},$%
\begin{eqnarray*}
V\left( x,y\right) &=&\lim_{n\rightarrow +\infty }\mathbb{E}_{x}\left(
y+M_{n};\tau _{y}>n\right) =\lim_{n\rightarrow +\infty }\mathbb{E}_{x}\left(
y+S_{n};\tau _{y}>n\right).
\end{eqnarray*}

\noindent 2. For any $y>0$ and $x\in \mathbb{X},$%
\begin{equation*}
0\vee \left( y-a\right) \leq V\left( x,y\right) \leq c\left( 1+y\right) ,
\end{equation*}%
where $a=2\left\Vert \mathbf{P}\theta \right\Vert _{\infty }.$

\noindent 3. For any $x\in \mathbb{X}, \quad 
\lim_{y\rightarrow +\infty }\frac{V\left( x,y\right) }{y}=1.$

\noindent 4. For any $x\in \mathbb{X},$ the function $V\left( x,\cdot
\right) $ is increasing.
\end{proposition}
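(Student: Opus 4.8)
The plan is to read off all four assertions from the martingale identities already in place, using the a.s.\ finiteness of $\tau_y$ and $T_y$ (Lemma \ref{Lemma finite tau}), the boundedness $\sup_n|S_n-M_n|\le a$ with $a=2\|\mathbf{P}\theta\|_\infty$ (Lemma \ref{Martingale decomp}), the a priori bounds of Lemma \ref{Lemma 4} and Corollaries \ref{corr to lemma4}, \ref{Intergrab of Mtau}, and the asymptotics of Lemma \ref{Lemma 6}. For assertion 1 I would first apply optional stopping to the bounded stopping time $\tau_y\wedge n$: since $(M_n,\mathcal{F}_n)_{n\ge0}$ is a zero-mean $\mathbb{P}_x$-martingale, $\mathbb{E}_x M_{\tau_y\wedge n}=0$, and decomposing over $\{\tau_y>n\}$ and $\{\tau_y\le n\}$ gives
\[
\mathbb{E}_x\left(y+M_n;\tau_y>n\right)=y\,\mathbb{P}_x\left(\tau_y>n\right)-\mathbb{E}_x\left(M_{\tau_y};\tau_y\le n\right).
\]
Letting $n\to\infty$, the first term tends to $0$ by Lemma \ref{Lemma finite tau}, and the second converges to $\mathbb{E}_x M_{\tau_y}$ by dominated convergence, the integrable dominating function $|M_{\tau_y}|$ being supplied by Corollary \ref{Intergrab of Mtau}; hence $\lim_n\mathbb{E}_x(y+M_n;\tau_y>n)=-\mathbb{E}_x M_{\tau_y}=V(x,y)$. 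Replacing $M_n$ by $S_n$ changes the expectation by at most $a\,\mathbb{P}_x(\tau_y>n)\to0$ (Lemma \ref{Martingale decomp}), so the $S_n$-limit agrees with it.

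\textbf{Assertions 2 and 4.} The upper bound $V(x,y)\le c(1+y)$ then follows at once from assertion 1 and Corollary \ref{corr to lemma4}. For the lower bound, $y+S_n>0$ on $\{\tau_y>n\}$ together with the $S_n$-representation gives $V(x,y)\ge0$, while $y+S_{\tau_y}\le0$ and $|S_{\tau_y}-M_{\tau_y}|\le a$ yield $M_{\tau_y}\le a-y$ a.s., hence $V(x,y)=-\mathbb{E}_x M_{\tau_y}\ge y-a$; combining gives $0\vee(y-a)\le V(x,y)$. For monotonicity I would fix $x$ and $0<y_1<y_2$, observe that $\{y_1+S_k>0\}\subseteq\{y_2+S_k>0\}$ forces $\{\tau_{y_1}>n\}\subseteq\{\tau_{y_2}>n\}$, and note that on the difference $\{\tau_{y_1}\le n<\tau_{y_2}\}$ one still has $y_2+S_n>0$; therefore
\[
\mathbb{E}_x\left(y_2+S_n;\tau_{y_2}>n\right)\ge\mathbb{E}_x\left(y_2+S_n;\tau_{y_1}>n\right)\ge\mathbb{E}_x\left(y_1+S_n;\tau_{y_1}>n\right),
\]
and letting $n\to\infty$ with assertion 1 gives $V(x,y_2)\ge V(x,y_1)$.

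\textbf{Assertion 3.} This is where the actual work lies. The idea is to sandwich $\tau_y$ between the martingale exit times $T_{y\pm a}$ and then transfer Lemma \ref{Lemma 6}. From $|S_n-M_n|\le a$ one checks $T_{y-a}\le\tau_y\le T_{y+a}$ for $y>a$. Set $W(x,z):=\lim_n\mathbb{E}_x(z+M_n;T_z>n)$, which exists (the sequence is increasing and bounded, as established in the proof of Lemma \ref{Lemma 6}). Since $\{T_{y-a}>n\}\subseteq\{\tau_y>n\}$ with $y+S_n>0$ on $\{\tau_y>n\}$, and $\{\tau_y>n\}\subseteq\{T_{y+a}>n\}$ with $(y+a)+M_n>0$ on $\{T_{y+a}>n\}$, one gets for $y>a$
\[
\mathbb{E}_x\left((y-a)+M_n;T_{y-a}>n\right)-a\,\mathbb{P}_x\left(\tau_y>n\right)\le\mathbb{E}_x\left(y+M_n;\tau_y>n\right)\le\mathbb{E}_x\left((y+a)+M_n;T_{y+a}>n\right).
\]
Passing to the limit in $n$ (the error term vanishes by Lemma \ref{Lemma finite tau}) yields $W(x,y-a)\le V(x,y)\le W(x,y+a)$. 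Dividing by $y$, writing $\tfrac{W(x,y\pm a)}{y}=\tfrac{y\pm a}{y}\cdot\tfrac{W(x,y\pm a)}{y\pm a}$, and using the uniform-in-$x$ convergence $W(x,z)/z\to1$ of Lemma \ref{Lemma 6}, we conclude $V(x,y)/y\to1$ uniformly in $x$.

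\textbf{Main obstacle.} Assertions 1, 2 and 4 are essentially bookkeeping on top of the previously proved estimates; the delicate point is assertion 3, where the hard part will be to establish the two-sided comparison $T_{y-a}\le\tau_y\le T_{y+a}$ and to keep track of the $\pm a$ shifts precisely enough that the error terms disappear and, crucially, the uniformity in $x\in\mathbb{X}$ is preserved in passing from the martingale quantity $W$ to $V$.
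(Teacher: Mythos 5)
Your proposal is correct and follows essentially the same route as the paper: optional stopping and dominated convergence (via Corollary \ref{Intergrab of Mtau}) for assertion 1, the uniform bound from Corollary \ref{corr to lemma4} and the pathwise inequality $M_{\tau_y}\le a-y$ for assertion 2, monotonicity of $\tau_y$ in $y$ for assertion 4, and the comparison $\tau_y\le T_{y+a}$ together with Lemma \ref{Lemma 6} for assertion 3. The one place you do slightly more work than needed is the lower bound in assertion 3: rather than introducing the extra comparison $T_{y-a}\le\tau_y$ and the $-a\,\mathbb{P}_x(\tau_y>n)$ correction term, the paper simply reads off $\liminf_{y\to\infty}V(x,y)/y\ge 1$ directly from the already-proved bound $V(x,y)\ge y-a$ of assertion 2, which is cleaner and already uniform in $x$.
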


\begin{proof}
Let $x\in \mathbb{X}$ and $y>0$. Since $\left( M_{n},\mathcal{F}%
_{n}\right) _{n\geq 1}$ is a zero mean $\mathbb{P}_{x}$-martingale,  
\begin{eqnarray}
\mathbb{E}_{x}\left( y+M_{n};\tau _{y}>n\right) &=&\mathbb{E}_{x}\left(
y+M_{n}\right) -\mathbb{E}_{x}\left( y+M_{n};\tau _{y}\leq n\right)  \notag
\\
&=&y-\mathbb{E}_{x}\left( y+M_{\tau _{y}};\tau _{y}\leq n\right) .
\label{VVV001}
\end{eqnarray}

\emph{Proof of the claim 1.} According to Corollary \ref{Intergrab of Mtau}
one gets $\sup_{x\in \mathbb{X}}\mathbb{E}_{x}\left\vert y+M_{\tau
_{y}}\right\vert \leq c\left( 1+y\right) < +\infty $; thus, by Lebesgue's
dominated convergence theorem, for any $x\in \mathbb{X},$%
\begin{equation*}
\lim_{n\rightarrow +\infty }\mathbb{E}_{x}\left( y+M_{\tau _{y}};\tau
_{y}\leq n\right) =\mathbb{E}_{x}\left( y+M_{\tau _{y}}\right) =y-V\left(
x,y\right) .
\end{equation*}%
Therefore, from (\ref{VVV001}), it follows%
\begin{equation*}
\lim_{n\rightarrow +\infty }\mathbb{E}_{x}\left( y+M_{n};\tau _{y}>n\right)
=y-\mathbb{E}_{x}\left( y+M_{\tau _{y}}\right) =V\left( x,y\right) .
\end{equation*}%
Since $\left\vert S_{n}-M_{n}\right\vert \leq 2\left\Vert \mathbf{P}\theta
\right\Vert _{\infty }$ and $\lim_{n\rightarrow +\infty }\mathbb{P}_{x}\left(
\tau _{y}>n\right) =0$ one obtains%
\begin{equation*}
\lim_{n\rightarrow +\infty }\mathbb{E}_{x}\left( y+S_{n};\tau _{y}>n\right)
=V\left( x,y\right) .
\end{equation*}

Taking into account that $\left( y+S_{n}\right) 1_{\left\{ \tau
_{y}>n\right\} }\geq 0,$ we have $V\left( x,y\right) \geq 0,$ which proves
the first claim.

\emph{Proof of the claim 2.} Corollary \ref{corr to lemma4} implies that for
any $x\in \mathbb{X},$ $y>0$ and $n\geq 1,$ 
\begin{equation*}
\mathbb{E}_{x}\left( y+S_{n};\tau _{y}>n\right) \leq c\left( 1+y\right) .
\end{equation*}%
Taking the limit as $n\rightarrow +\infty ,$ we obtain $V\left( x,y\right)
\leq c\left( 1+y\right) ,$ which proves the upper bound. From (\ref{VVV001}%
), taking into account the bound $\left\vert S_{n}-M_{n}\right\vert \leq
2\left\Vert \mathbf{P}\theta \right\Vert _{\infty }=a,$ we get%
\begin{equation*}
\mathbb{E}_{x}\left( y+M_{n};\tau _{y}>n\right) \geq y-\mathbb{E}_{x}\left(
y+S_{\tau _{y}};\tau _{y}\leq n\right) -a\geq y-a.
\end{equation*}%
The expected lower bound follows letting $n\rightarrow +\infty .$

\emph{Proof of the claim 3. }From the claim 2 it follows that $%
\lim_{y\rightarrow +\infty }\frac{V\left( x,y\right) }{y}\geq 1.$ Let $%
a=2\left\Vert \mathbf{P}\theta \right\Vert _{\infty }$ and $y>0.$ Since $%
\tau _{y}\leq T_{y+a},$ we have%
\begin{equation*}
\mathbb{E}_{x}\left( y+S_{n};\tau _{y}>n\right) \leq \mathbb{E}_{x}\left(
y+M_{n}+a;\tau _{y}>n\right) \leq \mathbb{E}_{x}\left(
y+a+M_{n};\ T_{y+a}>n\right) .
\end{equation*}%
Taking into account the claim 1 and Lemma \ref{Lemma 6}, we obtain $%
\lim_{y\rightarrow +\infty }\frac{V\left( x,y\right) }{y}\leq 1.$

\emph{Proof of the claim 4.}\textbf{\ }It is clear that $y\leq y^{\prime }$
implies $\tau _{y}\leq \tau _{y^{\prime }}.$ Therefore%
\begin{equation*}
\mathbb{E}_{x}\left( y+S_{n};\tau _{y}>n\right) \leq \mathbb{E}_{x}\left(
y^{\prime }+S_{n};\tau _{y}>n\right) \leq \mathbb{E}_{x}\left( y^{\prime
}+S_{n};\tau _{y^{\prime }}>n\right) .
\end{equation*}%
Taking the limit as $n\rightarrow +\infty $ one gets the claim 4.
\end{proof}

In the following proposition we prove that $V$ is $\mathbf{Q}_{+}$-harmonic.

\begin{proposition}
\label{PROP harmonic func}For any $x\in \mathbb{X}$ and $y>0$ it holds%
\begin{equation*}
\mathbf{Q}_{+}V\left( x,y\right) =\mathbb{E}_{x}\left( V\left(
X_{1},y+S_{1}\right) ;\tau _{y}>1\right) =V\left( x,y\right) >0.
\end{equation*}
\end{proposition}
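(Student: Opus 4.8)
The plan is to obtain the identity $\mathbf{Q}_{+}V=V$ from the one-step Markov property combined with the two equivalent formulas for $V$ established in Proposition \ref{PROP func V for S}, and then to derive the strict positivity of $V$ from condition \textbf{P5}.

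First I would use that $\left( X_{n},y+S_{n}\right) _{n\geq 0}$ is a Markov chain whose transition kernel is $\mathbf{Q}$, translation invariant in the second coordinate, and that $\left\{ \tau _{y}>1\right\} =\left\{ y+S_{1}>0\right\}$. Conditioning on $\mathcal{F}_{1}$ and shifting the chain by one unit of time, the event $\left\{ \tau _{y}>n+1\right\}$, restricted to $\left\{ \tau _{y}>1\right\}$, becomes the event $\left\{ \tau _{y+S_{1}}>n\right\}$ for the chain started at $\left( X_{1},y+S_{1}\right)$. This gives, for every $n\geq 1$,
\begin{equation*}
\mathbb{E}_{x}\left( y+S_{n+1};\tau _{y}>n+1\right) =\mathbb{E}_{x}\left( g_{n}\left( X_{1},y+S_{1}\right) ;\tau _{y}>1\right) ,\qquad g_{n}\left( x^{\prime },y^{\prime }\right) :=\mathbb{E}_{x^{\prime }}\left( y^{\prime }+S_{n};\tau _{y^{\prime }}>n\right) .
\end{equation*}

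Next I would let $n\rightarrow \infty$. By claim 1 of Proposition \ref{PROP func V for S} the left-hand side converges to $V\left( x,y\right)$, and $g_{n}\left( x^{\prime },y^{\prime }\right) \rightarrow V\left( x^{\prime },y^{\prime }\right)$ for every $\left( x^{\prime },y^{\prime }\right) \in \mathbb{X}\times \mathbb{R}_{+}^{\ast }$. On the other hand, Corollary \ref{corr to lemma4} yields $0\leq g_{n}\left( x^{\prime },y^{\prime }\right) \leq c\left( 1+y^{\prime }\right)$ uniformly in $n$, and since $y+S_{1}=y+\rho \left( X_{1}\right)$ with $\mathbb{E}_{x}\left\vert \rho \left( X_{1}\right) \right\vert <\infty$ (by condition \textbf{M4}, or directly from $\left\vert \rho \left( X_{1}\right) \right\vert \leq \log N\left( g_{1}\right)$ and \textbf{P1}), the integrand $c\left( 1+y+S_{1}\right) $ is $\mathbb{P}_{x}$-integrable on $\left\{ \tau _{y}>1\right\}$; hence dominated convergence applies and gives
\begin{equation*}
V\left( x,y\right) =\mathbb{E}_{x}\left( V\left( X_{1},y+S_{1}\right) ;\tau _{y}>1\right) =\mathbf{Q}_{+}V\left( x,y\right) ,
\end{equation*}
the last equality being the very definition of $\mathbf{Q}_{+}$ applied to $V$, since $\left\{ \tau _{y}>1\right\} =\left\{ y+S_{1}>0\right\}$ and $V$ vanishes on $\mathbb{X}\times \mathbb{R}_{-}$. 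I expect the only mildly delicate point to be exhibiting the integrable dominating function; the rest is a routine use of the Markov property.

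Finally, for $V\left( x,y\right) >0$, iterating the identity just proved gives $V\left( x,y\right) =\mathbb{E}_{x}\left( V\left( X_{n},y+S_{n}\right) ;\tau _{y}>n\right)$ for all $n\geq 1$. By \textbf{P5} one has $\mathbb{P}_{x}\left( \rho \left( X_{k}\right) >\delta \mid \mathcal{F}_{k-1}\right) \geq \varepsilon _{0}:=\inf_{s\in \mathbb{S}^{d-1}}\boldsymbol{\mu }\left( g:\log \left\Vert gs\right\Vert >\delta \right) >0$ for every $k\geq 1$, because given $\mathcal{F}_{k-1}$ the direction entering $\rho \left( X_{k}\right)$ is fixed and $g_{k}$ has law $\boldsymbol{\mu }$; hence $\mathbb{P}_{x}\left( \rho \left( X_{k}\right) >\delta ,\ k=1,\dots ,n\right) \geq \varepsilon _{0}^{n}$. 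On this event $y+S_{k}\geq y+k\delta >0$ for all $k\leq n$, so $\tau _{y}>n$ and $y+S_{n}\geq y+n\delta$. Choosing $n$ so large that $y+n\delta >a+1$, where $a=2\left\Vert \mathbf{P}\theta \right\Vert _{\infty }$, and invoking the lower bound $V\left( x^{\prime },y^{\prime }\right) \geq y^{\prime }-a$ of claim 2 of Proposition \ref{PROP func V for S}, we conclude $V\left( x,y\right) \geq \mathbb{P}_{x}\left( \tau _{y}>n,\ y+S_{n}>a+1\right) \geq \varepsilon _{0}^{n}>0$.
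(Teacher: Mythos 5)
Your proof is correct and follows essentially the same route as the paper: one-step Markov property to write $V_{n+1}(x,y)=\mathbb{E}_x(V_n(X_1,y+S_1);\tau_y>1)$, dominated convergence justified by the uniform bound $V_n\le c(1+y)$ from Corollary \ref{corr to lemma4}, and iteration plus condition \textbf{P5} combined with the lower bound $V(x',y')\ge 0\vee(y'-a)$ to obtain strict positivity. The only cosmetic difference is that you phrase the positivity step via the conditional probability $\mathbb{P}_x(\rho(X_k)>\delta\mid\mathcal{F}_{k-1})\ge\varepsilon_0$, while the paper unwinds the same bound through the kernel $\mathbf{Q}$; the content is identical.
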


\begin{proof}
Let $x\in \mathbb{X}$ and $y>0$ and set $V_{n}\left( x,y\right) =\mathbb{E}%
_{x}\left( y+S_{n};\tau _{y}>n\right) ,$ for any $n\geq 1.$ By the Markov
property we have%
\begin{eqnarray*}
V_{n+1}\left( x,y\right) &=&\mathbb{E}_{x}\left( y+S_{n+1};\tau
_{y}>n+1\right) \\
&=&\mathbb{E}_{x}\left( \left( V_{n}\left( X_{1};y+S_{1}\right) \right)
;\tau _{y}>1\right) .
\end{eqnarray*}%
By Corollary \ref{corr to lemma4}, we have%
\begin{equation*}
\sup_{x\in \mathbb{X}}V_{n}\left( x,y\right) \leq \sup_{x\in \mathbb{X}%
}\sup_{n}\mathbb{E}_{x}\left( y+S_{n};\tau _{y}>n\right) \leq c\left(
1+y\right) .
\end{equation*}%
This implies that $V_{n}\left( X_{1};y+S_{1}\right) 1_{\left\{ \tau
_{y}>1\right\} }$ is dominated by $c\left( 1+y+S_{1}\right) 1_{\left\{ \tau
_{y}>1\right\} }$ which is integrable. Taking the limit as $n\rightarrow +\infty ,$ 
by Lebesgue's dominated convergence theorem, we get%
\begin{eqnarray}
V\left( x,y\right) &=&\lim_{n\rightarrow +\infty }\mathbb{E}_{x}\left(
V_{n}\left( X_{1};y+S_{1}\right) ;\tau _{y}>1\right)  \notag \\
&=&\mathbb{E}_{x}\left( \lim_{n\rightarrow +\infty }V_{n}\left(
X_{1};y+S_{1}\right) ;\tau _{y}>1\right)  \notag \\
&=&\mathbb{E}_{x}V\left( X_{1},y+S_{1}\right) 1_{\left\{ \tau _{y}>1\right\}
}  \notag \\
&=&\mathbf{Q}_{+}V\left( x,y\right) .  \label{V001a}
\end{eqnarray}

To prove that $V$ is strictly positive on $\mathbb{X\times R}^{\ast }_{+}$
we first iterate (\ref{V001a}): for any $n\geq 1,$%
\begin{equation}
V\left( x,y\right) =\mathbf{Q}_{+}^{n}V\left( x,y\right) =\mathbb{E}%
_{x}\left( V\left( X_{n},y+S_{n}\right) ;\tau _{y}>n\right) .  \label{V002}
\end{equation}%
Let $\varepsilon >0.$ From (\ref{V002}) using the claim 2 of Proposition \ref%
{PROP func V for S} we conclude that, for any $x\in \mathbb{X}$ and $n\geq
1, $ $\mathbb{P}_{x}$-a.s.%
\begin{eqnarray*}
&&\mathbb{E}_{x}\left( V\left( X_{n},y+S_{n}\right) ;\tau _{y}>n\right) \\
&\geq &\mathbb{E}_{x}\left( 0\vee \left( y+S_{n}-a\right) \right) 1\left(
\tau _{y}>n\right) \\
&\geq &\mathbb{\varepsilon P}_{x}\left\{
y+S_{1}>0,\dots,y+S_{n-1}>0,y+S_{n}>a+\varepsilon \right\} .
\end{eqnarray*}%
According to condition \textbf{P5} there exists $\delta >0$ such that 
$q_{\delta }:=\inf_{x\in \mathbb{X}}\mathbb{P}_{x}\left( \rho \left(
X_{1}\right) \geq \delta \right) >0.$ Choose $n$ sufficiently large such
that $n\delta >a+\varepsilon .$ Then%
\begin{eqnarray*}
&&\mathbb{P}_{x}\left( y+S_{1}>0,\dots,y+S_{n-1}>0,y+S_{n}>a+\varepsilon
\right) \\
&\geq &\mathbb{P}_{x}\left( y+S_{1}>\delta ,y+S_{2}>2\delta,\dots,y+S_{n}>n\delta \right) .
\end{eqnarray*}%
By the Markov property%
\begin{eqnarray}
&&\mathbb{P}_{x}\left( y+S_{1}>\delta ,y+S_{2}>2\delta ,\dots,y+S_{n}>n\delta
\right)  \notag \\
&=&\int_{\delta }^{+\infty }\int_{\mathbb{X}} \dots \int_{\left( n-1\right)
\delta }^{+\infty }\int_{\mathbb{X}}\int_{n\delta }^{+\infty }\int_{\mathbb{X}}%
\mathbf{Q}\left( x_{n-1},y_{n-1},dx_{n},dy_{n}\right)  \notag \\
&&\times \mathbf{Q}\left( x_{n-2},y_{n-2},dx_{n-1},dy_{n-1}\right) \dots%
\mathbf{Q}\left( x,y,dx_{1},dy_{1}\right) ,  \label{LB001}
\end{eqnarray}%
where%
\begin{equation*}
\mathbf{Q}\left( x,y,dx^{\prime },dy^{\prime }\right) =\mathbb{P}_{x}\left(
X_{1}\in dx^{\prime },y+\rho \left( X_{1}\right) \in dy^{\prime }\right) .
\end{equation*}%
For any $1\leq m\leq n,$ $y_{m-1}\geq \left( m-1\right) \delta $ and $x\in 
\mathbb{X}$ we have%
\begin{eqnarray*}
\int_{m\delta }^{+\infty }\int_{\mathbb{X}}\mathbf{Q}\left(
x_{m-1},y_{m-1},dx_{m},dy_{m}\right) &=&\mathbb{P}_{x_{m-1}}\left( \rho
\left( X_{1}\right) \geq m\delta -y_{m-1}\right) \\
&\geq &\inf_{x\in \mathbb{X}}\mathbb{P}_{x}\left( \rho \left( X_{1}\right)
\geq \delta \right) \\
&=&q_{\delta }>0.
\end{eqnarray*}%
Inserting consecutively these bounds in (\ref{LB001}), it readily follows
that%
\begin{equation*}
\mathbb{P}_{x}\left( y+S_{1}>\delta ,y+S_{2}>2\delta ,\dots,y+S_{n}>n\delta
\right) \geq q_{\delta }^{n}>0,
\end{equation*}%
which proves that $V$ is strictly positive on \HL{$\mathbb{X\times R}^{\ast}_{+}.$}
\end{proof}

\section{Coupling argument and proof of Theorem \protect\ref{Th asympt tau}}

Let $\left( B_{t}\right) _{t\geq 0}$ be a standard Brownian motion on the
probability space $\left( \Omega ,\mathcal{F},\mathbf{Pr}\right) .$ For any $%
y>0$ define the exit time%
\begin{equation*}
\tau _{y}^{bm}=\inf \left\{ t\geq 0:y+\sigma B_{t}<0\right\} ,
\end{equation*}%
where $\sigma >0$ is given by (\ref{sigma001}). The following
well known formulas are due to Levy \cite{Levy37} (Theorem 42.I, pp.
194-195).

\begin{lemma}
\label{lemma tauBM} Let $y>0$. The stopping time $\tau _{y}^{bm}$ has the following
properties:

\noindent 1. For any $n\geq 1$,%
\begin{equation}
\mathbf{Pr}\left( \tau _{y}^{bm}>n\right) =\mathbf{Pr}\left( \sigma
\inf_{0\leq u\leq n}B_{u}>-y\right) =\frac{2}{\sqrt{2\pi n}\sigma }%
\int_{0}^{y}e^{-\frac{s^{2}}{2n\sigma ^{2}}}ds.  \label{levy001a}
\end{equation}

\noindent 2. For any  $a,b$ satisfying $0\leq a<b<+\infty $ and $n\geq 1$,%
\begin{equation}
\mathbf{Pr}\left( \tau _{y}^{bm}>n,y+\sigma B_{n}\in \left[ a,b\right]
\right) =\frac{1}{\sqrt{2\pi n}\sigma }\int_{a}^{b}\left( e^{-\frac{\left(
s-y\right) ^{2}}{2n\sigma ^{2}}}-e^{-\frac{\left( s+y\right) ^{2}}{2n\sigma
^{2}}}\right) 1_{\left\{ s\geq 0\right\} }ds.  \label{levy001b}
\end{equation}
\end{lemma}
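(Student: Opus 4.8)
The plan is to obtain both identities from the reflection principle for Brownian motion, so that the proof reduces to a short computation; alternatively one may simply quote \cite{Levy37}. Write $Y_t = y + \sigma B_t$, a Brownian motion with infinitesimal variance $\sigma^2$ started at $y > 0$, with transition density $p_n(u,v) = \frac{1}{\sigma\sqrt{2\pi n}}\exp\bigl(-\frac{(v-u)^2}{2n\sigma^2}\bigr)$. First I would note that, up to a $\mathbf{Pr}$-null set, $\{\tau_y^{bm} > n\} = \{\inf_{0\le u\le n} Y_u \ge 0\} = \{\inf_{0\le u\le n} Y_u > 0\}$: since $0$ is regular for $(-\infty,0)$ for Brownian motion, a path started at $y > 0$ that attains the value $0$ at some time $< n$ goes strictly below $0$ immediately afterwards, $\mathbf{Pr}$-a.s., while $\mathbf{Pr}(Y_n = 0) = 0$. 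In particular $\mathbf{Pr}(\tau_y^{bm} > n) = \mathbf{Pr}(\sigma\inf_{0\le u\le n} B_u > -y)$, which is the first equality in (\ref{levy001a}).

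Next I would prove the joint law (\ref{levy001b}). Fix $0\le a<b<\infty$. By the reflection principle---reflecting each path after its first hitting time of level $0$---the sub-probability density of $Y_n$ on the event $\{\inf_{[0,n]} Y_u > 0\}$ is $s\mapsto \bigl(p_n(y,s)-p_n(y,-s)\bigr)1_{\{s>0\}}$. Integrating over $[a,b]$ and using $p_n(y,-s) = \frac{1}{\sigma\sqrt{2\pi n}}\exp\bigl(-\frac{(s+y)^2}{2n\sigma^2}\bigr)$ gives exactly (\ref{levy001b}); the factor $1_{\{s\ge 0\}}$ there is automatic since $a\ge 0$ and is retained only to cover the general case.

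Finally, the remaining equality in (\ref{levy001a}) follows either by setting $a=0$ and letting $b\to\infty$ in (\ref{levy001b}), or directly by a second use of reflection: for $y>0$,
\[
\mathbf{Pr}\Bigl(\sigma\inf_{0\le u\le n}B_u > -y\Bigr) = 1 - 2\,\mathbf{Pr}(\sigma B_n \le -y) = \mathbf{Pr}\bigl(|\sigma B_n| < y\bigr) = \frac{2}{\sigma\sqrt{2\pi n}}\int_0^y e^{-\frac{s^2}{2n\sigma^2}}\,ds,
\]
where we used $\mathbf{Pr}(\inf_{[0,n]}B_u \le -a) = 2\,\mathbf{Pr}(B_n\le -a)$ for $a>0$ and the symmetry of the centered normal law. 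There is no substantive obstacle: the only point needing a little care is the measure-theoretic identification of $\{\tau_y^{bm}>n\}$ with the event that $Y$ stays nonnegative on $[0,n]$ (strong Markov property and regularity of $0$ for $(-\infty,0)$); the rest is the classical reflection computation.
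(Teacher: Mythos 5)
Your argument is correct and complete. The paper itself gives no proof here: it simply records these identities as classical and cites L\'evy \cite{Levy37} (Theorem 42.I, pp. 194--195). Your reflection-principle derivation is the standard route to both formulas and is exactly what one would write out if a self-contained proof were wanted: the identification of $\{\tau_y^{bm}>n\}$ with $\{\inf_{[0,n]}Y_u>0\}$ up to a null set via regularity of $0$ for $(-\infty,0)$ is the only point requiring care, and you handle it correctly; the sub-density $(p_n(y,s)-p_n(y,-s))1_{\{s>0\}}$ then gives (\ref{levy001b}) by integration, and (\ref{levy001a}) follows either by letting $b\to\infty$ or, as you do, from $\mathbf{Pr}(\inf_{[0,n]}B_u\le -a)=2\,\mathbf{Pr}(B_n\le -a)$ plus symmetry. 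No gap.
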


From Lemma \ref{lemma tauBM} we easily deduce:

\begin{lemma}
\label{lemma expantau} The stopping time $\tau _{y}^{bm}$ has the following
properties: for any $y>0$ and $n \geq 1,$
\begin{equation}
\mathbf{Pr}\left( \tau _{y}^{bm}>n\right) \leq c\frac{y}{\sqrt{n}\sigma }
\label{levy002a}
\end{equation}
and, for any sequence of real numbers $(\alpha _{n})_n$ such that $\alpha_n\rightarrow 0$, 
as $n\rightarrow +\infty ,$%
\begin{equation}
\sup_{y\in \left[ 0,\alpha _{n}\sqrt{n}\right] }\left( \frac{\mathbf{Pr}%
\left( \tau _{y}^{bm}>n\right) }{\frac{2y}{\sqrt{2\pi n}\sigma }}-1\right)
=O\left( \alpha _{n}\right) .  \label{levy002b}
\end{equation}
\end{lemma}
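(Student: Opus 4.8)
The statement is an elementary consequence of the explicit Gaussian formula \eqref{levy001a} from Lemma \ref{lemma tauBM}, so the plan is purely computational: bound and then asymptotically evaluate the integral $\frac{2}{\sqrt{2\pi n}\sigma}\int_0^y e^{-s^2/(2n\sigma^2)}\,ds$.

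For \eqref{levy002a}, I would simply bound the integrand by $1$: from \eqref{levy001a},
\begin{equation*}
\mathbf{Pr}\left(\tau_y^{bm}>n\right)=\frac{2}{\sqrt{2\pi n}\sigma}\int_0^y e^{-\frac{s^2}{2n\sigma^2}}\,ds\leq \frac{2}{\sqrt{2\pi n}\sigma}\int_0^y 1\,ds=\frac{2y}{\sqrt{2\pi n}\sigma}\leq c\frac{y}{\sqrt{n}\sigma},
\end{equation*}
with $c=2/\sqrt{2\pi}$ (or just $c=1$, absorbing the constant). This needs no hypotheses beyond $y>0$ and $n\geq 1$.

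For \eqref{levy002b}, the point is that on the range $y\in[0,\alpha_n\sqrt n]$ the argument $s^2/(2n\sigma^2)$ stays uniformly small, so the exponential is close to $1$. Concretely, for $0\leq s\leq y\leq \alpha_n\sqrt n$ one has $0\leq \frac{s^2}{2n\sigma^2}\leq \frac{\alpha_n^2 n}{2n\sigma^2}=\frac{\alpha_n^2}{2\sigma^2}$, hence using $1-u\leq e^{-u}\leq 1$ for $u\geq 0$,
\begin{equation*}
\left(1-\frac{\alpha_n^2}{2\sigma^2}\right)y\leq \int_0^y e^{-\frac{s^2}{2n\sigma^2}}\,ds\leq y .
\end{equation*}
Dividing by $\frac{2y}{\sqrt{2\pi n}\sigma}\cdot\frac{\sqrt{2\pi n}\sigma}{2}=y$ (equivalently, comparing $\mathbf{Pr}(\tau_y^{bm}>n)$ with $\frac{2y}{\sqrt{2\pi n}\sigma}$) gives
\begin{equation*}
-\frac{\alpha_n^2}{2\sigma^2}\leq \frac{\mathbf{Pr}\left(\tau_y^{bm}>n\right)}{\frac{2y}{\sqrt{2\pi n}\sigma}}-1\leq 0
\end{equation*}
uniformly in $y\in[0,\alpha_n\sqrt n]$, which is $O(\alpha_n^2)=O(\alpha_n)$ since $\alpha_n\to 0$. (If one prefers a one-sided $\sup$ as written, only the upper bound $\leq 0$ is even needed, but the two-sided estimate makes the $O(\alpha_n)$ claim transparent.)

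There is no real obstacle here; the only mild point of care is handling the degenerate case $y=0$ (where both sides of the ratio vanish and the statement is read as the limit, giving $0$) and making sure the smallness of $\frac{s^2}{2n\sigma^2}$ is uniform in $s\leq y$ and in $y\leq\alpha_n\sqrt n$ — which it is, because the worst case is $s=y=\alpha_n\sqrt n$ and there the bound depends on $n$ only through $\alpha_n$. So the proof is a two-line estimate for each display, and I would present it exactly as above.
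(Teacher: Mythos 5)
Your proof is correct and is exactly the intended elementary computation from the Lévy formula \eqref{levy001a}; the paper states that Lemma \ref{lemma expantau} is "easily deduced" from Lemma \ref{lemma tauBM} and gives no further details, so there is nothing to compare against. Bounding $e^{-s^2/(2n\sigma^2)}$ above by $1$ gives \eqref{levy002a}, and the two-sided estimate $1-\alpha_n^2/(2\sigma^2)\le e^{-s^2/(2n\sigma^2)}\le 1$ on $s\le y\le \alpha_n\sqrt n$ gives \eqref{levy002b} (even with the stronger rate $O(\alpha_n^2)$); your remark on the degenerate $y=0$ case, where the ratio is $0/0$ and should be read as a limit, is a reasonable and harmless precaution.
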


We transfer the properties of the exit time $\tau _{y}^{bm}$ to the exit
time $\tau _{y}$ for large $y$ using the following coupling result proved in 
\cite{GLP2014}, Theorem 2.1. 
Let $\widetilde{\Omega }=\mathbb{R}^{\infty}\times \mathbb{R}^{\infty }$ 
and for any $\omega =\left( \omega _{1},\omega_{2}\right) \in \widetilde{\Omega }$ 
define the coordinate processes 
$\widetilde{Y}_{i}=\omega _{1,i}$ and $\widetilde{W}_{i}=\omega _{2,i}$ for $i\geq 1.$
\HL{Recall that according to condition \textbf{M4} for any $p>2$
the moments 
$\mathbb{E}_{x}^{1/p}\left\vert \rho \left( X_{n}\right) \right\vert ^{p} $
are uniformly bounded in $x\in \mathbb{X}$ and $n\geq 1.$}
\begin{proposition}
\label{Proposition KMT}Assume that the Markov chain $\left( X_{i}\right)
_{i\geq 0}$ and the function $\rho $ satisfy hypotheses \textbf{M1-M5}. Let $%
p>2$ and $0<\alpha <\frac{p-2}{2}.$ Then, there exists a Markov transition
kernel $x\rightarrow \widetilde{\mathbb{P}}_{x}\left( \cdot \right) $ from $%
\left( \mathbb{X},\mathcal{B}\left( \mathbb{X}\right) \right) $ to $\left( 
\widetilde{\Omega },\mathcal{B}\left( \widetilde{\Omega }\right) \right) $
such that:

\noindent 1. For any $x\in \mathbb{X}$ the distribution of $\left( 
\widetilde{Y}_{i}\right) _{i\geq 1}$ under $\widetilde{\mathbb{P}}_{x}$
coincides with the distribution of $\left( \rho \left( X_{i}\right) \right)
_{i\geq 1}$ under $\mathbb{P}_{x};$

\noindent 2. For any $x\in \mathbb{X}$ the 
$\widetilde{W}_{i}, i\geq 1,$ are i.i.d. standard normal random variables under $%
\widetilde{\mathbb{P}}_{x};$

\noindent 3. For any  
$ \varepsilon \in (0, \frac{1}{2}\frac{\alpha }{1+2\alpha })$    
there exist a constant $C$ depending on $\varepsilon ,\alpha $ and $p$ and an absolute constant $c$
such that for any
$x\in \mathbb{X}$ and $n\geq 1,$%
\begin{equation*}
\widetilde{\mathbb{P}}_{x}\left( n^{-1/2}\sup_{1\leq k\leq n}\left\vert
\sum_{i=1}^{k}\left( \widetilde{Y}_{i}-\sigma \widetilde{W}_{i}\right)
\right\vert > c n^{-\varepsilon }\right) \leq Cn^{-\alpha \frac{1+\alpha }{%
1+2\alpha }+\varepsilon \left( 2+2\alpha \right) }.
\end{equation*}%
\end{proposition}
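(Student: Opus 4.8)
\emph{Plan.} The result is a Komlós--Major--Tusnády type coupling for the Markov walk, and I would deduce it from a quantitative Gaussian approximation of the associated martingale, using the spectral gap \textbf{M2}--\textbf{M3} and the moment conditions \textbf{M4}--\textbf{M5}. By the Poisson equation $\rho=\theta-\mathbf P\theta$ of Lemma \ref{Lemma bound g}, the martingale $M_n=\sum_{k=1}^{n}\xi_k$ with $\xi_k=\theta(X_k)-\mathbf P\theta(X_{k-1})$ satisfies $\sup_n|S_n-M_n|\le a:=2\|\mathbf P\theta\|_\infty$ by Lemma \ref{Martingale decomp}, and by Corollary \ref{bound moment g} together with Burkholder's inequality (as in Lemma \ref{Lp boud martingales}) the $\xi_k$ have $p$-th moments bounded uniformly in the starting state. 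It therefore suffices to build, on an enlarged space $\widetilde\Omega$, a Markov kernel $x\mapsto\widetilde{\mathbb P}_x$ carrying an exact copy of $(X_k)_{k\ge1}$ — so that $\widetilde Y_i=\rho(X_i)$ has the right joint law — together with i.i.d.\ $N(0,1)$ variables $(\widetilde W_i)_{i\ge1}$ for which $\sup_{k\le n}\bigl|M_k-\sigma\sum_{i\le k}\widetilde W_i\bigr|$ is $o(n^{1/2-\varepsilon})$ off a polynomially small event; the uniformly bounded gap $S_n-M_n$ is then absorbed trivially.

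First I would cut $\{1,\dots,n\}$ into consecutive blocks $I_1,\dots,I_N$ of common length $b=b_n=[n^{\gamma}]$, with $\gamma\in(0,1)$ fixed at the end, and set $U_j=\sum_{i\in I_j}\xi_i$ and $\mathcal G_j=\mathcal F_{jb}$. Then $(U_j,\mathcal G_j)$ is a martingale difference sequence, and the crucial quantitative input is that its conditional variance is $\sigma^2b$ up to a bounded error. Indeed, by the Markov property $\mathbb E_x(U_j^2\mid\mathcal G_{j-1})=\sum_{l=1}^{b}\mathbf P^{\,l-1}h(X_{(j-1)b})$ with $h=\mathbf P(\theta^2)-(\mathbf P\theta)^2$, which is bounded by Corollary \ref{bound moment g} and satisfies $\boldsymbol\lambda(h)=\sigma^2$ in view of (\ref{sigma001}); since $\theta-\rho=\sum_{n\ge1}\mathbf P^{n}\rho$ lies in $\mathcal B$ and, by a smoothing argument based on \textbf{P1} analogous to Lemma \ref{lemma rho smoo}, $\mathbf P h\in\mathcal B$, the spectral gap \textbf{M2} yields $\|\mathbf P^{\,l}h-\sigma^2\|_\infty\le Cr^{\,l-1}$ for $l\ge1$, whence $\bigl|\mathbb E_x(U_j^2\mid\mathcal G_{j-1})-\sigma^2b\bigr|=O(1)$ uniformly in $x$ and $j$; Burkholder's inequality gives $\mathbb E_x(|U_j|^p\mid\mathcal G_{j-1})\le Cb^{p/2}$ uniformly.

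Next, on $\widetilde\Omega$ I would couple the blocks one at a time, conditionally on $\mathcal G_{j-1}$. Using a quantile (Strassen--Dudley type) coupling I would match $U_j$ with $\sigma\sqrt b\,\bar W_j$, where $\bar W_j$ is constructed so as to be $N(0,1)$ conditionally on $\mathcal G_{j-1}$ regardless of $\mathcal G_{j-1}$ (hence unconditionally $N(0,1)$ and independent of the previous blocks), and then realise the fresh block $(\widetilde W_i)_{i\in I_j}$ of i.i.d.\ $N(0,1)$ variables by putting $b^{-1/2}\bar W_j$ along the diagonal of $\mathbb R^{b}$ and an independent standard Gaussian on its orthogonal complement, so that $b^{-1/2}\sum_{i\in I_j}\widetilde W_i=\bar W_j$. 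This keeps the $\widetilde W_i$ exactly i.i.d.\ standard normal while the chain, hence each $\widetilde Y_i=\rho(X_i)$, keeps its exact law; the per-block discrepancy $U_j-\sigma\sqrt b\,\bar W_j$ is controlled in $L^{2+2\alpha}$ by a negative power $b^{-\kappa}$, $\kappa=\kappa(p,\alpha)>0$, arising from a Berry--Esseen bound for $M_b$ uniform in the starting state — which itself follows from the spectral analysis of the perturbed operator $\mathbf P_t$ of \textbf{M3} — together with the $O(1/b)$ conditional-variance correction of the previous paragraph. Reconciling exactness of both marginals with a quantitative $L^{2+2\alpha}$ error is the main obstacle, and is where \textbf{M4}--\textbf{M5} and the constraint $\alpha<\frac{p-2}{2}$ are used.

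Finally I would add up the errors. At a block endpoint $k=jb$, $\bigl|\sum_{i\le k}\widetilde Y_i-\sigma\sum_{i\le k}\widetilde W_i\bigr|\le a+\sum_{l\le j}|U_l-\sigma\sqrt b\,\bar W_l|$, whose $L^{2+2\alpha}$ norm is $O(Nb^{-\kappa})=O(n^{1-\gamma(1+\kappa)})$ by the triangle inequality; inside a block, Doob's maximal inequality together with the $L^p$ bound of Lemma \ref{Lp boud martingales} (for $\sum\widetilde Y_i$) and the Gaussian maximal inequality (for $\sigma\sum\widetilde W_i$) bound the extra fluctuation of each by $O(b^{1/2}\log n)=O(n^{\gamma/2}\log n)$ off a polynomially small event. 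A Markov inequality converts the $L^{2+2\alpha}$ bound into a tail bound of polynomial order in $n$, and choosing $\gamma$ to balance $n^{1-\gamma(1+\kappa)}$ against $n^{\gamma/2}$, and both against the threshold $n^{1/2-\varepsilon}$, yields the admissible range $\varepsilon<\frac{1}{2}\frac{\alpha}{1+2\alpha}$ together with the exponent $-\alpha\frac{1+\alpha}{1+2\alpha}+\varepsilon(2+2\alpha)$. Since the construction is carried out block by block as a composition of Markov kernels on $\widetilde\Omega$, the resulting object is a genuine Markov transition kernel $x\mapsto\widetilde{\mathbb P}_x$ with the required exactness properties.
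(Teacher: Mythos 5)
The paper does not prove Proposition \ref{Proposition KMT} in this text: it is quoted verbatim from \cite{GLP2014}, Theorem~2.1, where it is established by a Komlós--Major--Tusnády type construction. Your proposal is an honest sketch of how such a proof might go, and the high-level structure (martingale approximation via the Poisson equation, blocking, quantitative Gaussian coupling block by block, maximal inequalities inside blocks) is the right kind of scaffolding. But as written it has several real gaps, and the crucial one is conceded rather than filled.

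\emph{The core coupling step is missing.} You reduce to showing that for each block $U_j=\sum_{i\in I_j}\xi_i$ one can construct, conditionally on $\mathcal G_{j-1}$, a standard normal $\bar W_j$ independent of $\mathcal G_{j-1}$ with $\|U_j-\sigma\sqrt b\,\bar W_j\|_{L^{2+2\alpha}}\le C b^{-\kappa}$ for some explicit $\kappa=\kappa(p,\alpha)>0$. You acknowledge that ``reconciling exactness of both marginals with a quantitative $L^{2+2\alpha}$ error is the main obstacle'' and attribute it to a Berry--Esseen estimate following from \textbf{M3}, but you do not state or prove such an estimate. This is the entire content of a KMT-type theorem: a quantile coupling requires a uniform (in the starting point) Edgeworth-type expansion or Berry--Esseen bound for the conditional law of $U_j$ given $X_{(j-1)b}$ at the needed polynomial rate in $b$, and extracting that from the spectral gap \textbf{M2}--\textbf{M3} is itself a substantial argument. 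Without it, no value of $\kappa$ is produced, and the final exponent is not derived.

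\emph{The exponent is asserted, not computed.} The proposition specifies the range $\varepsilon\in\bigl(0,\tfrac12\,\tfrac{\alpha}{1+2\alpha}\bigr)$ and the tail exponent $-\alpha\,\tfrac{1+\alpha}{1+2\alpha}+\varepsilon(2+2\alpha)$. You say that ``choosing $\gamma$ to balance'' the block-endpoint error $n^{1-\gamma(1+\kappa)}$ (or, with a sharper Rosenthal bound for the martingale $\sum_l(U_l-\sigma\sqrt b\,\bar W_l)$, $n^{(1-\gamma)/2-\gamma\kappa}$) against the intra-block fluctuation $n^{\gamma/2}$ and the threshold $n^{1/2-\varepsilon}$ yields exactly this range and exponent, but you never carry out that calculation nor identify the value of $\kappa$; as it stands the constants in the statement are simply reproduced, not obtained. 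Incidentally, the crude triangle inequality you invoke at block endpoints is suboptimal relative to a martingale bound for $\sum_l(U_l-\sigma\sqrt b\,\bar W_l)$; with the weaker accounting you would likely land strictly inside the stated $\varepsilon$-range rather than match it.

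\emph{A smaller gap.} The claim $\mathbf P h\in\mathcal B$ with $h=\mathbf P(\theta^2)-(\mathbf P\theta)^2$ needs a separate justification. Since $\theta=\rho+g$ with $g=\sum_{n\ge1}\mathbf P^n\rho\in\mathcal B$, one has $\theta^2=\rho^2+2\rho g+g^2$; the terms $\rho^2$ and $\rho g$ are not in $\mathcal B$ and require a smoothing argument analogous to (but not identical with) Lemma~\ref{lemma rho smoo}, which you gesture at but do not supply. Likewise, the rotation trick for turning the block Gaussian $\bar W_j$ into $b$ i.i.d.\ $N(0,1)$'s while preserving the Markov kernel structure is a nice device, but one must still verify that the composed construction is a genuine Markov kernel in $x$ and that $\bar W_j$ is exactly $N(0,1)$ (this requires external randomization or a continuity assumption on the conditional law of $U_j$, neither of which is addressed).

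In short, the overall architecture is reasonable and resembles how such results are proved, but the quantitative heart of the argument --- the conditional Berry--Esseen bound and the resulting $\kappa$, and the bookkeeping that yields the stated range of $\varepsilon$ and the tail exponent --- is not supplied. Given that the paper explicitly defers to \cite{GLP2014} for precisely this reason, the correct move here is to cite that reference; a self-contained proof would need the pieces above filled in.
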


\begin{remark}
In Theorem 2.1 of \cite{GLP2014}, the constant $C$ depends on $\Vert \delta_x \Vert_{\mathcal B'}$
and on 
$\mu_{p}\left( x\right) =\sup_{k\geq 1}\mathbb{E}_{x}^{1/p}\left\vert \rho
\left( X_{k}\right) \right\vert ^{p}.$ 
By conditions \textbf{M1} and \textbf{M5}
we have $\sup_{x\in \mathbb{X}}\Vert \delta_x \Vert_{\mathcal B'} \leq 1$ 
and $\sup_{x\in \mathbb{X}}\mu _{p}\left( x\right) < +\infty$ which implies that
we can choose the constant $C$ to be independent of $x.$ 
Note that the constant $C$ depends also on other constants introduced so far, in
particular on the variance $\sigma $ and the constants in conditions \textbf{M1-M5}. 
\end{remark}

Without loss of generality we shall consider in the sequel that 
$\HL{\rho\left(X_{i}\right)} =\widetilde{Y}_{i},$  $B_{i}=\sum_{j=0}^{i}\widetilde{W}_{j}$ and 
$\widetilde{\mathbb{P}}_{x}=\mathbb{P}_{x}.$ 
Choosing $\alpha <\frac{p-2}{2}$
and $p$ sufficiently large in Proposition \ref{Proposition KMT}, it follows
that there exists $ \varepsilon_0 >0$ such that, 
for any $\varepsilon \in (0, \varepsilon_0),$ $x\in \mathbb{X}$ and $n\geq 1,$ 
\begin{equation}
\mathbb{P}_{x}\left( \sup_{0\leq t\leq 1}\left\vert S_{\left[ nt\right]
}-\sigma B_{nt}\right\vert >  n^{1/2-2\varepsilon }\right) \leq c_{\varepsilon}  n^{-2\varepsilon},
\label{KMTbound001}
\end{equation}%
where $c_{\varepsilon}$ depends on $\varepsilon.$
To pass from the Brownian motion in discrete
time to those in continuous time one can use standard bounds for the
oscillation of $\left( B_{t}\right) _{t\geq 0}$ from Revuz and Yor \cite{RY}.

In the proof of Theorem \ref{Th asympt tau} we use the following auxiliary
result:

\begin{lemma}
\label{lemma tau ylarge}Let $\varepsilon \in (0, \varepsilon_0)$ and 
$\left(\theta _{n}\right) _{n\geq 1}$ be a sequence of positive numbers such that $%
\theta _{n}\rightarrow 0$ and $\theta _{n}n^{\varepsilon/4 }\rightarrow +\infty$ as $n\rightarrow +\infty .$ Then:

\noindent 1. There exists a constant $c>0$ such that, for $n$ sufficiently large,%
\begin{equation*}
\sup_{x\in \mathbb{X},\ y\in \left[ n^{1/2-\varepsilon },\theta _{n}n^{1/2}\right] }
\left\vert \frac{\mathbb{P}_{x}\left( \tau _{y}>n\right) }
{\frac{2y}{\sqrt{2\pi n}\sigma }}-1\right\vert \leq c\theta _{n}.
\end{equation*}

\noindent 2. There exists a constant $c_\varepsilon>0$ such that for any 
$n\geq 1$ and $y\geq n^{1/2-\varepsilon },$ 
\begin{equation*}
\sup_{x\in \mathbb{X}}\mathbb{P}_{x}\left( \tau _{y}>n\right) \leq
c_{\varepsilon }\frac{y}{\sqrt{n}}.
\end{equation*}
\end{lemma}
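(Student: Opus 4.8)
The strategy is to transfer the corresponding estimates for the exit time $\tau^{bm}_y$ of the Brownian motion (Lemmas \ref{lemma tauBM} and \ref{lemma expantau}) to $\tau_y$ using the strong approximation bound (\ref{KMTbound001}), together with the a priori upper bound of Corollary \ref{corr to lemma4} to control the error terms. The key observation is that, for $y$ in the range $[n^{1/2-\varepsilon},\theta_n n^{1/2}]$, the event $\{\tau_y>n\}$ is (up to an event of small probability) sandwiched between two Brownian events $\{\tau^{bm}_{y\pm n^{1/2-2\varepsilon}}>n\}$, whose probabilities we control explicitly, and whose ratio tends to $1$ because the perturbation $n^{1/2-2\varepsilon}$ is negligible compared with $y\geq n^{1/2-\varepsilon}$.

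\textbf{Part 2 first (the rough bound).} I would first prove assertion 2, which is easier and will also be used in assertion 1. Write $\Delta_n=\sup_{0\leq t\leq 1}|S_{[nt]}-\sigma B_{nt}|$. On the event $\{\Delta_n\leq n^{1/2-2\varepsilon}\}$, if $\tau_y>n$ then $y+S_k>0$ for all $k\leq n$, hence $y+n^{1/2-2\varepsilon}+\sigma B_k> S_k+y>0$ for the corresponding integer times, and a standard oscillation bound for Brownian motion (Revuz--Yor) upgrades this to $y+2n^{1/2-2\varepsilon}+\sigma B_t>0$ for all $t\in[0,n]$, i.e. $\tau^{bm}_{y+2n^{1/2-2\varepsilon}}>n$. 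Therefore
\begin{equation*}
\mathbb{P}_x(\tau_y>n)\leq \mathbf{Pr}\bigl(\tau^{bm}_{y+2n^{1/2-2\varepsilon}}>n\bigr)+\mathbb{P}_x(\Delta_n>n^{1/2-2\varepsilon})\leq c\,\frac{y+2n^{1/2-2\varepsilon}}{\sqrt n\,\sigma}+c_\varepsilon n^{-2\varepsilon},
\end{equation*}
using (\ref{levy002a}) and (\ref{KMTbound001}); since $y\geq n^{1/2-\varepsilon}$, both error terms are $\leq c_\varepsilon\, y/\sqrt n$, which gives assertion 2.

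\textbf{Part 1 (the sharp asymptotic).} For the upper bound I repeat the sandwiching above: on $\{\Delta_n\leq n^{1/2-2\varepsilon}\}$ we have $\{\tau_y>n\}\subset\{\tau^{bm}_{y+2n^{1/2-2\varepsilon}}>n\}$, so
\begin{equation*}
\mathbb{P}_x(\tau_y>n)\leq \mathbf{Pr}\bigl(\tau^{bm}_{y+2n^{1/2-2\varepsilon}}>n\bigr)+c_\varepsilon n^{-2\varepsilon}=\frac{2(y+2n^{1/2-2\varepsilon})}{\sqrt{2\pi n}\sigma}\bigl(1+O(\theta_n)\bigr)+c_\varepsilon n^{-2\varepsilon},
\end{equation*}
where I applied (\ref{levy002b}) with $\alpha_n\asymp\theta_n$ (legitimate since $y+2n^{1/2-2\varepsilon}\leq c\theta_n\sqrt n$). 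Dividing by $\frac{2y}{\sqrt{2\pi n}\sigma}$ and using $n^{1/2-2\varepsilon}/y\leq n^{-\varepsilon}$ together with $n^{-2\varepsilon}/(y/\sqrt n)\leq n^{-\varepsilon}$, the right-hand side is $1+O(\theta_n)+O(n^{-\varepsilon})$; since $\theta_n n^{\varepsilon/4}\to\infty$ forces $n^{-\varepsilon}=o(\theta_n)$, this is $1+O(\theta_n)$. For the matching lower bound, the naive inclusion in the other direction fails because $\tau^{bm}_{y-2n^{1/2-2\varepsilon}}>n$ does not imply $\tau_y>n$ — the discrete walk could dip below zero strictly between integer times controlled only by $B$. \textbf{This is the main obstacle.} To handle it I would not pass through $\tau^{bm}$ directly but instead intersect with the event that $y+\sigma B_n$ is not too small: writing $A=\{\tau^{bm}_y>n,\ y+\sigma B_n\geq 3n^{1/2-2\varepsilon}\}$, on $A\cap\{\Delta_n\leq n^{1/2-2\varepsilon}\}$ one still needs to rule out an early excursion of $S$ below $0$, which is done by further intersecting with $\{\min_{0\le t\le n}(y+\sigma B_t)\ge \text{(small)}\}$ is too strong; the cleaner route is to use that $\{\tau_y>n\}\supset\{\tau^{bm}_{y-2n^{1/2-2\varepsilon}}>n\}\cap\{\Delta_n\leq n^{1/2-2\varepsilon}\}$ \emph{does} hold after the same Revuz--Yor oscillation upgrade applied in reverse (if $y-2n^{1/2-2\varepsilon}+\sigma B_t>0$ for all continuous $t$, then $y+S_k\geq y-n^{1/2-2\varepsilon}+\sigma B_k-\text{osc}>0$ for integer $k$). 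Then
\begin{equation*}
\mathbb{P}_x(\tau_y>n)\geq \mathbf{Pr}\bigl(\tau^{bm}_{y-2n^{1/2-2\varepsilon}}>n\bigr)-\mathbb{P}_x(\Delta_n>n^{1/2-2\varepsilon})\geq \frac{2(y-2n^{1/2-2\varepsilon})}{\sqrt{2\pi n}\sigma}\bigl(1+O(\theta_n)\bigr)-c_\varepsilon n^{-2\varepsilon},
\end{equation*}
and dividing by $\frac{2y}{\sqrt{2\pi n}\sigma}$ gives, as before, a lower bound $1-O(\theta_n)$. Combining the two bounds yields assertion 1. The remaining care is purely in bookkeeping the oscillation term from \cite{RY} (which on time scale $n$ is of order $\sqrt{n\log n}\cdot(\text{mesh}/n)^{1/2}$ and hence $\ll n^{1/2-2\varepsilon}$ after possibly shrinking $\varepsilon$) and in verifying that the hypothesis $\theta_n n^{\varepsilon/4}\to\infty$ is exactly what makes the strong-approximation error $n^{-2\varepsilon}$ negligible relative to the target accuracy $\theta_n\cdot(y/\sqrt n)\geq \theta_n n^{-\varepsilon}$.
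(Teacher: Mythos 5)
Your proposal is correct and follows essentially the same route as the paper: sandwich $\{\tau_y>n\}$ between the Brownian events $\{\tau^{bm}_{y\pm}>n\}$ on the large-probability event $A_n$ from the strong approximation (\ref{KMTbound001}), evaluate the Brownian probabilities via Lemma~\ref{lemma expantau}, divide by $2y/(\sigma\sqrt{2\pi n})$, and absorb the $n^{-2\varepsilon}$ error using $y\geq n^{1/2-\varepsilon}$ and $\theta_n n^{\varepsilon/4}\to\infty$. One small point: the passage you flag as ``the main obstacle'' for the lower bound is a false alarm — since $A_n$ already controls the \emph{continuous-time} supremum $\sup_{0\leq t\leq 1}|S_{[nt]}-\sigma B_{nt}|$, the inclusion $\{\tau^{bm}_{y-n^{1/2-2\varepsilon}}>n\}\cap A_n\subset\{\tau_y>n\}$ is immediate (the discrete walk is only evaluated at integer times, so it cannot ``dip below zero between integer times''), and you in fact recover exactly this argument a few lines later.
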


\begin{proof}
We start with the claim 1. Let $y\geq n^{1/2-\varepsilon }.$ Denote $%
y^{+}=y+n^{1/2-2\varepsilon }$ and $y^{-}=y-n^{1/2-2\varepsilon }.$ 
Let
\begin{equation*}
A_{n}=\left\{ \sup_{0\leq t\leq 1}\left\vert S_{\left[ nt\right] }-\sigma B_{nt}\right\vert 
\leq n^{1/2-2\varepsilon }\right\} .
\end{equation*}%
Using (\ref{KMTbound001}), we have $\mathbb{P}_{x}\left( {A}_{n}^c\right) \leq c_{\varepsilon }n^{-2 \varepsilon},$ 
for any $x\in \mathbb{X}.$ Since%
\begin{equation*}
\left\{ \tau _{y}>n\right\} \cap A_{n}\subset \left\{ \tau
_{y^{+}}^{bm}>n\right\} \cap A_{n}
\end{equation*}%
we obtain, for any $x\in \mathbb{X}$ and $y\geq n^{1/2-\varepsilon },$ 
\begin{eqnarray}
\mathbb{P}_{x}\left( \tau _{y}>n\right) &\leq &\mathbb{P}_{x}\left( \tau
_{y}>n,\ A_{n}\right) +\mathbb{P}_{x}\left( A_n^c\right)  \notag
\\
&\leq &\mathbb{P}_{x}\left( \tau _{y^{+}}^{bm}>n\right) + c_{\varepsilon}n^{-2 \varepsilon}.  \label{TAU001}
\end{eqnarray}%
In the same way we get, for any $x\in \mathbb{X}$ and $y\geq
n^{1/2-2\varepsilon },$%
\begin{equation}
\mathbb{P}_{x}\left( \tau _{y}>n\right) \geq \mathbb{P}_{x}\left( \tau
_{y^{-}}^{bm}>n\right) - c_{\varepsilon }n^{-2 \varepsilon}.  \label{TAU002}
\end{equation}%
Combining (\ref{TAU001}) and (\ref{TAU002}), for any $x\in \mathbb{X}$ and $%
y\geq n^{1/2-2\varepsilon },$%
\begin{equation}
\left\vert \mathbb{P}_{x}\left( \tau _{y}>n\right) -\mathbb{P}_{x}\left(
\tau _{y^{\pm }}^{bm}>n\right) \right\vert \leq c_{\varepsilon }n^{-2 \varepsilon}.
\label{TAU003a}
\end{equation}%
For any $y\in \left[  n^{1/2-\varepsilon  },\theta _{n}n^{1/2}\right] $ and $%
n$ large enough,%
\begin{equation}
y^{\pm } = y \pm n^{1/2-2\varepsilon }  = y(1 + \beta_n n^{-\varepsilon } )  \leq 2\theta_{n}n^{1/2},  \label{TAU005}
\end{equation}%
with some $\beta_n$ satisfying $\vert \beta_n \vert \leq 1.$
Using (\ref{levy002b}) and (\ref{TAU005}), for any $x\in \mathbb{X}$ and $%
y\in \left[  n^{1/2-\varepsilon  },\theta _{n}\sqrt{n}\right] ,$ we obtain%
\begin{equation}
\mathbb{P}_{x}\left( \tau _{y^{\pm }}^{bm}>n\right) =\frac{2y}{\sqrt{2\pi  n }%
\sigma }\left( 1+O\left( \theta _{n}\right) \right),  \label{TAU003}
\end{equation}%
where the constant in $O$ is absolute. 
 Since $\theta _{n}n^{\varepsilon/4 }\rightarrow +\infty, $ we have 
\begin{equation}
\theta _{n} \frac{y}{\sqrt{n}} 
\geq 
\frac{ n^{1/2-\varepsilon  }}{n^{\varepsilon }\sqrt{n}}= n^{-2\varepsilon },  \label{TAU006}
\end{equation}
for $n$ sufficiently large.
From (\ref{TAU003a}) and (\ref{TAU003}), taking into account (\ref{TAU006}), it follows that,
for any $x\in \mathbb{X}$ and $y\in \left[ n^{1/2-\varepsilon  },\theta _{n}%
\sqrt{n}\right] ,$%
\begin{equation*}
\mathbb{P}_{x}\left( \tau _{y}>n\right) =\frac{2y}{\sqrt{2\pi  n }%
\sigma }\left( 1+O\left( \theta _{n}\right) \right) ,
\end{equation*}%
as $n\rightarrow +\infty ,$ where the constant in $O$ does not depend on $x$
and $y.$ This proves the claim 1.

\emph{Proof of the claim 2.} Note first that for any $y\geq n^{1/2-\varepsilon  }$ and $n$ large enough%
\begin{equation}
y\leq y^{+}\leq y+n^{1/2-2\varepsilon }\leq y+y n^{-\varepsilon }\leq 2y;
\label{TAU011}
\end{equation}%
consequently, from (\ref{TAU001}) and (\ref{levy002a}), it follows that
\begin{equation*}
\mathbb{P}_{x}\left( \tau _{y}>n\right) \leq \mathbb{P}_{x}\left( \tau
_{y^{+}}^{bm}>n\right) +c_{\varepsilon }n^{-2 \varepsilon}\leq c%
\frac{y}{\sqrt{n}\sigma }+c_{\varepsilon }n^{-2 \varepsilon}
\end{equation*}%
 with $n^{-2 \varepsilon}\leq {y\over \sqrt{n}}$, for $n$ large enough. 
 This yields $\mathbb{P}_{x}\left( \tau
_{y}>n\right) \leq c_{\varepsilon }\frac{y}{\sqrt{n}}$ for any $x \in \mathbb X$ and $y\geq n^{{1\over 2}-\epsilon}$.
\end{proof}

Now we proceed to prove Theorem \ref{Th asympt tau}. 
Let $\varepsilon \in (0, \varepsilon_0)$ and $\left( \theta _{n}\right) _{n\geq 1}$ be a sequence of
positive numbers such that 
$\theta _{n}\rightarrow 0$ and $\theta_{n}n^{\varepsilon/4 }\rightarrow +\infty $
as $n\rightarrow +\infty .$ 
Let $x\in \mathbb{X}$ and $y>0.$ 
Recall that $\nu _{n}=\min \left\{ k\geq 1:\left\vert y+M_{k}\right\vert \geq 2 n^{1/2-\varepsilon }\right\}. $ A
simple decomposition gives%
\begin{equation}
P_{n}\left( x,y\right) :=\mathbb{P}_{x}\left( \tau _{y}>n\right)  = \mathbb{P%
}_{x}\left( \tau _{y}>n,\nu _{n}>n^{1-\varepsilon }\right)   
 +\mathbb{P}_{x}\left( \tau _{y}>n,\nu _{n}\leq n^{1-\varepsilon }\right) .
\label{proof tau003}
\end{equation}%
The first probability in the right hand side of (\ref{proof tau003}) is
estimated using Lemma \ref{Lemma 2},%
\begin{equation}
\sup_{x\in \mathbb{X},\ y>0}\mathbb{P}_{x}\left( \tau _{y}>n,\nu
_{n}>n^{1-\varepsilon }\right) \leq \sup_{x\in \mathbb{X},\ y>0}\mathbb{P}%
_{x}\left( \nu _{n}>n^{1-\varepsilon }\right) =O\left( e^{-cn^{\varepsilon
}}\right) .  \label{proof tau004}
\end{equation}%
For the second probability in (\ref{proof tau003}) we have by the Markov
property,%
\begin{eqnarray}
&&\mathbb{P}_{x}\left( \tau _{y}>n,\nu _{n}\leq n^{1-\varepsilon }\right) 
\notag \\
&=&\mathbb{E}_{x}\left( P_{n-\nu _{n}}\left( X_{\nu _{n}},y+S_{\nu
_{n}}\right) ;\tau _{y}>\nu _{n},\nu _{n}\leq n^{1-\varepsilon }\right) 
\notag \\
&=&\mathbb{E}_{x}\left( P_{n-\nu _{n}}\left( X_{\nu _{n}},y+S_{\nu
_{n}}\right) ;y+S_{\nu _{n}}\leq \theta _{n}n^{1/2},\tau _{y}>\nu _{n},\nu
_{n}\leq n^{1-\varepsilon }\right)  \notag \\
&&+\mathbb{E}_{x}\left( P_{n-\nu _{n}}\left( X_{\nu _{n}},y+S_{\nu
_{n}}\right) ;y+S_{\nu _{n}}>\theta _{n}n^{1/2},\tau _{y}>\nu _{n},\nu
_{n}\leq n^{1-\varepsilon }\right)  \notag \\
&=&J_{1}\left( x,y\right) +J_{2}\left( x,y\right) .  \label{proof tau006}
\end{eqnarray}%
By Lemma \ref{Martingale decomp} one gets $y+S_{\nu _{n}}\geq y+M_{\nu
_{n}}-a, \mathbb{P}_{x}$-a.e., where $a=2\left\Vert \mathbf{P}\theta
\right\Vert _{\infty }.$ 
By the definition of $\nu _{n},$ we have, for $n$ sufficiently large, $ \mathbb{P}_{x}$-a.e.
\begin{equation}
y+S_{\nu _{n}}\geq y+M_{\nu _{n}}-a\geq 2 n^{1/2-\varepsilon }-a\geq  n^{1/2-\varepsilon  }.\label{proof tau006bis}
\end{equation}
 On the other hand, obviously, for $1\leq k\leq n^{1-\varepsilon },$%
\begin{equation}
\mathbb{P}_{x}\left( \tau _{y}>n\right) \leq P_{n-k}\left( x,y\right) \leq 
\mathbb{P}_{x}\left( \tau _{y}>n-n^{1-\varepsilon }\right) .
\label{proof tau001}
\end{equation}%
Using (\ref{proof tau006bis}),  the two sided bounds of (\ref{proof tau001}) and the claim 1 of 
Lemma \ref{lemma tau ylarge} 
with $ \theta_n$ replaced by $ \theta_n \left( \frac{n}{n-n^{1- \varepsilon}} \right)^{1/2},$
we deduce that on the event 
$F=\left\{ y+S_{\nu _{n}}\leq \theta _{n}n^{1/2}, \tau_y>\nu_n, \nu _{n}\leq
n^{1-\varepsilon }\right\} ,$ for $n$ sufficiently large, $\mathbb{P}_{x}$-a.e.
\begin{equation}
P_{n-\nu _{n}}\left( X_{\nu _{n}},y+S_{\nu _{n}}\right) = \frac{2\left( y+S_{\nu _{n}}\right) }{\sqrt{2\pi n}\sigma }
\left( 1+o(1) \right);  
\label{proof tau007}
\end{equation}%
in particular, (\ref{proof tau007}) implies that, on this event $F,$ we have, 
$\mathbb{P}_{x}$-a.e.%
\begin{equation}
P_{n-\nu _{n}}\left( X_{\nu _{n}},y+S_{\nu _{n}}\right) \leq c\frac{y+S_{\nu_{n}}}{\sqrt{n}}.  \label{proof tau008}
\end{equation}%
Implementing (\ref{proof tau007}) in the expression for $J_{1}$ we obtain%
\begin{eqnarray}
J_{1}\left( x,y\right) &=&\frac{2\left( 1+o\left( 1\right) \right) }{\sqrt{%
2\pi n}\sigma }\mathbb{E}_{x}\left( y+S_{\nu _{n}};y+S_{\nu _{n}}\leq \theta
_{n}n^{1/2},\tau _{y}>\nu _{n},\nu _{n}\leq n^{1-\varepsilon }\right)  \notag
\\
&=&\frac{2\left( 1+o\left( 1\right) \right) }{\sqrt{2\pi n}\sigma }\mathbb{E}%
_{x}\left( y+S_{\nu _{n}};\tau _{y}>\nu _{n},\nu _{n}\leq n^{1-\varepsilon
}\right)  \notag \\
&&+\frac{2\left( 1+o\left( 1\right) \right) }{\sqrt{2\pi n}\sigma }%
J_{3}\left( x,y\right) ,  \label{proof tau009}
\end{eqnarray}%
where%
\begin{equation*}
J_{3}\left( x,y\right) =\mathbb{E}_{x}\left( y+S_{\nu _{n}};y+S_{\nu
_{n}}>\theta _{n}n^{1/2},\tau _{y}>\nu _{n},\nu _{n}\leq n^{1-\varepsilon
}\right) .
\end{equation*}%
Similarly, implementing (\ref{proof tau008}) in the expression for $J_{2},$
we have 
\begin{equation}
J_{2}\left( x,y\right) \leq \frac{c}{\sqrt{n}}J_{3}\left( x,y\right) .
\label{proof tau011}
\end{equation}%
From (\ref{proof tau003}), (\ref{proof tau004}), (\ref{proof tau006}), (\ref%
{proof tau009}) and (\ref{proof tau011}) we get 
\begin{eqnarray*}
\mathbb{P}_{x}\left( \tau _{y}>n\right) &=&\frac{2\left( 1+o\left( 1\right)
\right) }{\sqrt{2\pi n}\sigma }\mathbb{E}_{x}\left( y+S_{\nu _{n}};\tau
_{y}>\nu _{n},\nu _{n}\leq n^{1-\varepsilon }\right) \\
&&+O\left( n^{-1/2}J_{3}\left( x,y\right) \right) +O\left(
e^{-cn^{\varepsilon }}\right) .
\end{eqnarray*}%
The first assertion of Theorem \ref{Th asympt tau} follows if we show that $%
\mathbb{E}_{x}\left( y+S_{\nu _{n}};\tau _{y}>\nu _{n},\nu _{n}\leq
n^{1-\varepsilon }\right) $ converges to $V\left( x,y\right) $ and that 
$J_{3}\left( x,y\right) =o(1)$ as $n\rightarrow +\infty .$ This is proved in Lemmas \ref{Lemma BB-1} and \ref{Lemma BB2}
below. Note that the convergence established in these lemmas is not uniform
in $x\in \mathbb{X}$ which explains why the convergence in Theorem \ref{Th
asympt tau} is not uniform.

\begin{lemma}
\label{Lemma BB-1}
Let $\varepsilon \in (0, \varepsilon_0).$ 
For any $x\in \mathbb{X}$ and $y>0,$
\begin{equation}
\lim_{n\rightarrow +\infty }\mathbb{E}_{x}\left( y+S_{\nu _{n}};\tau _{y}>\nu
_{n},\nu _{n}\leq n^{1-\varepsilon }\right) =V\left( x,y\right) .
\label{lemma BB1 0}
\end{equation}
\end{lemma}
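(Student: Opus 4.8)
The plan is to obtain the identity from the $\mathbf{Q}_{+}$-harmonicity of $V$ by optional stopping at the stopping time $\nu_{n}\wedge N$, where $N=[n^{1-\varepsilon}]$, and then to replace $V$ by its leading linear term. Throughout, fix $x\in\mathbb{X}$, $y>0$ and $\varepsilon\in(0,\varepsilon_{0})$, and recall from Lemma \ref{Martingale decomp} that $|S_{k}-M_{k}|\le a:=2\left\Vert \mathbf{P}\theta\right\Vert_{\infty}$ for all $k$. First I would observe that the process $W_{k}=V(X_{k},y+S_{k})\,1_{\{\tau_{y}>k\}}$, $k\ge0$, is an integrable $\mathbb{P}_{x}$-martingale for $(\mathcal{F}_{k})$: since $\{\tau_{y}>k+1\}=\{\tau_{y}>k\}\cap\{y+S_{k+1}>0\}$ and, conditionally on $\mathcal{F}_{k}$, the pair $(X_{k+1},y+S_{k+1})$ has law $\mathbf{Q}(X_{k},y+S_{k},\cdot)$, Proposition \ref{PROP harmonic func} gives $\mathbb{E}_{x}(W_{k+1}\mid\mathcal{F}_{k})=1_{\{\tau_{y}>k\}}\,\mathbf{Q}_{+}V(X_{k},y+S_{k})=W_{k}$; integrability follows from $0\le V(x',y')\le c(1+y')$ (claim 2 of Proposition \ref{PROP func V for S}) and $\mathbb{E}_{x}|S_{k}|<\infty$ (condition \textbf{M4}). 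As $\nu_{n}\wedge N$ is a bounded stopping time, optional stopping yields $V(x,y)=\mathbb{E}_{x}W_{\nu_{n}\wedge N}=I_{1}(n)+I_{2}(n)$, where
\[
I_{1}(n)=\mathbb{E}_{x}\big(V(X_{\nu_{n}},y+S_{\nu_{n}});\tau_{y}>\nu_{n},\ \nu_{n}\le N\big),\qquad I_{2}(n)=\mathbb{E}_{x}\big(V(X_{N},y+S_{N});\tau_{y}>N,\ \nu_{n}>N\big).
\]

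Next I would show $I_{2}(n)\to0$. On $\{\nu_{n}>N\}$ we have $|y+M_{N}|<2n^{1/2-\varepsilon}$, so on $\{\tau_{y}>N,\ \nu_{n}>N\}$ one has $0<y+S_{N}\le 2n^{1/2-\varepsilon}+a$ and hence $V(X_{N},y+S_{N})\le c\,n^{1/2-\varepsilon}$ for $n$ large. Since $\nu_{n}$ is integer valued and $N=[n^{1-\varepsilon}]$, the inclusion $\{\nu_{n}>N\}\subseteq\{\nu_{n}>n^{1-\varepsilon}\}$ together with Lemma \ref{Lemma 2} gives $\mathbb{P}_{x}(\nu_{n}>N)\le e^{-c_{\varepsilon}n^{\varepsilon}}$, whence $I_{2}(n)\le c\,n^{1/2-\varepsilon}e^{-c_{\varepsilon}n^{\varepsilon}}\to0$.

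It then remains to compare $I_{1}(n)$ with the quantity in (\ref{lemma BB1 0}). On $\{\tau_{y}>\nu_{n}\}$ one has $|y+M_{\nu_{n}}|\ge 2n^{1/2-\varepsilon}$ and $y+S_{\nu_{n}}>0$; since $|S_{\nu_{n}}-M_{\nu_{n}}|\le a$, for $n$ large the alternative $y+M_{\nu_{n}}\le-2n^{1/2-\varepsilon}$ is impossible, so $y+S_{\nu_{n}}\ge 2n^{1/2-\varepsilon}-a\ge n^{1/2-\varepsilon}$ (cf.\ (\ref{proof tau006bis})). By claim 2 of Proposition \ref{PROP func V for S} and Lemma \ref{Lemma 6}, which together give that $V(x',y')/y'\to1$ as $y'\to\infty$ uniformly in $x'\in\mathbb{X}$, the quantity $\delta_{n}:=\sup_{x'\in\mathbb{X},\ y'\ge n^{1/2-\varepsilon}}|V(x',y')/y'-1|$ tends to $0$, and therefore
\[
\big|\,I_{1}(n)-\mathbb{E}_{x}(y+S_{\nu_{n}};\tau_{y}>\nu_{n},\ \nu_{n}\le N)\,\big|\ \le\ \delta_{n}\,\mathbb{E}_{x}(y+S_{\nu_{n}};\tau_{y}>\nu_{n},\ \nu_{n}\le N).
\]
Finally I would bound the last expectation uniformly in $n$: using $\tau_{y}\le T_{y+a}$ and $|S_{k}-M_{k}|\le a$, the random variable $(y+S_{\nu_{n}})\,1_{\{\tau_{y}>\nu_{n},\ \nu_{n}\le N\}}$ is dominated by $(y+a+M_{\nu_{n}\wedge N})\,1_{\{T_{y+a}>\nu_{n}\wedge N\}}$, and optional stopping for the submartingale $\big((y+a+M_{k})\,1_{\{T_{y+a}>k\}}\big)_{k}$ together with Lemma \ref{Lemma 4} gives $\mathbb{E}_{x}(y+S_{\nu_{n}};\tau_{y}>\nu_{n},\ \nu_{n}\le N)\le\mathbb{E}_{x}(y+a+M_{N};\ T_{y+a}>N)\le c(1+y)$.

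Combining the three displays, $\mathbb{E}_{x}(y+S_{\nu_{n}};\tau_{y}>\nu_{n},\ \nu_{n}\le N)=I_{1}(n)+O(\delta_{n}(1+y))=V(x,y)-I_{2}(n)+O(\delta_{n}(1+y))\to V(x,y)$ as $n\to\infty$, which is exactly (\ref{lemma BB1 0}) since $\{\nu_{n}\le N\}=\{\nu_{n}\le n^{1-\varepsilon}\}$. The main obstacle is the very first step, namely setting up the martingale $W_{k}$ and justifying the optional stopping identity $V(x,y)=I_{1}(n)+I_{2}(n)$ at the stopping time $\nu_{n}\wedge N$ so that the truncation matches $\{\nu_{n}\le n^{1-\varepsilon}\}$ in the statement; once this identity is secured, everything else is a routine use of the previously established bounds on $V$, of Lemma \ref{Lemma 2}, and of the uniform linear asymptotics of $V$ coming from Lemma \ref{Lemma 6}.
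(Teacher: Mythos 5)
Your proof is correct, but it takes a genuinely different route than the paper. The paper does not invoke the $\mathbf{Q}_{+}$-harmonicity of $V$ at this point: it works directly with the martingale $(M_{k})$, writing $\mathbb{E}_{x}(y+M_{\nu_{n}\wedge N};\tau_{y}>\nu_{n}\wedge N)=y-\mathbb{E}_{x}(y+M_{\tau_{y}};\tau_{y}\le\nu_{n}\wedge N)$ by optional stopping, passing to the limit via dominated convergence (Corollary \ref{Intergrab of Mtau} makes $M_{\tau_{y}}$ integrable), handling the bad event $\{\nu_{n}>n^{1-\varepsilon}\}$ through Lemma \ref{Lemma 3}, and only at the very end transferring from $M_{k}$ to $S_{k}$ using $|S_{k}-M_{k}|\le a$ and $\mathbb{P}_{x}(\tau_{y}>\nu_{n})\to0$. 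You instead observe that $W_{k}=V(X_{k},y+S_{k})1_{\{\tau_{y}>k\}}$ is itself a $\mathbb{P}_{x}$-martingale (a direct consequence of Proposition \ref{PROP harmonic func}), apply optional stopping to $W$ at $\nu_{n}\wedge N$, and then replace $V$ by its leading linear term using the asymptotics $V(x',y')/y'\to1$. Your argument is shorter and avoids the $M$-to-$S$ transfer step entirely, at the cost of needing the linear asymptotics of $V$ to hold uniformly in $x'$; the paper's Claim 3 of Proposition \ref{PROP func V for S} only states the pointwise limit, so you should note explicitly that the uniformity follows by re-running its proof, since the lower bound in Claim 2 has an $x$-independent constant $a$ and Lemma \ref{Lemma 6} is already stated uniformly in $x$. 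With that observation added, both proofs are sound; yours leans more on the structure already established (the harmonicity of $V$, the uniform bound in Lemma \ref{Lemma 4} for the dominating submartingale), while the paper's is more self-contained and parallels more closely the mechanics of its Propositions \ref{PROP func V for S} and \ref{PROP harmonic func}.
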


\begin{proof}
First we prove (\ref{lemma BB1 0}) for the martingale $M_{n}.$ For any $x\in 
\mathbb{X}$ and $y>0,$%
\begin{eqnarray}
&&\mathbb{E}_{x}\left( y+M_{\nu _{n}};\tau
_{y}>\nu _{n},\nu _{n}\leq n^{1-\varepsilon }\right)  \notag \\
&=&\mathbb{E}_{x}\left( y+M_{\nu _{n}\wedge \left[ n^{1-\varepsilon }\right]
};\tau _{y}>\nu _{n}\wedge n^{1-\varepsilon }, \nu _{n}\leq n^{1-\varepsilon
}\right)  \notag \\
&=&\mathbb{E}_{x}\left( y+M_{\nu _{n}\wedge \left[ n^{1-\varepsilon }\right]
};\tau _{y}>\nu _{n}\wedge n^{1-\varepsilon }\right)  \label{proof BB000} \\
&&-\mathbb{E}_{x}\left( y+M_{\nu _{n}\wedge \left[ n^{1-\varepsilon }\right]
};\tau _{y}>\nu _{n}\wedge n^{1-\varepsilon },\nu _{n}>n^{1-\varepsilon
}\right) .  \label{proof BB001}
\end{eqnarray}%
Using Lemma \ref{Lemma 3}, the expectation in (\ref{proof BB001}) is bounded
as follows:%
\begin{eqnarray}
&&\mathbb{E}_{x}\left( y+M_{\nu _{n}\wedge \left[ n^{1-\varepsilon }\right]%
};\tau _{y}>\nu _{n}\wedge n^{1-\varepsilon },\nu _{n}>n^{1-\varepsilon
}\right)  \notag \\
&=&\mathbb{E}_{x}\left( y+M_{\left[ n^{1-\varepsilon }\right]};\tau
_{y}>n^{1-\varepsilon },\nu _{n}>n^{1-\varepsilon }\right)  \notag \\
&\leq &c\left( 1+y\right) \exp \left( -c_{\varepsilon }n^{\varepsilon
}\right) .  \label{proof BB002}
\end{eqnarray}%
The expectation in (\ref{proof BB000}) is decomposed into two terms:%
\begin{eqnarray}
&&\mathbb{E}_{x}\left( y+M_{\nu _{n}\wedge \left[ n^{1-\varepsilon }\right]
};\tau _{y}>\nu _{n}\wedge n^{1-\varepsilon }\right)  \notag \\
&=&\mathbb{E}_{x}\left( y+M_{\nu _{n}\wedge \left[ n^{1-\varepsilon }\right]
}\right) -\mathbb{E}_{x}\left( y+M_{\nu _{n}\wedge \left[ n^{1-\varepsilon }%
\right]};\tau _{y}\leq \nu _{n}\wedge n^{1-\varepsilon }\right) .
\label{proof BB003}
\end{eqnarray}%
Since $\left( M_{n}\right) _{n\geq 1}$ is a martingale, $\mathbb{E}%
_{x}\left( y+M_{\nu _{n}\wedge \left[ n^{1-\varepsilon }\right] }\right) =y$
and%
\begin{equation}
\mathbb{E}_{x}\left( y+M_{\nu _{n}\wedge \left[ n^{1-\varepsilon }\right]
};\tau _{y}\leq \nu _{n}\wedge n^{1-\varepsilon }\right) =\mathbb{E}%
_{x}\left( y+M_{\tau _{y}};\tau _{y}\leq \nu _{n}\wedge n^{1-\varepsilon
}\right) .  \label{proof BB003a}
\end{equation}%
By Corollary \ref{Intergrab of Mtau}, $M_{\tau _{y}}$ is integrable, consequently %
\begin{equation*}
\lim_{n\rightarrow +\infty }\mathbb{E}%
_{x}\left( y+M_{\tau _{y}};\tau _{y}\leq \nu _{n}\wedge n^{1-\varepsilon
}\right) =\mathbb{E}_{x}\left( y+M_{\tau _{y}}\right) ,
\end{equation*}%
which, together with (\ref{proof BB003a}) and (\ref{proof BB003}), implies%
\begin{equation}
\lim_{n\rightarrow +\infty }\mathbb{E}_{x}\left( y+M_{\nu _{n}\wedge \left[
n^{1-\varepsilon }\right] };\tau _{y}>\nu _{n}\wedge n^{1-\varepsilon
}\right) =y-\mathbb{E}_{x}\left( y+M_{\tau _{y}}\right) =V\left( x,y\right) .
\label{proof BB004}
\end{equation}%
From (\ref{proof BB002}) and (\ref{proof BB004}) it follows that%
\begin{equation}
\lim_{n\rightarrow +\infty }\mathbb{E}_{x}\left( y+M_{\nu _{n}};\tau _{y}>\nu
_{n},\nu _{n}\leq n^{1-\varepsilon }\right) =V\left( x,y\right) .
\label{proof BB005}
\end{equation}

Now we extend (\ref{proof BB005}) to $S_{n}$ using the fact that the
difference $R_{n}=S_{n}-M_{n}$ is  $\mathbb P_x$-a.s.  bounded. For this we write%
\begin{eqnarray}
&&\mathbb{E}_{x}\left( y+S_{\nu _{n}\wedge \left[ n^{1-\varepsilon }\right]
};\tau _{y}>\nu _{n}\wedge n^{1-\varepsilon },\nu _{n}\leq n^{1-\varepsilon
}\right)  \notag \\
&=&\mathbb{E}_{x}\left( y+M_{\nu _{n} };\tau _{y}>\nu _{n},\nu _{n}\leq
n^{1-\varepsilon }\right) +\mathbb{E}_{x}\left( R_{\nu _{n}};\tau _{y}>\nu
_{n},\nu _{n}\leq n^{1-\varepsilon }\right) .  \label{proof BB006}
\end{eqnarray}%
Note that $\mathbb{P}_{x}$-a.s.\ we have $\tau _{y}< +\infty ,$ $%
\sup_{n\geq 0}\left\vert S_{n}-M_{n}\right\vert \leq a=2\Vert P\theta\Vert_\infty$ 
and $\nu_{n}\rightarrow +\infty$ \HL{(by Lemma \ref{nu001})}, 
which implies that, as $n\rightarrow +\infty ,$%
\begin{equation}
\mathbb{E}_{x}\left( \left\vert R_{\nu _{n} }\right\vert ;\tau _{y}>\nu
_{n},\nu _{n}\leq n^{1-\varepsilon }\right) \leq a\mathbb{P}_{x}\left( \tau
_{y}>\nu _{n}\right) \rightarrow 0.  \label{proof BB007}
\end{equation}%
The assertion of the lemma follows from (\ref{proof BB005}), (\ref{proof
BB006}) and (\ref{proof BB007}).
\end{proof}

\begin{lemma}
\label{Lemma BB2}
Let $\varepsilon \in (0, \varepsilon_0)$ and $\left( \theta _{n}\right) _{n\geq 1}$ be a sequence of
positive numbers such that 
$\theta _{n}\rightarrow 0$  and $\theta_{n}n^{\varepsilon/4 }\rightarrow +\infty $
as $n\rightarrow +\infty .$ 
For any $x\in \mathbb{X}$ and $y>0,$%
\begin{equation*}
\lim_{n\rightarrow +\infty } n^{2 \varepsilon} \mathbb{E}_{x}\left(
y+S_{\nu_{n}}; y+ S_{\nu _{n}} >\theta _{n}n^{1/2},\tau _{y}>\nu _{n},\nu
_{n}\leq n^{1-\varepsilon }\right) =0.
\end{equation*}
\end{lemma}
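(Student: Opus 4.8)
The plan is to control the expectation $\mathbb{E}_{x}\left(y+S_{\nu_{n}}; y+S_{\nu_{n}}>\theta_{n}n^{1/2},\tau_{y}>\nu_{n},\nu_{n}\leq n^{1-\varepsilon}\right)$ by splitting the martingale increment at the exit time $\nu_n$ into its ``bulk'' part, which has size $2n^{1/2-\varepsilon}$ by definition of $\nu_n$, and the final jump $\xi_{\nu_n}=\theta(X_{\nu_n})-\mathbf{P}\theta(X_{\nu_n-1})$. On the event $\{\nu_n\leq n^{1-\varepsilon}\}$ we have $|y+M_{\nu_n-1}|<2n^{1/2-\varepsilon}$, hence by Lemma \ref{Martingale decomp} the quantity $y+S_{\nu_n}$ exceeds $\theta_n n^{1/2}\gg n^{1/2-\varepsilon}$ only if $|\xi_{\nu_n}|$ is at least of order $\theta_n n^{1/2}$. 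So the event inside the expectation forces a large jump of the underlying i.i.d.-like sequence $(\xi_k)$.

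First I would write, using $y+S_{\nu_n}\leq y+M_{\nu_n}+a$ and $|y+M_{\nu_n-1}|\leq 2n^{1/2-\varepsilon}$, the pointwise bound $y+S_{\nu_n}\leq 3n^{1/2-\varepsilon}+|\xi_{\nu_n}|$ on the relevant event, and then bound the whole expectation by a sum over the possible values $k\leq n^{1-\varepsilon}$ of $\nu_n$:
\begin{equation*}
\mathbb{E}_{x}\left(y+S_{\nu_{n}}; y+S_{\nu_{n}}>\theta_{n}n^{1/2},\tau_{y}>\nu_{n},\nu_{n}\leq n^{1-\varepsilon}\right)
\leq \sum_{k=1}^{[n^{1-\varepsilon}]}\mathbb{E}_{x}\left(\left(3n^{1/2-\varepsilon}+|\xi_{k}|\right);|\xi_{k}|>\tfrac{1}{2}\theta_{n}n^{1/2}\right).
\end{equation*}
Here I used that $\{\nu_n=k\}$ together with $y+S_{\nu_n}>\theta_n n^{1/2}$ implies $|\xi_k|>\tfrac12\theta_n n^{1/2}$ for $n$ large, since $\theta_n n^{1/2}-3n^{1/2-\varepsilon}=\theta_n n^{1/2}(1-3\theta_n^{-1}n^{-\varepsilon})\geq \tfrac12\theta_n n^{1/2}$ once $\theta_n n^{\varepsilon}\to\infty$ (a fortiori $\theta_n n^{\varepsilon/4}\to\infty$ suffices after raising to a power). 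Then each term is estimated by Markov's inequality at exponent $p>2$: for a constant depending only on $p$,
\begin{equation*}
\mathbb{E}_{x}\left(|\xi_{k}|;|\xi_{k}|>\tfrac12\theta_{n}n^{1/2}\right)
\leq \left(\tfrac12\theta_{n}n^{1/2}\right)^{1-p}\mathbb{E}_{x}|\xi_{k}|^{p}
\leq c_{p}\left(\theta_{n}n^{1/2}\right)^{1-p},
\end{equation*}
where the last bound uses $\sup_{k\geq1}\mathbb{E}_{x}|\xi_k|^p\leq 2^p\sup_{k\geq1}\mathbb{E}_{x}|\theta(X_k)|^p<\infty$ by Corollary \ref{bound moment g}; similarly $3n^{1/2-\varepsilon}\,\mathbb{P}_{x}(|\xi_k|>\tfrac12\theta_n n^{1/2})\leq c_p n^{1/2-\varepsilon}(\theta_n n^{1/2})^{-p}$.

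Summing over the at most $n^{1-\varepsilon}$ values of $k$ gives a bound of order $n^{1-\varepsilon}\cdot c_p(\theta_n n^{1/2})^{1-p}=c_p\,\theta_n^{1-p}\,n^{1-\varepsilon+(1-p)/2}$, plus a lower-order term from the $3n^{1/2-\varepsilon}$ contribution. Multiplying by $n^{2\varepsilon}$, the exponent of $n$ is $(3-p)/2-\varepsilon$, so for $p>3$ and $\varepsilon$ small this is $n$ to a negative power; and the factor $\theta_n^{1-p}$ grows only polynomially in $n$ through the constraint $\theta_n n^{\varepsilon/4}\to\infty$, i.e.\ $\theta_n^{-1}\leq n^{\varepsilon/4}$ eventually, so $\theta_n^{1-p}\leq n^{(p-1)\varepsilon/4}$. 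Choosing $p$ large enough and then $\varepsilon$ small enough (within $(0,\varepsilon_0)$, shrinking $\varepsilon_0$ if necessary) makes the total exponent negative, hence $n^{2\varepsilon}\mathbb{E}_{x}(\cdots)\to 0$. The main obstacle is purely bookkeeping: one must keep track of the competing powers of $n$ coming from the number of summands $n^{1-\varepsilon}$, the tail decay $(\theta_n n^{1/2})^{1-p}$, the prefactor $n^{2\varepsilon}$, and the polynomial growth $\theta_n^{1-p}\leq n^{(p-1)\varepsilon/4}$, and verify that $p$ can be taken large and $\varepsilon$ small simultaneously so that everything is summable and the limit vanishes; the probabilistic content is just the $L^p$ moment bound for $\theta(X_k)$ from Corollary \ref{bound moment g} fed into Markov's inequality.
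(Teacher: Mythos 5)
Your proposal is correct and gives an alternative route to the paper's. The paper bounds the expectation by
\begin{equation*}
\mathbb{E}_{x}\left( y'+M^*_{[n^{1-\varepsilon}]};\ y'+M^*_{[n^{1-\varepsilon}]} >\theta_{n}n^{1/2} \right), \qquad y'=y+a,\quad M^*_m=\max_{1\leq k\leq m}|M_k|,
\end{equation*}
then uses $\theta_n n^{\varepsilon/4}\to\infty$ to reduce to proving $n^{2\varepsilon}\,\mathbb{E}_x(y'+M^*_n; M^*_n>n^{1/2+\delta})\to 0$ for a suitable small $\delta>0$, and finishes via Doob's maximal inequality $\mathbb{P}_x(M^*_n>t)\leq c\,n^{p/2}t^{-p}$ together with an integration by parts of the tail. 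You instead decompose the event over the possible values $k$ of $\nu_n$, observe that since $|y+M_{k-1}|<2n^{1/2-\varepsilon}$ the event $y+S_{\nu_n}>\theta_n n^{1/2}$ forces a single large increment $|\xi_k|\gtrsim\theta_n n^{1/2}$, and apply the moment bound on $\xi_k$ from Corollary \ref{bound moment g} via Markov's inequality. Both arguments rely on the availability of arbitrarily high moments and lead to a power-counting check, but yours localizes the rarity of the event in the last jump $\xi_{\nu_n}$ while the paper's controls the running maximum globally.

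Two small remarks on the bookkeeping. First, after multiplying by $n^{2\varepsilon}$ the exponent of $n$ coming from $n^{1-\varepsilon}\cdot(\theta_n n^{1/2})^{1-p}\cdot n^{2\varepsilon}$ is $(3-p)/2+\varepsilon$, not $(3-p)/2-\varepsilon$ as you wrote; the sign error is harmless, and after inserting $\theta_n^{1-p}\leq n^{(p-1)\varepsilon/4}$ one obtains the total exponent $(3-p)/2+\varepsilon(p+3)/4$, which is negative precisely when $\varepsilon<2(p-3)/(p+3)$. Second, the phrase ``shrinking $\varepsilon_0$ if necessary'' is not needed and would alter the statement of the lemma: since $\sup_{p>3}2(p-3)/(p+3)=2$ and $\varepsilon_0<1/4$, for any fixed $\varepsilon\in(0,\varepsilon_0)$ one can simply take $p$ large enough (using \textbf{M4} for all $p>2$), so the result holds without modifying $\varepsilon_0$.
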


\begin{proof}
Let $y^{\prime }=y+a,$ where $a=2\left\Vert \mathbf{P}\theta \right\Vert_{\infty }.$ 
With the notation  
$M_{n}^*:=\max_{1\leq k\leq n}\left\vert M_{k}\right\vert$,
we have%
\begin{eqnarray*}
&&\mathbb{E}_{x}\left( y+S_{\nu_{n}}; y+ S_{\nu _{n}} >\theta_{n}n^{1/2},\tau _{y}>\nu _{n},\nu _{n}\leq n^{1-\varepsilon }\right) \\
&\leq &\mathbb{E}_{x}\left( y^{\prime }+M_{\nu _{n}}; y^{\prime }+M_{\nu_{n}} >\theta _{n}n^{1/2}, 
\nu _{n} \leq n^{1-\varepsilon }\right) \\
&\leq &\mathbb{E}_{x}\left( y^{\prime } + M^*_{[n^{1-\varepsilon }]}; 
y^{\prime } + M^*_{[n^{1-\varepsilon }]}  >\theta_{n}n^{1/2} \right) .
\end{eqnarray*}%
Since $\theta _{n}n^{\varepsilon/4 }\rightarrow +\infty $ 
as $n\rightarrow +\infty $, 
to finish the proof it is enough to show that, for any $\delta >0$ and 
  $x\in \mathbb{X},$%
\begin{equation}
\lim_{n\rightarrow +\infty }  n^{2 \varepsilon} \mathbb{E}_{x}\left( y^{\prime }+M_{n}^*;
M_{n}^* >n^{1/2+\delta } \right) =0  \label{proofCCC001}
\end{equation}%
(let us remark that the condition $\theta _{n}n^{\varepsilon}\rightarrow +\infty $ at this step would not be sufficient).
Obviously
\begin{eqnarray}
\mathbb{E}_{x}\left( y^{\prime }+M_n^*;M_n^*>n^{1/2+\delta }\right) 
\leq y^{\prime }\mathbb{P}_{x}\left( M_n^*>n^{1/2+\delta }\right) +%
\mathbb{E}_{x}\left( M_n^*;M_n^*>n^{1/2+\delta }\right) ,
\label{proofCCC002}
\end{eqnarray}%
with 
\begin{eqnarray}
\mathbb{E}_{x}\left( M_n^*;M_n^*>n^{1/2+\delta }\right) &=&n^{1/2+\delta }%
\mathbb{P}_{x}\left( M_n^*>n^{1/2+\delta }\right) \notag \\
& + & \int_{n^{1/2+\delta }}^{+\infty } \mathbb{P}_{x} \left( M_n^*>t\right)dt .   \label{proofCCC003}
\end{eqnarray}%
By Doob's maximal inequality for martingales%
\begin{equation}
\mathbb{P}_{x}\left( M_n^*>t\right) \leq \frac{1}{t^{p}}\mathbb{E}%
_{x}\left\vert M_{n}\right\vert ^{p}\leq c\frac{n^{p/2}}{t^{p}}.
\label{proofCCC004}
\end{equation}%
Implementing (\ref{proofCCC004}) in (\ref{proofCCC002}) and (\ref{proofCCC003}),%
\begin{eqnarray*}
&&\mathbb{E}_{x}\left( \HL{y^{\prime }+M_{n}^*};\ M_n^* >n^{1/2+\delta } \right) \\
&\leq &c \left( y^{\prime }+n^{1/2+\delta }\right) \frac{n^{p/2}}{%
n^{p/2+p\delta }}+cn^{p/2}\HL{\int_{n^{1/2+\delta }}^{+\infty }t^{-p}dt}
\\
&\leq &c\left( y^{\prime }+n^{1/2+\delta }\right) n^{-p\delta }+%
cn^{p/2}n^{-\left( 1/2+\delta \right) \left( p-1\right) }
\\
&\leq &c\left( y^{\prime }+n^{1/2+\delta }\right) n^{-p\delta }+%
cn^{-p\delta +1/2+\delta }.
\end{eqnarray*}%
Since $p$ can be taken arbitrarily large we get (\ref{proofCCC001}).
\end{proof}

The small rate of convergence of order $n^{-2 \varepsilon}$ obtained in the previous lemma will be used in the proof of Theorem \protect\ref{Th weak conv cond positive} (see next section).
 
We proceed to prove the second assertion in Theorem \ref{Th asympt tau}. 
It is enough to give a proof only for $n$ large enough, otherwise the assertion is trivial.
Set $\mathbb P_n(x,y) := \mathbb P_x (\tau_y >n).$
A simple decomposition gives
\begin{equation}
\label{sa001}
\mathbb P_n(x,y)  
=  \mathbb P_x \left(\tau_y >n, \nu_n >n^{1-\varepsilon}\right) + \mathbb P_x \left(\tau_y >n, \nu_n \leq n^{1-\varepsilon}\right).  
\end{equation}
Using Lemma \ref{Lemma 2}, the first probability converges uniformly to $0$:
\begin{equation}
\label{sa002}
\sup_{x\in\mathbb X,\, y>0}  \mathbb P_x \left(\tau_y >n, \nu_n > n^{1-\varepsilon}\right) 
\leq  \sup_{x\in\mathbb X,\, y>0}  \mathbb P_x \left( \nu_n > n^{1-\varepsilon}\right) 
\leq O\left(e^{-c_{\varepsilon}n^{\varepsilon}}\right).
\end{equation}
For the second probability, by the Markov property, we have
\begin{equation}
\label{sa003}
\mathbb P_x \left(\tau_y >n, \nu_n \leq n^{1-\varepsilon}\right)
= \mathbb E_{x} \left( \mathbb P_{n-\nu_n } \left(X_{\nu_n}, y+ S_{\nu_n}\right); \tau_y >\nu_n, \nu_n \leq n^{1-\varepsilon}\right).
\end{equation}
We shall bound the probability $\mathbb P_{n-\nu_n } \left(X_{\nu_n}, y+ S_{\nu_n}\right).$
To this end we note that, for any $x' \in \mathbb X$, $y'>0$ and $0\leq k \leq n^{1-\varepsilon}$
\begin{equation}
\label{sa004}
\mathbb P_{n-k } \left(x', y' \right) \leq  \mathbb P_{n-[n^{1-\varepsilon}] } \left(x', y' \right) 
\end{equation}
and, by the point 2 of Lemma \ref{lemma tau ylarge}, for any $x' \in \mathbb X$ and $y'\geq n^{1/2-\varepsilon} $
\begin{equation}
\label{sa005}
\mathbb P_{n-[n^{1-\varepsilon}] } \left(x', y' \right) \leq c_{\varepsilon} \frac{y'}{\sqrt{n}}.
\end{equation}
By the definition of $\nu_n$ and Lemma \ref{Martingale decomp}, for $n$ sufficiently large and any $y>0,$ we have $\mathbb P_x$-a.s.
\begin{equation}
\label{sa006}
y+S_{\nu_n} \geq y+M_{\nu_n} - a \geq 2n^{1/2-\varepsilon} -a \geq n^{1/2-\varepsilon},
\end{equation}
where $a= 2\left\Vert  \mathbb{P} \theta \right \Vert _{\infty}.$
The bounds (\ref{sa004}), (\ref{sa005}) and (\ref{sa006}) imply that, $\mathbb P_x$-a.s.
\begin{equation}
\label{sa007}
\mathbb P_{n-[n^{1-\varepsilon}] } \left(X_{\nu_n}, y+ S_{\nu_n} \right) \leq c_{\varepsilon} \frac{ y+ S_{\nu_n} }{\sqrt{n}}.
\end{equation}
From (\ref{sa003}) and (\ref{sa007}) it follows that  
\begin{equation}
\label{sa008}
\mathbb P_x \left(\tau_y >n, \nu_n \leq n^{1-\varepsilon}\right)
\leq   \frac{c_{\varepsilon} }{\sqrt{n}} \mathbb E_{x} \left(   y+ S_{\nu_n}; \tau_y >\nu_n, \nu_n \leq n^{1-\varepsilon}\right).
\end{equation}
Let $y'=y+a.$ Since $\tau_{y} \leq T_{y'},$ we have 
\begin{equation*}
\label{sa009}
\mathbb E_{x} \left(   y+ S_{\nu_n}; \tau_y >\nu_n, \nu_n \leq n^{1-\varepsilon}\right)
\leq \mathbb E_{x} \left(   y'+ M_{\nu_n}; T_y' >\nu_n, \nu_n \leq n^{1-\varepsilon}\right).
\end{equation*}
Using the fact that $\left(\left(y'+M_n\right) 1 _{T_{y'} > n}\right)_{n\leq 1}$ is a submartingale and Lemma \ref{Lemma 4}, we bound the last expectation by
\begin{equation*}
\label{sa010}
\mathbb E_{x} \left(   y'+ M_{[n^{1-\varepsilon}]}; T_y' >[n^{1-\varepsilon}], \nu_n \leq n^{1-\varepsilon}\right) \leq c\left(1+y'\right) = c\left(1+y +a\right). 
\end{equation*}
Inserting this bound in (\ref{sa008}) we get
\begin{equation}
\label{sa011}
\mathbb P_x \left(\tau_y >n, \nu_n \leq n^{1-\varepsilon}\right)
\leq   \frac{c_{\varepsilon} }{\sqrt{n}} \left(1+y +a\right).
\end{equation}
From (\ref{sa001}), (\ref{sa002}) and (\ref{sa011}) it follows that
\begin{equation}
\label{sa011}
\mathbb P_x \left(\tau_y >n\right)
\leq O\left(e^{c_{\varepsilon}n^{-\varepsilon}}\right) + \frac{c_{\varepsilon} }{\sqrt{n}} \left(1+y +a\right),
\end{equation}
which proves the second assertion for $n$ large enough. 

\section{Proof of Theorem \protect\ref{Th weak conv cond positive}}

We first state  the following lemma.

\begin{lemma}
\label{Lemma Weak Conv} Let $\varepsilon \in (0, \varepsilon_0),$ $t>0$ and $\left( \theta _{n}\right) _{n\geq 1}$
be a sequence such that $\theta _{n}\rightarrow 0$ and 
$\theta_{n}n^{\varepsilon/4 }\rightarrow +\infty $ as $n\rightarrow +\infty .$ Then%
\begin{equation}
\lim_{n\rightarrow +\infty }\sup \left\vert \frac{\mathbb{P}_{x}\left( \tau
_{y}>n-k,\frac{y+S_{n-k}}{\sqrt{n}}\leq t\right) }{\frac{2y}{\sqrt{2\pi n} }%
\frac{1}{\sigma ^{3}}\int_{0}^{t}u\exp \left( -\frac{u^{2}}{2\sigma ^{2}}%
\right) du}-1\right\vert =0,  \label{WKth001}
\end{equation}%
where $\sup $ is taken over $x\in \mathbb{X},\ k\leq n^{1-\varepsilon }$ and 
$ n^{1/2-\varepsilon  } \leq y\leq \theta _{n}n^{1/2}$.
\end{lemma}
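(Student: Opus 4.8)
The plan is to transfer, via the Komlos--Major--Tusnady coupling (\ref{KMTbound001}), the exit probability for the Markov walk to the corresponding quantity for Brownian motion, and then to evaluate the latter asymptotically by means of L\'evy's formula (\ref{levy001b}), all uniformly over the stated range of $x$, $k$ and $y$. Fix $\varepsilon\in(0,\varepsilon_{0})$ and set $m=n-k$; since $k\le n^{1-\varepsilon}$ one has $n/2\le m\le n$ and $n/m=1+O(n^{-\varepsilon})$ for $n$ large. Let $A_{n}$ be the event in (\ref{KMTbound001}), enlarged by the standard Brownian oscillation event, so that $\mathbb{P}_{x}(A_{n}^{c})\le c_{\varepsilon}n^{-2\varepsilon}$ uniformly in $x$ and, on $A_{n}$, the continuous- and discrete-time Brownian trajectories differ by at most $O(n^{1/2-2\varepsilon})$ (cf. the remark preceding Lemma \ref{lemma tau ylarge}). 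Writing $y^{\pm}=y\pm c\,n^{1/2-2\varepsilon}$ and $t^{\pm}=t\pm c\,n^{-2\varepsilon}$ for a suitable absolute constant $c$, on $A_{n}$ one has the inclusions
\[
\{\tau^{bm}_{y^{-}}>m,\ y^{-}+\sigma B_{m}\le t^{-}\sqrt{n}\}\ \subset\ \{\tau_{y}>m,\ \tfrac{y+S_{m}}{\sqrt{n}}\le t\}\ \subset\ \{\tau^{bm}_{y^{+}}>m,\ y^{+}+\sigma B_{m}\le t^{+}\sqrt{n}\},
\]
so that
\[
\mathbb{P}_{x}\Bigl(\tau_{y}>m,\ \tfrac{y+S_{m}}{\sqrt{n}}\le t\Bigr)=\mathbf{Pr}\bigl(\tau^{bm}_{y^{\pm}}>m,\ y^{\pm}+\sigma B_{m}\le t^{\pm}\sqrt{n}\bigr)+O(n^{-2\varepsilon})
\]
uniformly in $x$, $k\le n^{1-\varepsilon}$ and $y\ge n^{1/2-\varepsilon}$.

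Next I would compute the Brownian side. By (\ref{levy001b}) of Lemma \ref{lemma tauBM}, applied with $m$ in place of $n$ and $[a,b]=[0,\tau\sqrt{n}]$, for $z\in\{y^{+},y^{-}\}$ and $\tau\in\{t^{+},t^{-}\}$ (both positive for $n$ large, since $t>0$ is fixed),
\[
\mathbf{Pr}\bigl(\tau^{bm}_{z}>m,\ z+\sigma B_{m}\le\tau\sqrt{n}\bigr)=\frac{1}{\sqrt{2\pi m}\,\sigma}\int_{0}^{\tau\sqrt{n}}e^{-\frac{s^{2}+z^{2}}{2m\sigma^{2}}}\,2\sinh\!\Bigl(\frac{sz}{m\sigma^{2}}\Bigr)\,ds.
\]
Because $z=y^{\pm}\le 2\theta_{n}\sqrt{n}$, one has $z^{2}/(m\sigma^{2})=O(\theta_{n}^{2})$ and $sz/(m\sigma^{2})=O(\theta_{n})$ uniformly for $s\in[0,\tau\sqrt{n}]$; using $\sinh(\xi)=\xi(1+O(\xi^{2}))$ and $e^{-\xi}=1+O(\xi)$ the integrand equals $e^{-s^{2}/(2m\sigma^{2})}\,(2sz/(m\sigma^{2}))\,(1+O(\theta_{n}^{2}))$ uniformly in $s$. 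The substitution $s=u\sqrt{n}$ together with $n/m=1+O(n^{-\varepsilon})$ then gives
\[
\mathbf{Pr}\bigl(\tau^{bm}_{z}>m,\ z+\sigma B_{m}\le\tau\sqrt{n}\bigr)=\frac{2z}{\sqrt{2\pi n}\,\sigma^{3}}\int_{0}^{\tau}u\,e^{-\frac{u^{2}}{2\sigma^{2}}}\,du\,\bigl(1+O(\theta_{n}^{2})+O(n^{-\varepsilon})\bigr).
\]

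It remains to collect the errors. Since $y\ge n^{1/2-\varepsilon}$, the shifts satisfy $y^{\pm}=y(1+O(n^{-\varepsilon}))$; since $t>0$ is fixed and $\tau=t^{\pm}=t+O(n^{-2\varepsilon})$, the Gaussian integral $\int_{0}^{\tau}u\,e^{-u^{2}/(2\sigma^{2})}\,du$ equals $\int_{0}^{t}u\,e^{-u^{2}/(2\sigma^{2})}\,du$ up to the factor $1+O(n^{-2\varepsilon})$, with limiting value a fixed positive constant. Finally, the additive remainder $O(n^{-2\varepsilon})$ inherited from the coupling is $o(y/\sqrt{n})$, hence $o(1)$ times the main term $\frac{2y}{\sqrt{2\pi n}\,\sigma^{3}}\int_{0}^{t}u\,e^{-u^{2}/(2\sigma^{2})}\,du$, because $y/\sqrt{n}\ge n^{-\varepsilon}\gg n^{-2\varepsilon}$; here the hypotheses $\theta_{n}\to0$ and $\theta_{n}n^{\varepsilon/4}\to\infty$ enter exactly as in the proof of Lemma \ref{lemma tau ylarge}. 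Combining the lower ($y^{-},t^{-}$) and upper ($y^{+},t^{+}$) sandwiching choices, both bounds agree with $\frac{2y}{\sqrt{2\pi n}\,\sigma^{3}}\int_{0}^{t}u\,e^{-u^{2}/(2\sigma^{2})}\,du$ up to a relative error tending to $0$ uniformly over $x\in\mathbb{X}$, $k\le n^{1-\varepsilon}$ and $n^{1/2-\varepsilon}\le y\le\theta_{n}n^{1/2}$, which is (\ref{WKth001}). The principal difficulty is bookkeeping: checking that each error — the two coupling shifts, the discrete/continuous-time passage, $m$ versus $n$, and the $\sinh$/Gaussian Taylor remainders — is uniform and is $o(1)$ relative to the main term, which is most delicate at the lower end $y\asymp n^{1/2-\varepsilon}$, where that term is smallest.
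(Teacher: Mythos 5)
Your proof is correct and follows essentially the same route as the paper's: KMT coupling with shifted boundaries $y^{\pm},t^{\pm}$ to pass to Brownian motion, Lévy's exit formula (\ref{levy001b}), the substitution $s=u\sqrt{n}$, and a Taylor expansion of the kernel (your $\sinh$ form is just the paper's $e^{v_n}-e^{-v_n}$ written compactly), followed by the same bookkeeping of the $O(\theta_n)$, $O(n^{-\varepsilon})$ and $O(n^{-2\varepsilon})$ error terms against the lower bound $y/\sqrt{n}\ge n^{-\varepsilon}$.
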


\begin{proof}
Denote $y^{+}=y+n^{1/2-2\varepsilon },$ $y^{-}=y-n^{1/2-2\varepsilon },$ $%
t^{+}=t+n^{-2\varepsilon }$ and $t^{-}=t-n^{-2\varepsilon }.$ Let $m_{n}=n-k,
$ where $1\leq k\leq n^{1-\varepsilon }.$ As in the previous section, set 
\begin{equation*}
A_{n}=\left\{ \sup_{0\leq t\leq 1}\left\vert S_{\left[ nt\right] }-\sigma
B_{nt}\right\vert \leq  n^{1/2-2\varepsilon }\right\} .
\end{equation*}%
Since on this set it holds%
\begin{equation*}
\left\{ \tau _{y}>m_{n}\right\} \subset \left\{ \tau
_{y^{+}}^{bm}>m_{n}\right\}
\end{equation*}%
and 
\begin{equation*}
\left\{ \frac{y+S_{m_{n}}}{\sqrt{n}}\leq t\right\} \subset \left\{ \frac{%
y+\sigma B_{m_{n}}}{\sqrt{n}}\leq t^{+}\right\} ,
\end{equation*}%
we obtain 
\begin{eqnarray}
&&\mathbb{P}_{x}\left( \tau _{y}>m_{n},y+S_{m_{n}}\leq \sqrt{n}t\right) 
\notag \\
&\leq &\mathbb{P}_{x}\left( \tau _{y}>m_{n},\frac{y+S_{m_{n}}}{\sqrt{n}}\leq
t,A_{n}\right) +\mathbb{P}_{x}\left( A_n^c\right)  \notag \\
&\leq &\mathbb{P}_{x}\left( \tau _{y^{+}}^{bm}>m_{n},\frac{y^{+}+\sigma
B_{m_{n}}}{\sqrt{n}}\leq t^{+},A_{n}\right) +\mathbb{P}_{x}\left( A^c_n
\right)  \notag \\
&\leq &\mathbb{P}_{x}\left( \tau _{y^{+}}^{bm}>m_{n},\frac{y^{+}+\sigma
B_{m_{n}}}{\sqrt{n}}\leq t^{+}\right) +\mathbb{P}_{x}\left( A_n^c\right) .  \label{WK001}
\end{eqnarray}%
In the same way we get%
\begin{eqnarray}
&&\mathbb{P}_{x}\left( \tau _{y}>m_{n},y+S_{m_{n}}\leq
\sqrt{n} t\right)  \notag \\
&\geq &\mathbb{P}_{x}\left( \tau _{y^{-}}^{bm}>m_{n},\frac{y^{-}+\sigma
B_{m_{n}}}{\sqrt{n}}\leq t^{-}\right) -\mathbb{P}_{x}\left(A_n^c\right) .  \label{WK002}
\end{eqnarray}

Now we deal with the first probability in (\ref{WK001}). By Lemma \ref{lemma
tauBM},%
\begin{eqnarray}
&&\mathbb{P}_{x}\left( \tau _{y^{+}}^{bm}>m_{n},\frac{y^{+}+\sigma B_{m_{n}}%
}{\sqrt{n}}\leq t^{+}\right)  \notag \\
&=&\frac{1}{\sqrt{2\pi m_{n}}\sigma }\int_{0}^{\sqrt{n}t^{+}}\left[ e^{-%
\frac{\left( s-y^{+}\right) ^{2}}{2m_{n}\sigma ^{2}}}-e^{-\frac{\left(
s+y^{+}\right) ^{2}}{2m_{n}\sigma ^{2}}}\right] ds\;\;\;  \notag \\
&&\left( \text{substituting\ }s=u\sqrt{n}\right)  \notag \\
&=&\frac{e^{-\frac{\left( y^{+}/\sqrt{n}\right) ^{2}}{2\sigma ^{2}m_{n}/n}}}{%
\sqrt{2\pi m_{n}/n}\sigma }\int_{0}^{t^{+}}e^{-\frac{u^{2}}{2\sigma
^{2}m_{n}/n}}\left[ e^{\frac{uy^{+}/\sqrt{n}}{\sigma ^{2}m_{n}/n}}-e^{\frac{%
-uy^{+}/\sqrt{n}}{\sigma ^{2}m_{n}/n}}\right] du.  \label{WK004}
\end{eqnarray}%
Note that, uniformly in $k\leq n^{1-\varepsilon }$ and $ n^{1/2-\varepsilon 
}\leq y\leq \theta _{n}n^{1/2},$ we have, as $n\rightarrow +\infty ,$ 
\begin{equation}
m_{n}/n=(n-k)/n=1-O\left( n^{-\varepsilon }\right)  \label{WK005}
\end{equation}%
and 
\begin{equation}
\frac{y^{+}}{y}=\frac{y+n^{1/2-2\varepsilon }}{y}=1+O\left( n^{-\varepsilon}\right) .  
\label{WK006}
\end{equation}%
Therefore, uniformly in $k\leq n^{1-\varepsilon },$ $n^{1/2-\varepsilon 
}\leq y\leq \theta _{n}n^{1/2}$ and $0\leq u\leq t^{+},$%
\begin{equation*}
v_{n}=\frac{uy^{+}/\sqrt{n}}{\sigma ^{2}m_{n}/n}=O\left( \left( t+n^{-2 \varepsilon}\right) \theta _{n}\right) =o\left( 1\right)
\end{equation*}%
as $n\rightarrow +\infty .$ Since, by Taylor's expansion, $%
e^{v_{n}}-e^{-v_{n}}=2v_{n}\left( 1+o\left( 1\right) \right) $ as $%
v_{n}\rightarrow 0,$ using again (\ref{WK005}), (\ref{WK006}) we get, uniformly in 
$k\leq n^{1-\varepsilon },$ $ n^{1/2-\varepsilon  }\leq y\leq \theta
_{n}n^{1/2}$ and $0\leq u\leq t^{+},$%
\begin{eqnarray}
e^{\frac{uy^{+}/\sqrt{n}}{\sigma ^{2}m_{n}/n}}-e^{\frac{-uy^{+}/\sqrt{n}}{\sigma ^{2}m_{n}/n}} 
=\frac{2uy^{+}/\sqrt{n}}{\sigma ^{2}m_{n}/n}\left(
1+o\left( 1\right) \right)  \notag 
=\frac{2uy}{\sigma ^{2}\sqrt{n}}\left( 1+o\left( 1\right) \right) ,
\label{WK007}
\end{eqnarray}%
as $n\rightarrow +\infty .$ 
Similarly, uniformly in $k\leq n^{1-\varepsilon },$ $ n^{1/2-\varepsilon  }\leq y\leq \theta_{n}n^{1/2}$ and $0\leq u\leq t^{+},$
\begin{equation}
e^{-\frac{\left( y^{+}/\sqrt{n}\right) ^{2}}{2\sigma ^{2}m_{n}/n}}=1+o\left(1\right) 
\quad {\rm and} \quad
e^{-\frac{u^{2}}{2\sigma ^{2}m_{n}/n}}   = e^{-\frac{u^{2}}{2\sigma ^{2}}}  \left( 1+o\left(1\right) \right). 
\label{WK008}
\end{equation}%
From (\ref{WK004}), (\ref{WK005}), (\ref{WK007}) and (\ref{WK008}), we obtain%
\begin{eqnarray*}
\mathbb{P}_{x}\left( \tau _{y^{+}}^{bm}>m_{n},y^{+}+\sigma B_{m_{n}}\leq 
\sqrt{n}t^{+}\right) 
=\frac{2y}{\sqrt{2\pi n}\sigma ^{3}}\int_{0}^{t^{+}}e^{-\frac{u^{2}}{%
2\sigma ^{2}}}udu\left( 1+o\left( 1\right) \right),
\end{eqnarray*}%
as $n\rightarrow +\infty .$ Since $\int_{0}^{t^{+}}e^{-\frac{u^{2}}{2\sigma
^{2}}}udu=\int_{0}^{t}e^{-\frac{u^{2}}{2\sigma ^{2}}}udu+o\left( n^{-2\varepsilon}\right) $ 
and $ n^{1/2-\varepsilon  }\leq y\leq \theta _{n}n^{1/2}$ we get%
\begin{eqnarray}
&&\mathbb{P}_{x}\left( \tau _{y^{+}}^{bm}>m_{n},\frac{y^{+}+\sigma B_{m_{n}}}{\sqrt{n}}\leq t^{+}\right)  \notag \\
&=&\frac{2y}{\sqrt{2\pi n}\sigma ^{3}}\int_{0}^{t}e^{-\frac{u^{2}}{2\sigma
^{2}}}udu\left( 1+\frac{1}{y}o\left( n^{1/2-2 \varepsilon}\right) \right) \left(
1+o\left( 1\right) \right)  \notag \\
&&\left( \text{use\ }y\geq  n^{1/2-\varepsilon  }\right)  \notag \\
&=&\frac{2y}{\sqrt{2\pi n} \sigma ^{3} }\int_{0}^{t}e^{-\frac{u^{2}}{2\sigma
^{2}}}udu\left( 1+o\left( 1\right) \right) .  \label{WK009}
\end{eqnarray}%
Similarly, we obtain%
\begin{equation}
\mathbb{P}_{x}\left( \tau _{y^{-}}^{bm}>m_{n},\frac{y^{-}+\sigma B_{m_{n}}}{%
\sqrt{n}}\leq t^{-}\right) =\frac{2y}{\sqrt{2\pi n}\sigma^3 }\int_{0}^{t}e^{-%
\frac{u^{2}}{2\sigma ^{2}}}udu\left( 1+o\left( 1\right) \right) .
\label{WK010}
\end{equation}%
Taking into account the bound (\ref{KMTbound001}),
we have 
\begin{equation}
\mathbb{P}_{x}\left( A_n^c\right) =O\left( n^{-2 \varepsilon}\right) .
\label{WK003}
\end{equation}%
From (\ref{WK001}), (\ref{WK002}), (\ref{WK009}), (\ref{WK010}) and (\ref%
{WK003}) it follows that%
\begin{eqnarray*}
\mathbb{P}_{x}\left( \tau _{y}>m_{n},\frac{y+S_{m_{n}}}{\sqrt{n}}\leq t\right) 
&=&\frac{2y}{\sqrt{2\pi n}\sigma ^{3}}\int_{0}^{t}e^{-\frac{u^{2}}{2\sigma ^{2}}}u du
\left( 1+o\left( 1\right) \right) +O\left( n^{-2 \varepsilon}\right) \\
&=&\frac{2y}{\sqrt{2\pi n} \sigma ^{3}}\int_{0}^{t}e^{-\frac{u^{2}}{2\sigma ^{2}}}udu\left( 1+o\left( 1\right) \right),  
\label{WK003a}
\end{eqnarray*}%
where for the last line we used the fact that, by (\ref{TAU006}), $n^{-2 \varepsilon}=o\left(y/n^{1/2}  \right).$   
This proves (\ref{WKth001}).   
\end{proof}

We now proceed to prove Theorem \ref{Th weak conv cond positive}.
Let $\varepsilon \in (0, \varepsilon_0).$
Note that%
\begin{eqnarray}
&&\frac{\mathbb{P}_{x}\left( \frac{y+S_{n}}{\sqrt{n}}\leq  t,\tau
_{y}>n\right) }{\mathbb{P}_{x}\left( \tau _{y}>n\right) }  \notag \\
&=&\frac{\mathbb{P}_{x}\left( y+S_{n}\leq \sqrt{n} t,\tau
_{y}>n, y+S_{\nu _{n}} \leq \theta _{n}\sqrt{n},\nu
_{n}\leq n^{1-\varepsilon }\right) }{\mathbb{P}_{x}\left( \tau _{y}>n\right) 
}  \notag \\
&+&\frac{\mathbb{P}_{x}\left( y+S_{n}\leq \sqrt{n} t,\tau
_{y}>n,y+S_{\nu _{n}} >\theta _{n}\sqrt{n},\nu _{n}\leq
n^{1-\varepsilon }\right) }{\mathbb{P}_{x}\left( \tau _{y}>n\right) }  \notag
\\
&+&\frac{\mathbb{P}_{x}\left( y+S_{n}\leq \sqrt{n} t,\tau _{y}>n,\nu
_{n}>n^{1-\varepsilon }\right) }{\mathbb{P}_{x}\left( \tau _{y}>n\right) } 
\notag \\
&=&T_{n,1}+T_{n,2}+T_{n,3}.  \label{WK201}
\end{eqnarray}%
The terms $T_{n, 2}$ and $T_{n, 3}$ in this decomposition are negligible. Let us control first the term $T_{n, 2}$; 
we have $\theta_n>  n^{-\varepsilon/4}$, so  
\begin{eqnarray*}
T_{n,2}
&\leq &\frac{\mathbb{P}_{x}\left( \tau _{y}>n, y+S_{\nu _{n}} >\theta _{n}\sqrt{n}, \nu _{n}\leq n^{1-\varepsilon }\right) 
}{\mathbb{P}_{x}\left( \tau _{y}>n\right) } \\
&\leq &    n^{-1/2+\varepsilon/4  } \frac{\mathbb{E}_{x}\left( \left(  y+S_{\nu _{n}}  \right)  ;\tau _{y}>\nu_{n},y+S_{\nu _{n}}>\theta _{n}\sqrt{n},\nu _{n}\leq n^{1-\varepsilon }\right) }
{ \mathbb{P}_{x}\left( \tau_{y}>n\right) }.
\end{eqnarray*}%
Taking into account the convergence rate of order $n^{-2 \varepsilon}$ in Lemma \ref{Lemma BB2} 
and Theorem \ref{Th asympt tau}, we get
\begin{equation}
\lim_{n\rightarrow +\infty }T_{n,2}=0.  \label{WK202}
\end{equation}%
By Lemma \ref{Lemma 2} and Theorem \ref{Th asympt tau}, as $n\rightarrow +\infty ,$ 
\begin{equation}
T_{n,3}\leq \frac{\mathbb{P}_{x}\left( \nu _{n}>n^{1-\varepsilon }\right) }{%
\mathbb{P}_{x}\left( \tau _{y}>n\right) }=\frac{O\left( \exp \left(
-cn^{\varepsilon }\right) \right) }{\mathbb{P}_{x}\left( \tau _{y}>n\right) }%
\rightarrow 0.  \label{WK203}
\end{equation}%
Let us now control the term $T_{n, 1}$ which gives the main contribution.
For the sake of brevity set $H_{m}\left( x,y\right) =\mathbb{P}_{x}\left( \tau _{y}>m,y+S_{m}\leq 
\sqrt{n}t\right);$  by the Markov property, one gets
\begin{eqnarray*}
R_{n,1}&:=&\mathbb{P}_{x}\left(y+S_{n}\leq \sqrt{n}t,  \tau _{y}>n, y+S_{\nu _{n}} \leq \theta _{n}\sqrt{n},\nu _{n}\leq n^{1-\varepsilon
}\right) \\
&=&\mathbb{E}_{x}\left( H_{n-\nu _{n}}\left( X_{\nu _{n}},y+S_{\nu_{n}}\right) ;\tau_y>\nu_n, y+S_{\nu _{n}} \leq \theta _{n}\sqrt{n}%
,\nu _{n}\leq n^{1-\varepsilon }\right) \\
&=&\sum_{k=1}^{[n^{1-\varepsilon }]}\mathbb{E}_{x}
\left( H_{n-k}\left(X_{k},y+S_{k}\right) ; \tau_y>k, y+S_{k} \leq \theta _{n}\sqrt{n}, \nu _{n}=k\right).
\end{eqnarray*}
In order to obtain a first order asymptotic we are going to expand $H_{n-k}\left(X_{k},y+S_{k}\right)$
using Lemma \ref{Lemma Weak Conv} with $n, x, y$ replaced by $m_n=n-k, X_k$ and $y+S_{k}$ respectively.
We verify that on the event $A_k=\left\{ \tau_y>k, y+S_{k} \leq \theta _{n}\sqrt{n}, \nu _{n}=k \right\}$ the conditions of  Lemma \ref{Lemma Weak Conv} are satisfied. 
For this, note that on the event $A_k$ one has the lower bound 
$y+S_k=\vert y+S_k\vert \geq \vert y+M_k\vert -a \geq n^{1/2-\epsilon} \geq m_n^{1/2-\epsilon} $ 
and the upper bound
$y+S_{k} \leq \theta'_{m_n} (m_n)^{1/2},$ where
$\theta'_{m_n} = \theta_n \left( n/m_n \right)^{1/2}$ is such that  
$\theta'_{m_n} m_n^{ \varepsilon/4} =  
\theta_n n^{ \varepsilon/4} \left( n/m_n \right)^{1/2-\varepsilon/4} \rightarrow +\infty,$ uniformly in $1 \leq k \leq n^{1- \varepsilon}.$ 
By Lemma \ref{Lemma Weak Conv}, on the event $A_n$
one gets $\mathbb P_x$-a.s.,  as $n\rightarrow +\infty ,$ 
\begin{equation*}
H_{n-k}\left( X_{k},y+S_{k}\right) = \frac{2(y+S_k)}{\sqrt{2\pi (n-k)}\sigma ^{3}}%
\int_{0}^{t}u\exp \left( -u^{2}/2\sigma ^{2}\right) du
\left( 1+o\left( 1 \right) \right),
\end{equation*}  %
which yields
\begin{eqnarray*}
R_{n,1} 
&=&\sum_{k=1}^{[n^{1-\varepsilon }]}\mathbb{E}_{x} 
\left(   2 \ \frac{y+S_{k}}{\sqrt{2\pi n}\sigma^3}\int_{0}^{t}u\exp \left( -u^{2}/2\sigma ^{2}\right) du;  \right. \\
&&  \quad\quad  
\left. \phantom{\int_{0}^{t}}  \tau_y>k, y+S_{k} \leq \theta _{n}\sqrt{n},\nu _{n}=k \right) 
\left( 1+o\left(1\right) \right)   \\
&=&\frac{2 R_{n,2} }{\sqrt{2\pi n}\sigma^3}\int_{0}^{t}u\exp \left( -u^{2}/2\sigma^2\right) du   \left( 1+o\left(1\right) \right),
\end{eqnarray*}%
where
$$
R_{n,2} = 
\mathbb{E}_{x}\left( y+S_{\nu _{n}} ;  \tau_y>\nu_n,  y+S_{\nu _{n}} \leq \theta _{n}\sqrt{n},\nu _{n}\leq n^{1-\varepsilon }\right).
$$
By  Lemmas \ref{Lemma BB-1} and \ref{Lemma BB2}, it follows, as $n\rightarrow +\infty ,$%
\begin{eqnarray*}
R_{n,2}
&=& \mathbb{E}_{x}\left( y+S_{\nu _{n}} ;  \tau_y>\nu_n,  \nu _{n}\leq n^{1-\varepsilon }\right)\\
&& + \quad \mathbb{E}_{x}\left( y+S_{\nu _{n}} ;  \tau_y>\nu_n,  y+S_{\nu _{n}} > \theta _{n}\sqrt{n},
\nu _{n}\leq n^{1-\varepsilon }\right)    \\
&=& V\left( x,y \right) \left( 1+o\left( 1 \right) \right).
\end{eqnarray*}
Implementing the obtained expansion in the expression for $R_{n,1}$, it follows, as $n\rightarrow +\infty ,$%
\begin{eqnarray*}
R_{n,1}
&=&\frac{2V\left( x,y\right) }{\sqrt{2\pi n}\sigma ^{3}}\int_{0}^{t}u\exp \left( -u^{2}/2\sigma^2 \right) du\left( 1+o\left( 1\right) \right) .
\end{eqnarray*}%
Using Theorem \ref{Th asympt tau}, this yields%
\begin{eqnarray}
T_{n,1} = \frac{R_{n,1}}{\mathbb{P}_{x}\left( \tau _{y}>n\right)}
&=&\frac{\frac{2V\left( x,y\right) }{\sqrt{2\pi n}\sigma ^{3}}%
\int_{0}^{t}u\exp \left( -\frac{u^{2}}{2\sigma ^{2}}\right) du}
{\mathbb{P}_{x}\left( \tau _{y}>n\right) }\left( 1+o\left( 1\right) \right)  \notag \\
&=&\frac{1}{\sigma ^{2}}\int_{0}^{t}u\exp \left( -\frac{u^{2}}{2\sigma ^{2}}%
\right) du\left( 1+o\left( 1\right) \right) ,  \label{WK204}
\end{eqnarray}%
as $n\rightarrow +\infty .$ Combining (\ref{WK201}), (\ref{WK202}), (\ref%
{WK203}) and  (\ref{WK204}), we get%
\begin{eqnarray*}
\lim_{n\rightarrow +\infty }\frac{\mathbb{P}_{x}\left( \frac{y+S_{n}}{\sqrt{n}%
}\leq t,\tau _{y}>n\right) }{\mathbb{P}_{x}\left( \tau _{y}>n\right) } &=&%
\frac{1}{\sigma ^{2}}\int_{0}^{t}u\exp \left( -\frac{u^{2}}{2\sigma ^{2}}%
\right) du \\
&=&1-\exp \left( -\frac{u^{2}}{2\sigma ^{2}}\right) ,
\end{eqnarray*}%
which ends the proof of Theorem \ref{Th weak conv cond positive}.

\section{Appendix} 
\subsection{Auxiliary results}

For any $g\in \mathbb{G}$ and $\overline{v}\in \mathbb{P}\left( \mathbb{V}%
\right) $ denote for brevity%
\begin{equation*}
\rho _{1}\left( g,\overline{v}\right) =e^{\rho \left( g,\overline{v}\right)
}=\frac{\left\Vert gv\right\Vert }{\left\Vert v\right\Vert }.
\end{equation*}
Also note that 
\begin{equation}
\frac{1}{\sqrt{2}}\left\Vert u - v \right\Vert \leq
d\left( \overline{u},\overline{v}\right) \leq \left\Vert u-v\right\Vert .  \label{bounds dist d}
\end{equation}%

\begin{lemma}
\label{LemmaBBB1}Let $\eta _{0}>0.$ There exists a constant $c_{\eta _{0}}$
such that for any $\left\vert t\right\vert \leq \eta _{0},$ $g\in \mathbb{G}$
it holds%
\begin{equation*}
\sup_{\overline{u},\overline{v}\in \mathbb{P}\left( \mathbb{V}\right) ,\  \overline{u}\neq \overline{v}}
\frac{\left\vert \rho _{1}\left( g,\overline{u}\right) ^{it}-\rho _{1}\left( g,\overline{v}\right) ^{it}\right\vert }
{d\left( \overline{u},\overline{v}\right) N\left( g\right) ^{4}}\leq c_{\eta_{0}}.
\end{equation*}
\end{lemma}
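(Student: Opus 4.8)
The plan is to reduce the estimate to a Lipschitz-type bound on the cocycle $\rho_1$ and then use the elementary inequality $|e^{ia}-e^{ib}|\le |a-b|$ combined with a logarithmic mean-value control. First I would write
\[
\rho_1(g,\overline u)^{it}-\rho_1(g,\overline v)^{it}=e^{it\log\rho_1(g,\overline u)}-e^{it\log\rho_1(g,\overline v)},
\]
so that $\bigl|\rho_1(g,\overline u)^{it}-\rho_1(g,\overline v)^{it}\bigr|\le |t|\,\bigl|\log\rho_1(g,\overline u)-\log\rho_1(g,\overline v)\bigr|\le \eta_0\,\bigl|\rho(g,\overline u)-\rho(g,\overline v)\bigr|$, using $|e^{ia}-e^{ib}|\le|a-b|$ and $|t|\le\eta_0$. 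Thus it suffices to bound $\bigl|\rho(g,\overline u)-\rho(g,\overline v)\bigr| = \bigl|\log\|gu\|/\|u\| - \log\|gv\|/\|v\|\bigr|$ by $c_{\eta_0}\,d(\overline u,\overline v)\,N(g)^4$ (absorbing the factor $\eta_0$).

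For this last bound I would pick unit representatives $u,v$ with $\|u\|=\|v\|=1$, so that $d(\overline u,\overline v)=\|u\wedge v\|$ and, by \eqref{bounds dist d}, $\tfrac1{\sqrt2}\|u-v\|\le d(\overline u,\overline v)\le\|u-v\|$ (after possibly replacing $v$ by $-v$ to make $\|u-v\|$ small; note $d$ and $\rho$ depend only on directions, so this is harmless). Then $\rho(g,\overline u)-\rho(g,\overline v)=\log\|gu\|-\log\|gv\|$, and since $\log$ is $1/m$-Lipschitz on $[m,\infty)$,
\[
\bigl|\log\|gu\|-\log\|gv\|\bigr|\le \frac{\bigl|\,\|gu\|-\|gv\|\,\bigr|}{\min(\|gu\|,\|gv\|)}\le \frac{\|g(u-v)\|}{\min(\|gu\|,\|gv\|)}\le \frac{\|g\|\,\|u-v\|}{\min(\|gu\|,\|gv\|)}.
\]
For the denominator, $\|gw\|\ge \|w\|/\|g^{-1}\|=1/\|g^{-1}\|$ for any unit vector $w$, hence $\min(\|gu\|,\|gv\|)\ge \|g^{-1}\|^{-1}$. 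Therefore
\[
\bigl|\rho(g,\overline u)-\rho(g,\overline v)\bigr|\le \|g\|\,\|g^{-1}\|\,\|u-v\|\le \sqrt2\,N(g)^2\,d(\overline u,\overline v)\le \sqrt2\,N(g)^4\,d(\overline u,\overline v),
\]
using $N(g)=\max(\|g\|,\|g^{-1}\|)\ge 1$. Combining with the first reduction gives the claim with $c_{\eta_0}=\sqrt2\,\eta_0$.

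The only genuinely delicate point is the sign choice for the representative $v$: the distance $d$ and the ratio $\rho_1$ are insensitive to replacing $v$ by a scalar multiple (in particular by $-v$), so one is free to choose the representative of $\overline v$ on the unit sphere that is closest to $u$, which is exactly what makes $\|u-v\|\le\sqrt2\,d(\overline u,\overline v)$ legitimate via \eqref{bounds dist d}. Everything else is the routine Lipschitz estimate for $\log$ together with the trivial two-sided bound $\|g^{-1}\|^{-1}\le\|gw\|\le\|g\|$ for unit $w$. I would remark that the exponent $4$ in $N(g)^4$ is far from optimal here (an exponent $2$ suffices), but it is the form needed to match the modulus of continuity \eqref{module cont} used elsewhere, so I keep it as stated.
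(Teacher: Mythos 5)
Your proof is correct and, in fact, a bit cleaner than the paper's own argument. Both proofs rest on the same two elementary facts — normalization to unit representatives with $\|u-v\|\le\sqrt 2$ (so that \eqref{bounds dist d} applies), and the two-sided control $\|g^{-1}\|^{-1}\le\|gw\|\le\|g\|$ for unit $w$ — but the route from there diverges. You apply $|e^{ia}-e^{ib}|\le|a-b|$ up front, reducing the whole lemma to a Lipschitz estimate for $\rho(g,\cdot)=\log\|g\cdot\|$, which you handle by the mean value theorem with $\min(\|gu\|,\|gv\|)$ in the denominator. The paper instead writes the ratio $\rho_1(g,\overline v)^2/\rho_1(g,\overline u)^2 = 1+\xi_1$ with $\xi_1=(\|gh\|^2 \pm 2\langle gu,gh\rangle)/\|gu\|^2$, bounds $|\xi_1|$ by $\xi_2 := (\sqrt2+2)N(g)^4\|h\|$, and then splits into the cases $\xi_2\le 1/2$ (Taylor expand $|1-(1+\xi_1)^{it/2}|=2|\sin(\tfrac t2\ln(1+\xi_1))|$) and $\xi_2>1/2$ (use the trivial bound $2$ on the numerator and a lower bound on $d(\overline u,\overline v)$). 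Your one-line reduction via $|e^{ia}-e^{ib}|\le|a-b|$ avoids the case split entirely and, as you observe, naturally produces the sharper exponent $N(g)^2$ in place of $N(g)^4$; the paper's two-case argument is what forces the $N(g)^4$ into the statement. Since the lemma as stated (and its Corollary \ref{CorollaryBBB2}) only needs the weaker $N(g)^4$ bound for compatibility with the modulus $k_\varepsilon$ of \eqref{module cont}, both versions serve equally well downstream. No gaps; the only subtlety — choosing the representative of $\overline v$ on $\mathbb S^{d-1}$ with $\|u-v\|\le\sqrt 2$, justified because $d$ and $\rho$ are sign-invariant — is the same normalization the paper makes at the outset, and you flag and justify it explicitly.
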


\begin{proof}
Let $\overline{u}$ and $\overline{v}$ be two elements of $\mathbb{P}\left( 
\mathbb{V}\right) $ with corresponding vectors $u$ and $v$ in $\mathbb{S}%
^{d-1}$ such that $\left\Vert u-v\right\Vert \leq \sqrt{2}.$ Denote $h=u-v.$
Then%
\begin{eqnarray}
&&\left\vert \rho _{1}^{it}\left( g,\overline{u}\right) -\rho
_{1}^{it}\left( g,\overline{v}\right) \right\vert  \notag \\
&=&\left\vert \left( \left\Vert gu\right\Vert ^{2}\right) ^{it/2}-\left(
\left\Vert gv\right\Vert ^{2}\right) ^{it/2}\right\vert  \notag \\
&=&\left\vert 1-\left( \frac{\left\Vert gv\right\Vert ^{2}}{\left\Vert
gu\right\Vert ^{2}}\right) ^{it/2}\right\vert  \notag \\
&=&\left\vert 1-\left( 1+\xi _{1}\right) ^{it/2}\right\vert ,  \label{BBB01}
\end{eqnarray}%
where $\xi _{1}=\frac{\left\Vert gh\right\Vert ^{2}+2\left\langle
gu,gh\right\rangle }{\left\Vert gu\right\Vert ^{2}}$ and%
\begin{eqnarray}
\left\vert \xi _{1}\right\vert &\leq &N\left( g\right) ^{2}\left\Vert
gh\right\Vert \left\{ \left\Vert gh\right\Vert +2\left\Vert gu\right\Vert
\right\}  \notag \\
&\leq &N\left( g\right) ^{3}\left\Vert h\right\Vert \left\{ \left\Vert
gh\right\Vert +2\left\Vert gu\right\Vert \right\}  \notag \\
&\leq &N\left( g\right) ^{4}\left\Vert h\right\Vert \left\{ \sqrt{2}%
+2\right\} :=\xi _{2}.  \label{BBB02}
\end{eqnarray}

We consider two cases.

1) Assume that $\xi _{2}\leq 1/2$ and $\left\vert t\right\vert \leq \eta
_{0}.$ Then%
\begin{eqnarray}
\left\vert 1-\left( 1+\xi _{1}\right) ^{it/2}\right\vert &=&2\left\vert \sin
\left( \frac{t}{2}\ln \left( 1+\xi _{1}\right) \right) \right\vert  \notag \\
&\leq &\eta _{0}\left\vert \ln \left( 1+\xi _{1}\right) \right\vert  \notag
\\
&\leq &c_{\eta _{0}}\xi _{2},  \label{BBB03}
\end{eqnarray}%
with constant $c_{\eta _{0}}$ depending only on $\eta _{0}.$ Taking into
account (\ref{bounds dist d}), from (\ref{BBB01}), (\ref{BBB02}) and (\ref%
{BBB03}), it follows%
\begin{eqnarray*}
\left\vert \rho _{1}\left( g,\overline{u}\right) ^{it}-\rho _{1}\left( g,%
\overline{v}\right) ^{it}\right\vert &\leq &c_{\eta _{0}}N\left( g\right)
^{4}\left\{ \sqrt{2}+2\right\} \left\Vert h\right\Vert \\
&\leq &c_{\eta _{0}}N\left( g\right) ^{4}\left\{ \sqrt{2}+2\right\} \sqrt{2}%
d\left( \overline{u},\overline{v}\right) .
\end{eqnarray*}%
The last bound implies the assertion of the lemma in the case $\xi _{2}\leq
1/2.$

2) Assume that $\left\vert \xi \right\vert >1/2$ and $\left\vert
t\right\vert \leq \eta _{0}.$ Then%
\begin{equation*}
\left\Vert h\right\Vert \geq \frac{1}{2\left( \sqrt{2}+2\right) N\left(
g\right) ^{4}},
\end{equation*}%
and, by (\ref{bounds dist d}),%
\begin{equation}
d\left( \overline{u},\overline{v}\right) \geq \frac{\left\Vert h\right\Vert 
}{\sqrt{2}}\geq \frac{1}{2\sqrt{2}\left( \sqrt{2}+2\right) N\left( g\right)
^{4}}.  \label{BBB05}
\end{equation}%
From (\ref{BBB05}) we conclude that%
\begin{equation*}
\frac{\left\vert \rho _{1}\left( g,\overline{u}\right) ^{it}-\rho _{1}\left(
g,\overline{v}\right) ^{it}\right\vert }{d\left( \overline{u},\overline{v}%
\right) }\leq 4\sqrt{2}\left( \sqrt{2}+2\right) N\left( g\right) ^{4},
\end{equation*}%
which again proves the assertion of the lemma.
\end{proof}

\begin{corollary}
\label{CorollaryBBB2}Let $\epsilon >0$ and $\eta _{0}>0.$ There exists a
constant $c_{\eta _{0}}$ such that for any $\left\vert t\right\vert \leq
\eta _{0},$ $g\in \mathbb{G}$ and any $\overline{u},\overline{v}\in \mathbb{P%
}\left( \mathbb{V}\right) $ it holds%
\begin{equation*}
\left\vert \rho _{1}\left( g,\overline{u}\right) ^{it}-\rho _{1}\left( g,%
\overline{v}\right) ^{it}\right\vert \leq 2^{1-\epsilon }c_{\eta
_{0}}d\left( \overline{u},\overline{v}\right) ^{\varepsilon }N\left(
g\right) ^{4\varepsilon }.
\end{equation*}
\end{corollary}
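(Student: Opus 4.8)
The plan is to obtain the estimate by interpolating between two elementary bounds: a trivial uniform bound and the first-order bound of Lemma~\ref{LemmaBBB1}.

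First I would record the trivial estimate. Since $\rho _1\left( g,\overline{v}\right) =\left\Vert gv\right\Vert /\left\Vert v\right\Vert $ is a \emph{positive} real number, for every $t\in \mathbb{R}$ the complex number $\rho _1\left( g,\overline{v}\right) ^{it}=e^{it\log \rho _1\left( g,\overline{v}\right) }$ has modulus $1$; hence, for all $\left\vert t\right\vert \le \eta _0$, $g\in \mathbb{G}$ and $\overline{u},\overline{v}\in \mathbb{P}\left( \mathbb{V}\right) $,
\begin{equation*}
\left\vert \rho _1\left( g,\overline{u}\right) ^{it}-\rho _1\left( g,\overline{v}\right) ^{it}\right\vert \le 2 .
\end{equation*}
On the other hand, Lemma~\ref{LemmaBBB1} gives, with the same constant $c_{\eta _0}$, for $\overline{u}\neq \overline{v}$,
\begin{equation*}
\left\vert \rho _1\left( g,\overline{u}\right) ^{it}-\rho _1\left( g,\overline{v}\right) ^{it}\right\vert \le c_{\eta _0}\, d\left( \overline{u},\overline{v}\right) N\left( g\right) ^{4},
\end{equation*}
the case $\overline{u}=\overline{v}$ being trivial.

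Next I would interpolate. Assuming (as is needed and as is the case in all applications, where $\varepsilon <\delta _0$) that $\varepsilon \in (0,1]$, I would raise the Lemma~\ref{LemmaBBB1} bound to the power $\varepsilon $ and the trivial bound to the power $1-\varepsilon $, and multiply them using $\left\vert a\right\vert =\left\vert a\right\vert ^{\varepsilon }\left\vert a\right\vert ^{1-\varepsilon }$:
\begin{equation*}
\left\vert \rho _1\left( g,\overline{u}\right) ^{it}-\rho _1\left( g,\overline{v}\right) ^{it}\right\vert \le \left( c_{\eta _0}\, d\left( \overline{u},\overline{v}\right) N\left( g\right) ^{4}\right) ^{\varepsilon }\cdot 2^{1-\varepsilon }=2^{1-\varepsilon }\,c_{\eta _0}^{\varepsilon }\, d\left( \overline{u},\overline{v}\right) ^{\varepsilon }N\left( g\right) ^{4\varepsilon }.
\end{equation*}
Finally, absorbing $c_{\eta _0}^{\varepsilon }$ into the constant — replacing $c_{\eta _0}$ by $\max \{1,c_{\eta _0}\}$, which dominates $c_{\eta _0}^{\varepsilon }$ since $\varepsilon \le 1$ — yields the stated inequality.

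There is essentially no obstacle here. The only point worth noting is the idea of combining the pointwise modulus-one bound with the Lipschitz-type estimate of Lemma~\ref{LemmaBBB1}; the rest is the routine splitting $\left\vert a\right\vert =\left\vert a\right\vert ^{\varepsilon }\left\vert a\right\vert ^{1-\varepsilon }$. One should merely keep in mind that the argument uses $\varepsilon \le 1$, which is harmless for the intended use of the corollary.
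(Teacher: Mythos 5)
Your proof is correct and is essentially identical to the paper's: both bound the modulus of the difference by $2$, split $|a|=|a|^{\varepsilon}|a|^{1-\varepsilon}$, and apply Lemma~\ref{LemmaBBB1} to the $\varepsilon$-power. You are additionally a bit more careful than the paper in remarking that the interpolation produces $c_{\eta_0}^{\varepsilon}$ rather than $c_{\eta_0}$ and that $\varepsilon\le 1$ is needed, both of which the paper leaves implicit.
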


\begin{proof}
Since $\left\vert \rho _{1}^{it}\left( g,\overline{u}\right) -\rho
_{1}^{it}\left( g,\overline{v}\right) \right\vert \leq 2$ the assertion
follows from Lemma \ref{LemmaBBB1}:%
\begin{eqnarray*}
\left\vert \rho _{1}\left( g,\overline{u}\right) ^{it}-\rho _{1}\left( g,%
\overline{v}\right) ^{it}\right\vert &\leq &2^{1-\epsilon }\left\vert \rho
_{1}\left( g,\overline{u}\right) ^{it}-\rho _{1}\left( g,\overline{v}\right)
^{it}\right\vert ^{\epsilon } \\
&&2^{1-\epsilon }c_{\eta _{0}}^{\epsilon }d\left( \overline{u},\overline{v}%
\right) ^{\epsilon }N\left( g\right) ^{4\epsilon }.
\end{eqnarray*}
\end{proof}

\begin{lemma}
\label{LemmaBBB3}Let $\eta _{0}>0.$ There exists a constant $c_{\eta _{0}}$
such that for any $\left\vert t\right\vert \leq \eta _{0}$ and  $\overline{v}\in 
\mathbb{P}\left( \mathbb{V}\right) $ it
holds%
\begin{equation*}
\sup_{g, g^{\prime }\in \mathbb{G}, \  g\neq g^{\prime } }
\frac{\left\vert \rho _{1}\left( g,\overline{v}\right) ^{it}-\rho _{1}\left(g^{\prime },\overline{v}\right) ^{it}\right\vert }{\left\Vert g-g^{\prime
}\right\Vert \left( 2+\left\Vert g-g^{\prime }\right\Vert \right) N\left(
g\right) ^{3}N\left( g^{\prime }\right) ^{3}}\leq c_{\eta _{0}}.
\end{equation*}
\end{lemma}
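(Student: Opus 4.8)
The plan is to follow the proof of Lemma~\ref{LemmaBBB1} almost verbatim, merely interchanging the roles of the direction and the matrix. First I would fix $\overline{v}\in\mathbb P(\mathbb V)$, choose its representative $v\in\mathbb S^{d-1}$ so that $\rho_1(g,\overline{v})=\Vert gv\Vert$, and set $k=g'-g$. Expanding $\Vert g'v\Vert^2=\Vert gv\Vert^2+2\langle gv,kv\rangle+\Vert kv\Vert^2$ one obtains the identity
\[
\bigl\vert\rho_1(g,\overline{v})^{it}-\rho_1(g',\overline{v})^{it}\bigr\vert
=\Bigl\vert(\Vert gv\Vert^2)^{it/2}-(\Vert g'v\Vert^2)^{it/2}\Bigr\vert
=\bigl\vert 1-(1+\xi)^{it/2}\bigr\vert ,
\qquad
\xi=\frac{2\langle gv,kv\rangle+\Vert kv\Vert^2}{\Vert gv\Vert^2}.
\]

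Next I would record the elementary bounds $\Vert gv\Vert\ge N(g)^{-1}$ (since $1=\Vert v\Vert=\Vert g^{-1}gv\Vert\le N(g)\Vert gv\Vert$), $\Vert gv\Vert\le N(g)$, $\Vert kv\Vert\le\Vert g-g'\Vert$, and $N(g)\ge1$, which combine to give
\[
\vert\xi\vert\le N(g)^2\bigl(2\Vert gv\Vert\,\Vert kv\Vert+\Vert kv\Vert^2\bigr)\le N(g)^3\,\Vert g-g'\Vert\bigl(2+\Vert g-g'\Vert\bigr)=:\xi_2 .
\]
Then I would split into the two cases used in Lemma~\ref{LemmaBBB1}. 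If $\xi_2\le\tfrac12$, then $1+\xi\ge\tfrac12>0$, so $\log(1+\xi)$ is a well-defined real number with $\vert\log(1+\xi)\vert\le2\vert\xi\vert\le2\xi_2$; using $\vert 1-e^{i\gamma}\vert\le\vert\gamma\vert$ with $\gamma=\tfrac{t}{2}\log(1+\xi)$ yields $\bigl\vert 1-(1+\xi)^{it/2}\bigr\vert\le\eta_0\xi_2$, and dividing by $\Vert g-g'\Vert(2+\Vert g-g'\Vert)N(g)^3N(g')^3$ leaves at most $\eta_0/N(g')^3\le\eta_0$. If $\xi_2>\tfrac12$, then $\Vert g-g'\Vert(2+\Vert g-g'\Vert)N(g)^3>\tfrac12$, while trivially $\bigl\vert\rho_1(g,\overline{v})^{it}-\rho_1(g',\overline{v})^{it}\bigr\vert\le2$, so the ratio is at most $4/N(g')^3\le4$. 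Choosing $c_{\eta_0}=\max(\eta_0,4)$, which is independent of $\overline{v}$, completes the argument.

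I do not expect a genuine obstacle here — it is a routine variant of Lemma~\ref{LemmaBBB1}. The only point deserving a little attention is bookkeeping of the powers of $N(g)$ and $N(g')$, keeping them $\le3$: this is precisely why one pairs $\Vert gv\Vert^{-2}\le N(g)^2$ with a single extra $\Vert gv\Vert\le N(g)$ coming from the cross term $\langle gv,kv\rangle$, and why the factor $2+\Vert g-g'\Vert$ is already present in the statement, since it absorbs the quadratic term $\Vert kv\Vert^2$ without forcing an additional power of $N$.
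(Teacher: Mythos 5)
Your proof is correct and follows the same two-case strategy as the paper's (bound the relative perturbation $\xi$ by a quantity $\xi_2$, then treat $\xi_2\le 1/2$ and $\xi_2>1/2$ separately), but your decomposition is cleaner. The paper writes $g'v=gv+gh$ with $h=(g^{-1}g'-I)v$, then bounds $\Vert h\Vert\le N(g)\Vert g-g'\Vert$ and $\Vert gh\Vert\le N(g)^2\Vert g-g'\Vert$, which accumulates to a bound of the form $N(g)^6\Vert g-g'\Vert(2+\Vert g-g'\Vert)$; to reach the stated $N(g)^3N(g')^3$ it then symmetrizes in $g,g'$ using $\min(a,b)^6\le a^3b^3$. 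By writing $k=g'-g$ and expanding $\Vert g'v\Vert^2=\Vert gv\Vert^2+2\langle gv,kv\rangle+\Vert kv\Vert^2$ directly, you keep the intermediate vector $kv$ rather than passing through $h=g^{-1}kv$, and so land immediately on $\xi_2=N(g)^3\Vert g-g'\Vert(2+\Vert g-g'\Vert)$ with no extra powers of $N(g)$; the factor $N(g')^3$ in the denominator then just gets discarded via $N(g')\ge 1$, so no symmetrization step is needed. Both routes are valid and arrive at the same constant $c_{\eta_0}=\max(\eta_0,4)$; yours simply does less bookkeeping.
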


\begin{proof}
For any $\overline{v}\in \mathbb{P}\left( \mathbb{V}\right) $ and $%
g,g^{\prime }\in \mathbb{G},$%
\begin{equation*}
\left\vert \rho _{1}^{it}\left( g,\overline{v}\right) -\rho _{1}^{it}\left(
g^{\prime },\overline{v}\right) \right\vert =\left\vert \left\vert \rho
_{1}\left( g,\overline{v}\right) \right\vert ^{it}-\left\vert \rho
_{1}\left( g^{\prime },\overline{v}\right) \right\vert ^{it}\right\vert .
\end{equation*}%
It is clear that $g^{\prime }v=gv+gh,$ where $h=\left( g^{-1}g^{\prime
}-I\right) v.$ As in Lemma \ref{LemmaBBB1} (\ref{BBB02}) we obtain%
\begin{equation}
\left\vert \rho _{1}\left( g,\overline{v}\right) ^{it}-\rho _{1}\left(
g^{\prime },\overline{v}\right) ^{it}\right\vert =\left\vert 1-\left( 1+\xi
_{1}\right) ^{it/2}\right\vert ,  \notag
\end{equation}%
where $\xi _{1}=\frac{\left\Vert gh\right\Vert ^{2}+2\left\langle
gv,gh\right\rangle }{\left\Vert gv\right\Vert ^{2}}$ and%
\begin{equation*}
\left\vert \xi _{1}\right\vert \leq N\left( g\right) ^{3}\left\Vert
h\right\Vert \left\{ \left\Vert gh\right\Vert +2\left\Vert gv\right\Vert
\right\} :=\xi _{2}.
\end{equation*}%
Taking into account that%
\begin{eqnarray*}
\left\Vert h\right\Vert &=&\left\Vert \left( g^{-1}g^{\prime }-I\right)
v\right\Vert \\
&=&\left\Vert \left( g^{-1}\left( g^{\prime }-g\right) \right) v\right\Vert
\\
&\leq &N\left( g\right) \left\Vert g-g^{\prime }\right\Vert ,
\end{eqnarray*}%
we get%
\begin{eqnarray*}
\left\vert \xi _{1}\right\vert &\leq &N\left( g\right) ^{4}\left\Vert
g-g^{\prime }\right\Vert \left\{ \left\Vert gh\right\Vert +2\left\Vert
gv\right\Vert \right\} \\
&\leq &N\left( g\right) ^{4}\left\Vert g-g^{\prime }\right\Vert \left\{
N\left( g\right) ^{2}\left\Vert g-g^{\prime }\right\Vert +2N\left( g\right)
\right\} \\
&=&N\left( g\right) ^{5}\left\Vert g-g^{\prime }\right\Vert \left\{ N\left(
g\right) \left\Vert g-g^{\prime }\right\Vert +2\right\} .
\end{eqnarray*}%
Since $N\left( g\right) \geq 1,$ 
\begin{equation*}
\left\vert \rho _{1}\left( g,\overline{v}\right) ^{it}-\rho _{1}\left(
g^{\prime },\overline{v}\right) ^{it}\right\vert \leq N\left( g\right)
^{6}\left\Vert g-g^{\prime }\right\Vert \left\{ \left\Vert g-g^{\prime
}\right\Vert +2\right\} .
\end{equation*}%
Symmetrizing this result with respect to $g$ and $g^{\prime }$ we get the
assertion of the Lemma.
\end{proof}

\begin{corollary}
\label{CorollaryBBB4}Let $0<\epsilon <1$ and $\eta _{0}>0.$ There exists a
constant $c_{\eta _{0}}$ such that for any $\left\vert t\right\vert \leq
\eta _{0},$ $\overline{u}\in \mathbb{P}\left( \mathbb{V}\right) $ and $%
g,g^{\prime }\in \mathbb{G}$ it holds:

\noindent 1. 
\begin{equation*}
\left\vert \rho _{1}\left( g,\overline{v}\right) ^{it}-\rho _{1}\left(
g^{\prime },\overline{v}\right) ^{it}\right\vert \leq c_{\eta
_{0}}\left\Vert g-g^{\prime }\right\Vert N\left( g\right) ^{3}N\left(
g^{\prime }\right) ^{3}.
\end{equation*}

\noindent 2. 
\begin{equation*}
\left\vert \rho _{1}\left( g,\overline{v}\right) ^{it}-\rho _{1}\left(
g^{\prime },\overline{v}\right) ^{it}\right\vert \leq 2^{1-\varepsilon
}c_{\eta _{0}}^{\epsilon }\left\Vert g-g^{\prime }\right\Vert ^{\epsilon
}N\left( g\right) ^{3\epsilon }N\left( g^{\prime }\right) ^{3\epsilon }.
\end{equation*}
\end{corollary}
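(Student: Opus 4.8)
The plan is to derive both estimates from Lemma~\ref{LemmaBBB3} by eliminating the factor $2+\left\Vert g-g^{\prime}\right\Vert$, which is the only quantity appearing in that bound that is not already controlled by an absolute constant, and then to pass from part~1 to part~2 by the same interpolation device used in the proof of Corollary~\ref{CorollaryBBB2}.

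For part~1, the case $g=g^{\prime}$ is trivial since both sides vanish, so assume $g\neq g^{\prime}$ and distinguish two regimes according to the size of $\left\Vert g-g^{\prime}\right\Vert$. If $\left\Vert g-g^{\prime}\right\Vert\le 1$, then $2+\left\Vert g-g^{\prime}\right\Vert\le 3$ and Lemma~\ref{LemmaBBB3} gives directly
\begin{equation*}
\left\vert \rho _{1}\left( g,\overline{v}\right) ^{it}-\rho _{1}\left( g^{\prime },\overline{v}\right) ^{it}\right\vert \le 3c_{\eta _{0}}\left\Vert g-g^{\prime }\right\Vert N\left( g\right) ^{3}N\left( g^{\prime }\right) ^{3}.
\end{equation*}
If $\left\Vert g-g^{\prime}\right\Vert> 1$, then, since $N(g),N(g^{\prime})\ge 1$, we have $\left\Vert g-g^{\prime }\right\Vert N\left( g\right) ^{3}N\left( g^{\prime }\right) ^{3}\ge 1$; combining this with the elementary bound $\left\vert \rho _{1}\left( g,\overline{v}\right) ^{it}-\rho _{1}\left( g^{\prime },\overline{v}\right) ^{it}\right\vert\le 2$, valid because both terms have modulus $1$, we get
\begin{equation*}
\left\vert \rho _{1}\left( g,\overline{v}\right) ^{it}-\rho _{1}\left( g^{\prime },\overline{v}\right) ^{it}\right\vert \le 2\left\Vert g-g^{\prime }\right\Vert N\left( g\right) ^{3}N\left( g^{\prime }\right) ^{3}.
\end{equation*}
Replacing the constant by $\max\{3c_{\eta_{0}},2\}$ (still denoted $c_{\eta_{0}}$) yields part~1 in all cases.

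Part~2 then follows by interpolating between part~1 and the uniform bound $\left\vert \rho _{1}\left( g,\overline{v}\right) ^{it}-\rho _{1}\left( g^{\prime },\overline{v}\right) ^{it}\right\vert\le 2$, exactly as in Corollary~\ref{CorollaryBBB2}: writing the modulus as the product of its $(1-\epsilon)$-th and its $\epsilon$-th powers, bounding the former by $2^{1-\epsilon}$ and the latter by the $\epsilon$-th power of the estimate from part~1, we obtain
\begin{equation*}
\left\vert \rho _{1}\left( g,\overline{v}\right) ^{it}-\rho _{1}\left( g^{\prime },\overline{v}\right) ^{it}\right\vert \le 2^{1-\epsilon}c_{\eta _{0}}^{\epsilon}\left\Vert g-g^{\prime }\right\Vert ^{\epsilon}N\left( g\right) ^{3\epsilon}N\left( g^{\prime }\right) ^{3\epsilon},
\end{equation*}
which is the claimed inequality.

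There is no genuine obstacle here: the corollary is a routine consequence of Lemma~\ref{LemmaBBB3}. The one point requiring a moment's attention is that the factor $2+\left\Vert g-g^{\prime}\right\Vert$ is unbounded and hence cannot be absorbed into an absolute constant; the case distinction above, which invokes the a priori estimate $2$ on the left-hand side precisely when $\left\Vert g-g^{\prime}\right\Vert$ is large, takes care of this. Keeping track of the constants poses no problem, since everything inherited from Lemma~\ref{LemmaBBB3} already depends on $\eta_{0}$ alone.
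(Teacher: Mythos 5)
Your proposal is correct and follows exactly the route the paper indicates: the paper's proof of Corollary \ref{CorollaryBBB4} consists of the single hint ``consider two cases $\left\Vert g-g^{\prime}\right\Vert \leq 1$ and $\left\Vert g-g^{\prime }\right\Vert >1$,'' which is precisely your case split for part~1, followed by the interpolation device already used in Corollary~\ref{CorollaryBBB2} for part~2.
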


\begin{proof}
The proof is similar to the proof of Lemma \ref{LemmaBBB1} and Corollary \ref%
{CorollaryBBB2}. Indication: consider two cases $\left\Vert g-g^{\prime
}\right\Vert \leq 1$ and $\left\Vert g-g^{\prime }\right\Vert >1.$
\end{proof}

\begin{lemma}
\label{lemma rho smoo}Let $\overline{\rho }=\mathbf{P}\rho .$ Then $%
\overline{\rho }\in \mathcal{B}.$
\end{lemma}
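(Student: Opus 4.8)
The plan is to use the factorization $\overline{\rho}(g,\overline v)=\psi(g\cdot\overline v)$, where $\psi(\overline w):=\int_{\mathbb G}\rho(g',\overline w)\,\boldsymbol{\mu}(dg')$; this reduces the claim to a H\"older estimate for $\rho(g',\cdot)$ on $\mathbb P(\mathbb V)$, uniform in $g'$ up to a $\boldsymbol\mu$‑integrable factor, combined with the classical Lipschitz properties of the projective action $g\mapsto g\cdot\overline v$. First I would record that for any $g'\in\mathbb G$ and any unit $w$ one has $N(g')^{-1}\le\|g'w\|\le N(g')$, so $|\rho(g',\overline w)|\le\log N(g')$ for every $\overline w$; by \textbf{P1} this already yields $\|\overline\rho\|_\infty\le\int_{\mathbb G}\log N(g')\,\boldsymbol\mu(dg')<\infty$, and (assuming, as everywhere in the paper, that $\varepsilon$ is small enough, here $4\varepsilon<\delta_0$) also $\int_{\mathbb G}(1+\log N(g'))N(g')^{4\varepsilon}\,\boldsymbol\mu(dg')<\infty$, since $(1+\log t)t^{4\varepsilon}\le c_{\delta_0}t^{\delta_0}$ on $[1,\infty)$.

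The core step is the estimate: there is $c_\varepsilon$ with
\[
|\rho(g',\overline w_1)-\rho(g',\overline w_2)|\le c_\varepsilon\,(1+\log N(g'))\,N(g')^{4\varepsilon}\,d(\overline w_1,\overline w_2)^{\varepsilon}
\]
for all $g'\in\mathbb G$, $\overline w_1,\overline w_2\in\mathbb P(\mathbb V)$. I would prove this along the lines of Lemma~\ref{LemmaBBB1}: with unit representatives chosen so that $\|h\|\le\sqrt2\,d(\overline w_1,\overline w_2)$, $h=w_1-w_2$, one has $2(\rho(g',\overline w_1)-\rho(g',\overline w_2))=\log(1+\xi_1)$ with $\xi_1=\|g'w_2\|^{-2}(\|g'h\|^2+2\langle g'w_2,g'h\rangle)$, and, using $\|g'w_2\|\ge N(g')^{-1}$, $|\xi_1|\le\xi_2:=N(g')^4\|h\|(\|h\|+2)$. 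When $\xi_2\le1/2$, the bound $|\log(1+\xi_1)|\le2\xi_2$ gives a Lipschitz estimate with weight $N(g')^4$; when $\xi_2>1/2$, $d(\overline w_1,\overline w_2)N(g')^4$ is bounded below by an absolute constant, so the trivial bound $|\rho(g',\overline w_1)-\rho(g',\overline w_2)|\le2\log N(g')$ gives the same Lipschitz estimate up to the extra factor $\log N(g')$. Combining the two cases and then interpolating with $\le2\log N(g')$ (and $d\le\sqrt2$) lowers the exponent of $N(g')$ from $4$ to $4\varepsilon$ and produces the displayed inequality. This is the main obstacle: unlike in Lemma~\ref{LemmaBBB1}, where $\rho_1^{it}$ is bounded, here $\rho$ is \emph{unbounded}, which is exactly why the large‑$\xi$ branch forces the factor $1+\log N(g')$; that factor is harmless because the exponential moment \textbf{P1} absorbs it, and this is where \textbf{P1} is genuinely used.

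It then remains to assemble the pieces using the standard contraction bounds $d(g\cdot\overline u,g\cdot\overline v)\le N(g)^4\,d(\overline u,\overline v)$ (from $gu\wedge gv=(\wedge^2g)(u\wedge v)$, $\|\wedge^2g\|\le\|g\|^2$, $\|gu\|\ge N(g)^{-1}$) and $d(g\cdot\overline v,h\cdot\overline v)\le\|g-h\|\,N(h)$ (from $gv\wedge hv=gv\wedge(h-g)v$, $\|hv\|\ge N(h)^{-1}$), whence by symmetry $d(g\cdot\overline v,h\cdot\overline v)\le\|g-h\|\,(N(g)N(h))^{1/2}$. Writing $C_\varepsilon:=c_\varepsilon\int_{\mathbb G}(1+\log N(g'))N(g')^{4\varepsilon}\,\boldsymbol\mu(dg')<\infty$ and integrating the core estimate over $g'$, I would obtain $|\overline\rho(g,\overline u)-\overline\rho(g,\overline v)|\le C_\varepsilon\,d(g\cdot\overline u,g\cdot\overline v)^\varepsilon\le C_\varepsilon\,N(g)^{4\varepsilon}d(\overline u,\overline v)^\varepsilon$ and $|\overline\rho(g,\overline v)-\overline\rho(h,\overline v)|\le C_\varepsilon\,\|g-h\|^\varepsilon(N(g)N(h))^{\varepsilon/2}\le C_\varepsilon\,\|g-h\|^\varepsilon(N(g)N(h))^{3\varepsilon}$ (using $N(g)N(h)\ge1$), so $k_\varepsilon(\overline\rho)\le2C_\varepsilon<\infty$. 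Finally, $\psi$ is $\varepsilon$‑H\"older on the compact $\mathbb P(\mathbb V)$, hence continuous and bounded, and $(g,\overline v)\mapsto g\cdot\overline v$ is continuous, so $\overline\rho\in\mathcal C_b$; together with $k_\varepsilon(\overline\rho)<\infty$ this gives $\overline\rho\in\mathcal B$.
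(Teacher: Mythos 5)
Your proof is correct and follows the same overall strategy as the paper: factor $\overline\rho(g,\overline v)=\psi(g\cdot\overline v)$ with $\psi(\overline w)=\int_{\mathbb G}\rho(g',\overline w)\,\boldsymbol\mu(dg')$, establish a H\"older estimate for $\rho(g',\cdot)$ on $\mathbb P(\mathbb V)$ with a $\boldsymbol\mu$-integrable weight in $g'$, then transfer it via the standard contraction bounds $d(g\cdot\overline u,g\cdot\overline v)\le N(g)^4 d(\overline u,\overline v)$ and $d(g\cdot\overline v,h\cdot\overline v)\le\|g-h\|(N(g)N(h))^{1/2}$. The only real mechanical difference lies in how the H\"older estimate for $\rho(g',\cdot)$ is derived: the paper first interpolates $|a-b|\le(2\log N(g'))^{1-\varepsilon}|a-b|^\varepsilon$ and then bounds $|a-b|$ directly through $|\log s-\log t|\le |s-t|/\min(s,t)$, producing the weight $(\log N(g'))^{1-\varepsilon}(\|g'\|\|g'^{-1}\|)^\varepsilon$ in one step; you instead replay the two-case dichotomy of Lemma \ref{LemmaBBB1} on $\xi_2\lessgtr 1/2$, which makes the extra factor $1+\log N(g')$ appear explicitly in the large-$\xi$ branch, and then interpolate. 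Both weights are absorbed by \textbf{P1}, so the conclusions coincide; the paper's version is slightly more compact, while yours makes transparent exactly where the unboundedness of $\rho$ (as opposed to the bounded $\rho_1^{it}$ of Lemma \ref{LemmaBBB1}) enters and why \textbf{P1} is needed beyond a mere $N(g')^{4\varepsilon}$ moment.
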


\begin{proof}
Let $g,h\in G$ and $v_{g}$ ($u_{h}$) be any vector in $\mathbb{V}$ of
direction $g\cdot \overline{v}$ ($h\cdot \overline{u}$). Since%
\begin{equation*}
\left\vert \rho \left( g_{1},g\cdot \overline{v}\right) \right\vert \leq
\left\vert \ln \frac{\left\Vert g_{1}v_{g}\right\Vert }{\left\Vert
v_{g}\right\Vert }\right\vert \leq \ln N\left( g_{1}\right) ,
\end{equation*}%
we have%
\begin{equation}
\left\Vert \overline{\rho }\right\Vert _{\infty }\leq \int_{\mathbb{G}%
}\left\vert \rho \left( g_{1},g\cdot \overline{v}\right) \right\vert 
\boldsymbol{\mu }\left( dg_{1}\right) \leq \int_{\mathbb{G}}\ln N\left(
g_{1}\right) \boldsymbol{\mu }\left( dg_{1}\right) < +\infty .  \label{rho bb}
\end{equation}%
Moreover%
\begin{eqnarray*}
\left\vert \rho \left( g_{1},g\cdot \overline{v}\right) -\rho \left(
g_{1},h\cdot \overline{u}\right) \right\vert &\leq &2^{1-\varepsilon }\ln
^{1-\varepsilon }N\left( g_{1}\right) \left\vert \ln \frac{\left\Vert
g_{1}v_{g}\right\Vert }{\left\Vert v_{g}\right\Vert }-\ln \frac{\left\Vert
g_{1}u_{h}\right\Vert }{\left\Vert u_{h}\right\Vert }\right\vert
^{\varepsilon } \\
&\leq &2^{1-\varepsilon }\ln ^{1-\varepsilon }N\left( g_{1}\right) \frac{%
\left\vert \left\Vert g_{1}v_{g}\right\Vert -\left\Vert
g_{1}u_{h}\right\Vert \right\vert ^{\varepsilon }}{\left\Vert
g_{1}u_{h}\right\Vert ^{\varepsilon }} \\
&\leq &2^{1-\varepsilon }\ln ^{1-\varepsilon }N\left( g_{1}\right) \frac{%
\left\Vert g_{1}\left( v_{g}-u_{h}\right) \right\Vert ^{\varepsilon }}{%
\left\Vert g_{1}u_{h}\right\Vert ^{\varepsilon }} \\
&\leq &2^{1-\varepsilon }\ln ^{1-\varepsilon }N\left( g_{1}\right)
\left\Vert g_{1}\right\Vert ^{\varepsilon }\left\Vert g_{1}^{-1}\right\Vert
^{\varepsilon }\left\Vert v_{g}-u_{h}\right\Vert ^{\varepsilon }.
\end{eqnarray*}%
Therefore 
\begin{equation*}
\left\vert \overline{\rho }\left( g,\overline{v}\right) -\overline{\rho }%
\left( h,\overline{u}\right) \right\vert \leq 2^{1-\varepsilon }\left\Vert
v_{g}-u_{h}\right\Vert ^{\varepsilon }\int \left\Vert g_{1}\right\Vert
^{\varepsilon }\left\Vert g_{1}^{-1}\right\Vert ^{\varepsilon }\ln
^{1-\varepsilon }N\left( g_{1}\right) \mu \left( dg_{1}\right) .
\end{equation*}%
In particular, with $h=g,$ we get 
\begin{equation*}
\left\vert \overline{\rho }\left( g,\overline{v}\right) -\overline{\rho }%
\left( g,\overline{u}\right) \right\vert \leq c_{\varepsilon }\left\Vert
g\right\Vert ^{\varepsilon }\left\Vert v-u\right\Vert ^{\varepsilon }\leq
c_{\varepsilon }N\left( g\right) ^{\varepsilon }\left\Vert v-u\right\Vert
^{\varepsilon }
\end{equation*}%
and with $u=v,$ 
\begin{equation*}
\left\vert \overline{\rho }\left( g,\overline{v}\right) -\overline{\rho }%
\left( h,\overline{v}\right) \right\vert \leq c_{\varepsilon }\left\Vert
g-h\right\Vert ^{\varepsilon }.
\end{equation*}%
The last two bounds imply that $k_{\varepsilon }\left( \overline{\rho }%
\right) < +\infty .$ Together with (\ref{rho bb}) this proves that $\left\Vert 
\overline{\rho }\right\Vert _{\mathcal{B}}< +\infty .$
\end{proof}

The following assertions are proved in Le Page \cite{LePage82} (see
respectively Theorem 1, p.262 and Corollary 1, p.269).

\begin{proposition}
\label{key prop}Assume conditions \textbf{P1-P3}. There exist $\varepsilon
>0 $ and $\rho _{\varepsilon }\in \left( 0,1\right) $ such that 
\begin{equation*}
\lim_{n\rightarrow +\infty }\sup_{\overline{v}_{1}\neq \overline{v}_{2}}%
\mathbf{Pr}\left( \frac{d\left( G_{n}  \cdot \overline{v}_{1},G_{n}  \cdot \overline{v}%
_{2}\right) ^{\varepsilon }}{d\left( \overline{v}_{1},\overline{v}%
_{2}\right) ^{\varepsilon }}\right) ^{1/n}=\rho _{\varepsilon }.
\end{equation*}
\end{proposition}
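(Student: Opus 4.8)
This is the classical exponential contraction estimate for the projective action of $G_{n}$, and a complete proof is given in \cite{LePage82}; I only describe the architecture of the standard Furstenberg--Le Page--Bougerol--Lacroix argument. Set
\[
a_{n}(\varepsilon)=\sup_{\overline{u}\neq \overline{v}}\ \mathbf{Pr}\!\left(\frac{d\left(G_{n}\cdot \overline{u},G_{n}\cdot \overline{v}\right)^{\varepsilon }}{d\left(\overline{u},\overline{v}\right)^{\varepsilon }}\right),\qquad n\geq 1 .
\]
Since $x\mapsto x^{1/n}$ is increasing, the proposition amounts to two facts: (i) $\left(a_{n}(\varepsilon)\right)_{n}$ is submultiplicative, so that $\rho_{\varepsilon}:=\lim_{n}a_{n}(\varepsilon)^{1/n}=\inf_{n}a_{n}(\varepsilon)^{1/n}$ exists; and (ii) $\rho_{\varepsilon}\in(0,1)$ for every sufficiently small $\varepsilon>0$.

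For (i), the elementary two-sided bound
\[
N(g)^{-4}\,d\left(\overline{u},\overline{v}\right)\ \leq\ d\left(g\cdot \overline{u},g\cdot \overline{v}\right)\ \leq\ N(g)^{4}\,d\left(\overline{u},\overline{v}\right),
\]
obtained from $\|gu\wedge gv\|\leq\|g\|^{2}\|u\wedge v\|$ together with $\|gw\|\geq\|g^{-1}\|^{-1}\|w\|$ (and the symmetric inequalities), combined with \textbf{P1}, gives $a_{1}(\varepsilon)<\infty$ as soon as $4\varepsilon\leq\delta_{0}$; since moreover $N(G_{n})\leq\prod_{i\leq n}N(g_{i})$ and $x\mapsto x^{-1}$ is convex, one gets $a_{n}(\varepsilon)\geq\big(\mathbf{Pr}\,N(g_{1})^{4\varepsilon}\big)^{-n}$, hence $\rho_{\varepsilon}>0$. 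Submultiplicativity follows from the identity
\[
\frac{d\left(G_{n+m}\cdot \overline{u},G_{n+m}\cdot \overline{v}\right)}{d\left(\overline{u},\overline{v}\right)}=\frac{d\big(g_{n+m}\cdots g_{m+1}\cdot(G_{m}\cdot \overline{u}),\,g_{n+m}\cdots g_{m+1}\cdot(G_{m}\cdot \overline{v})\big)}{d\left(G_{m}\cdot \overline{u},G_{m}\cdot \overline{v}\right)}\cdot\frac{d\left(G_{m}\cdot \overline{u},G_{m}\cdot \overline{v}\right)}{d\left(\overline{u},\overline{v}\right)},
\]
in which $g_{n+m}\cdots g_{m+1}$ is independent of $G_{m}$ and distributed as $G_{n}$, while $G_{m}$ is invertible so the two base directions $G_{m}\cdot\overline{u}\neq G_{m}\cdot\overline{v}$ remain distinct. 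Raising to the power $\varepsilon$ and conditioning on $\sigma(g_{1},\dots,g_{m})$, the conditional expectation of the first factor is at most $a_{n}(\varepsilon)$ (the supremum in $a_{n}$ runs over \emph{all} base pairs), while the second factor is $\sigma(g_{1},\dots,g_{m})$-measurable; hence $a_{n+m}(\varepsilon)\leq a_{n}(\varepsilon)a_{m}(\varepsilon)$ and Fekete's lemma applies.

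The substance is (ii), and it is where \textbf{P2} and \textbf{P3} are used. Writing, with $\wedge^{2}G_{n}$ the action on the second exterior power,
\[
\frac{d\left(G_{n}\cdot \overline{u},G_{n}\cdot \overline{v}\right)}{d\left(\overline{u},\overline{v}\right)}=\frac{\big\|(\wedge^{2}G_{n})(u\wedge v)\big\|}{\|u\wedge v\|}\cdot\frac{\|u\|}{\|G_{n}u\|}\cdot\frac{\|v\|}{\|G_{n}v\|},
\]
one sees that the exponential rate of the left side is driven by $\gamma_{2}-\gamma_{1}$, the gap between the two top Lyapunov exponents of $\boldsymbol{\mu}$ (here $\gamma_{1}=\gamma_{\boldsymbol{\mu}}$); strict positivity of this gap is precisely what \textbf{P3} provides, namely simplicity of the top exponent (see \cite{FurstKest}, \cite{Boug-Lacr85}), while \textbf{P2} in addition delivers the \emph{uniformity over base pairs}, because the unique stationary measure $\boldsymbol{\nu}$ charges no proper subspace. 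Concretely I would establish that there exist $N\geq 1$ and $\delta>0$ with
\[
\sup_{\overline{u}\neq \overline{v}}\ \mathbf{Pr}\!\left(\log\frac{d\left(G_{N}\cdot \overline{u},G_{N}\cdot \overline{v}\right)}{d\left(\overline{u},\overline{v}\right)}\right)\ \leq\ -\delta<0,
\]
and then conclude by a Taylor expansion at $\varepsilon=0$. Indeed the ratio lies in $[N(G_{N})^{-4},N(G_{N})^{4}]$, and by \textbf{P1} the function $N(G_{N})^{4\varepsilon}\big(\log N(G_{N})\big)^{2}$ is $\boldsymbol{\mu}^{\otimes N}$-integrable and dominates $\big(\log(\text{ratio})\big)^{2}(\text{ratio})^{\varepsilon}$ uniformly in the base pair for $\varepsilon$ small; hence the second $\varepsilon$-derivative of $\varepsilon\mapsto\mathbf{Pr}\big((\text{ratio})^{\varepsilon}\big)$ is bounded uniformly in $\overline{u}\neq\overline{v}$, and
\[
\mathbf{Pr}\!\left(\frac{d\left(G_{N}\cdot \overline{u},G_{N}\cdot \overline{v}\right)^{\varepsilon }}{d\left(\overline{u},\overline{v}\right)^{\varepsilon }}\right)\ \leq\ 1-\delta\varepsilon+C\varepsilon^{2}\ <\ 1
\]
for $\varepsilon$ small, i.e.\ $a_{N}(\varepsilon)<1$, whence $\rho_{\varepsilon}\leq a_{N}(\varepsilon)^{1/N}<1$.

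The main obstacle is the displayed one-step contraction-in-mean inequality \emph{uniformly over all base pairs} $(\overline{u},\overline{v})$: pairs whose directions nearly coincide, or lie close to the (near-)kernel of an almost rank-one product $G_{N}$, must still be controlled, and this is exactly the delicate point settled in \cite{LePage82} by combining strong irreducibility with the regularity of the stationary measure $\boldsymbol{\nu}$; everything else above is routine.
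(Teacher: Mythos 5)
Your proposal is correct and, like the paper, defers the substance to Le Page: the paper simply cites Theorem 1 (p.~262) of \cite{LePage82} for this proposition without reproducing a proof. Your sketch of that proof's architecture --- submultiplicativity of $a_n(\varepsilon)$ via the cocycle decomposition and the $N(g)^{\pm 4}$ distortion bound, the exterior-power identity tying the exponential rate to the Lyapunov gap $\gamma_2-\gamma_1<0$, and the Taylor expansion in $\varepsilon$ around a uniform one-step contraction-in-mean estimate --- is an accurate account of Le Page's argument, and you correctly isolate and attribute the genuinely delicate step (uniformity of the log-contraction over all base pairs, which is where \textbf{P2}--\textbf{P3} and the regularity of $\boldsymbol{\nu}$ enter).
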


\begin{proposition}
Assume conditions \textbf{P1-P3}. For any continuous function $\varphi :%
\mathbb{P}\left( \mathbb{V}\right) \rightarrow \mathbb{R}$ it holds%
\begin{equation}
\lim_{n\rightarrow +\infty }\sup_{\overline{v}\in \mathbb{P}\left( \mathbb{V}\right) }
\left\vert \mathbf{Pr}\varphi \left( G_{n} \cdot \overline{v}%
\right) -\boldsymbol{\nu }\left( \varphi \right) \right\vert =0,
\label{ST001}
\end{equation}%
where $\boldsymbol{\nu }$ is the invariant measure defined by (\ref{st mes
proj sp}).
\end{proposition}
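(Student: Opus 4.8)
The plan is to first reduce to H\"older test functions and then to exploit the exponential contraction on the projective space supplied by Proposition \ref{key prop}. Let $\varepsilon>0$ and $\rho_\varepsilon\in(0,1)$ be as in that proposition. Since $\mathbb{P}\left(\mathbb{V}\right)$ is compact for the distance $d$ and $d\leq 1$, the $\varepsilon$-H\"older functions are dense in $\mathcal{C}\left(\mathbb{P}\left(\mathbb{V}\right)\right)$ for the uniform norm: Lipschitz functions are dense by the usual inf-convolution argument $f_\lambda(\overline{u})=\inf_{\overline{w}}\left(f(\overline{w})+\lambda d(\overline{u},\overline{w})\right)$, and every Lipschitz function is $\varepsilon$-H\"older because $d\leq 1$. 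Hence it suffices to prove the claim when $\varphi$ satisfies $\left|\varphi(\overline{u})-\varphi(\overline{v})\right|\leq \left\Vert\varphi\right\Vert_\varepsilon\, d(\overline{u},\overline{v})^\varepsilon$ for some finite constant $\left\Vert\varphi\right\Vert_\varepsilon$.

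Set $\psi_n(\overline{v})=\mathbf{Pr}\varphi\left(G_n\cdot\overline{v}\right)$. For such a $\varphi$ and any $\overline{v}_1,\overline{v}_2$,
\[
\left|\psi_n(\overline{v}_1)-\psi_n(\overline{v}_2)\right|\leq \left\Vert\varphi\right\Vert_\varepsilon\,\mathbf{Pr}\left(d\left(G_n\cdot\overline{v}_1,G_n\cdot\overline{v}_2\right)^\varepsilon\right)\leq \left\Vert\varphi\right\Vert_\varepsilon\,\rho^{\,n}
\]
for all $n$ large enough and any fixed $\rho\in(\rho_\varepsilon,1)$, by Proposition \ref{key prop} together with $d\leq 1$. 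Thus the oscillation $\mathrm{osc}(\psi_n):=\sup\psi_n-\inf\psi_n$ is at most $\left\Vert\varphi\right\Vert_\varepsilon\rho^n$. Next, conditioning on $g_1,\dots,g_n$ and using that $g_{n+m}\cdots g_{n+1}$ has the same law as $G_m$ and is independent of $G_n$, one gets $\psi_{n+m}(\overline{v})=\mathbf{Pr}\left[\psi_m\left(G_n\cdot\overline{v}\right)\right]$, so $\psi_{n+m}(\overline{v})$ lies between $\inf\psi_m$ and $\sup\psi_m$; hence $\left\Vert\psi_{n+m}-\psi_m\right\Vert_\infty\leq \mathrm{osc}(\psi_m)\leq\left\Vert\varphi\right\Vert_\varepsilon\rho^m$ uniformly in $n$. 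Therefore $(\psi_n)_{n\geq 1}$ is uniformly Cauchy, converges in $\left\Vert\cdot\right\Vert_\infty$, and, since $\mathrm{osc}(\psi_n)\to 0$, the limit is a constant $c_\varphi$.

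To identify $c_\varphi$ I would integrate against $\boldsymbol{\nu}$: iterating the invariance relation (\ref{st mes proj sp}) shows $\boldsymbol{\nu}(\varphi)=\int_{\mathbb{P}\left(\mathbb{V}\right)}\psi_n(\overline{v})\,\boldsymbol{\nu}(d\overline{v})$ for every $n\geq 1$, and letting $n\to\infty$ with the uniform convergence $\psi_n\to c_\varphi$ yields $c_\varphi=\boldsymbol{\nu}(\varphi)$. Finally, for an arbitrary continuous $\varphi$ and $\delta>0$, choose an $\varepsilon$-H\"older $\varphi_\delta$ with $\left\Vert\varphi-\varphi_\delta\right\Vert_\infty<\delta$; then for every $\overline{v}$ and $n$,
\[
\left|\mathbf{Pr}\varphi\left(G_n\cdot\overline{v}\right)-\boldsymbol{\nu}(\varphi)\right|\leq 2\delta+\sup_{\overline{w}\in\mathbb{P}\left(\mathbb{V}\right)}\left|\mathbf{Pr}\varphi_\delta\left(G_n\cdot\overline{w}\right)-\boldsymbol{\nu}(\varphi_\delta)\right|,
\]
and the last term tends to $0$ as $n\to\infty$ by the H\"older case; letting $\delta\to 0$ gives the claim. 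The only genuinely delicate input is the uniform geometric decay of $\mathbf{Pr}\left(d\left(G_n\cdot\overline{v}_1,G_n\cdot\overline{v}_2\right)^\varepsilon\right)$, which is exactly the content of Proposition \ref{key prop} (the stated limit of $n$-th roots being $<1$, combined with $d\leq 1$, upgrades to a uniform bound $\rho^n$); the remainder is a soft compactness and Cauchy-sequence argument, so no further serious obstacle is expected.
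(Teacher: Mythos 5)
Your proof is correct and essentially self-contained given Proposition \ref{key prop}. The paper itself does not supply an argument for this proposition: it is cited as Corollary~1, p.~269 of Le Page \cite{LePage82}, together with Proposition \ref{key prop} (Theorem~1, p.~262). So there is no ``paper proof'' to compare against line by line; what you have done is to reconstruct a proof from the quoted contraction estimate, which is indeed the natural route and consistent with how the paper itself uses Proposition \ref{key prop} elsewhere (compare the chaining of $d(G_n\cdot\overline{v}_1,G_n\cdot\overline{v}_2)^{\varepsilon}$ estimates in Lemma \ref{PropDoeblFortet}).

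All the individual steps check out. The reduction to $\varepsilon$-H\"older test functions is legitimate (inf-convolution gives Lipschitz density on a compact metric space, and $d\leq 1$ upgrades Lipschitz to $\varepsilon$-H\"older). The oscillation bound $\mathrm{osc}(\psi_n)\leq\left\Vert\varphi\right\Vert_{\varepsilon}\rho^{n}$ follows correctly from Proposition \ref{key prop} after upgrading the $n$-th-root limit to a geometric bound for $n$ large and using $d\leq 1$ to drop the denominator. The Markov/independence identity $\psi_{n+m}(\overline{v})=\mathbf{Pr}\bigl[\psi_{m}(G_n\cdot\overline{v})\bigr]$ is valid because $g_{n+m}\cdots g_{n+1}$ is independent of $G_n$ with the law of $G_m$, and it does yield uniform Cauchyness with constant limit. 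The identification of the limit with $\boldsymbol{\nu}(\varphi)$ by iterating the invariance relation (\ref{st mes proj sp}) is clean, and the final $2\delta$-approximation step is standard and correctly accounts for both replacing $\varphi$ by $\varphi_\delta$ under the expectation and replacing $\boldsymbol{\nu}(\varphi)$ by $\boldsymbol{\nu}(\varphi_\delta)$. No gaps.
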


\subsection{Existence of the stationary probability\label{Sec Exist Inv Prob}}

The fact that the measure $\boldsymbol{\nu }$ is $\boldsymbol{\mu }$%
-invariant implies that the probability measure $\boldsymbol{\lambda }$
defined by (\ref{invar mes X}) is stationary for the Markov chain $\left(
X_{n}\right) _{n\geq 1}.$ Indeed, from (\ref{st mes proj sp}) and (\ref%
{trans prob X}) it follows that for any bounded measurable function $\varphi 
$ on $\mathbb{G}\times \mathbb{P}\left( \mathbb{V}\right) ,$%
\begin{eqnarray*}
\boldsymbol{\lambda }\left( \mathbf{P}\varphi \right) &=&\int_{\mathbb{G}%
}\int_{\mathbb{P}\left( \mathbb{V}\right) }\left( \int_{\mathbb{G}}\varphi
\left( g_{1},g\cdot \overline{v}\right) \boldsymbol{\mu }\left(
dg_{1}\right) \right) \boldsymbol{\mu }\left( dg\right) \boldsymbol{\nu }%
\left( d\overline{v}\right) \\
&=&\int_{\mathbb{G}}\left( \int_{\mathbb{G}}\int_{\mathbb{P}\left( \mathbb{V}%
\right) }\varphi \left( g_{1},g\cdot \overline{v}\right) \boldsymbol{\nu }%
\left( d\overline{v}\right) \boldsymbol{\mu }\left( dg\right) \right) 
\boldsymbol{\mu }\left( dg_{1}\right) \\
&=&\int_{\mathbb{G}}\left( \int_{\mathbb{P}\left( \mathbb{V}\right) }\varphi
\left( g_{1},\overline{v}\right) \boldsymbol{\nu }\left( d\overline{v}%
\right) \right) \boldsymbol{\mu }\left( dg_{1}\right) \\
&=&\boldsymbol{\lambda }\left( \varphi \right) .
\end{eqnarray*}%
It remains to prove its unicity.

For any $f\in \mathcal{B}$ define $\overline{f}:\mathbb{P}\left( \mathbb{V}%
\right) \rightarrow \mathbb{R}$ by $\overline{f}\left( \overline{v}\right)
=\int_{\mathbb{G}}f\left( g,\overline{v}\right) \boldsymbol{\mu }\left(
dg\right) .$ Since $f\in \mathcal{B}$ it follows that $\overline{f}$ is $%
\varepsilon $-H\"{o}lder on $\mathbb{P}\left( \mathbb{V}\right) .$ Indeed,
we have $k_{\varepsilon }\left( f\right) < +\infty ,$ so that 
\begin{eqnarray*}
\left\vert \overline{f}\left( \overline{u}\right) -\overline{f}\left( 
\overline{v}\right) \right\vert &\leq &\int \left\vert f\left( g,\overline{u}%
\right) -f\left( g,\overline{v}\right) \right\vert \boldsymbol{\mu }\left(
dg\right) \\
&\leq &k_{\varepsilon }\left( f\right) d\left( \overline{u},\overline{v}%
\right) ^{\varepsilon }\int N\left( g\right) ^{4\varepsilon }\boldsymbol{\mu 
}\left( dg\right) .
\end{eqnarray*}%
Using the independence of $g_{1},\dots,g_{n},$ 
\begin{equation}
\mathbf{P}^{n}f\left( g,\overline{v}\right) =\mathbf{Pr}f\left(
g_{n},G_{n-1}g\cdot \overline{v}\right) =\mathbf{Pr}\overline{f}\left(
G_{n-1}g\cdot \overline{v}\right) .  \label{ST002}
\end{equation}%
From (\ref{ST002}) and (\ref{ST001}) with $\varphi =\overline{f}$ it follows
that%
\begin{equation}
\lim_{n\rightarrow +\infty }\mathbf{P}^{n}f\left( g,\overline{v}\right) =%
\boldsymbol{\nu }\left( \overline{f}\right) =\boldsymbol{\lambda }\left(
f\right) .  \label{ST003}
\end{equation}%
This proves that $\boldsymbol{\lambda }$ is the unique stationary measure.

\subsection{The theorem of Ionescu-Tulcea and Marinescu\label{Sec I T M}}

Recall that $\mathcal{B}\subset \mathcal{C}_{b}$ and that $\mathcal{B}$ and $%
\mathcal{C}_{b}$ are endowed with the norms $\left\Vert \cdot \right\Vert _{%
\mathcal{B}}$ and $\left\Vert \cdot \right\Vert _{\infty }$ respectively.
Define $\left\Vert \mathbf{P}_{t}\right\Vert _{\infty}=\sup_{f\in 
\mathcal{B}}\frac{\left\Vert \mathbf{P}_{t}f\right\Vert _{\infty }}{%
\left\Vert f\right\Vert _{\infty }}.$ 
Consider the following conditions (see Norman \cite{Norman72}, Section 3.2):

(a) If $f_{n}\in \mathcal{B},$ 
$\left\Vert f_{n}\right\Vert _{\mathcal{B}}\leq c,$ $n\geq 1,$ 
$f\in \mathcal{C}_{b},$ 
and
lim$_{n\rightarrow +\infty }\left\Vert f_{n}-f\right\Vert _{\infty}=0,$ 
then $f\in \mathcal{B}$ and $%
\left\Vert f\right\Vert_{\mathcal{B}} \leq c.$

(b) There exist $\eta _{0}>0$ such that $\sup_{\left\vert t\right\vert \leq
\eta _{0}}\left\Vert \mathbf{P}_{t}\right\Vert _{\infty}\leq c.$

(c) There exist $\eta _{0}>0,$ $n_{0}\geq 1$ and a constant $0<r_{0}<1$ such
that for any function $f\in \mathcal{B}$ it holds 
\begin{equation*}
\sup_{\left\vert t\right\vert <\eta _{0}}\left\Vert \mathbf{P}%
_{t}^{n_{0}}f\right\Vert _{\mathcal{B}}<r_{0}\left\Vert f\right\Vert _{%
\mathcal{B}}+\left\Vert f\right\Vert _{\infty }.
\end{equation*}

(d) For any $t\in \lbrack -\eta _{0},\eta _{0}]$ the set $\mathbf{P}_{t}B$
has compact closure in $\left( \mathcal{C}_{b},\left\Vert \cdot \right\Vert
_{\infty }\right) $ for any bounded subset $B$ of $\mathcal{B}.$

The assertion (a) is obvious.

The following lemma shows that the family of operators $\mathbf{P}_{t}^{n},$ 
$t\in \lbrack -\eta _{0},\eta _{0}]$ is a contraction on $\mathcal{B}$
uniformly in $t,$ which implies assertion (b).

\begin{lemma}
\label{Lemma Contract}For any $\left\vert t\right\vert \leq \eta _{0}$ and $%
f\in \mathcal{B},$ 
\begin{equation*}
\left\Vert \mathbf{P}_{t}f\right\Vert _{\infty }\leq \left\Vert f\right\Vert
_{\infty }.
\end{equation*}
\end{lemma}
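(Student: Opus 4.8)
The plan is to observe that the inequality is an immediate consequence of the fact that $e^{it\rho}$ has modulus $1$. First I recall that, by definition, $\mathbf{P}_{t}f=\mathbf{P}\left(e^{it\rho}f\right)$; by condition \textbf{M1} d) the function $e^{it\rho}f$ belongs to $\mathcal{B}$ for every $\left\vert t\right\vert\leq\eta_{0}$ and $f\in\mathcal{B}$, and by \textbf{M1} c) we have $\mathcal{B}\subseteq L^{1}\left(\mathbf{P}\left(x,\cdot\right)\right)$ for every $x\in\mathbb{X}$, so that $\mathbf{P}_{t}f\left(x\right)$ is well defined.

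Next, fixing $x=\left(g,\overline{v}\right)\in\mathbb{X}$ and writing out the transition operator as in (\ref{trans prob X}), one has
\begin{equation*}
\mathbf{P}_{t}f\left(x\right)=\int_{\mathbb{G}}e^{it\rho\left(g',g\cdot\overline{v}\right)}f\left(g',g\cdot\overline{v}\right)\boldsymbol{\mu}\left(dg'\right).
\end{equation*}
Since $\rho$ is real valued and $t$ is real, $\left\vert e^{it\rho\left(g',g\cdot\overline{v}\right)}\right\vert=1$ for every $g'$, so the triangle inequality for integrals gives
\begin{equation*}
\left\vert\mathbf{P}_{t}f\left(x\right)\right\vert\leq\int_{\mathbb{G}}\left\vert f\left(g',g\cdot\overline{v}\right)\right\vert\boldsymbol{\mu}\left(dg'\right)\leq\left\Vert f\right\Vert_{\infty}\,\boldsymbol{\mu}\left(\mathbb{G}\right)=\left\Vert f\right\Vert_{\infty},
\end{equation*}
because $\boldsymbol{\mu}$ is a probability measure. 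Taking the supremum over $x\in\mathbb{X}$ yields the claim.

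There is no genuine obstacle here: the only points worth spelling out are that $e^{it\rho}f$ indeed belongs to $\mathcal{B}$, so that $\mathbf{P}_{t}f$ is meaningful (this is exactly \textbf{M1} d)), and that $\boldsymbol{\mu}$ has total mass $1$. Since $\mathbf{P}_{t}$ maps $\mathcal{B}$ into $\mathcal{B}$ (by \textbf{M1} d) together with \textbf{M2} a)), applying the estimate above successively to $\mathbf{P}_{t}^{k-1}f\in\mathcal{B}$ for $k=1,\dots,n$ gives $\left\Vert\mathbf{P}_{t}^{n}f\right\Vert_{\infty}\leq\left\Vert f\right\Vert_{\infty}$ for all $n\geq1$; in particular $\sup_{\left\vert t\right\vert\leq\eta_{0}}\left\Vert\mathbf{P}_{t}\right\Vert_{\infty}\leq1$, which is the form of the bound needed to verify hypothesis (b) in the application of the Ionescu--Tulcea--Marinescu theorem.
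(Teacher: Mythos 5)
Your proof is correct and uses exactly the same idea as the paper: write out $\mathbf{P}_{t}f$ as an integral against $\boldsymbol{\mu}$, note that $\left\vert e^{it\rho}\right\vert=1$ since $\rho$ is real-valued, and conclude by the triangle inequality and the fact that $\boldsymbol{\mu}$ is a probability measure. The paper states this more tersely, but the argument is identical; your closing remark about iterating to bound $\left\Vert \mathbf{P}_{t}^{n}\right\Vert_{\infty}$ is a harmless elaboration beyond the lemma's statement.
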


\begin{proof}
Note that%
\begin{equation*}
\mathbf{P}_{t}f\left( g,\overline{v}\right) =\mathbf{Pr}\left( e^{it\rho
\left( g_{1},g\cdot \overline{v}\right) }f\left( g_{1},g\cdot \overline{v}%
\right) \right) .
\end{equation*}%
The assertion of the lemma follows from the last identity.
\end{proof}

We prove next that the family of operators $\left( \mathbf{P}_{t}\right)
_{\left\vert t\right\vert <\eta _{0}}$ satisfies the uniform Doeblin-Fortet
property (c). This follows from the following lemma by choosing $n$
sufficiently large.

\begin{lemma}
\label{PropDoeblFortet}There exist constants $\eta _{0}>0$ and $\rho
_{\varepsilon }\in (0,1)$ such that for any $n\geq 1$ 
\begin{equation*}
\sup_{\left\vert t\right\vert <\eta _{0}}\left\Vert \mathbf{P}%
_{t}^{n}f\right\Vert _{\mathcal{B}}\leq \left( 1+c_{\varepsilon }\rho
_{\varepsilon }^{n}\right) \left\Vert f\right\Vert _{\infty }\leq \left\Vert
f\right\Vert _{\infty }+c_{\varepsilon }\rho _{\varepsilon }^{n}\left\Vert
f\right\Vert _{\mathcal{B}}.
\end{equation*}
\end{lemma}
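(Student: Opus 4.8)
The plan is to establish a Doeblin--Fortet (Lasota--Yorke) inequality of the form
\[
k_{\varepsilon}\bigl(\mathbf{P}_{t}^{n}f\bigr)\le c_{\varepsilon}\Vert f\Vert_{\infty}+c_{\varepsilon}\rho_{\varepsilon}^{\,n}k_{\varepsilon}(f),\qquad |t|\le\eta_{0},\ n\ge1,
\]
with $\rho_{\varepsilon}\in(0,1)$ the contraction rate of Proposition~\ref{key prop}; combined with Lemma~\ref{Lemma Contract} (which gives $\Vert\mathbf{P}_{t}^{n}f\Vert_{\infty}\le\Vert f\Vert_{\infty}$) this yields the asserted bound. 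The starting point is the representation obtained by unfolding (\ref{trans prob X}) and the cocycle identity $\rho(g''g',\overline{v})=\rho(g'',g'\cdot\overline{v})+\rho(g',\overline{v})$: writing $G_{k}=g_{k}\cdots g_{1}$ and $\rho_{1}=e^{\rho}$,
\[
\mathbf{P}_{t}^{n}f(g,\overline{v})=\mathbb{E}\bigl[\rho_{1}(G_{n},g\cdot\overline{v})^{it}\,f(g_{n},G_{n-1}g\cdot\overline{v})\bigr].
\]
The crucial structural observation is that the right-hand side depends on $(g,\overline{v})$ only through the direction $\overline{w}:=g\cdot\overline{v}$; thus $\mathbf{P}_{t}^{n}f(g,\overline{v})=F_{n}(\overline{w})$ for a function $F_{n}$ on $\mathbb{P}(\mathbb{V})$, and it suffices to estimate $k_{\varepsilon}(\mathbf{P}_{t}^{n}f)$.

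For the projective part of $k_{\varepsilon}$, fix $g$ and two directions $\overline{v},\overline{v}'$; put $\overline{w}=g\cdot\overline{v}$, $\overline{w}'=g\cdot\overline{v}'$ and split
\[
F_{n}(\overline{w})-F_{n}(\overline{w}')=\mathbb{E}\bigl[\rho_{1}(G_{n},\overline{w})^{it}\bigl(f(g_{n},G_{n-1}\cdot\overline{w})-f(g_{n},G_{n-1}\cdot\overline{w}')\bigr)\bigr]+\mathbb{E}\bigl[\bigl(\rho_{1}(G_{n},\overline{w})^{it}-\rho_{1}(G_{n},\overline{w}')^{it}\bigr)f(g_{n},G_{n-1}\cdot\overline{w}')\bigr].
\]
In the first expectation $|\rho_{1}(G_{n},\overline{w})^{it}|=1$ and the $f$-increment is at most $k_{\varepsilon}(f)\,d(G_{n-1}\cdot\overline{w},G_{n-1}\cdot\overline{w}')^{\varepsilon}N(g_{n})^{4\varepsilon}$; using the independence of $g_{n}$ and $G_{n-1}$, the moment bound $\int N(g)^{4\varepsilon}\boldsymbol{\mu}(dg)<\infty$ from \textbf{P1} (for $\varepsilon$ small), and Proposition~\ref{key prop}, this is $\le c_{\varepsilon}\rho_{\varepsilon}^{\,n}k_{\varepsilon}(f)\,d(\overline{w},\overline{w}')^{\varepsilon}$; since $d(g\cdot\overline{v},g\cdot\overline{v}')\le N(g)^{4}d(\overline{v},\overline{v}')$ this produces the contracting summand $c_{\varepsilon}\rho_{\varepsilon}^{\,n}k_{\varepsilon}(f)$.

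The main obstacle is the second expectation, which must be bounded by $c_{\varepsilon}\Vert f\Vert_{\infty}d(\overline{w},\overline{w}')^{\varepsilon}$ \emph{uniformly in $n$}: applying Corollary~\ref{CorollaryBBB2} with $g=G_{n}$ is useless because $\mathbb{E}\,N(G_{n})^{4\varepsilon}$ grows geometrically. Instead I would use the additive cocycle once more, $\log\rho_{1}(G_{n},\overline{w})=\sum_{k=1}^{n}\rho(g_{k},G_{k-1}\cdot\overline{w})$, so that, by the elementary increment bound $|\rho(g,\overline{u})-\rho(g,\overline{u}')|\le c_{\varepsilon}N(g)^{4\varepsilon}d(\overline{u},\overline{u}')^{\varepsilon}$ (obtained as in Lemma~\ref{LemmaBBB1}, interpolating between $|\rho|\le\log N(g)$ and the Lipschitz estimate), the inequality $|e^{ita}-e^{itb}|\le2^{1-\varepsilon}|t|^{\varepsilon}|a-b|^{\varepsilon}$, and subadditivity of $x\mapsto x^{\varepsilon}$,
\[
\bigl|\rho_{1}(G_{n},\overline{w})^{it}-\rho_{1}(G_{n},\overline{w}')^{it}\bigr|\le c_{\eta_{0}}\sum_{k=1}^{n}N(g_{k})^{4\varepsilon}\,d(G_{k-1}\cdot\overline{w},G_{k-1}\cdot\overline{w}')^{\varepsilon}.
\]
Taking expectations, using the independence of $g_{k}$ from $G_{k-1}$, \textbf{P1}, and Proposition~\ref{key prop} term by term, the sum is dominated by a convergent geometric series, whence $\mathbb{E}|\rho_{1}(G_{n},\overline{w})^{it}-\rho_{1}(G_{n},\overline{w}')^{it}|\le\frac{c_{\eta_{0}}c_{\varepsilon}}{1-\rho_{\varepsilon}}\,d(\overline{w},\overline{w}')^{\varepsilon}$, uniformly in $n$; together with $|f(g_{n},\cdot)|\le\Vert f\Vert_{\infty}$ this gives the $c_{\varepsilon}\Vert f\Vert_{\infty}$ contribution. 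Dividing by $d(\overline{v},\overline{v}')^{\varepsilon}N(g)^{4\varepsilon}$ bounds the projective part of $k_{\varepsilon}(\mathbf{P}_{t}^{n}f)$, and in passing shows $F_{n}$ is $\varepsilon$-Hölder on $\mathbb{P}(\mathbb{V})$ with constant $\le c_{\varepsilon}\Vert f\Vert_{\infty}+c_{\varepsilon}\rho_{\varepsilon}^{\,n}k_{\varepsilon}(f)$.

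For the matrix part of $k_{\varepsilon}$ the two values $\mathbf{P}_{t}^{n}f(g,\overline{u})=F_{n}(g\cdot\overline{u})$ and $\mathbf{P}_{t}^{n}f(h,\overline{u})=F_{n}(h\cdot\overline{u})$ carry the \emph{same} function $F_{n}$, so by the Hölder estimate just obtained and the elementary bound $d(g\cdot\overline{u},h\cdot\overline{u})\le(N(g)N(h))^{3}\Vert g-h\Vert$ (used exactly as in Lemma~\ref{LemmaBBB3} and Corollary~\ref{CorollaryBBB4}) one gets $|\mathbf{P}_{t}^{n}f(g,\overline{u})-\mathbf{P}_{t}^{n}f(h,\overline{u})|\le(c_{\varepsilon}\Vert f\Vert_{\infty}+c_{\varepsilon}\rho_{\varepsilon}^{\,n}k_{\varepsilon}(f))\Vert g-h\Vert^{\varepsilon}(N(g)N(h))^{3\varepsilon}$, which is the required bound for that seminorm. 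Adding the two parts gives the Doeblin--Fortet inequality, and hence the lemma (the constants being chosen so the bound also holds for small $n$; for $n$ large one reads off condition (c) with $r_{0}=c_{\varepsilon}\rho_{\varepsilon}^{\,n}<1$). The hard point is, as indicated, the uniform-in-$n$ control of the cocycle increment $\rho_{1}(G_{n},\overline{w})^{it}-\rho_{1}(G_{n},\overline{w}')^{it}$: the naive Hölder bound fails, and one must trade the bad factor $N(G_{n})^{4\varepsilon}$ for the telescoped sum whose terms are tamed individually by the projective contraction.
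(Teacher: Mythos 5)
Your telescoping argument is the right approach and, in fact, repairs a flaw in the paper's own derivation. The paper's proof (displayed as (\ref{DF00a})) splits the difference into a $\rho_1^{it}$-increment and an $f$-increment, applies Corollary~\ref{CorollaryBBB2}, and then in the last line replaces the sum $c_1\Vert f\Vert_\infty + k_\varepsilon(f)$ by $c\Vert f\Vert_\infty$; that step is invalid, since $k_\varepsilon(f)$ is not controlled by $\Vert f\Vert_\infty$. As a result the \emph{first} inequality of the lemma, namely $\Vert\mathbf{P}_t^n f\Vert_{\mathcal{B}}\le(1+c_\varepsilon\rho_\varepsilon^n)\Vert f\Vert_\infty$, is actually false: take $f(g,\overline{w})=\phi_r(\overline{w})$ a bump of radius $r$ on $\mathbb{P}(\mathbb{V})$, so $\Vert f\Vert_\infty=1$ while $k_\varepsilon(\mathbf{P}_0 f)\ge r^{-\varepsilon}\to\infty$; no bound of the form $c\Vert f\Vert_\infty$ can hold. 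What is true, and what the paper really uses (in both condition (c) and \textbf{M3}), is the Doeblin--Fortet inequality
\[
\Vert\mathbf{P}_t^n f\Vert_{\mathcal{B}}\le(1+c_\varepsilon)\Vert f\Vert_\infty+c_\varepsilon\rho_\varepsilon^{\,n}\Vert f\Vert_{\mathcal{B}},
\]
which is exactly what your argument produces and which trivially implies the lemma's \emph{second} inequality.

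The other point where you do better than the written proof is the control of the cocycle increment. The paper's display shows the single term $\rho_1^{it}(g_n,G_{n-1}g\cdot\overline{u})-\rho_1^{it}(g_n,G_{n-1}g\cdot\overline{v})$ (and a stray $N(g)$ where $N(g_n)$ must have been meant), but the correct unfolding of $\mathbf{P}_t^n$ gives $\rho_1(G_n,g\cdot\overline{u})^{it}$, and applying Corollary~\ref{CorollaryBBB2} directly to $G_n$ produces $N(G_n)^{4\varepsilon}$, whose expectation grows in $n$. Your telescoping
\[
\rho_1(G_n,\overline{w})^{it}-\rho_1(G_n,\overline{w}')^{it}
=\sum_{k=1}^{n}\bigl(\rho_1(g_k,G_{k-1}\cdot\overline{w})^{it}-\rho_1(g_k,G_{k-1}\cdot\overline{w}')^{it}\bigr)\cdot(\text{unit modulus factors}),
\]
combined with independence of $g_k$ from $G_{k-1}$, \textbf{P1}, and Proposition~\ref{key prop} term by term, trades the bad factor $N(G_n)^{4\varepsilon}$ for a geometric series; this is the decisive step, and it is implicit but not carried out in the paper. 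Two small remarks: the bound $d(g\cdot\overline{u},h\cdot\overline{u})\le\Vert g-h\Vert\bigl(N(g)N(h)\bigr)^{1/2}$ used in the paper is tighter than the $\bigl(N(g)N(h)\bigr)^3$ you quote, but either works after raising to the power $\varepsilon$; and condition (c) as stated asks for coefficient $1$ on $\Vert f\Vert_\infty$, whereas your bound has $1+c_\varepsilon$ — this is harmless, since the Ionescu Tulcea--Marinescu theorem only needs some fixed constant there.
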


\begin{proof}
Using Corollary \ref{CorollaryBBB2} and (\ref{module cont}),%
\begin{eqnarray}
&&\left\Vert \mathbf{P}_{t}^{n}f\left( g,\overline{u}\right) -\mathbf{P}%
_{t}^{n}f\left( g,\overline{v}\right) \right\Vert  \notag \\
&\leq &\left\Vert f\right\Vert _{\infty }\mathbf{Pr}\left( \left\vert \rho_1^{it}\left( g_{n},G_{n-1}g\cdot \overline{u}\right) -\rho ^{it}\left(
g_{n},G_{n-1}g  \cdot \overline{v}\right) \right\vert \right)  \notag \\
&&+\mathbf{Pr}\left( \left\vert f\left( g_{n},G_{n-1}g\cdot\overline{u}\right)
-f\left( g_{n},G_{n-1}g  \cdot \overline{v}\right) \right\vert \right)  \notag \\
&\leq &\left\Vert f\right\Vert _{\infty }2^{1-\varepsilon }c_{\eta_{0}}^{\varepsilon }\mathbf{Pr}\left( N\left( g\right) ^{4\varepsilon
}d\left( G_{n-1}g  \cdot \overline{u},G_{n-1}g  \cdot \overline{v}\right) ^{\varepsilon
}\right)  \notag \\
&&+k_{\varepsilon }\left( f\right) \mathbf{Pr}\left( N\left( g\right)
^{4\varepsilon }d\left( G_{n-1}g  \cdot \overline{u},G_{n-1}g  \cdot \overline{v}\right)
^{\varepsilon }\right)  \notag \\
&\leq &c\left\Vert f\right\Vert _{\infty }\mathbf{Pr}\left( d\left( G_{n-1}g  \cdot \overline{u},G_{n-1}g  \cdot \overline{v}\right) ^{\varepsilon }\right) .
\label{DF00a}
\end{eqnarray}%
Note that%
\begin{equation}
\frac{d\left( G_{n-1}g  \cdot \overline{u},G_{n-1}g  \cdot \overline{v}\right) }{d\left( 
\overline{u},\overline{v}\right) }\leq \frac{d\left( G_{n-1}g  \cdot \overline{u},G_{n-1}g  \cdot \overline{v}\right) }{d\left( g  \cdot \overline{u},g  \cdot \overline{v}\right) }%
\frac{d\left( g  \cdot \overline{u},g  \cdot \overline{v}\right) }{d\left( \overline{u},%
\overline{v}\right) }  \label{DF00b}
\end{equation}%
and%
\begin{equation}
\frac{d\left( g  \cdot \overline{u},g  \cdot \overline{v}\right) }{d\left( \overline{u},\overline{v}\right) }
=\frac{\left\Vert g  \cdot \overline{u}\wedge g  \cdot \overline{v} \right\Vert}
{\left\Vert \overline{u}\wedge \overline{v} \right\Vert}\leq N\left( g\right) ^{4}.  \label{DF00c}
\end{equation}%
From (\ref{DF00a}), (\ref{DF00b}), (\ref{DF00c}) and Proposition \ref{key
prop} it follows that for sufficiently small $\varepsilon >0$ and some $\rho
_{\varepsilon }\in \left( 0,1\right) ,$%
\begin{eqnarray}
\frac{\left\Vert \mathbf{P}_{t}^{n}f\left( g,\overline{u}\right) -\mathbf{P}%
_{t}^{n}f\left( g,\overline{v}\right) \right\Vert }{d\left( \overline{u},%
\overline{v}\right) ^{\varepsilon }N\left( g\right) ^{4\varepsilon }} &\leq
&c\left\Vert f\right\Vert _{\infty }\sup_{\overline{v}_{1}\neq \overline{v}%
_{2}}\mathbf{Pr}\left( \frac{d\left( G_{n-1} \cdot \overline{v}_{1},G_{n-1}  \cdot \overline{v}_{2}\right) ^{\varepsilon }}{d\left( \overline{v}_{1},\overline{v%
}_{2}\right) ^{\varepsilon }}\right)  \notag \\
&\leq &c_{\varepsilon }\left\Vert f\right\Vert _{\infty }\rho _{\varepsilon
}^{n}.  \label{DF001}
\end{eqnarray}

On the other hand, in the same way as above, we get%
\begin{eqnarray*}
&&\left\Vert \mathbf{P}_{t}^{n}f\left( g,\overline{v}\right) -\mathbf{P}%
_{t}^{n}f\left( g^{\prime },\overline{v}\right) \right\Vert \\
&\leq &\left\Vert f\right\Vert _{\infty }\mathbf{Pr}\left( \left\vert \rho
^{it}\left( g_{n},G_{n-1}g  \cdot \overline{v}\right) -\rho ^{it}\left(
g_{n},G_{n-1}g^{\prime }  \cdot \overline{v}\right) \right\vert \right) \\
&&+\mathbf{Pr}\left( \left\vert f\left( g_{n},G_{n-1}g  \cdot \overline{v}\right)
-f\left( g_{n},G_{n-1}g^{\prime }  \cdot \overline{v}\right) \right\vert \right) \\
&\leq &\left\Vert f\right\Vert _{\infty }2^{1-\varepsilon }
c_{\eta_{0}}^{\varepsilon }\mathbf{Pr}
\left( N\left( g\right) ^{4\varepsilon}d\left( G_{n-1}g  \cdot \overline{v},G_{n-1}g^{\prime }  \cdot \overline{v}\right)^{\varepsilon }\right) \\
&&+k_{\varepsilon }\left( f\right) \mathbf{Pr}\left( N\left( g\right)
^{4\varepsilon }d\left( G_{n-1}g  \cdot \overline{v},G_{n-1}g^{\prime }  \cdot \overline{v}%
\right) ^{\varepsilon }\right) \\
&\leq &c\left\Vert f\right\Vert _{\infty }\mathbf{Pr}\left( d\left( G_{n-1}g  \cdot \overline{v},
G_{n-1}g^{\prime }  \cdot \overline{v}\right) ^{\varepsilon }\right) \\
&\leq &c\left\Vert f\right\Vert _{\infty }\sup_{\overline{v}_{1}\neq 
\overline{v}_{2}}\mathbf{Pr}\left( \frac{d\left( G_{n-1}\overline{v}%
_{1},G_{n-1}\overline{v}_{2}\right) ^{\varepsilon }}{d\left( \overline{v}%
_{1},\overline{v}_{2}\right) ^{\varepsilon }}\right) d\left( g  \cdot \overline{v}%
,g^{\prime }  \cdot \overline{v}\right) ^{\varepsilon }.
\end{eqnarray*}%
Since $g  \cdot \overline{u}\wedge g  \cdot \overline{v}
=\left( g  \cdot \overline{u}-g  \cdot \overline{v}\right) \wedge g  \cdot \overline{v},$ we have%
\begin{eqnarray*}
d\left( g  \cdot \overline{v},g^{\prime }  \cdot \overline{v}\right) &=&\frac{\left\Vert g  \cdot \overline{v}\wedge g^{\prime }  \cdot \overline{v}\right\Vert }
{\left\Vert g  \cdot \overline{v}\right\Vert \left\Vert g^{\prime }  \cdot \overline{v}\right\Vert } \\
&=&\frac{\left\Vert g-g^{\prime }\right\Vert \inf \left\{ \left\Vert
gv\right\Vert ,\left\Vert g^{\prime }v\right\Vert \right\} }{\left\Vert g  \cdot \overline{v}\right\Vert \left\Vert g^{\prime }  \cdot \overline{v}\right\Vert } \\
&\leq &\left\Vert g-g^{\prime }\right\Vert \frac{1}{\sqrt{\left\Vert g  \cdot \overline{v}\right\Vert \left\Vert g^{\prime }  \cdot \overline{v}\right\Vert }} \\
&\leq &\left\Vert g-g^{\prime }\right\Vert \left( N\left( g\right) N\left(
g^{\prime }\right) \right) ^{1/2}.
\end{eqnarray*}%
Then, for some sufficiently small $\varepsilon >0$ and some $\rho
_{\varepsilon }\in \left( 0,1\right) ,$%
\begin{equation}
\frac{\left\Vert \mathbf{P}_{t}^{n}f\left( g,\overline{v}\right) -\mathbf{P}%
_{t}^{n}f\left( g^{\prime },\overline{v}\right) \right\Vert }{\left\Vert
g-g^{\prime }\right\Vert ^{\varepsilon }\left( N\left( g\right) N\left(
g^{\prime }\right) \right) ^{3\varepsilon }}\leq c_{\varepsilon }\left\Vert
f\right\Vert _{\infty }\rho _{\varepsilon }^{n}.  \label{DF002}
\end{equation}

From (\ref{DF001}) and (\ref{DF002}) we obtain, uniformly in $t\in \left[
-\eta _{0},\eta _{0}\right] ,$%
\begin{equation*}
k_{\varepsilon }\left( \mathbf{P}_{t}^{n}f\right) \leq 2c_{\varepsilon }\rho
_{\varepsilon }^{n}\left\Vert f\right\Vert _{\infty },
\end{equation*}%
which in turn implies%
\begin{equation*}
\left\Vert \mathbf{P}_{t}^{n}f\right\Vert _{\mathcal{B}}\leq \left\Vert
f\right\Vert _{\infty }+2c_{\varepsilon }\rho _{\varepsilon }^{n}\left\Vert
f\right\Vert _{\infty }.
\end{equation*}
\end{proof}

Finally, condition (d) follows from the next lemma.

\begin{lemma}
\label{LemmaRelatComp}For any $\left\vert t\right\vert \leq \eta _{0}$ the
set $\mathbf{P}_{t}B$ is relatively compact in $\left( \mathcal{C}%
_{b},\left\Vert \cdot \right\Vert _{\infty }\right) $ for any bounded subset 
$B$ of $\mathcal{B}.$
\end{lemma}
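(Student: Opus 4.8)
The plan is to reduce the statement from the non-compact space $\mathbb{X}=\mathbb{G}\times\mathbb{P}\left(\mathbb{V}\right)$ to the compact projective space $\mathbb{P}\left(\mathbb{V}\right)$ and then to invoke the Arzel\`a--Ascoli theorem. The crucial remark is that, for $f\in\mathcal{B}$ and $\left\vert t\right\vert\leq\eta_0$, the function $\mathbf{P}_tf$ depends on $\left(g,\overline{v}\right)$ only through the direction $g\cdot\overline{v}$: indeed, by (\ref{trans prob X}),
\begin{equation*}
\mathbf{P}_tf\left(g,\overline{v}\right)=\int_{\mathbb{G}}e^{it\rho\left(g_1,g\cdot\overline{v}\right)}f\left(g_1,g\cdot\overline{v}\right)\boldsymbol{\mu}\left(dg_1\right)=h_{t,f}\left(g\cdot\overline{v}\right),
\end{equation*}
where $h_{t,f}\left(\overline{w}\right):=\int_{\mathbb{G}}e^{it\rho\left(g_1,\overline{w}\right)}f\left(g_1,\overline{w}\right)\boldsymbol{\mu}\left(dg_1\right)$ for $\overline{w}\in\mathbb{P}\left(\mathbb{V}\right)$. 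Thus $\mathbf{P}_tf=h_{t,f}\circ\pi$, where $\pi:\mathbb{X}\rightarrow\mathbb{P}\left(\mathbb{V}\right)$, $\pi\left(g,\overline{v}\right)=g\cdot\overline{v}$, is continuous and onto (given $\overline{w}$ and $g$, take $\overline{v}=g^{-1}\cdot\overline{w}$). Consequently the map $\varphi\mapsto\varphi\circ\pi$ is a linear isometry of $\left(\mathcal{C}\left(\mathbb{P}\left(\mathbb{V}\right)\right),\left\Vert\cdot\right\Vert_\infty\right)$, the space of continuous functions on $\mathbb{P}\left(\mathbb{V}\right)$ with the supremum norm, into $\left(\mathcal{C}_b,\left\Vert\cdot\right\Vert_\infty\right)$, and it sends relatively compact (equivalently totally bounded) sets to totally bounded sets. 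Hence it suffices to prove that $\left\{h_{t,f}:f\in B\right\}$ is relatively compact in $\mathcal{C}\left(\mathbb{P}\left(\mathbb{V}\right)\right)$.

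I would then verify the two hypotheses of Arzel\`a--Ascoli on the compact metric space $\left(\mathbb{P}\left(\mathbb{V}\right),d\right)$. Uniform boundedness is clear: $\left\vert h_{t,f}\left(\overline{w}\right)\right\vert\leq\left\Vert f\right\Vert_\infty\leq\left\Vert f\right\Vert_{\mathcal{B}}\leq c$ for every $f\in B$, since $B$ is bounded in $\mathcal{B}$. For equicontinuity I would write, for $\overline{u},\overline{v}\in\mathbb{P}\left(\mathbb{V}\right)$ and using $e^{it\rho}=\rho_1^{it}$,
\begin{equation*}
h_{t,f}\left(\overline{u}\right)-h_{t,f}\left(\overline{v}\right)=\int_{\mathbb{G}}e^{it\rho\left(g_1,\overline{u}\right)}\bigl(f\left(g_1,\overline{u}\right)-f\left(g_1,\overline{v}\right)\bigr)\boldsymbol{\mu}\left(dg_1\right)+\int_{\mathbb{G}}f\left(g_1,\overline{v}\right)\bigl(\rho_1\left(g_1,\overline{u}\right)^{it}-\rho_1\left(g_1,\overline{v}\right)^{it}\bigr)\boldsymbol{\mu}\left(dg_1\right).
\end{equation*}
For the first integral the definition (\ref{module cont}) of $k_\varepsilon$ gives $\left\vert f\left(g_1,\overline{u}\right)-f\left(g_1,\overline{v}\right)\right\vert\leq k_\varepsilon\left(f\right)d\left(\overline{u},\overline{v}\right)^\varepsilon N\left(g_1\right)^{4\varepsilon}$; for the second, Corollary \ref{CorollaryBBB2} gives $\left\vert\rho_1\left(g_1,\overline{u}\right)^{it}-\rho_1\left(g_1,\overline{v}\right)^{it}\right\vert\leq 2^{1-\varepsilon}c_{\eta_0}d\left(\overline{u},\overline{v}\right)^\varepsilon N\left(g_1\right)^{4\varepsilon}$. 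Since $\left\Vert f\right\Vert_\infty+k_\varepsilon\left(f\right)=\left\Vert f\right\Vert_{\mathcal{B}}\leq c$, and since $4\varepsilon\leq\delta_0$ (as $\varepsilon$ is small) so that $N\left(g\right)^{4\varepsilon}\leq N\left(g\right)^{\delta_0}$ and condition \textbf{P1} yields $\int_{\mathbb{G}}N\left(g_1\right)^{4\varepsilon}\boldsymbol{\mu}\left(dg_1\right)<\infty$, these two bounds combine to
\begin{equation*}
\sup_{f\in B}\ \sup_{\left\vert t\right\vert\leq\eta_0}\left\vert h_{t,f}\left(\overline{u}\right)-h_{t,f}\left(\overline{v}\right)\right\vert\leq C\,d\left(\overline{u},\overline{v}\right)^\varepsilon,
\end{equation*}
with $C$ depending only on $c$, $c_{\eta_0}$, $\varepsilon$ and $\int_{\mathbb{G}}N^{4\varepsilon}\,d\boldsymbol{\mu}$. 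Thus $\left\{h_{t,f}:f\in B\right\}$ is uniformly $\varepsilon$-H\"older, hence equicontinuous, and Arzel\`a--Ascoli gives its relative compactness in $\mathcal{C}\left(\mathbb{P}\left(\mathbb{V}\right)\right)$. Applying the isometry $\varphi\mapsto\varphi\circ\pi$ shows that $\mathbf{P}_tB=\left\{h_{t,f}\circ\pi:f\in B\right\}$ is totally bounded, hence relatively compact, in $\left(\mathcal{C}_b,\left\Vert\cdot\right\Vert_\infty\right)$.

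The part that really needs care — and is the heart of the matter — is the reduction to the compact space $\mathbb{P}\left(\mathbb{V}\right)$ carried out in the first paragraph: one cannot apply Arzel\`a--Ascoli directly on $\mathbb{X}$, because the factor $\mathbb{G}$ is non-compact and an equicontinuous, uniformly bounded family of functions on $\mathbb{X}$ need not be relatively compact for $\left\Vert\cdot\right\Vert_\infty$. It is precisely the averaging over $g_1$ built into $\mathbf{P}_t=\mathbf{P}\left(e^{it\rho}\,\cdot\right)$ that erases the $g$-dependence and forces the image to factor through the compact projective space; once this factorization is noticed, the remaining steps are routine consequences of the H\"older estimate for $\rho_1^{it}$ (Corollary \ref{CorollaryBBB2}), of the modulus $k_\varepsilon$ controlling the oscillation of $f$, and of the moment bound \textbf{P1}.
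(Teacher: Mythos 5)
Your proof is correct, and it takes a genuinely different route from the paper's. The paper argues by extracting, from any sequence in $B$, a subsequence converging uniformly on compacts of the \emph{non-compact} space $\mathbb{X}=\mathbb{G}\times\mathbb{P}(\mathbb{V})$ (via Arzel\`a--Ascoli applied locally), and then shows that $\mathbf{P}_t$ turns this local uniform convergence into global uniform convergence by splitting the $g_1$-integral over $\{\|g_1\|\leq A\}$ and its complement and letting $A\rightarrow\infty$; the tightness of $\boldsymbol{\mu}$ absorbs the tail. You instead observe the structural fact that $\mathbf{P}_tf$ depends on $(g,\overline{v})$ only through $g\cdot\overline{v}$, so that $\mathbf{P}_tB$ is the isometric image of a family of functions on the \emph{compact} space $\mathbb{P}(\mathbb{V})$, and then you prove a uniform H\"older estimate on that family using $k_\varepsilon$ for the $f$-term and Corollary \ref{CorollaryBBB2} for the $\rho_1^{it}$-term, together with the moment bound \textbf{P1} to integrate $N(g_1)^{4\varepsilon}$. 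Both arguments are sound; yours has the advantage of invoking Arzel\`a--Ascoli directly on a compact metric space, which sidesteps entirely the somewhat delicate point that local uniform convergence plus uniform boundedness on $\mathbb{X}$ does not by itself give relative compactness in $(\mathcal{C}_b,\|\cdot\|_\infty)$, and which also yields the slightly stronger conclusion that $\{\mathbf{P}_tf:f\in B,\ |t|\leq\eta_0\}$ is \emph{uniformly} equicontinuous in both $f$ and $t$. The paper's approach has the virtue of being more robust: it does not rely on the specific factorization of $\mathbf{P}_t$ through the projective space and would work for kernels where the image function did retain genuine dependence on the first coordinate. One small point to be explicit about: you need $4\varepsilon\leq\delta_0$ (or just $4\varepsilon<\delta_0$) so that $\int_{\mathbb{G}}N(g_1)^{4\varepsilon}\boldsymbol{\mu}(dg_1)<\infty$ follows from \textbf{P1} via Jensen; the paper's convention $0<\varepsilon<\delta_0$ does not literally guarantee this, but as you note $\varepsilon$ is implicitly taken small throughout, and the paper relies on the same moment finiteness elsewhere.
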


\begin{proof}
Let $B$ be a bounded subset of $\mathcal{B}$ and $\left( f_{n}\right)
_{n\geq 1}$ be a sequence in $B.$ This sequence is also bounded in the
Banach space $\mathcal{B}$ and therefore $\left( f_{n}\right) _{n\geq 1}$ is
uniformly bounded and uniformly equicontinuous on the compacts of $\mathbb{G}%
\times \mathbb{P}\left( \mathbb{V}\right) .$ By the theorem of Arzela-Ascoli
(see e.g. Dunford and Schwartz \cite{DunSchw58}) we can extract a
subsequence $\left( f_{n_{k}}\right) _{k\geq 1}$ converging uniformly on the
compacts of $\mathbb{G}\times \mathbb{P}\left( \mathbb{V}\right) $ to a
function $f\in \mathcal{B}.$ Then, for any $\left\vert t\right\vert \leq
\eta _{0},$%
\begin{eqnarray*}
\left\Vert \mathbf{P}_{t}f_{n_{k}}-\mathbf{P}_{t}f\right\Vert _{\infty }
&\leq &\sup_{\left\Vert g_{1}\right\Vert \leq A,\ \overline{v}\in \mathbb{P}%
\left( \mathbb{V}\right) }\left\vert f_{n_{k}}\left( g_{1},\overline{v}%
\right) -f\left( g_{1},\overline{v}\right) \right\vert \\
&&+\left( \sup_{n}\left\Vert f_{n}\right\Vert _{\infty }+\left\Vert
f\right\Vert _{\infty }\right) \int_{\left\Vert g_{1}\right\Vert >A}\mu
\left( dg_{1}\right) .
\end{eqnarray*}%
Taking the limit as $k\rightarrow +\infty $ and then as $A\rightarrow +\infty $
we get, for any $\left\vert t\right\vert \leq \eta _{0},$ 
\begin{equation*}
\lim_{k\rightarrow +\infty }\left\Vert \mathbf{P}_{t}f_{n_{k}}-\mathbf{P}%
_{t}f\right\Vert _{\infty }=0,
\end{equation*}%
which shows that the set $\mathbf{P}_{t}B$ is relatively compact.
\end{proof}

\subsection{Existence of the measure satisfying condition \textbf{P5} } \label{sec cond P5}
We prove the existence  of a measure $\boldsymbol{\mu }$ satisfying
conditions \textbf{P1-P5.}
Let $\boldsymbol{\mu }_0$ be a probability measure on $\mathbb{G}$ satisfying conditions 
\textbf{P1-P4} which admits $\boldsymbol{\nu }$ as invariant measure and
whose upper Lyapunov exponent is $0.$ Let $\lambda >1.$ Define the measure 
\begin{equation*}
\boldsymbol{\mu }_{\lambda }\left( dg^{\prime }\right) =\alpha \boldsymbol{%
\delta }_{\lambda I}\left( dg^{\prime }\right) +\left( 1-\alpha \right) 
\boldsymbol{\mu }_0\left( \frac{1}{\lambda }dg^{\prime }\right) ,
\end{equation*}%
where $\alpha \in \left( 0,1\right) .$ Then $\boldsymbol{\mu }_{\lambda }$
satisfies conditions \textbf{P1-P4} and $\boldsymbol{\mu }_{\lambda }\ast 
\boldsymbol{\nu }=\boldsymbol{\nu }$ , i.e. $\boldsymbol{\nu }$ is $%
\boldsymbol{\mu }_{\lambda }$-invariant measure. Moreover, the upper
Lyapunov exponent of $\boldsymbol{\mu }_{\lambda }$ is $0,$ i.e.%
\begin{equation*}
\int_{\mathbb{G}}\int_{\mathbb{P}\left( \mathbb{V}\right) }\rho \left( g,%
\overline{v}\right) \boldsymbol{\mu }_{\lambda }\left( dg\right) \boldsymbol{%
\nu }\left( dv\right) =0
\end{equation*}%
and 
\begin{equation*}
\inf_{v\in \mathbb{S}^{d-1}}\boldsymbol{\mu }_{\lambda }\left( g:\log
\left\Vert gv\right\Vert >\ln \lambda \right) \geq \alpha >0
\end{equation*}%
which means that condition \textbf{P5} is satisfied.

\end{document}